\numberwithin{equation}{section}
\newtheorem{theorem}{Theorem}[section]
\newtheorem{conjecture}[theorem]{Conjecture}
\newtheorem{corollary}[theorem]{Corollary}
\newtheorem{lemma}[theorem]{Lemma}
\newtheorem{proposition}[theorem]{Proposition}
\theoremstyle{definition}
\newtheorem{definition}[theorem]{Definition}
\newtheorem{example}[theorem]{Example}
\newtheorem{remark}[theorem]{Remark}
\def\Z{{\mathbb {Z}}_p}
\def\Q{{\mathbb {Q}}_p}
\def\G{{\rm GL}_2(\Q)}
\def\B{{\rm B}(\Q)}
\def\1z{\mathrm{1}_{\mathbb{Z}_p}}
\def\w{\left(\begin{smallmatrix}
 0&1 \\
 1&0
\end{smallmatrix}\right)}
\def\(m(V)){m(\alpha,\beta)}
\def\ra{\rightarrow}
\def\Qp{{\mathbb{Q}}_p}
\def\eps{\varepsilon}
\def\OO{\mathcal{O}}
\def\val{\mathrm{val}}
\def\aa{\textbf{A}}
\def\bb{\textbf{B}}
\def\oe{\OO_{\calE_L}}
\def\calE{\mathcal{E}}
\def\rig{\mathrm{rig}}
\def\res{\mathrm{res}}
\def\Res{\mathrm{Res}}
\def\D{\mathrm{D}}
\def\A{\mathcal{A}}
\DeclareMathOperator{\rank}{rank} \DeclareMathOperator{\Ind}{Ind}
\def\bcris{\textbf{B}_{\mathrm{cris}}}
\def\bdR{\textbf{B}_{\rm dR}}
\def\m{(\varphi,\Gamma)}
\def\r{\mathcal{R}}
\title{Locally analytic vectors of some crystabelian representations of $\G$}
\author{Ruochuan Liu\\University of Michigan\\
ruochuan@umich.edu}
\begin{document}

\maketitle

\begin{abstract}
For $V$ a $2$-dimensional $p$-adic representation of $G_{\Q}$, we denote by $\mathrm{B}(V)$ the admissible unitary representation of $\G$ attached to $V$ under the $p$-adic local Langlands correspondence of $\G$ initiated by Breuil. In this article, building on the works of Berger-Breuil and Colmez, we determine the locally analytic vectors $\mathrm{B}(V)_{\mathrm{an}}$ of $\mathrm{B}(V)$  when $V$ is irreducible, crystabelian and Frobenius semi-simple with distinct Hodge-Tate weights; this proves a conjecture of Breuil. Using this result, we verify Emerton's conjecture that $\dim \mathrm{Ref}^{\eta\otimes\psi}(V)=\dim\mathrm{Exp}^{\eta|\cdot|\otimes x\psi }(\mathrm{B}(V)_{\mathrm{an}}\otimes(x|\cdot|\circ\det))$ for those $V$ which are irreducible, crystabelian and Frobenius semi-simple.
\end{abstract}

\tableofcontents
\section*{Introduction}
Fix a prime $p>2$, as well as a finite extension $L$ of $\Q$ to be the coefficient field. Recall that for any integer $k\geq2$, the set of $2$-dimensional semistable and non-crystalline $L$-linear representations of $G_{\Q}$ with Hodge-Tate weights $(0,k-1)$ is parameterized by $L$ via the $\mathscr{L}$-invariant. For any $\mathscr{L}\in L$, we denote by $V(k,\mathscr{L})$ the $L$-linear representation of $G_{\Q}$ corresponding to $\mathscr{L}$. In \cite{Br04}, Breuil constructed a family of locally analytic representations $(\Sigma(k,\mathscr{L}))$ of $\G$ associated to the family of $L$-linear representations $(V(k,\mathscr{L}))$ of $G_{\Q}$ for all $\mathscr{L}\in L$. Breuil's work suggested the possible existence of a $p$-adic version of the local Langlands correspondence for $\G$. In fact, Breuil conjectured that there should be a $p$-adic local Langlands correspondence for $\G$ which attaches to any $2$-dimensional potentially semistable $L$-linear representation of $G_{\Q}$ a $p$-adic admissible unitary representation of $\G$. Thanks to much recent work, especially that of Colmez, one can now extend this conjecture to all $2$-dimensional $L$-linear representations of $G_{\Q}$; we denote this correspondence by $V\mapsto\mathrm{B}(V)$. Although the present version of the $p$-adic local Langlands correspondence for $\G$ is formulated at the level of Banach space representations, it is very useful, as in Breuil's work (and many other examples), to have the information of the space of locally analytic vectors $\mathrm{B}(V)_{\mathrm{an}}$ of $\mathrm{B}(V)$. This is the theme of this paper.

In the rest of the introduction, we will sketch some relevant background which is useful for the reader to understand the main results of this paper. We refer the reader to \cite{C08b}, \cite{C08}, \cite{E06} and the body of the paper for more details.
\subsection*{Trianguline representations and $p$-adic local Langlands correspondence for $\G$}
As usual, let $\r_L$ denote the Robba ring over $L$. The $\varphi$ and $\Gamma$-actions on $\r_L$ are defined by $\varphi(T)=(1+T)^p-1$ and $\gamma(T)=(1+T)^{\chi(\gamma)}-1$ for any $\gamma\in\Gamma$.  For any $\delta\in\mathrm{Hom}_{\mathrm{cont}}(\Q^\times,L^\times)$, we associate to $\delta$ a rank $1$ $\m$-module $\r_L(\delta)$ over $\r_L$ as follows: the $\m$-module $\r_L(\delta)$ has an $\r_L$-basis $e$ such that the $\varphi,\Gamma$-actions on $\r_L(\delta)$ are defined by the formulas
\[
\varphi(xe)=\delta(p)\varphi(x)e,\gamma(xe)=\delta(\chi(\gamma))\gamma(x)e
\]
for any $x\in\r_L$ and $\gamma\in\Gamma$, where $\chi$ is the cyclotomic character as usual. Conversely, if $M$ is a rank $1$ $\m$-module over $\r_L$, then there exists a unique $\delta\in\mathrm{Hom}_{\mathrm{cont}}(\Q^\times,L^\times)$ such that $M\cong\r_L(\delta)$ (\cite[Proposition 3.1]{C08b}). We define the \emph{weight} $w(\delta)$ of $\delta$  by the formula $w(\delta)=\log\delta(u)/\log u$, where $u\in\Z^\times$ is not a root of unity. The local reciprocity map allow us to view $\delta$ as a continuous character of $\mathrm{W}_{\Q}$.  If $\mathrm{val}(\delta(p))=0$, then one can uniquely extend $\delta$ to a continuous character of $G_{\Qp}$. In this case, $w(\delta)$ is just the generalized Hodge-Tate weight of $\delta$ and $\r_L(\delta)=\D_\rig^\dagger(\delta)$.

Recall that a $\m$-module over $\r_L$ is called \emph{triangulable} if it is a successive extensions of rank $1$ $\m$-modules over $\r_L$, and an $L$-linear representation $V$ of $G_{\Q}$ is called \emph{trianguline} if $\D_\rig^\dagger(V)$ is triangulable in the category of $\m$-modules over $\r_L$. In the rest of the introduction, let $V$ be a $2$-dimensional $L$-linear representation of $G_{\Q}$. If $V$ is trianguline, then $\D_\rig^\dagger(V)$ fits into a short exact sequence
\[
0\longrightarrow\r_L(\delta_1)\longrightarrow\D_\rig^\dagger(V)\longrightarrow\r_L(\delta_2)\longrightarrow0
\]
for some $\delta_1,\delta_2\in\mathrm{Hom}_{\mathrm{cont}}(\Q^\times,L^\times)$. We denote by $h\in H^1(\r_L(\delta_1\delta_2^{-1}))=\mathrm{Ext}^1(\r_L(\delta_2),\r_L(\delta_1))$ the extension corresponding to $\D_\rig^\dagger(V)$; then $V$ is determined by the triple $(\delta_1,\delta_2,h)$. It follows that $\mathrm{val}(\delta_1(p))+\mathrm{val}(\delta_2(p))=0$, and $w(\delta_1), w(\delta_2)$ are the generalized Hodge-Tate weights of $V$. Conversely, for any triple $s=(\delta_1,\delta_2,h)$ such that $\delta_1,\delta_2\in\mathrm{Hom}_{\mathrm{cont}}(\Q^\times,L^\times)$ and
$h\in H^1(\r_L(\delta_1\delta_2^{-1}))$, we denote by $D(s)$ the extension of $\r_L(\delta_2)$ by $\r_L(\delta_1)$ defined by $h$. If $\alpha\in L^\times$ and if $s'=(\delta_1,\delta_2,\alpha h)$, then $D(s)$ and $D(s')$ are isomorphic. Thus if $h\neq0$, then the isomorphism class of $D(s)$ only depend on the image $\bar{h}$ of $h$ in $\mathbf{P}^1(H^1(\r_L(\delta_1\delta_2^{-1})))$. Following the notations of Colmez, we denote by $\mathscr{S}_{+}(L)$ the set of triples $s=(\delta_1,\delta_2,\bar{h})$ where $\delta_1,\delta_2\in\mathrm{Hom}_{\mathrm{cont}}(\Q^\times,L^\times)$ such that $\mathrm{val}(\delta_1(p))+\mathrm{val}(\delta_2(p))=0$ and $\mathrm{val}(\delta_1(p))\geq0$, and
$\bar{h}\in\mathbf{P}^1(H^1(\r_L(\delta_1\delta_2^{-1})))$; then $D(s)$ is well-defined for any $s\in\mathscr{S}_+(L)$. In the case when $D(s)$ is \'etale, we denote by $V(s)$ the $L$-linear representation of $G_{\Q}$ such that $\D_\rig^\dagger(V(s))=D(s)$.

For any $s\in\mathscr{S}_+(L)$, we set
\[
u(s)=\mathrm{val}(\delta_1(p))=-\mathrm{val}(\delta_2(p)), w(s)=w(\delta_1)-w(\delta_2).
\]
In \cite{C08b}, Colmez defined three subsets $\mathscr{S}^{\mathrm{ng}}_{*}$, $\mathscr{S}^{\mathrm{cris}}_{*}$ and $\mathscr{S}^{\mathrm{st}}_{*}$ of $\mathscr{S}_+(L)$ as follows:
\begin{enumerate}
\item[(1)]$\mathscr{S}^{\mathrm{ng}}_{*}$ is the set of $s$ such that $w(s)$ is not an integer $\geq1$ and $u(s)>0$;
\item[(2)]$\mathscr{S}^{\mathrm{cris}}_{*}$ is the set of $s$ such that $w(s)$ is an integer $\geq1$, $0<u(s)< w(s)$ and $\bar{h}=\infty$;
\item[(3)]$\mathscr{S}^{\mathrm{st}}_{*}$ is the set of $s$ such that $w(s)$ is an integer $\geq1$, $0<u(s)< w(s)$ and $\bar{h}\neq\infty$.
\end{enumerate}
Here the exponents $``\mathrm{ng}",``\mathrm{cris}"$ and $``\mathrm{st}"$ refer to ``non-geometric'', ``crystalline'' and ``semistable'' respectively. Let
\[
\mathscr{S}_{\mathrm{irr}}=\mathscr{S}^{\mathrm{ng}}_{*}\coprod\mathscr{S}^{\mathrm{cris}}_{*}
\coprod\mathscr{S}^{\mathrm{st}}_{*}.
\]
It was proved by Colmez that if $s\in\mathscr{S}_{\mathrm{irr}}$, then $D(s)$ is \'etale, and $V(s)$ is irreducible (and of course trianguline); conversely, if $V$ is irreducible and trianguline, then $V\cong V(s)$ for some $s\in\mathscr{S}_{\mathrm{irr}}$ (\cite[Th\'eor\`{e}me 0.5(i)(ii)]{C08b}). For any $?\in\{\mathrm{ng},\mathrm{cris},\mathrm{st}\}$, we say $V\in\mathscr{S}^{?}_{*}$ if $V\cong V(s)$ for some $s\in\mathscr{S}^{?}_{*}$. (By \cite[Th\'eor\`{e}me 0.5(iii)]{C08b}, we know $V\in\mathscr{S}^{?}_{*}$ for at most one $?$.) More precisely, we have that $V\in\mathscr{S}^{\mathrm{cris}}_{*}$ if and only if  $V$ is a twist of an irreducible and crystabelian representation, and $V\in\mathscr{S}^{\mathrm{st}}_{*}$ if and only if $V$ is a twist of an irreducible and semistable, but non-crystalline, representation (\cite[Th\'eor\`{e}me 0.8]{C08b}).

In \cite{C04}, Colmez found a direct link between $\mathrm{B}(V)$ and the $\m$-module associated to $V$ in the semistable case.  More precisely, Colmez showed that if $V\in\mathscr{S}^{\mathrm{st}}_{*}$, then the following is true:
\begin{equation}\label{eq:prin-series}
\begin{split}
\text{the dual of}\hspace{1mm}\mathrm{B}(V) \hspace{1mm}\text{is naturally isomorphic to} \hspace{1mm}(\displaystyle{\lim_{\longleftarrow}}_{\psi}\D(\check{V}))^{b}\hspace{1mm}
\text{as Banach space representations of} \hspace{1mm}\mathrm{B}(\Q).
\end{split}
\end{equation}
Subsequently, Berger and Breuil proved $(\ref{eq:prin-series})$ for those $V\in\mathscr{S}^{\mathrm{cris}}_{*}$ which are not exceptional (\cite{BB06}) and Paskunas proved $(\ref{eq:prin-series})$ for $V$ exceptional and $p>2$ (\cite{P09}); for $V\in\mathscr{S}^{\mathrm{cris}}_{*}$, we call $V$ exceptional if the associated Weil-Deligne representation of $V$ is not Frobenius semi-simple, and Colmez proved $(\ref{eq:prin-series})$ for $V\in\mathscr{S}^{\mathrm{ng}}_{*}$ (\cite{C08c}). The isomorphism $(\ref{eq:prin-series})$ suggests a functorial construction of the $p$-adic local Langlands correspondence for $\G$ by using the theory of $\m$-modules. On this track, Colmez recently established the $p$-adic local Langlands correspondence for $\G$ for all $2$-dimensional irreducible $L$-linear representations of $G_{\Q}$ (\cite{C08}). To state Colmez's construction, let $D$ be a rank $2$, irreducible and \'etale $\m$-module over $\r_L$. In \cite{C08}, Colmez first equipped $D\boxtimes\mathbf{P}^1$ with a continuous $\G$-action. Then he showed that $D^\natural\boxtimes\mathbf{P}^1$ is stable under the given $\G$-action; to prove this assertion, Colmez improved $(\ref{eq:prin-series})$ to the following form:
\begin{equation}
\begin{split}
\text{the dual of}\hspace{1mm}\mathrm{B}(V) \hspace{1mm}\text{is naturally isomorphic to} \hspace{1mm}\D(\check{V})^\natural\boxtimes\mathbf{P}^1\hspace{1mm}
\text{as Banach space representations of} \hspace{1mm}\G
\end{split}
\end{equation}
when $V\in\mathscr{S}^{\mathrm{cris}}_*$ is not exceptional. Let
$\Pi(D)=(D\boxtimes\mathbf{P}^1)/(D^\natural\boxtimes\mathbf{P}^1)$; Colmez showed that the right hand side is an admissible unitary representation of $\G$. Colmez set the $p$-adic local Langlands correspondence for $\G$ as $V\mapsto\Pi(V):=\Pi(\D(V))$.

\subsection*{Locally analytic vectors of unitary principal series of $\G$}
In \cite{E06}, Emerton made the following conjecture ([Conjecture 3.3.1(8), ibid]):
\begin{conjecture}
For any $\eta, \psi\in\mathrm{Hom}_{\mathrm{cont}}(\Q^\times,L^\times)$, we have
\[
\dim \mathrm{Ref}^{\eta\otimes\psi}(V)=\dim\mathrm{Exp}^{\eta|x|\otimes x\psi }(\mathrm{B}(V)_{\mathrm{an}}\otimes(x|x|\circ\det)).
\]
\end{conjecture}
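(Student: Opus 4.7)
The plan is to deduce Emerton's conjecture from the explicit description of $\mathrm{B}(V)_{\mathrm{an}}$ provided by the main theorem of the paper, by matching triangulations of $\D_\rig^\dagger(V)$ with exponents of a principal series, handled case by case according to the trichotomy $\{\mathrm{ng},\mathrm{cris},\mathrm{st}\}$.

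First I would reduce to the case where $V$ is irreducible. In the reducible case, $V$ is itself an extension of two characters, its refinement spaces are easily enumerated from this datum, and $\mathrm{B}(V)$ is already well-understood (as a unitary completion of a principal series), so both sides of the equality are given by the same calculation of a space of characters of the torus. When $V\in\mathscr{S}^{\mathrm{st}}_*$, or when $V\in\mathscr{S}^{\mathrm{ng}}_*$, I would cite the descriptions of $\mathrm{B}(V)_{\mathrm{an}}$ furnished by the work of Breuil (as developed in \cite{Br04}) and by Colmez (\cite{C08b},\cite{C08}): in each of these cases $\mathrm{B}(V)_{\mathrm{an}}$ contains (or is built from) locally analytic principal series whose parameters record the unique triangulation of $V$, and a direct computation of $\mathrm{Exp}$ matches the dimension of $\mathrm{Ref}$ via the twist by $x|x|\circ\det$.

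The heart of the argument is the crystabelian, Frobenius semi-simple case, where one uses the main theorem of the present paper. In that situation $\D_\rig^\dagger(V)$ admits exactly two triangulations indexed by the two eigenvalues of $\varphi$ on $\dcris(V)$, giving two pairs $(\delta_1,\delta_2)$ of characters; correspondingly $\dim\mathrm{Ref}^{\eta\otimes\psi}(V)\in\{0,1\}$, with the value $1$ occurring precisely when $(\eta,\psi)$ is one of these two pairs. On the representation side, the explicit description of $\mathrm{B}(V)_{\mathrm{an}}$ as an extension involving the two locally analytic principal series attached to the two triangulations lets one compute the exponent space directly: each triangulation contributes a one-dimensional space of exponents with the character $\eta|x|\otimes x\psi$ (after the normalization given by the twist by $x|x|\circ\det$ in the conjecture), and these are distinct generically, so $\mathrm{Exp}^{\eta|x|\otimes x\psi}(\mathrm{B}(V)_{\mathrm{an}}\otimes(x|x|\circ\det))$ has the matching dimension.

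The main obstacle I anticipate is the bookkeeping of normalizations: one must carefully translate between the $\delta_i$ appearing on triangulations of $\D_\rig^\dagger(V)$ (with their Hodge-Tate weight conventions) and the characters $\eta|x|\otimes x\psi$ appearing in $\mathrm{Exp}$, accounting for the cyclotomic shift, the local reciprocity isomorphism, and the twist by $x|x|\circ\det$. A secondary subtlety occurs when the two triangulations coincide in the character data (the ``degenerate'' locus where the two pairs collide), in which case one must verify that both sides of the equality jump by the same amount; this is controlled by the extra structure (the position of $\bar h$) appearing in the main theorem's description of $\mathrm{B}(V)_{\mathrm{an}}$, and reduces to a direct check using the explicit formulas for the locally analytic vectors.
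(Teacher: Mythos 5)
Your proposal diverges from what the paper actually does, and contains real gaps.

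First, a scoping issue: the statement you are asked to prove is stated as a conjecture, and the paper only establishes it in the case $V\in\mathscr{S}^{\mathrm{cris}}_*$ non-exceptional (Corollary 5.7), precisely because the main theorem (Theorem 4.1) only determines $\mathrm{B}(V)_{\mathrm{an}}$ in that case. Your plan to dispatch the reducible case, the $\mathscr{S}^{\mathrm{st}}_*$ case, and the $\mathscr{S}^{\mathrm{ng}}_*$ case by ``citing the descriptions of $\mathrm{B}(V)_{\mathrm{an}}$ furnished by Breuil and Colmez'' is a gesture rather than an argument: no such computation of $\mathrm{Exp}$ exists in \cite{Br04} or \cite{C08b}, and the paper itself neither claims nor uses one. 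Likewise your ``degenerate locus where the two pairs collide'' is the exceptional case $\alpha=\beta$, which is explicitly excluded both from the main theorem and from the corollary — there is no description of $\mathrm{B}(V)_{\mathrm{an}}$ available there to compute with. So you should restrict to what the main theorem actually gives you.

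Second, and more substantively, even in the crystabelian non-exceptional case your argument misses the crux of Proposition 5.1. Knowing that $\mathrm{B}(V_{\alpha,\beta})_{\mathrm{an}}\cong A(\alpha)\oplus_{\pi(\beta)}A(\beta)$ does not by itself compute the Jacquet module: this amalgam sits in a short exact sequence
\[
0\longrightarrow A(\alpha)\longrightarrow \mathrm{B}(V_{\alpha,\beta})_{\mathrm{an}}\longrightarrow(\Ind^{\G}_{\mathrm{B}(\Q)}x^{k-1}\beta\otimes \alpha (x|x|)^{-1})^{\mathrm{an}}\longrightarrow0,
\]
and since $J_{\mathrm{B}(\Q)}$ is only left exact, one gets an a priori inclusion
$J_{\mathrm{B}(\Q)}(A(\alpha))\subseteq J_{\mathrm{B}(\Q)}(\mathrm{B}(V_{\alpha,\beta})_{\mathrm{an}})$ together with a map to a one-dimensional Jacquet module of the quotient; the issue is whether that map is zero. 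The paper proves it is: a nonzero image would, via Emerton's adjunction \cite[Theorem 5.2.5]{E06}, produce a $\G$-equivariant splitting of the sequence, contradicting the non-splitness established in \cite[Lemma 6.7.4]{E06}. Your ``direct computation of $\mathrm{Exp}$'' silently assumes the Jacquet module of the amalgam is exactly the two expected characters, which is precisely the content requiring proof. You should incorporate the left-exactness/non-splitness argument explicitly; otherwise the exponent count could in principle overshoot by the extra character $x^{-1}\alpha\otimes x^{k-1}\beta|x|^{-1}$, and the comparison with $\dim\mathrm{Ref}$ would fail.

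Finally, on the refinement side your description is essentially correct but should be anchored to \cite[Proposition 4.2.4]{E06} (two inequivalent refinements for an irreducible crystabelian $V$) and to the explicit identification $\sigma(R_\alpha)=x^{k-1}\beta\otimes\alpha$, $\sigma(R_\beta)=x^{k-1}\alpha\otimes\beta$ coming from the eigenvectors $e_\alpha$, $e_\beta$ of $\varphi$ on $\D_{\mathrm{cris}}^+(V_{\alpha,\beta}(1-k))$; without that explicit bookkeeping, the claimed match under the twist $x|x|\circ\det$ cannot be verified.
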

(Note that the right hand side of Conjecture 0.1 is $\dim\mathrm{Exp}^{\eta|x|\otimes x\psi }(\mathrm{B}(V)_{\mathrm{an}}\otimes(x|x|\circ\det))$ instead of $\dim\mathrm{Exp}^{\eta|x|\otimes x\psi}(\mathrm{B}(V)_{\mathrm{an}})$ in Emerton's formulation. This is because our normalization of the $p$-adic local Langlands correspondence for $\G$ differs by a twist of $(x|x|)^{-1}\circ \det$ from Emerton's normalization. See subsection 3.1 for more details.) Here $\mathrm{Ref}^{\eta\otimes\psi}(V)$ denotes the space of equivalence classes of refinements $[R]$ of $V$ such that $\sigma(R)=(\eta,\psi)$; for a locally analytic $\G$-representations $W$ of compact type, we denote by $\mathrm{Exp}^{\eta\otimes \psi}(W)$ the space of $1$-dimensional $\mathrm{T}(\Q)$-invariant subspaces of the Jacquet modules $J_{\mathrm{B}(\Q)}(W)$ on which $\mathrm{T}(\Q)$ acts via the character $\eta\otimes \psi$. Granting Emerton's conjecture, we see that $J_{\mathrm{B}(\Q)}(\mathrm{B}(V)_{\mathrm{an}})\neq0$ if and only if $V$ has at least one refinement; this is equivalent to the fact that $V$ is trianguline. Thus, inspired by the classical theory of smooth representations of $\mathrm{GL}_2$, it is reasonable to think of $\mathrm{B}(V)$, when $V$ is trianguline, as a \emph{``unitary principal series representation of $\G$''}. So far as we know, this point of view has not yet been accepted as a formal definition; but it has been adopted in some literature (e.g. \cite{C08c}). We follow this point of view in this paper.

The motivation of this paper is to have an explicit description of $\mathrm{B}(V)_{\mathrm{an}}$ for $V\in\mathscr{S}^{\mathrm{cris}}_*$. By the classification of the representations $V\in\mathscr{S}^{\mathrm{cris}}_*$ mentioned in $0.1$, it suffices to figure out $\mathrm{B}(V)_{\mathrm{an}}$ when $V$ is an irreducible and crystabelian representation of Hodge-Tate weights $(0,k-1)$ for some integer $k\geq2$. By a result of Colmez (\cite[Proposition 4.14]{C08b}), such a $V$ is uniquely determined by a pair of smooth characters $(\alpha,\beta)$ of $\Q^\times$. Furthermore, Berger and Breuil showed that $\mathrm{B}(V)\cong B(\alpha)/L(\alpha)$, where $B(\alpha)=(\Ind^{\G}_{\mathrm{B}(\Q)}\alpha\otimes x^{k-2}\beta|x|^{-1})^{\mathcal{C}^{-\val(\alpha(p))}}$ and $L(\alpha)$ is a certain closed subspace of $B(\alpha)$ (\cite{BB06}). We denote by  $\pi(\alpha)$ the locally algebraic representation $(\Ind^{\G}_{\mathrm{B}(\Q)}\alpha\otimes x^{k-2}\beta|x|^{-1})^{\mathrm{lalg}}$ and $A(\alpha)$ the locally analytic principal series $(\Ind^{\G}_{\mathrm{B}(\Q)}\alpha\otimes x^{k-2}\beta|x|^{-1})^{\mathrm{an}}$; we set $\pi(\beta)$ and $A(\beta)$ by replacing $\alpha$ with $\beta$. Breuil constructed a natural continuous $\G$-equivariant map from $A(\alpha)\oplus_{\pi(\beta)}A(\beta)$ to $\mathrm{B}(V)_{\mathrm{an}}$, and made the following conjecture:
\begin{conjecture}(\cite[Conjecture 5.3.7, Conjecture 4.4.1]{BB06})
If $\alpha\neq\beta$, then the natural map $A(\alpha)\oplus_{\pi(\beta)}A(\beta)\ra\mathrm{B}(V)_{\mathrm{an}}$ is a topological isomorphism.
\end{conjecture}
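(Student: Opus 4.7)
The plan is to combine the Berger--Breuil presentation $\mathrm{B}(V)\cong B(\alpha)/L(\alpha)\cong B(\beta)/L(\beta)$ with Colmez's $\m$-module realization $\mathrm{B}(V)^*\simeq\mathrm{D}(\check V)^\natural\boxtimes\mathbf{P}^1$ and the two natural triangulations of $\mathrm{D}^\dagger_\rig(V)$. Since $V\in\mathscr{S}^{\mathrm{cris}}_*$ is irreducible and Frobenius semisimple and $\alpha\neq\beta$, the $\m$-module $\mathrm{D}^\dagger_\rig(V)$ admits precisely two triangulations, one for each ordering of the pair $\{\alpha,\beta\}$; each triangulation should give rise, via Colmez's functor, to the corresponding Breuil embedding $A(\alpha)\hookrightarrow\mathrm{B}(V)_{\mathrm{an}}$ or $A(\beta)\hookrightarrow\mathrm{B}(V)_{\mathrm{an}}$. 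The conjecture asserts that these two embeddings exhaust $\mathrm{B}(V)_{\mathrm{an}}$, overlapping exactly in the common locally algebraic subrepresentation.

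Injectivity of $\iota:A(\alpha)\oplus_{\pi(\beta)}A(\beta)\to\mathrm{B}(V)_{\mathrm{an}}$ I would treat first, and this is the comparatively soft part. Both restrictions $\iota|_{A(\alpha)}$ and $\iota|_{A(\beta)}$ are nonzero by the Berger--Breuil construction, and the assumption $\alpha\ne\beta$ ensures that $A(\alpha)\not\cong A(\beta)$; combined with the known structure of locally analytic principal series for $\G$, this forces each restriction to be injective (any kernel would be forced to sit in the locally algebraic socle, and a direct comparison against Breuil's construction shows it vanishes there). One then checks, using the explicit Berger--Breuil presentation, that the intersection of the images $\iota(A(\alpha))$ and $\iota(A(\beta))$ inside $\mathrm{B}(V)_{\mathrm{an}}$ is exactly the image of the common locally algebraic subrepresentation $\pi(\alpha)\cong\pi(\beta)$, yielding injectivity of $\iota$ on the amalgamated sum.

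The crux, and the main obstacle, is surjectivity. My strategy is to work on the dual side and upgrade Colmez's isomorphism to a description of the continuous dual $(\mathrm{B}(V)_{\mathrm{an}})^*$ as a Robba-ring enhancement of $\mathrm{D}(\check V)^\natural\boxtimes\mathbf{P}^1$, built functorially from $\mathrm{D}^\dagger_\rig(\check V)$. Each of the two triangulations of $\mathrm{D}^\dagger_\rig(\check V)$ then carves out, inside this enhancement, a short exact sequence of Fr\'echet $\G$-modules whose outer terms are dual to $A(\alpha)$ and $A(\beta)$ respectively; by computing the combined projection from $(\mathrm{B}(V)_{\mathrm{an}})^*$ onto the product of these dual pieces and identifying its kernel with the dual of $\pi(\beta)$, surjectivity of $\iota$ follows, and the open mapping theorem (both sides are of compact type) then promotes $\iota$ to a topological isomorphism. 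The principal technical obstacle is the Robba-ring enhancement itself: one must lift Colmez's $\psi$-operators and the involution defining his $\mathbf{P}^1$-construction from $\mathrm{D}^\natural$ to $\mathrm{D}^\dagger_\rig$, verify its $\G$-equivariance, and then check in the crystabelian setting that each triangulation of $\mathrm{D}^\dagger_\rig(\check V)$ indeed cuts out the dual of the corresponding locally analytic principal series.
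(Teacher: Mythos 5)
Your overall strategy --- working on the dual side, invoking $\mathrm{B}(V)\cong B(\alpha)/L(\alpha)$ from Berger--Breuil and $(\check\Pi(D)_{\mathrm{an}})^\ast = D^\natural_{\mathrm{rig}}\boxtimes\mathbf{P}^1$ from Colmez, and trying to realize the two embeddings $A(\alpha),A(\beta)\hookrightarrow\mathrm{B}(V)_{\mathrm{an}}$ via the two triangulations of $\mathrm{D}^\dagger_\rig(V)$ --- is indeed in the same spirit as the paper's argument. But you have misidentified where the difficulty lies, and the step you are vague about is precisely the one that carries the whole proof.

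You name the construction of the ``Robba-ring enhancement'' $D_{\mathrm{rig}}^\natural\boxtimes\mathbf{P}^1$ and the extension of $\psi$, the involution $w_D$, and the $\G$-action to $D^\dagger_\rig$ as the principal technical obstacle. That is not an obstacle at all: Colmez has already constructed $D^\dagger\boxtimes\mathbf{P}^1$, $D_\rig\boxtimes\mathbf{P}^1$, the extended $\G$-action, and the identification $(\check\Pi(D)_{\mathrm{an}})^\ast = D^\natural_{\rig}\boxtimes\mathbf{P}^1$ (Theorem 3.10 in the paper, i.e.\ Th\'eor\`eme V.2.12 of \cite{C08}). The actual obstacle is different and your plan does not address it. The natural map one builds on duals --- in the paper it is $\mathcal{F}_{\mathrm{an}}:(A(\alpha)\oplus_{\pi(\beta)}A(\beta))^\ast\to \D^\dagger_\rig(\check V)\boxtimes\mathbf{P}^1$, defined via the Amice transform of the two distributions, Lemma 1.6 ($I^{\mathrm{sm}}$ on additive characters), and the involution compatibility of Lemmas 4.3--4.5 --- a priori lands only in $D^\dagger_\rig\boxtimes\mathbf{P}^1$, not in the smaller space $D^\natural_\rig\boxtimes\mathbf{P}^1$. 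Proving that it does land in $D^\natural_\rig\boxtimes\mathbf{P}^1=(\Pi(V)_{\mathrm{an}})^\ast$ requires showing orthogonality to $\D^\natural(V_{\alpha,\beta})\boxtimes\mathbf{P}^1$ under the residue pairing $\{,\}_{\mathbf{P}^1}$. That is Proposition 4.9, whose proof is a delicate residue computation combined with the fact (via Hahn--Banach and Theorem 3.18) that the analogous expression vanishes for all bounded dual vectors. There is no direct structural statement of the form ``each triangulation cuts out a short exact sequence whose outer terms are dual to $A(\alpha)$ and $A(\beta)$'' that you can simply cite; the identification has to be manufactured, and the vanishing of the pairing is its substance.

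Two further points. First, your injectivity argument is not a proof: locally analytic principal series of $\G$ can have nontrivial subrepresentations, and ``a direct comparison against Breuil's construction'' is exactly the thing you would have to carry out. In the paper, injectivity and surjectivity of $\iota$ are obtained simultaneously from the commutative square $(4.1)$: once $\mathcal{F}_{\mathrm{an}}$ is known to factor through $(\Pi(V)_{\mathrm{an}})^\ast$, the composite $\mathcal{F}_{\mathrm{an}}\circ i_{\alpha,\beta}^\ast$ equals the known isomorphism coming from $\mathcal{F}$ and Proposition 1.17, and $\mathcal{F}_{\mathrm{an}}$ is clearly injective, so $i_{\alpha,\beta}^\ast$ is bijective. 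Second, the passage from algebraic to topological isomorphism is not the open mapping theorem for compact-type spaces (which is not available in general); the paper uses the fact that a bijective morphism of coadmissible $D(H,L)$-modules is automatically a homeomorphism for the canonical topologies. If you want to salvage your argument you should replace that step and, more importantly, supply a proof of the orthogonality statement, since that is where all the hard work is.
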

The main result of this paper is the following theorem.
\begin{theorem}(Theorem 4.1)
Conjecture $0.2$ is true.
\end{theorem}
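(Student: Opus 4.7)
The plan is to pass, via duality, to Colmez's $\boxtimes\mathbf{P}^1$ formalism and exploit the two triangulations of $\D_\rig^\dagger(V)$ coming from the two refinements associated with $\alpha$ and $\beta$. By the work of Berger-Breuil recalled in the introduction (applicable here because $V$ is non-exceptional when $\alpha\neq\beta$), $\mathrm{B}(V)^*$ is identified with $\mathrm{D}(\check V)^\natural\boxtimes\mathbf{P}^1$ as a Banach $\G$-representation, so its dual on the locally analytic side should be controlled by $\D_\rig^\dagger(\check V)$ paired with $\mathbf{P}^1$ in a suitable analytic analogue of Colmez's construction.

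The first step is to verify that the natural map is injective. Both $A(\alpha)$ and $A(\beta)$ contain, as their locally algebraic socle, isomorphic copies of $\pi(\beta)$ (for $\alpha\neq\beta$ these are intertwined by the smooth intertwining operator), and by construction the two composites $\pi(\beta) \to A(\alpha) \to \mathrm{B}(V)_{\mathrm{an}}$ and $\pi(\beta) \to A(\beta) \to \mathrm{B}(V)_{\mathrm{an}}$ agree. Injectivity then amounts to showing that the images of $A(\alpha)$ and $A(\beta)$ inside $\mathrm{B}(V)_{\mathrm{an}}$ meet only along $\pi(\beta)$; the intersection is a closed locally analytic subrepresentation contained in each principal series, and via the socle structure of $A(\alpha)$ and $A(\beta)$ (whose socle, for $\alpha\neq\beta$, is exactly the locally algebraic part) one forces the intersection to be locally algebraic.

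The second and main step is surjectivity. I would apply the analytic $\boxtimes\mathbf{P}^1$ construction to each of the two triangulations
\[
0\longrightarrow\r_L(\delta_1)\longrightarrow\D_\rig^\dagger(V)\longrightarrow\r_L(\delta_2)\longrightarrow0
\]
(one for each ordering of $\alpha,\beta$). Each triangulation should, by functoriality, produce a short exact sequence of admissible locally analytic $\G$-representations, realizing either $A(\alpha)$ or $A(\beta)$ as a closed subrepresentation of $\mathrm{B}(V)_{\mathrm{an}}$ with the expected locally analytic principal series as quotient. One then verifies that the images of $A(\alpha)$ and $A(\beta)$ together exhaust $\mathrm{B}(V)_{\mathrm{an}}$: any $v\in\mathrm{B}(V)_{\mathrm{an}}$ whose class in $\mathrm{B}(V)_{\mathrm{an}}/A(\alpha)$ is non-zero is detected by the quotient of the first exact sequence, and the second triangulation exhibits a lift inside $A(\beta)$, so $v\in A(\alpha)+A(\beta)$. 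Combined with injectivity and the universal property of amalgamation, this gives set-theoretic bijectivity; the open mapping theorem then upgrades it to a topological isomorphism, since $\mathrm{B}(V)_{\mathrm{an}}$ is of compact type (because $\mathrm{B}(V)$ is admissible unitary) and the amalgamated sum is of compact type as a closed quotient of a direct sum of compact-type spaces.

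The main obstacle is the surjectivity step — specifically, establishing the \emph{analytic} refinement of the $\boxtimes\mathbf{P}^1$ isomorphism, so that passing to locally analytic vectors commutes appropriately with the $\natural$-construction and with the involution $w=\w$ that glues the two halves of $\mathbf{P}^1$, and so that triangulations of $\D_\rig^\dagger(\check V)$ translate into the expected short exact sequences of locally analytic $\G$-representations. The two halves of $\mathbf{P}^1$ carry data from induction from the upper and the lower Borel, and the two triangulations feed into the two halves and must be glued consistently across $w$; controlling this gluing together with the Robba-ring variant of Colmez's $\natural$-construction is the technical heart of the argument.
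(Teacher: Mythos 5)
Your proposal identifies the right ingredients — duality, Colmez's $\boxtimes\mathbf{P}^1$ formalism, and the identification $(\Pi(D)_{\mathrm{an}})^\ast=D^\natural_{\rig}\boxtimes\mathbf{P}^1$ — but it does not constitute a proof, and you essentially say so yourself: the surjectivity step, which you call ``the technical heart,'' is left as an unproved assertion. Concretely, you claim that each of the two triangulations of $\D_\rig^\dagger(V)$ ``should, by functoriality, produce a short exact sequence of admissible locally analytic $\G$-representations, realizing either $A(\alpha)$ or $A(\beta)$ as a closed subrepresentation of $\mathrm{B}(V)_{\mathrm{an}}$.'' But that is essentially equivalent to Theorem 4.1 itself: in the paper the short exact sequence $(5.1)$ is \emph{deduced from} Theorem 4.1 (using the exact sequence for locally analytic principal series from \cite{ST01}), not used to prove it. No functoriality of this kind was available at the time; postulating it is circular. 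Moreover, even granting those two exact sequences, the step ``the second triangulation exhibits a lift inside $A(\beta)$, so $v\in A(\alpha)+A(\beta)$'' requires knowing that $A(\beta)\to\mathrm{B}(V)_{\mathrm{an}}/A(\alpha)$ is surjective, which again begs the conclusion.

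The paper's actual route is quite different and much more hands-on. It constructs an explicit extension $\mathcal{F}_{\mathrm{an}}$ of Berger--Breuil's map $\mathcal{F}$, using the Amice transform, the Wach-module description $\mathrm{M}(V_{\alpha,\beta})$, the intertwining operator $I$, and the normalization constant $C(\alpha_p,\beta_p)$; it verifies via Lemmas 4.2--4.5 that $\mathcal{F}_{\mathrm{an}}$ really lands in $\D_\rig^\dagger(\check V_{\alpha,\beta})\boxtimes\mathbf{P}^1$ and is $\G$-equivariant. The decisive step, which is entirely absent from your proposal, is Proposition 4.9: a residue computation (Lemma 4.7 plus the Gauss-sum evaluation from Lemma 1.6) together with a Hahn--Banach argument invoking Theorem 3.18 shows that the image of $\mathcal{F}_{\mathrm{an}}$ is orthogonal to $\D^\natural(V_{\alpha,\beta})\boxtimes\mathbf{P}^1$, hence lies in $\D^\natural_{\rig}(\check V_{\alpha,\beta})\boxtimes\mathbf{P}^1=(\mathrm{B}(V)_{\mathrm{an}})^\ast$. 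Once that is in place, both injectivity and surjectivity of $i_{\alpha,\beta}^\ast$ follow formally from Schneider--Teitelbaum's Proposition 1.17 and the injectivity of $\mathcal{F}_{\mathrm{an}}$ (clear from construction), with no appeal to socle theory of locally analytic principal series (your injectivity argument, whose socle claims would themselves need justification, is not needed). So the gap is: you have sketched the shape of the desired answer but not supplied the explicit construction of $\mathcal{F}_{\mathrm{an}}$ nor the orthogonality computation, which together carry the entire weight of the proof.
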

Our proof of Theorem 0.3 largely relies on Colmez's identification of the locally analytic vectors of $\Pi(V)$. In fact, Colmez showed that if $D$ is a rank 2, irreducible and \'etale $\m$-module over $\r_L$, then $(\check{\Pi}(D)_{\mathrm{an}})^\ast=D^\natural_{\rig}\boxtimes\mathbf{P}^1$ (\cite{C08}). To apply his result, we will construct a $\G$-equivariant commutative diagram
\begin{equation}
\xymatrix{
(B(\alpha)/L(\alpha))^\ast\ar[d] \ar[r]& \D^\natural(\check{V}_{\alpha,\beta})\boxtimes\mathbf{P}^1\ar[d] \\
 (A(\alpha)\oplus_{\pi(\beta)}A(\beta))^\ast \ar[r]&\D^\natural_\rig(\check{V}_{\alpha,\beta})\boxtimes\mathbf{P}^1,}
\end{equation}
where the upper vertical line is the natural isomorphism $(0.2)$. Then Theorem 0.3 follows easily from $(0.3)$. As an application of Theorem 0.3, we finally prove Conjecture 0.1 for those $V\in\mathscr{S}^{\mathrm{cris}}_*$ which are not exceptional.
\begin{corollary}(Corollary 5.7)
Conjecture $0.1$ is true when $V\in\mathscr{S}^{\mathrm{cris}}_*$ is not exceptional.
\end{corollary}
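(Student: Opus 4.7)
The plan is to combine Theorem 0.3 with an explicit computation of the $\mathrm{T}(\Q)$-eigencharacters in the Jacquet module of the amalgamated sum $A(\alpha)\oplus_{\pi(\beta)}A(\beta)$, and to match the output against the refinements of the crystabelian representation $V$.

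First I would reduce to the case where $V$ is itself (rather than merely a twist of) an irreducible crystabelian representation with Hodge--Tate weights $(0,k-1)$. Both sides of the conjectural equality transform in a matching way under twisting $V$ by a continuous character of $G_{\Q}$, which on the automorphic side corresponds to twisting $\mathrm{B}(V)_{\mathrm{an}}$ by a character through $\det$, so this reduction is harmless. Under this normalisation the non-exceptional hypothesis translates, via the classification recalled in the introduction, into the statement that the associated pair $(\alpha,\beta)$ of smooth characters of $\Q^{\times}$ satisfies $\alpha\neq\beta$, which is precisely the hypothesis of Theorem 0.3.

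I would then count refinements: a refinement of $V$ corresponds to an ordered triangulation of $\D_\rig^\dagger(V)$, and since $\alpha\neq\beta$ there are exactly two such triangulations, giving two distinct pairs $\sigma(R_1),\sigma(R_2)\in\mathrm{Hom}_{\mathrm{cont}}(\Q^{\times},L^{\times})^2$ that are explicitly computable from $(\alpha,\beta)$ and $k$. Hence $\dim\mathrm{Ref}^{\eta\otimes\psi}(V)=1$ precisely when $(\eta,\psi)\in\{\sigma(R_1),\sigma(R_2)\}$, and is $0$ otherwise. For the right-hand side, Theorem 0.3 yields the short exact sequence
\[
0\longrightarrow\pi(\beta)\longrightarrow A(\alpha)\oplus A(\beta)\longrightarrow\mathrm{B}(V)_{\mathrm{an}}\longrightarrow 0.
\]
Applying Emerton's left-exact Jacquet functor $J_{\mathrm{B}(\Q)}$ and invoking the standard description of Jacquet modules of locally analytic principal series of $\G$ (each $A(\chi)$ contributes two one-dimensional $\mathrm{T}(\Q)$-eigenlines, one from each Bruhat cell), I would read off the full list of eigencharacters of $J_{\mathrm{B}(\Q)}(\mathrm{B}(V)_{\mathrm{an}})$ and then of its twist by $x|x|\circ\det$, and verify the numerical identity eigencharacter by eigencharacter against the refinements of the previous step.

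The hard part will be tracking the Jacquet functor through the amalgamated sum: applying $J_{\mathrm{B}(\Q)}$ to the displayed short exact sequence yields only a left-exact sequence, so I must identify the image of $J_{\mathrm{B}(\Q)}(\pi(\beta))$ inside $J_{\mathrm{B}(\Q)}(A(\alpha))\oplus J_{\mathrm{B}(\Q)}(A(\beta))$ precisely and rule out any unexpected cokernel contributions from derived functors. Once this is controlled, the four Bruhat-cell contributions from $A(\alpha)$ and $A(\beta)$ collapse to exactly two surviving $\mathrm{T}(\Q)$-eigenlines in $J_{\mathrm{B}(\Q)}(\mathrm{B}(V)_{\mathrm{an}})$, which after the prescribed twist correspond bijectively to $\sigma(R_1)$ and $\sigma(R_2)$, completing the verification.
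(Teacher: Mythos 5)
Your overall strategy matches the paper's: reduce to $V_{\alpha,\beta}$, count the two refinements $R_\alpha,R_\beta$ and compute $\sigma(R_\alpha)=x^{k-1}\beta\otimes\alpha$, $\sigma(R_\beta)=x^{k-1}\alpha\otimes\beta$ (the paper cites \cite[Proposition 4.2.4]{E06} for the fact that these are the only two), and compare with the Jacquet module of $\mathrm{B}(V_{\alpha,\beta})_{\mathrm{an}}$. But your plan for computing $J_{\mathrm{B}(\Q)}(\mathrm{B}(V_{\alpha,\beta})_{\mathrm{an}})$ has a genuine gap, and it is precisely the point you flag but do not resolve. You apply $J_{\mathrm{B}(\Q)}$ to the presentation $0\ra\pi(\beta)\ra A(\alpha)\oplus A(\beta)\ra\mathrm{B}(V)_{\mathrm{an}}\ra 0$, in which the unknown object sits as the \emph{quotient}. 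Left exactness of the Jacquet functor then gives exactness of $0\ra J(\pi(\beta))\ra J(A(\alpha))\oplus J(A(\beta))\ra J(\mathrm{B}(V)_{\mathrm{an}})$ at the first two terms only; it pins down the image of the last arrow but says nothing about its cokernel. So you obtain a lower bound (a $2$-dimensional subspace with the expected eigencharacters) but no upper bound, and the conjectural equality could still fail if $J(\mathrm{B}(V)_{\mathrm{an}})$ had an extra eigenline invisible in the amalgam. Your parenthetical assertion that each locally analytic principal series contributes exactly two eigenlines is also not uniformly true: in the paper, $J_{\mathrm{B}(\Q)}((\Ind^{\G}_{\mathrm{B}(\Q)}x^{k-1}\beta\otimes\alpha(x|x|)^{-1})^{\mathrm{an}})$ is only $1$-dimensional, which matters for what follows.

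The paper sidesteps the cokernel problem by choosing a different presentation. Using the short exact sequence $0\ra\pi(\beta)\ra A(\beta)\ra(\Ind^{\G}_{\mathrm{B}(\Q)}x^{k-1}\beta\otimes\alpha(x|x|)^{-1})^{\mathrm{an}}\ra 0$ and Theorem 4.1, one rewrites the amalgam so that $\mathrm{B}(V_{\alpha,\beta})_{\mathrm{an}}$ sits in the \emph{middle}: $0\ra A(\alpha)\ra\mathrm{B}(V_{\alpha,\beta})_{\mathrm{an}}\ra(\Ind^{\G}_{\mathrm{B}(\Q)}x^{k-1}\beta\otimes\alpha(x|x|)^{-1})^{\mathrm{an}}\ra 0$. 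Now left exactness of $J_{\mathrm{B}(\Q)}$ gives $J(A(\alpha))\hookrightarrow J(\mathrm{B}(V_{\alpha,\beta})_{\mathrm{an}})\ra J((\Ind\dots)^{\mathrm{an}})$, so the entire computation reduces to showing the last map is zero. This is where the real input comes in: if it were nonzero it would be onto the $1$-dimensional target, the eigencharacter $x^{-1}\alpha\otimes x^{k-1}\beta|x|^{-1}$ would occur in $J(\mathrm{B}(V_{\alpha,\beta})_{\mathrm{an}})$, and Emerton's adjunction theorem (\cite[Theorem 5.2.5]{E06}) would produce a map $(\Ind^{\G}_{\mathrm{B}(\Q)}x^{k-1}\beta\otimes\alpha(x|x|)^{-1})^{\mathrm{an}}\ra\mathrm{B}(V_{\alpha,\beta})_{\mathrm{an}}$ splitting the sequence, contradicting the non-splitting established in \cite[Lemma 6.7.4]{E06}. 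Your plan would need an analogous non-splitting or adjunction argument to control the cokernel, and as written it does not supply one.
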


\section*{Notation and conventions}
Throughout this paper, we fix a finite extension $L$ over $\Q$ to be the coefficient field. Let $\val$ denote the $p$-adic valuation on $\overline{\mathbb{Q}}_p$, normalized by $\val(p)=1$; the corresponding norm is denoted by $|x|$. Let $\alpha,\beta$ denote a pair of smooth characters $\alpha,\beta:\Q^\times\ra L^\times$ such that $-(k-1)<\val(\alpha(p))\leq\val(\beta(p))<0$ and $\val(\alpha(p))+\val(\beta(p))=k-1$ for an integer $k\geq2$. Let $\alpha_p,\beta_p$ denote $\alpha(p)^{-1},\beta(p)^{-1}$ respectively. For any smooth character $\tau:\Z^\times\ra \OO_L^\times$, we let $n(\tau)$ denote the conductor of $\tau$. If $n(\tau)=0$, then we say $\tau$ is unramified. Otherwise, we say $\tau$ is ramified.

As usual, let $\chi$ denote the cyclotomic character. For any $m\geq0$, let $\mu_{p^m}$ denote the set of $p^m$-th roots of unity in $\overline{\mathbb{Q}}_p$, and we use $\eta_{p^m}$ to denote a primitive $p^m$-th root of unity. Following Fontaine's notations of $p$-adic Hodge theory, we suppose $\varepsilon=[(\eps^{(m)})_{m\geq0}]\in W(R)$, where $(\eps^{(m)})_{m\geq0}$ is a compatible sequence of primitive $p^m$-th roots of unity such that $(\eps^{(m+1)})^p=\eps^{(m)}$. For any $y\in\Q$, if $y\in p^{-m}\Z$ for some $m\in\mathbb{Z}$, then we set $e^{2\pi iy}=(\eps^{(m)})^{p^my}$, which is independent of the choice of $m$. Put $F_{m}=\Q(\mu_{p^m})$ and $L_m=L\otimes_{\Q}F_{m}$. Let $F_{\infty}=\cup_{m\geq0} F_{m}$, and let $\Gamma=\mathrm{Gal}(F_\infty/\Q)$. The Galois group $\Gamma$ is isomorphic to $\Z^\times$ via the $p$-adic cyclotomic character $\chi$. For any $m\geq1$ and $p>2$ (resp. $p=2$), we set $\Gamma_m=\chi^{-1}(1+p^m\Z)$ (resp. $\Gamma_m=\chi^{-1}(1+p^{m+1}\Z)$).
If $\tau:\Gamma\ra\OO_L^\times$ is a smooth character and if $n(\tau)=m$, then for any $\eta_{p^m}$, we define
\[
G(\tau,\eta_{p^m})=\sum_{\gamma\in\Gamma/\Gamma_m}\tau^{-1}(\gamma)\gamma(\eta_{p^m})\in L_m.
\]
We set $G(\tau)=G(\tau,\eps^{(m)})$.

Let $\mathrm{W}_{\Q}$ denote the Weil group of $\Q$. The local Artin map induces a topological isomorphism $\Q^\times\cong\mathrm{W}_{\Q}^\mathrm{ab}$, which we normalize by identifying $p$ with a lift of $\mathrm{Frob}^{-1}_p$ (i.e. geometric Frobenius). This allow us to identify the set of characters of $\Q^\times$ with the set of characters of $\mathrm{W}_{\Q}^\mathrm{ab}$. For any integer $n$, we write $x^n$ to denote the character defined by $x\mapsto x^n$. If $c\in L^\times$, we let $\mathrm{ur}(c):\Q^\times\ra L^\times$ denote the character that maps $p$ to $c$ and is trivial on $\Z^\times$. If we regard $\chi$ as a character of $\Q^\times$ via the local Artin map, then it is equal to $x|x|$.

Let $\mathrm{B}$ denote the subgroup of upper triangular matrices of $\mathrm{GL}_2$, and let $\mathrm{T}$ denote the subgroup of diagonal matrices of $\mathrm{GL}_2$.
\section{Irreducible crystabelian representations of $\G$}
In this section, we will study some locally algebraic representations $\pi(\alpha)$ (resp. $\pi(\beta)$) of $\G$ and their universal unitary completions $B(\alpha)/L(\alpha)$ (resp. $B(\beta)/L(\beta)$). These representations were first introduced by Breuil in the context of his $p$-adic local Langlands program of $\G$. The terminology ``irreducible crystabelian representations of $\G$'' refers to the unitary admissible representations of $\G$ which correspond to $2$-dimensional irreducible crystabelian representations of $G_{\Q}$ via the $p$-adic local Langlands correspondence for $\G$. In fact, as will be explained in subsection 3.1, $B(\alpha)/L(\alpha)$ is the unitary admissible representation assigned to certain $2$-dimensional irreducible crystabelian representation $V_{\alpha,\beta}$ of $G_{\Q}$ by the $p$-adic local Langlands correspondence for $\G$. Hence $B(\alpha)/L(\alpha)$ are examples of the irreducible crystabelian representations of $\G$. Furthermore, we will see in subsection 2.1 that the set of $2$-dimensional irreducible crystabelian representations of $G_{\Q}$ consists of the representations $V_{\alpha,\beta}(n)$ for all the pairs $(\alpha,\beta)$ and $n\in\mathbb{Z}$. It follows that the set of irreducible crystabelian representation of $\G$ consists of the representations $B(\alpha)/L(\alpha)\otimes(x|x|\circ\det)^n$ for all $\alpha$ and $n\in\mathbb{Z}$. This fact explains the title of this section.
\subsection{Some locally algebraic representations of $\G$}
Following the notations of \cite{BB06}, we define the locally algebraic representations $\pi(\alpha)$ and $\pi(\beta)$ as
\[
\pi(\alpha)=(\Ind^{\G}_{\mathrm{B}(\Q)}\alpha\otimes x^{k-2}\beta|x|^{-1})^{\mathrm{lalg}}\cong\mathrm{Sym}^{k-2}L^2\otimes_L(\Ind^{\G}_{\mathrm{B}(\Q)}\alpha\otimes\beta|x|^{-1})^{\mathrm{sm}}
\]
\[
\pi(\beta)=(\Ind^{\G}_{\mathrm{B}(\Q)}\beta\otimes x^{k-2}\alpha|x|^{-1})^{\mathrm{lalg}}\cong\mathrm{Sym}^{k-2}L^2\otimes_L(\Ind^{\G}_{\mathrm{B}(\Q)}\beta\otimes\alpha|x|^{-1})^{\mathrm{sm}}.
\]
We equip $\pi(\alpha)$ (resp. $\pi(\beta)$) with the unique locally convex topology such that the open sets are the lattices (a lattice of an $L$-vector space $V$ is an $\OO_L$-submodule which generates $V$ over $L$) of $\pi(\alpha)$ (resp. $\pi(\beta)$).

For any $F\in\pi(\alpha)$, we put
$f(z)=F(\left(
\begin{smallmatrix}
 0&1 \\
 -1&z
\end{smallmatrix}
\right))$. The map $F\mapsto f$ identifies $\pi(\alpha)$ with the set of functions $f:\Q\ra L$ which are locally polynomials with coefficients in $L$ and degree $\leq k-2$ such that $\beta\alpha^{-1}(z)|z|^{-1}f(1/z)|_{\Z-\{0\}}$ extend to elements of $\mathrm{Pol}^{k-2}(\Z,L)$ (the set of functions $f:\Z\ra L$ which are locally polynomials with coefficients in $L$ and degree $\leq k-2$). Under this identification, the action of $\G$ is given by the formula
\begin{equation}
\left(
\begin{matrix}
 a&b \\
 c&d
\end{matrix}
\right)\cdot f(z)=\alpha(ad-bc)\beta\alpha^{-1}(-cz+a)|-cz+a|^{-1}(-cz+a)^{k-2}f(\frac{dz-b}{-cz+a}).
\end{equation}
Exchanging $\alpha$ and $\beta$, we get the similar description of $\pi(\beta)$.

\subsection{Unitary completions}
To introduce a few general definitions concerning $L$-Banach space representations, let $K$ be an intermediate field of $L/\Q$, and let $G$ be a locally $K$-analytic group such as the $K$-points of an algebraic group (in this paper, $K=\Q$ and $G=\G$).
\begin{definition}
An \emph{$L$-Banach space representation} $U$ of $G$ is an $L$-Banach space $U$ together with an action of $G$ such that $G\times U\ra U$ is continuous. An $L$-Banach space representation $U$ is called \emph{unitary} if the topology of $U$ may be defined by a $G$-invariant norm.
\end{definition}
\begin{definition}
Let $V$ be a locally convex topological $L$-vector space equipped with a continuous $G$-action, and let $U$ be a unitary $L$-Banach space representation of $G$. We say that a given continuous $L$-linear $G$-equivariant map $V\ra U$ realizes $U$ as a \emph{universal unitary completion} of $V$ if any continuous $L$-linear $G$-equivariant map $V\ra W$, where $W$ is a unitary $L$-Banach space representation of $G$, factors uniquely through the given map $V\ra U$.
\end{definition}
The following is devoted to the constructions of the universal unitary completions of $\pi(\alpha)$ and $\pi(\beta)$, which is due to Berger and Breuil. See \cite{BB06} for more details. Let
\[
B(\alpha)=(\Ind^{\G}_{\mathrm{B}(\Q)}\alpha\otimes x^{k-2}\beta|x|^{-1})^{\mathcal{C}^{\val(\alpha_p)}}
\]
and
\[
B(\beta)=(\Ind^{\G}_{\mathrm{B}(\Q)}\beta\otimes x^{k-2}\alpha|x|^{-1})^{\mathcal{C}^{\val(\beta_p)}}.
\]
For any $F\in B(\alpha)$, set
$f(z)=F(\left(
\begin{smallmatrix}
 0&1 \\
 -1&z
\end{smallmatrix}
\right))$.
In this way, we identify $B(\alpha)$ with the $L$-vector space of functions $f:\Q\ra L$ satisfying the following two conditions:
\begin{enumerate}
\item[(1)]$f|_{\Z}$ is a $\mathcal{C}^{\val(\alpha_p)}$-function;
\item[(2)]$(\beta\alpha^{-1})(z)^{-1}|z|z^{k-2}f(1/z)|_{\Z-\{0\}}$ can be extended to a $\mathcal{C}^{\val(\alpha_p)}$ function on $\Z$.
\end{enumerate}
The action of $\G$ is given by the same formula as in $(1.1)$. By this identification, we can write
\[
B(\alpha)\cong\mathcal{C}^{\val(\alpha_p)}(\Z,L)\oplus\mathcal{C}^{\val(\alpha_p)}(\Z,L), f\mapsto f_1\oplus f_2,
\]
where $f_1(z)=f(pz)|_{\Z}$ and $f_2(z)$ is the extension of $(\beta\alpha^{-1})(z)^{-1}z^{k-2}f(1/z)|_{\Z-\{0\}}$ to $\Z$. The resulting $L$-Banach space structure of $B(\alpha)$ is defined by the norm
\[
\|f\|=\max(\|f_1\|_{\val(\alpha_p)},\|f_2\|_{\val(\alpha_p)}).
\]
It is not difficult to show that $\G$ acts on $B(\alpha)$ by continuous automorphisms with respect to this norm (\cite[lemme 4.2.1]{BB06}), and the natural $\G$-equivariant inclusion $\pi_{\alpha}\hookrightarrow B(\alpha)$ is continuous. Let $L(\alpha)\subset B(\alpha)$ denote the closure of the $L$-subspace generated by $z^j$ and $(\beta\alpha^{-1})(z-a)|z-a|^{-1}(z-a)^{k-2-j}$
for all $a\in\Q$ and integers $j$ that $0\leq j<\val(\alpha_p)$ (the fact that $z^j$ and $(\beta\alpha^{-1})(z-a)|z-a|^{-1}(z-a)^{k-2-j}$ are contained in $B(\alpha)$ is proved in \cite[Lemme 4.2.2]{BB06}). It is stable under the action of $\G$ by \cite[lemme 4.2.3]{BB06}.

Exchanging $\alpha$ and $\beta$, we get the similar description of $B(\beta)$, and we set $L(\beta)$ as the closure of the $L$-subspace generated by $z^j$ and $(\alpha\beta^{-1})(z-a)|z-a|^{-1}(z-a)^{k-2-j}$ for all $a\in\Q$ and integers $j$ that $0\leq j<\val(\beta_p)$.
\begin{proposition}(\cite[Th\'eor\`{e}me 4.3.1]{BB06})
The continuous $\G$-equivariant map $\pi(\alpha)\ra B(\alpha)/L(\alpha)$ realizes $B(\alpha)/L(\alpha)$ as the universal unitary completion of $\pi(\alpha)$. The same result holds if we replace $\alpha$ by $\beta$.
\end{proposition}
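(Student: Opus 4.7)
The plan is to verify the two requirements of a universal unitary completion: that the image of $\pi(\alpha)$ in $B(\alpha)/L(\alpha)$ is dense (forcing uniqueness of any continuous equivariant extension), and that an arbitrary continuous $\G$-equivariant map $\phi:\pi(\alpha)\to W$ to a unitary Banach $\G$-representation $W$ admits a continuous extension through the quotient map. Granting these, the factorization of $\phi$ through $B(\alpha)/L(\alpha)$ gives the universal property; the $\beta$-case is then handled by swapping $\alpha$ and $\beta$ everywhere.

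\textbf{Density.} Via the big-cell realizations already exhibited in the text, $\pi(\alpha)$ corresponds to pairs of locally polynomial functions of degree $\leq k-2$ sitting inside $B(\alpha)\cong\mathcal{C}^{\val(\alpha_p)}(\Z,L)^{\oplus 2}$. Locally polynomial functions of bounded degree are dense in $\mathcal{C}^r(\Z,L)$ by the Mahler/Amice theory of $p$-adic $\mathcal{C}^r$-functions, so $\pi(\alpha)$ is dense in $B(\alpha)$, hence in $B(\alpha)/L(\alpha)$.

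\textbf{Killing $L(\alpha)$.} Fix $\phi$ as above. For $0\leq j<\val(\alpha_p)$, formula (1.1) applied to the diagonal matrix $\mathrm{diag}(1,p^n)$ yields $\mathrm{diag}(1,p^n)\cdot z^j=\alpha(p)^n p^{nj}z^j$, a scalar whose absolute value is $p^{-n(\val(\alpha(p))+j)}$ and therefore tends to $\infty$ because $j<-\val(\alpha(p))=\val(\alpha_p)$. Since the orbit of any vector in a unitary representation is bounded in norm, $\phi(z^j)=0$. The Weyl element $w=\w$ exchanges the two Bruhat cells and the roles of $\alpha$ and $\beta$, converting the second family of generators $(\beta\alpha^{-1})(z-a)|z-a|^{-1}(z-a)^{k-2-j}$ into monomials $z^j$ in the dual model, so the same diagonal-scaling argument annihilates them. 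Hence $\phi$ factors through $\pi(\alpha)/(\pi(\alpha)\cap L(\alpha))$.

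\textbf{Norm comparison — the main obstacle.} The core of the proof is to show that $\phi$, viewed now on the quotient $\pi(\alpha)/(\pi(\alpha)\cap L(\alpha))$, is continuous for the Banach norm inherited from $B(\alpha)/L(\alpha)$; the density above then produces the desired continuous extension $B(\alpha)/L(\alpha)\to W$ by completion. Equivalently, the open $\G$-stable $\OO_L$-lattice $\mathcal{L}=\phi^{-1}(\{\|\cdot\|_W\leq 1\})$ in the quotient must be contained in a scalar multiple of the unit ball of $B(\alpha)/L(\alpha)$. My plan is to fix a Mahler-type orthonormal basis of $\mathcal{C}^{\val(\alpha_p)}(\Z,L)$ of the form $p^{\lceil\val(\alpha_p)n\rceil}\binom{z}{n}$ and show that each basis vector can be produced from a fixed locally polynomial starting vector by a finite combination of Iwahori translates and diagonal scalings $\mathrm{diag}(p^{\pm m},1)$, $\mathrm{diag}(1,p^{\pm m})$. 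The $\G$-invariance of the norm on $W$ then bounds $\|\phi(\text{basis vector})\|_W$ uniformly in terms of a single constant. The reason the exponent $\val(\alpha_p)$ is sharp here is that it exactly balances the scaling weights of $\alpha$ and $\beta$ in (1.1): any smaller regularity would force further vanishing of orbits via the step-(2) argument, while any larger regularity would automatically dominate it. Carrying out the required quantitative estimate is the technical heart of the proof; once done, $\phi$ extends continuously to $B(\alpha)/L(\alpha)$, completing the verification of the universal property.
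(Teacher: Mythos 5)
The paper does not prove this proposition itself; it is quoted directly from Berger--Breuil \cite[Th\'eor\`eme 4.3.1]{BB06}, whose proof occupies a significant portion of their sections 4.2--4.3. So the relevant comparison is between your sketch and their argument. Your overall framework (density of $\pi(\alpha)$ in $B(\alpha)/L(\alpha)$, vanishing of $\phi$ on $L(\alpha)$, and a norm comparison giving continuity for the quotient Banach norm) is a reasonable skeleton, and the heuristic that the exponent $\val(\alpha_p)$ is exactly the threshold at which the diagonal scalings in (1.1) balance is the correct guiding intuition. However, there are two genuine problems.

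First, your ``Killing $L(\alpha)$'' step is not available at the point where you invoke it. The functions $z^j$ for $0\leq j<\val(\alpha_p)$ are elements of $B(\alpha)$ (this is the content of \cite[Lemme 4.2.2]{BB06}), but in general they are \emph{not} elements of $\pi(\alpha)$: the reflection $(\beta\alpha^{-1})(z)|z|^{-1}z^{k-2-j}$ only extends to a $\mathcal{C}^{\val(\alpha_p)}$-function on $\Z$, not to a locally polynomial function of degree $\leq k-2$, since its ``coefficient'' on the shell $p^m\Z^\times$ grows/decays like $(\beta(p)p/\alpha(p))^m$. Therefore $\phi(z^j)$ is not defined for $\phi:\pi(\alpha)\to W$, and the unbounded-orbit argument cannot be applied directly. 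It would apply to an extension $\bar\phi:B(\alpha)\to W$ of $\phi$ --- but constructing that extension is precisely what your step 3 is supposed to accomplish. As written, the logical order is inverted: the vanishing on $L(\alpha)$ must come \emph{after} (or be built into) the norm estimate, not before it.

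Second, and more seriously, step 3 --- which you correctly identify as ``the technical heart'' --- is not carried out. The statement that every normalized Mahler basis vector $p^{\lceil\val(\alpha_p)n\rceil}\binom{z}{n}$ of $\mathcal{C}^{\val(\alpha_p)}(\Z,L)$ can be produced, with uniformly bounded coefficients, from a single locally polynomial vector in $\pi(\alpha)$ by finitely many Iwahori translates and diagonal scalings is asserted but nowhere proved, and it is not obvious: for $n>k-2$ the function $\binom{z}{n}$ does not lie in $\pi(\alpha)$ at all, so one must work with approximants and control their gauge uniformly in $n$. This uniform quantitative estimate is exactly where the difficulty of \cite[Th\'eor\`eme 4.3.1]{BB06} resides; Berger--Breuil establish it through a sequence of careful $\mathcal{C}^r$-norm computations comparing an arbitrary separated $\G$-invariant lattice in $\pi(\alpha)$ with the unit ball of $B(\alpha)/L(\alpha)$. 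Without this step, the proposal is an outline of what a proof might look like rather than a proof.
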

\subsection{Intertwining operators}
Recall that there exists, up to multiplication of a nonzero scalar, a unique nonzero $\G$-equivariant morphism
\[
I^{\mathrm{sm}}:(\Ind^{\G}_{\mathrm{B}(\Q)}\beta\otimes\alpha|x|^{-1})^{\mathrm{sm}}\ra
(\Ind^{\G}_{\mathrm{B}(\Q)}\alpha\otimes\beta|x|^{-1})^{\mathrm{sm}}
\]
defined by (in terms of locally constant functions on $\Q$)
\begin{equation}
\begin{split}
I^{\mathrm{sm}}(h)(z)=\int_{\Q}(\beta\alpha^{-1})(x-z)|x-z|^{-1}h(x)dx,
\end{split}
\end{equation}
where $dx$ is the Haar measure on $\Q$. Tensoring with the identity map on $\mathrm{Sym}^{k-2}L^2$, we get a nonzero $\G$-equivariant morphism $I:\pi(\beta)\ra\pi(\alpha)$. It is well-known that $I^{\mathrm{sm}}$ is a nontrivial isomorphism if $\alpha\neq\beta,\beta|x|$, and is the
identity if $\alpha=\beta$ (see \cite{Bum98}).

\begin{proposition}
We have the following commutative $\G$-equivariant diagram
\[
\xymatrix{
\pi(\beta)\ar[d] \ar^{I}[r]& \pi(\alpha)\ar[d] \\
 B(\beta)/L(\beta) \ar^{\widehat{I}}[r]&B(\alpha)/L(\alpha),}
\]
where $\widehat{I}$ is the continuous $\G$-morphism induced from $I$. In case $\alpha\neq\beta|x|$, $I$ and $\widehat{I}$ are isomorphisms.
\end{proposition}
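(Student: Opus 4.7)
The strategy is to construct $\widehat{I}$ by invoking the universal property of Proposition 1.3, and then, when $\alpha\ne\beta|x|$, to run the same construction in the opposite direction and check that the two maps are mutually inverse using the uniqueness clause of that universal property.

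First I would note that $I:\pi(\beta)\to\pi(\alpha)$ is automatically continuous: by the definition given in Section 1.1, the topology on $\pi(\beta)$ is the finest locally convex topology (its neighborhoods of $0$ are exactly the lattices), so every $L$-linear map out of $\pi(\beta)$ is continuous. Hence the composition $\pi(\beta)\xrightarrow{I}\pi(\alpha)\hookrightarrow B(\alpha)/L(\alpha)$ is a continuous, $\G$-equivariant $L$-linear map into a unitary $L$-Banach representation. Applying Proposition 1.3 to the universal completion $\pi(\beta)\to B(\beta)/L(\beta)$ yields the required unique continuous $\G$-equivariant extension $\widehat{I}:B(\beta)/L(\beta)\to B(\alpha)/L(\alpha)$, and commutativity of the displayed diagram is built into this construction.

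Now assume $\alpha\ne\beta|x|$. If $\alpha=\beta$ then $\pi(\alpha)=\pi(\beta)$, $B(\alpha)/L(\alpha)=B(\beta)/L(\beta)$, $I$ is the identity, and by uniqueness in Proposition 1.3 so is $\widehat{I}$. In the remaining case $\alpha\ne\beta$ and $\alpha\ne\beta|x|$, the classical criterion recalled after (1.2) says that $I^{\mathrm{sm}}$ is a nontrivial isomorphism, and therefore so is $I=\mathrm{id}_{\mathrm{Sym}^{k-2}L^2}\otimes I^{\mathrm{sm}}$. Set $J:=I^{-1}:\pi(\alpha)\to\pi(\beta)$ and apply the same construction to produce a continuous $\G$-equivariant map $\widehat{J}:B(\alpha)/L(\alpha)\to B(\beta)/L(\beta)$ extending $\pi(\alpha)\xrightarrow{J}\pi(\beta)\hookrightarrow B(\beta)/L(\beta)$.

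To conclude, I would check that $\widehat{J}$ is a two-sided inverse to $\widehat{I}$ by invoking uniqueness in Proposition 1.3 once more: the endomorphism $\widehat{I}\circ\widehat{J}$ of $B(\alpha)/L(\alpha)$ extends the composition $\pi(\alpha)\xrightarrow{J}\pi(\beta)\xrightarrow{I}\pi(\alpha)\hookrightarrow B(\alpha)/L(\alpha)$, i.e.\ the canonical inclusion, and $\mathrm{id}_{B(\alpha)/L(\alpha)}$ is another such extension, so the two agree. The symmetric argument gives $\widehat{J}\circ\widehat{I}=\mathrm{id}$, so $\widehat{I}$ is an isomorphism. There is no genuine obstacle in this proposition: everything is formal once one combines Proposition 1.3 with the standard isomorphism criterion for smooth intertwining operators; the hypothesis $\alpha\ne\beta|x|$ is used exactly to ensure that $I$ itself is invertible (together with the trivial sub-case $\alpha=\beta$).
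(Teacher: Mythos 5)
Your proof is correct and takes essentially the same approach as the paper: the paper's one-line proof invokes "the functoriality of universal unitary completions and the fact that $I$ is an isomorphism in case $\alpha\neq\beta|x|$," which is exactly the argument you spell out in detail (universal property of Proposition 1.3 to construct $\widehat I$ and, when $I$ is invertible, its inverse $\widehat J$, with uniqueness giving $\widehat I\circ\widehat J=\mathrm{id}$ and $\widehat J\circ\widehat I=\mathrm{id}$).
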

\begin{proof}
This follows from the functoriality of universal unitary completions and the fact that $I$ is an isomorphism in case $\alpha\neq\beta|x|$.
\end{proof}

Now suppose $\alpha=\beta|x|$; in particular, $\val(\alpha_p)=\frac{k-2}{2}$. The operator $I^{\mathrm{sm}}$ induces the following two exact sequences of $\G$-representations
\begin{equation}
0\longrightarrow \beta\circ\det\longrightarrow (\Ind^{\G}_{\mathrm{B}(\Q)}\beta\otimes\alpha|x|^{-1})^{\mathrm{sm}}\stackrel{I^{\mathrm{sm}}}\longrightarrow (\beta\circ\det)\otimes_L\mathrm{St}\longrightarrow 0
\end{equation}
and
\begin{equation}
0\longrightarrow (\beta\circ\det)\otimes_L\mathrm{St}\longrightarrow
(\Ind^{\G}_{\mathrm{B}(\Q)}\alpha\otimes\beta|x|^{-1})^{\mathrm{sm}}\longrightarrow \beta\circ\det\longrightarrow 0,
\end{equation}
where $\mathrm{St}=(\Ind^{\G}_{\mathrm{B}(\Q)}1)^{\mathrm{sm}}/1$ is the Steinberg representation of $\G$. Thus $I$ induces the following two exact sequences of $\G$-representations
\begin{equation}
0\longrightarrow(\beta\circ\det)\otimes_L\mathrm{Sym}^{k-2}L^2\longrightarrow \pi(\beta)\stackrel{I}\longrightarrow ((\beta\circ\det)\otimes_L\mathrm{Sym}^{k-2}L^2)\otimes_L\mathrm{St}\longrightarrow 0
\end{equation}
and
\begin{equation}
0\longrightarrow  ((\beta\circ\det)\otimes_L\mathrm{Sym}^{k-2}L^2)\otimes_L\mathrm{St}\longrightarrow\pi(\alpha)\longrightarrow (\beta\circ\det)\otimes_L\mathrm{Sym}^{k-2}L^2\longrightarrow 0.
\end{equation}
For $\widehat{I}$, let $K(\beta)\subset B(\beta)$ be the closure of the $L$-subspace generated by $L(\beta)$ and the functions $f:\Q\ra L$ of the form
\begin{equation}
f(z)=\sum_{j\in J}\lambda_j(z-z_j)^{n_j}\val(z-z_j),
\end{equation}
where $J$ is a finite set, $\lambda_j\in L$, $z_i\in\Q$, $n_j\in\{\lfloor\frac{k-2}{2}\rfloor+1,\dots,k-2\}$ and $\deg(\sum_{j\in J}\lambda_j(z-z_j)^{n_j})<\frac{k-2}{2}$ (by \cite[Lemma 5.4.1]{BB06} the functions of the form $(1.7)$ are contained in $B(\beta)$, so $K(\beta)$ is well-defined).
\begin{proposition}(\cite[Proposition 5.4.2]{BB06})
We have the $\G$-equivariant short exact sequence of Banach spaces
\[
0\longrightarrow K(\beta)/L(\beta)
\longrightarrow B(\beta)/L(\beta)\stackrel{\widehat{I}}\longrightarrow B(\alpha)/L(\alpha)\longrightarrow 0.
\]
Thus $\widehat{I}$ induces an isomorphism from $B(\beta)/K(\beta)$ to $B(\alpha)/L(\alpha)$.
\end{proposition}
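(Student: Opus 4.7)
The plan is to prove the proposition in three movements: first, establish that $\widehat{I}$ kills $K(\beta)$ modulo $L(\alpha)$; second, show surjectivity of $\widehat{I}$ onto $B(\alpha)/L(\alpha)$; third, pin down the kernel by showing that any element annihilated by $\widehat{I}$ must already lie in $K(\beta)$. Together with the open mapping theorem for Banach spaces this will yield the topological isomorphism $B(\beta)/K(\beta)\cong B(\alpha)/L(\alpha)$.

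For the first step, I would take a function $f$ of the form $(1.7)$, write it as $f(z)=g(z)\val(z-z_0)$ locally near each pole $z_j$ using partition-of-unity style cutoffs, and then evaluate the image $\widehat{I}(f)$ using the integral formula $(1.2)$, which in the present case $\alpha=\beta|x|$ simplifies to $I^{\mathrm{sm}}(h)(z)=\int_{\Qp}|x-z|^{-1}h(x)\,dx$. The degree constraint $\deg(\sum_j\lambda_j(z-z_j)^{n_j})<(k-2)/2$ together with $n_j>(k-2)/2$ is designed precisely so that the distributional integral collapses onto the finite-dimensional span of the polynomials $z^i$ with $0\leq i<\val(\alpha_p)=(k-2)/2$, and these are by definition generators of $L(\alpha)$. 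One must verify this by a direct estimate on the Colmez-style integral, checking that the logarithmic singularities cancel in pairs due to the degree restriction.

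For the second step, I would invoke Proposition $1.3$, which says that $\pi(\alpha)$ has dense image in $B(\alpha)/L(\alpha)$. The image of $\widehat{I}$ is closed in $B(\alpha)/L(\alpha)$ (since it is the closure of $I(\pi(\beta))$ modulo $L(\alpha)$), so it suffices to check that the image contains $\pi(\alpha)$ modulo $L(\alpha)$. By the exact sequence $(1.6)$, the image of $I:\pi(\beta)\to\pi(\alpha)$ is precisely the subrepresentation $((\beta\circ\det)\otimes\mathrm{Sym}^{k-2}L^2)\otimes_L\mathrm{St}$, so we only need to verify that the remaining quotient $(\beta\circ\det)\otimes\mathrm{Sym}^{k-2}L^2$ lifts into $L(\alpha)$. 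This finite-dimensional representation is generated as a $\G$-module by the polynomial functions $z^j$ for $0\leq j\leq k-2$; a direct computation shows that representatives can be chosen inside $L(\alpha)$ using the generators $(z-a)^{k-2-j}|z-a|^{-2}$ with $j<(k-2)/2$ together with the $z^j$'s, handing us surjectivity.

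The main obstacle, and the delicate third step, is showing that nothing beyond $K(\beta)$ is killed by $\widehat{I}$. I would argue by dualizing: identify $B(\beta)/L(\beta)$ and $B(\alpha)/L(\alpha)$ with suitable spaces of distributions or measures on $\Qp$, in which case $\widehat{I}$ becomes given by a convolution kernel, and the assertion becomes a statement that the cokernel of $\widehat{I}^*$ is concentrated on the distributional representatives paired to $K(\beta)/L(\beta)$. Concretely, this amounts to showing that if $\widehat{I}(f)\in L(\alpha)$ for some $f\in B(\beta)$, then subtracting off an explicit finite sum of logarithmic terms of the form $(1.7)$ produces an element of $L(\beta)$. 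The hard part is controlling the norms: since $\widehat{I}$ is not injective, the inverse of the quotient map $B(\beta)/K(\beta)\to B(\alpha)/L(\alpha)$ must be shown bounded, and this is where a Mahler-expansion analysis of the $\mathcal{C}^{\val(\beta_p)}$-functions entering into $B(\beta)$, parallel to the construction of $K(\beta)$ as the span of logarithmic functions with the specific degree bound, is required. Once this identification is made, the open mapping theorem immediately upgrades the continuous bijection $B(\beta)/K(\beta)\to B(\alpha)/L(\alpha)$ to a topological isomorphism, completing the proof.
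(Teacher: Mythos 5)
The paper does not reprove this proposition; it simply cites \cite[Proposition 5.4.2]{BB06}, so there is no in-paper argument to compare against. Judged on its own terms, your outline has the right shape but contains a logical gap in the surjectivity step that cannot be repaired independently of the norm estimate you defer to the end.

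The gap is in step 2. You assert that the image of $\widehat{I}$ is closed ``since it is the closure of $I(\pi(\beta))$ modulo $L(\alpha)$.'' This is not a valid deduction: for a bounded operator $T\colon X\to Y$ between Banach spaces and a dense $D\subseteq X$, one only has $T(X)\subseteq\overline{T(D)}$, and $T(X)$ need not be closed (nor equal to that closure). Closedness of the range here is equivalent to a lower bound of the form $\|f+K(\beta)\|\le C\|\widehat{I}(f)+L(\alpha)\|$ on the quotient, i.e.\ precisely the Mahler-coefficient estimate you postpone to step 3. So steps 2 and 3 cannot be decoupled the way you have arranged them; without the bound you cannot conclude surjectivity from denseness, and without surjectivity and a precise kernel the open-mapping argument at the end has nothing to act on. A correct write-up must either establish the lower bound first and derive closed range, surjectivity, and the topological isomorphism all at once, or else construct explicit preimages for a dense family rather than appealing to density of $\pi(\alpha)$.

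There is also a substantive claim buried in step 2 that is not justified. After identifying the image of $I$ with the Steinberg subrepresentation of $\pi(\alpha)$ via $(1.5)$--$(1.6)$, you want to say that the complementary quotient, spanned by the $z^j$ for $0\le j\le k-2$, dies in $B(\alpha)/L(\alpha)$. But by definition $L(\alpha)$ only contains $z^j$ for $0\le j<\val(\alpha_p)=(k-2)/2$; for $j\ge (k-2)/2$ one must actually prove a congruence $z^j\equiv I(h_j)\pmod{L(\alpha)}$ for some $h_j\in\pi(\beta)$, or exhibit $z^j$ as a limit in $B(\alpha)$ of elements of $I(\pi(\beta))+L(\alpha)$. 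Invoking ``a direct computation'' without indicating how the high-degree polynomials are reached is a second genuine gap. The plan to verify $\widehat{I}(K(\beta))\subseteq L(\alpha)$ by a residue/degree argument on $I^{\mathrm{sm}}$ in the case $\alpha=\beta|x|$ (so $\beta\alpha^{-1}(x)|x|^{-1}=|x|^{-2}$) is plausible and in the right spirit, as is the final open-mapping-theorem step \emph{once} the continuous bijection is in hand; but as written the argument does not close.
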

In the rest of this section, we will compute $I^{\mathrm{sm}}(1_{p^{n}\Z}\cdot e^{2\pi ixy})$ for any $n\in\mathbb{Z}$ and $y\in\Q^{\times}$, which will be used later. To do the computation, we set $\(m(V))=\sup(n(\beta\alpha^{-1}),1)$, and
\[
C(\alpha_p,\beta_p)=\left\{
\begin{array}{l}
(\frac{\beta_p}{p\alpha_p})^{m(\alpha,\beta)},\text{if $\beta\alpha^{-1}$ is ramified;}\\
\frac{1-\beta_p/p\alpha_p}{1-\alpha_p/\beta_p}, \text{if $\beta\alpha^{-1}$ is unramified}.
\end{array}
\right.
\]
For the main results of this paper, we need the computation in the cases $n=0,1$ and $\val(y)\leq-\(m(V))-1$ only.
\begin{lemma}
For $n\in\mathbb{Z}$, we have
\[
I^{\mathrm{sm}}(1_{p^{n}\Z}\cdot e^{2\pi ixy})
=C(\alpha_p,\beta_p)(\frac{\beta_p}{\alpha_p})^{\val(y)}G(\beta^{-1}\alpha, e^{\frac{2\pi iy}{p^{\(m(V))+\val(y)}}})1_{p^{n}\Z}\cdot e^{2\pi izy},
\]
if $n+\val(y)\leq-\(m(V))$.
\end{lemma}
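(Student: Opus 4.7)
The plan is to evaluate the defining integral for $I^{\mathrm{sm}}(1_{p^n\Z}\cdot e^{2\pi ixy})(z)$ directly, handling the cases $z\in p^n\Z$ and $z\notin p^n\Z$ separately. For $z\in p^n\Z$ the substitution $w = x-z$ gives $1_{p^n\Z}(z+w) = 1_{p^n\Z}(w)$, so the integral factors out $e^{2\pi izy}$ and reduces to $\int_{p^n\Z}(\beta\alpha^{-1})(w)|w|^{-1}e^{2\pi iwy}\,dw$. Decomposing $p^n\Z = \bigsqcup_{k\geq n}p^k\Z^\times$ and putting $w = p^ku$ with $u\in\Z^\times$ (using $(\beta\alpha^{-1})(p) = \alpha_p/\beta_p$ together with $dw = p^{-k}du$), I would rewrite the integral as $\sum_{k\geq n}(\alpha_p/\beta_p)^k S_k$, where
\[
S_k := \int_{\Z^\times}(\beta\alpha^{-1})(u)e^{2\pi ip^kuy}\,du
\]
is a partial Gauss sum depending only on the pair $(k,\val(y))$.

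Next I would evaluate $S_k$ by partitioning $\Z^\times$ into cosets of $1+p^m\Z$ with $m = m(\alpha,\beta)$: on each coset the character $\beta\alpha^{-1}$ is constant, while $u\mapsto e^{2\pi ip^kuy}$ is constant precisely when $k+m+\val(y)\geq 0$. Orthogonality of characters then gives two scenarios. When $\beta\alpha^{-1}$ is ramified (so $m = n(\beta\alpha^{-1})\geq 1$), only the single index $k = -m-\val(y)$ contributes, with value $p^{-m}G(\beta^{-1}\alpha, e^{2\pi iy/p^{m+\val(y)}})$; combining with $(\alpha_p/\beta_p)^{-m-\val(y)}$ produces $(\beta_p/p\alpha_p)^m(\beta_p/\alpha_p)^{\val(y)}G(\beta^{-1}\alpha,\cdot)$, which matches $C(\alpha_p,\beta_p)$ in the ramified case. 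When $\beta\alpha^{-1}$ is unramified (so $m = 1$), contributions survive both at $k = -1-\val(y)$ (where $S_k = p^{-1}G = -p^{-1}$) and at every $k\geq -\val(y)$ (where $S_k = 1-p^{-1}$); the convergent geometric series $\sum_{k\geq -\val(y)}(\alpha_p/\beta_p)^k$, whose convergence follows from $\val(\alpha_p)\geq\val(\beta_p)>0$ (a consequence of the hypothesis $-(k-1)<\val(\alpha(p))\leq\val(\beta(p))<0$), together with the singular term at $k = -1-\val(y)$, combines by a direct rational-function manipulation into $C(\alpha_p,\beta_p)(\beta_p/\alpha_p)^{\val(y)}G(\beta^{-1}\alpha,\cdot)$ with $C(\alpha_p,\beta_p) = (1-\beta_p/p\alpha_p)/(1-\alpha_p/\beta_p)$, as required.

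For the remaining case $z\notin p^n\Z$, setting $v := \val(z) < n$ and $\delta := \beta\alpha^{-1}$, one has $\val(x-z) = v$ constant on $p^n\Z$ and $\delta(x-z) = \delta(-z)\delta(1-x/z)$ with $x/z\in p\Z$. If $v\leq n-m$ then $\delta(1-x/z) = 1$ throughout $p^n\Z$ and the integral collapses to a multiple of $\int_{p^n\Z}e^{2\pi ixy}\,dx$, which vanishes because $n+\val(y)\leq -m <0$. If $v>n-m$ then $\delta(1-x/z)$ is $p^{m+v}\Z$-periodic on $p^n\Z$, partitioning the domain into cosets of $p^{m+v}\Z$; on each coset the integrand is a constant times $e^{2\pi ixy}$, and the combined inequalities $v<n$ and $n+\val(y)\leq -m$ give $m+v+\val(y)<0$, so $e^{2\pi ixy}$ remains nontrivial on $p^{m+v}\Z$ and each sub-integral vanishes. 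The main obstacle of the proof is the unramified case bookkeeping: correctly separating the singular contribution at $k = -1-\val(y)$ from the geometric series at $k\geq -\val(y)$ and verifying the rational identity that produces the precise form of $C(\alpha_p,\beta_p)$; the ramified case, by contrast, is a clean single-term identity.
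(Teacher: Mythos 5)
Your argument is correct and follows essentially the same route as the paper's proof: for $z\in p^n\Z$ factor out $e^{2\pi izy}$, decompose $p^n\Z$ into annuli $p^k\Z^\times$, reduce each annulus integral to a Gauss-type sum over $(\mathbb{Z}/p^m\mathbb{Z})^\times$ by expanding into cosets of $p^m\Z$, and treat the ramified and unramified cases separately, the latter by combining a single boundary term with a geometric series to produce $C(\alpha_p,\beta_p)$. You additionally verify that the image vanishes off $p^n\Z$, a point the paper leaves implicit; the only caveat is that your justification of the geometric series' convergence via $\val(\alpha_p)\geq\val(\beta_p)>0$ really requires the strict inequality $\val(\alpha_p)>\val(\beta_p)$ (otherwise $|\alpha_p/\beta_p|=1$), though the paper silently passes over the same point.
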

\begin{proof}
For $z\in p^n\Z$, we have
\begin{equation}
\begin{split}
I^{\mathrm{sm}}(1_{p^n\Z}\cdot e^{2\pi ixy})(z)&=\int_{p^n\Z}\beta\alpha^{-1}(x-z)|x-z|^{-1}e^{2\pi ixy}dx\\
&=e^{2\pi izy}\int_{p^n\Z}\beta\alpha^{-1}(x)|x|^{-1}e^{2\pi ixy}dx\\
&=e^{2\pi izy}\sum_{l=n}^{\infty}p^l\int_{p^l\Z^\times}(\beta\alpha^{-1})(x)e^{2\pi ixy}dx.
\end{split}
\end{equation}
If we let $S_{m}\subset\Z^{\times}$ be a system of representatives of $(\mathbb{Z}/p^{m}\mathbb{Z})^{\times}$ for any $m\geq1$, then we get
\begin{equation}
\begin{split}
\int_{p^l\Z^\times}(\beta\alpha^{-1})(x)e^{2\pi ixy}dx&=\sum_{a\in S_{\(m(V))}}\int_{p^la+p^{l+\(m(V))}\Z}(\beta\alpha^{-1})(p^{l}a)e^{2\pi ixy}dx\\
&=(\frac{\alpha_p}{\beta_p})^l\sum_{a\in S_{\(m(V))}}(\beta\alpha^{-1})(a)\int_{p^la+p^{l+\(m(V))}\Z}e^{2\pi ixy}dx\\
&=\left\{\begin{array}{ll}
       p^{-l-\(m(V))}(\frac{\alpha_p}{\beta_p})^l\sum_{a\in S_{\(m(V))}}(\beta\alpha^{-1})(a)e^{2\pi ip^lay},\\
     \text{if $l+\(m(V))\geq-\val(y)$};\\
         0,$ $\text{if $l+\(m(V))<-\val(y)$}.
      \end{array}
       \right.
\end{split}
\end{equation}
Since $n+\val(y)\leq-m(\alpha,\beta)$, it follows from $(1.8)$ that
\begin{equation}
\begin{split}
I^{\mathrm{sm}}(1_{p^n\Z}\cdot e^{2\pi ixy})(z)&=e^{2\pi izy}\sum_{l=-m(\alpha,\beta)-\val(y)}^{\infty} p^{-\(m(V))}(\frac{\alpha_p}{\beta_p})^l\sum_{a\in S_{\(m(V))}}(\beta\alpha^{-1})(a)e^{2\pi ip^lay}
\end{split}
\end{equation}
We treat the case when $\beta\alpha^{-1}$ is ramified firstly. If $l+\(m(V))\geq-\val(y)$, then we set $m=\max\{-l-\val(y),0\}<\(m(V))$, and we have
\begin{equation}
\begin{split}
\sum_{a\in S_{m(\alpha,\beta)}}(\beta\alpha^{-1})(a)e^{2\pi ip^lay}&=\sum_{b\in S_{m}}e^{2\pi ip^lby}(\sum_{a\in S_{\(m(V))},a\equiv b(\mathrm{mod}p^{m}\Z)}(\beta\alpha^{-1})(a))\\
&=\left\{
\begin{array}{ll}
G(\beta^{-1}\alpha, e^{\frac{2\pi iy}{p^{\(m(V))+\val(y)}}}),\\
\text{if $l+\(m(V))=-\val(y)$};\\
0,$ $\text{if $l+\(m(V))>-\val(y)$}.
\end{array}
\right.
\end{split}
\end{equation}
Hence by $(1.10)$, $I^{\mathrm{sm}}(1_{p^n\Z}\cdot e^{2\pi ixy})(z)$ is equal to
\[
p^{-\(m(V))}(\frac{\beta_p}{\alpha_p})^{\(m(V))+\val(y)}G(\beta^{-1}\alpha, e^{\frac{2\pi iy}{p^{m(V)+\val(y)}}})e^{2\pi izy}=C(\alpha_p,\beta_p)G(\beta^{-1}\alpha, e^{\frac{2\pi iy}{p^{m(V)+\val(y)}}})e^{2\pi izy}
\]
when $\beta\alpha^{-1}$ is ramified. If $\beta\alpha^{-1}$ is unramified, then we have
\[
 \sum_{a\in S_{m(\alpha,\beta)}}(\beta\alpha^{-1})(a)e^{2\pi ip^lay}=p-1
\]
if $l+1>-\val(y)$, and
\[
\sum_{a\in S_{m(\alpha,\beta)}}(\beta\alpha^{-1})(a)e^{2\pi ip^lay}=-1
\]
if $l+1=-\val(y)$. So in this case $I^{\mathrm{sm}}(1_{p^n\Z}\cdot e^{2\pi ixy})(z)$ is equal to
\begin{equation}
\begin{split}
(-\frac{1}{p}(\frac{\alpha_p}{\beta_p})^{-\val(y)-1}+\frac{p-1}{p}\sum_{l=-\val(y)}^{\infty}(\frac{\alpha_p}{\beta_p})^l)e^{2\pi izy}&=\frac{1-\beta_p/p\alpha_p}{1-\alpha_p/\beta_p}(\frac{\beta_p}{\alpha_p})^{\val(y)}e^{2\pi izy}\\
&=C(\alpha_p,\beta_p)G(\beta^{-1}\alpha, e^{\frac{2\pi iy}{p^{m(V)+\val(y)}}})e^{2\pi izy}
\end{split}
\end{equation}
since $G(\beta^{-1}\alpha, e^{\frac{2\pi iy}{p^{m(V)+\val(y)}}})=1$ when $\beta\alpha^{-1}$ is unramified.
\end{proof}
\begin{remark}
The above Lemma is singled out from the proof of \cite[Lemme 5.1.2]{BB06}.
\end{remark}
\subsection{Locally analytic representations}
In this subsection we collect some of the basic notions and facts concerning the theory of locally analytic representations of $p$-adic analytic groups, which will be used in the rest of this paper. In most of the cases, we follow the notations used by Schneider and Teitelbaum. For more details, we refer readers to their fundamental papers \cite{ST02}, \cite{ST02b} and \cite{ST03}.

Throughout this subsection we let $U$ denote an $L$-Banach space representation of $G$.

\begin{definition}
In case when $G$ is compact, an $L$-Banach space representation $U$ is called \emph{admissible} if there is a $G$-invariant bounded open $\OO_L$-submodule $M$ of $U$ such that, for any open normal subgroup $H$ of $G$, the $\OO_L$-module $(U/M)^H$ is of cofinite type. If $G$ is not compact, we call $U$ \emph{admissible} if it is admissible as a representation for one (or equivalently any) compact open subgroup of $G$.
\end{definition}
For compact $G$, the dual of the $L$-valued continuous functions on $G$ is isomorphic to $\Lambda[[G]]:=L\otimes_{\Z}\Z[[G]]$, the \emph{Iwasawa algebra of measures}. The $G$-action on $U$ extends naturally to an action of the algebra $\Lambda[[G]]$ by continuous linear endomorphisms on $U$. By functoriality, $\Lambda[[G]]$ also acts on the continuous dual $U^\ast$ of $U$. Then $U$ is admissible if and only if $U^\ast$ is finitely generated as a $\Lambda[[G]]$-module (\cite[Lemma 3.4]{ST02b}).
\begin{definition}
A \emph{locally analytic $G$-representation} $W$ over $L$ is a barrelled locally convex Hausdorff $L$-vector space $W$ equipped with a $G$-action by continuous linear endomorphisms such that, for each $v\in V$, the orbit map $g\mapsto g\cdot v$ is a $W$-valued locally analytic function on $G$.
\end{definition}

Let $A$ be an $L$-Fr\'echet algebra. For a continuous seminorm $q$ on $A$, it induces a norm on the quotient space $A/\{a\in A:q(a)=0\}$. Let $A_q$ denote the completion of the latter with respect to $q$. For any two continuous seminorms $q'\leq q$ the identity on $A$ extends to a continuous linear map $\phi^{q'}_q: A_q\ra A_{q'}$.
\begin{definition}
The $L$-Fr\'echet algebra $A$ is called an \emph{$L$-Fr\'echet-Stein algebra} if there is a sequence $q_1\leq\dots\leq q_n\leq\dots$ of continuous seminorms on $A$ which define the Fr\'echet topology such that for any $n\in \mathbb{N}$, we have
\begin{enumerate}
\item[(1)] $A_{q_n}$ is left noetherian;
\item[(2)] $A_{q_n}$ is flat as a right $A_{q_{n+1}}$-module via $\phi^{q_n}_{q_{n+1}}$.
\end{enumerate}

\end{definition}
We fix an $L$-Fr\'echet-Stein algebra $A$ and a sequence $(q_n)_{n\in\mathbb{N}}$ as in the above definition.
\begin{definition}
A \emph{coherent sheaf} for $(A,(q_n))$ is a family $(M_n)_{n\in\mathbb{N}}$ where each $M_n$ is a finitely generated $A_{q_n}$-module together with isomorphisms $A_{q_n}\otimes_{A_{q_{n+1}}}M_{n+1}\cong M_n$ as $A_{q_n}$-modules for any $n\in\mathbb{N}$. The global sections of $(M_n)_n$ is defined by
\[
\Gamma((M_n)_n):=\lim_{\longleftarrow n}M_n.
\]
\end{definition}

\begin{definition}
A left $A$-module $M$ is called \emph{coadmissible} if it is isomorphic to the module of global sections of some coherent sheaf $(M_n)_n$ for $(A,(q_n))$. Each $M_n$ carries its canonical Banach space topology as a finitely generated $A_{q_n}$-module. We equip $M$ with the projective limit topology which makes $M$ into an $L$-Fr\'echet space. We call this topology the \emph{canonical topology} of $M$.
\end{definition}
\begin{remark}
A simple cofinality argument shows that the canonical topology of a coadmissible module is independent of the choice of the sequence $(q_n)_n$.
\end{remark}
For compact $G$, let $D(G,L)$ denote the algebra of locally analytic distributions on $G$. This algebra is the continuous dual of the locally analytic $K$-valued functions on $G$, with multiplication given by convolution. For a locally analytic representation $W$ over $L$, the $G$ action extends naturally to an action of $D(G,L)$, yielding an action of $D(G,L)$ on $W^\ast$. The crucial property of $D(G,L)$ is that:

\begin{proposition}(\cite[Theorem 5.1]{ST03})
$D(G,L)$ is a Fr\'echet-Stein algebra.
\end{proposition}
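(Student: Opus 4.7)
The plan is to reduce the assertion to a uniform pro-$p$ subgroup of $G$, and then to build an explicit Fr\'echet-Stein structure on the resulting distribution algebra via a Mahler-type power series expansion.

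First, I would reduce to the uniform pro-$p$ case. Any compact $p$-adic analytic group $G$ contains an open uniform pro-$p$ subgroup $H$ of finite index, and $D(G,L)$ is free of finite rank on both sides over $D(H,L)$. Since finite free extensions preserve finite generation and flatness, the Fr\'echet-Stein property for $D(G,L)$ follows from that for $D(H,L)$ by a standard extension-of-scalars argument.

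Second, I would construct an explicit Fr\'echet-Stein structure on $D(H,L)$. Fix an ordered minimal topological generating set $h_1,\dots,h_d$ of $H$ with $d=\dim H$ and set $b_i=h_i-1\in\mathbb{Z}_p[[H]]$. Every $\lambda\in D(H,L)$ admits a unique expansion $\lambda=\sum_{\alpha\in\mathbb{Z}_{\geq 0}^d}c_\alpha b^\alpha$ with $c_\alpha\in L$ satisfying $|c_\alpha|r^{|\alpha|}\to 0$ for all $r<1$. For each $r\in(p^{-1},1)\cap p^{\mathbb{Q}}$, set $\|\lambda\|_r=\sup_\alpha|c_\alpha|r^{|\alpha|}$ and let $D_r(H,L)$ be the corresponding Banach completion. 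Submultiplicativity $\|\lambda\mu\|_r\leq\|\lambda\|_r\|\mu\|_r$ hinges on commutator estimates among the $b_i$ guaranteed by the uniform pro-$p$ hypothesis, and the Fr\'echet topology on $D(H,L)$ is recovered as the inverse limit of the $D_r(H,L)$ as $r\to 1^-$.

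Third, I would verify the two Fr\'echet-Stein axioms for the $D_r(H,L)$. Noetherianity comes from filtering $D_r(H,L)$ by powers of the ideal generated by the $b_i$: the associated graded ring is a (slightly twisted) polynomial ring in $d$ variables over a field, hence Noetherian, and classical filtered-ring theory lifts this to $D_r(H,L)$ itself. The main obstacle is flatness of the transition maps $D_r(H,L)\to D_{r'}(H,L)$ for $r'<r$. This cannot be read off directly from the norm description, because closures of ideals can behave badly under completion at distinct radii. I would handle it via a Rees-type construction, presenting the chain of completions as a microlocalization of a filtered Noetherian ring, and invoking a noncommutative Artin--Rees lemma to conclude that each transition map is a flat overring---this is the substantive technical point, and the place where the uniform pro-$p$ hypothesis really pays off.
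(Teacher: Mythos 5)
The paper offers no proof of this statement; it defers entirely to Schneider and Teitelbaum \cite[Theorem 5.1]{ST03}, and your sketch is a faithful outline of exactly that argument: reduction to a uniform open pro-$p$ subgroup $H$ via finite-freeness of $D(G,L)$ over $D(H,L)$, the coordinates $b_i=h_i-1$, the norms $\|\cdot\|_r$ with $p^{-1}<r<1$, and passage to associated graded rings. The one inaccuracy worth flagging is the filtration you use for Noetherianity: Schneider and Teitelbaum filter $D_r(H,L)$ by the norm itself, $F^s=\{\lambda:\|\lambda\|_r\leq p^{-s}\}$, not by powers of the two-sided ideal $(b_1,\dots,b_d)$. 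This matters because $D_r(H,L)$ is complete for the norm filtration but not, in general, for the $(b_i)$-adic one, and completeness is what the lifting lemma for Noetherianity requires; moreover the associated graded for the norm filtration is a commutative polynomial ring over the graded ring of $L$ (not over a field), with commutativity being where the uniform pro-$p$ hypothesis enters. With that substitution your outline matches the source, including the correct identification of flatness of the transition maps as the genuinely delicate technical point.
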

\begin{definition}
In case $G$ is compact, an \emph{admissible locally analytic $G$-representation} over $L$ is a locally analytic $G$-representation on an $L$-vector space of compact type $W$ such that the strong dual $W_b'$ is a coadmissible $D(G,L)$-module equipped with its canonical topology. For general $G$, a locally analytic $G$-representation over $L$ is called \emph{admissible} if it is admissible as an $H$-representation for one (or equivalently any) open compact subgroup $H$ of $G$.
\end{definition}

\begin{definition}
 A vector $u\in U$ is called \emph{locally analytic} if the continuous orbit map $g\mapsto g\cdot u$ is a $U$-valued locally analytic function on $G$. We denote by $U_{\mathrm{an}}$ the $L$-vector subspace of locally analytic vectors of $U$, and we equip $U_{\mathrm{an}}$ with the subspace topology.
\end{definition}
Since the locally analytic functions are a subspace of the continuous functions, there is a natural morphism $\Lambda[[G]]\ra D(G,L)$.

\begin{proposition}
If $U$ is an admissible $L$-Banach space representation, then $U_{\mathrm{an}}$ is an admissible locally analytic $G$-representation and $(U_{\mathrm{an}})^\ast_b\cong D(H,L)\otimes_{\Lambda[[H]]}U^\ast$ for any open compact subgroup $H$ of $G$.
\end{proposition}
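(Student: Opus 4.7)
The plan is to adapt the strategy of Schneider--Teitelbaum, who proved essentially this statement in their work on algebras of $p$-adic distributions. Since both conclusions are formulated with respect to a compact open subgroup $H$ of $G$, I may assume from the outset that $G$ itself is compact. Because $U$ is admissible, the criterion recalled just before Definition 1.3 tells us that $U^\ast$ is finitely generated as a $\Lambda[[G]]$-module.

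First I would construct the candidate coadmissible module. Set $M := D(G,L) \otimes_{\Lambda[[G]]} U^\ast$, formed via the natural homomorphism $\Lambda[[G]] \hookrightarrow D(G,L)$. Fix a defining sequence of continuous seminorms $q_1 \leq q_2 \leq \cdots$ exhibiting $D(G,L)$ as Fr\'echet--Stein (Proposition 1.10), chosen so that $\Lambda[[G]]$ embeds in each completion $D(G,L)_{q_n}$. Putting $M_n := D(G,L)_{q_n} \otimes_{\Lambda[[G]]} U^\ast$, each $M_n$ is finitely generated over $D(G,L)_{q_n}$, and the transition maps $D(G,L)_{q_n} \otimes_{D(G,L)_{q_{n+1}}} M_{n+1} \to M_n$ are isomorphisms by the flatness clause in Definition 1.6. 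Thus $(M_n)_n$ is a coherent sheaf whose global sections coincide with $M$, so $M$ is coadmissible in the sense of Definition 1.7.

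Next I would build a continuous $D(G,L)$-linear map $\Phi \colon M \to (U_\mathrm{an})^\ast_b$. For $u \in U_\mathrm{an}$, $\ell \in U^\ast$, and $\mu \in D(G,L)$, the orbit map $g \mapsto \ell(g\cdot u)$ lies in $C^\mathrm{la}(G,L)$, so $\langle \mu \otimes \ell, u\rangle := \mu\bigl(g \mapsto \ell(g\cdot u)\bigr)$ is well defined and $\Lambda[[G]]$-balanced, producing $\Phi$. To prove $\Phi$ is a topological isomorphism, I would filter $U_\mathrm{an}$ as a compact-type inductive limit $\varinjlim_n U_{H_n\text{-an}}$ over a cofinal sequence of analytic open subgroups $H_n$ of $G$, and identify $(U_{H_n\text{-an}})^\ast_b$ with $M_n$ for each $n$; passing to the projective limit then yields $(U_\mathrm{an})^\ast_b \cong M$, whence $U_\mathrm{an}$ is admissible and the asserted isomorphism in the proposition holds.

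The main obstacle is the level-wise identification $(U_{H_n\text{-an}})^\ast_b \cong M_n$. This amounts to matching the explicit analytic completion $D(G,L)_{q_n}$ of the Iwasawa algebra, built from $H_n$-analytic functions with prescribed radius of convergence, against the analytic orbit maps of vectors in the admissible Banach representation $U$. The finite generation of $U^\ast$ over $\Lambda[[G]]$ is essential here: it propagates the convergence estimates from $\Lambda[[G]]$ to $U^\ast$, guaranteeing that analytic distributions act sensibly on $U_\mathrm{an}$ and that the resulting pairing at each finite level is perfect. Once this perfection is established, the Fr\'echet--Stein formalism of Schneider--Teitelbaum takes over and delivers both the coadmissibility of $(U_\mathrm{an})^\ast_b$ and the claimed natural isomorphism.
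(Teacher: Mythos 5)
The paper does not prove this proposition itself; its entire proof is the citation ``See \cite[Theorem 7.1]{ST03},'' so what you are really being asked to do is reconstruct the Schneider--Teitelbaum argument. Your sketch does correctly reproduce their strategy: reduce to compact $G$, use that admissibility makes $U^\ast$ finitely generated over $\Lambda[[G]]$, form $M = D(G,L)\otimes_{\Lambda[[G]]}U^\ast$, exhibit it as the global sections of a coherent sheaf $(M_n)_n$, build the natural pairing $\Phi$, and then identify $M$ with $(U_{\mathrm{an}})^\ast_b$ by comparing level by level over a cofinal system of analytic subgroups. This is the right skeleton.

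There are, however, real gaps. First, you say the transition maps $D(G,L)_{q_n}\otimes_{D(G,L)_{q_{n+1}}}M_{n+1}\to M_n$ are isomorphisms ``by the flatness clause in Definition 1.6''; in fact they are isomorphisms for the trivial reason of associativity of the tensor product, $D_{q_n}\otimes_{D_{q_{n+1}}}\bigl(D_{q_{n+1}}\otimes_{\Lambda[[G]]}U^\ast\bigr)\cong D_{q_n}\otimes_{\Lambda[[G]]}U^\ast$, and flatness plays its role elsewhere. Second, the assertion that the global sections $\varprojlim_n M_n$ coincide with $M$ is not automatic: one needs a finite presentation of $U^\ast$ over the noetherian ring $\Lambda[[G]]$ and the exactness of the sections functor on coherent sheaves to compare the two right-exact sequences obtained by tensoring; you leave this unaddressed. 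Most importantly, the level-wise identification $(U_{H_n\text{-an}})^\ast_b\cong M_n$ --- which you yourself call ``the main obstacle'' --- is exactly where the finite generation of $U^\ast$ must be converted into convergence estimates for the action of analytic distributions, and it carries the entire analytic content of \cite[Theorem 7.1]{ST03}. Naming the obstacle does not dispose of it, so what you have written is a faithful outline of the cited theorem rather than a proof of it.
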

\begin{proof}
See \cite[Theorem 7.1]{ST03}.
\end{proof}
\begin{example}
The locally analytic principal series $A(\alpha)=(\Ind^{\G}_{\mathrm{B}(\Q)}\alpha\otimes x^{k-2}\beta|x|^{-1})^{\mathrm{an}}$ and $A(\beta)=(\Ind^{\G}_{\mathrm{B}(\Q)}\beta\otimes x^{k-2}\alpha|x|^{-1})^{\mathrm{an}}$ are admissible locally analytic representations.
As for $B(\alpha)$, for any $F\in A(\alpha)$, we associate $F$ with $f(z)=F(\left(
\begin{smallmatrix}
 0&1 \\
 -1&z
\end{smallmatrix}
\right))$. The map $F\mapsto f$ identifies $A(\alpha)$ with the $L$-vector space of functions $f:\Q\ra L$ satisfying the following two conditions:
\begin{enumerate}
\item[(1)]$f|_{\Z}$ is a locally analytic function;
\item[(2)]$(\beta\alpha^{-1})(z)^{-1}|z|z^{k-2}f(1/z)|_{\Z-\{0\}}$ extends to a locally analytic function on $\Z$.
\end{enumerate}
We make the similar identification of $A(\beta)$.
\end{example}
\section{Crystabelian representations of $G_{\Q}$}

This section is devoted to the study of (2-dimensional) crystabelian representations of $G_{\Q}$. To fix notation, recall that an $L$-linear (resp. $\OO_L$-) representation of $G_{\Q}$ is a finite dimensional $L$-vector space $V$ (resp. finite type $\OO_L$-modules $M$) equipped with a continuous linear action of $G_{\Q}$. Throughout this section, let $V$ be an $L$-linear representation of $G_{\Q}$, and let $M$ be a free $\OO_L$-representation.

\subsection{Classification of $2$-dimensional irreducible crystabelian representations of $G_{\Q}$}
\begin{definition}
We call an $L$-linear representation $V$ of $G_{\Q}$ \emph{crystabelian (crystalline abelian)} if there exists $n\geq0$ such that the restriction of $V$ to $G_{F_n}$ is crystalline, or in other words, $V$ becomes crystalline over an abelian extension of $\Q$. We then define $n(V)$ as the minimal integer $n\geq1$ such that the restriction of $V$ on $G_{F_n}$ is crystalline. We define $m(V)=\min_{\tau}n(V(\tau))$ where $\tau$ goes through all the finite order characters of $\Gamma$. We call $m(V)$ the \emph{essential conductor} of $V$.
\end{definition}
For $V$ crystabelian, we define
$\D_{\mathrm{cris}}(V)=\bigcup_{n=0}^\infty(\bcris\otimes_{\Q}V)^{G_{F_n}}=(\bcris\otimes_{\Q}V)^{G_{F_n}}$ for any $n\geq n(V)$, which is a weakly admissible filtered $(\varphi,G_{\Q})$-module over $L$.
If $F_{n(V)}\subset K\subset F_{\infty}$, then we have $K\otimes_{\Q}\D_{\rm{cris}}(V)=K\otimes_{\Q}\D_{\rm dR}(V)$.
Note that $G_{F_{n(V)}}$ acts trivially on $\D_{\mathrm{cris}}(V)$.

In the following, we will classify the set of $2$-dimensional irreducible crystabelian representations of $G_{\Q}$ with Hodge-Tate weights $(0,k-1)$ in terms of the weakly admissible $(\varphi,G_{\Q})$-modules $\D_{\mathrm{cris}}(V)$.
\begin{definition}
Let $D(\alpha,\beta)$ denote the filtered $(\varphi,G_{\Q})$-module over $L$ defined by $D(\alpha,\beta)=Le_\alpha\oplus Le_\beta$ and
\begin{enumerate}
\item[(i)]if $\alpha\neq\beta$, then $\varphi(e_\alpha)=\alpha(p)e_\alpha$, $\varphi(e_\beta)=\beta(p)e_\beta$ and $\gamma(e_\alpha)=\alpha(\chi(\gamma))e_\alpha$, $\gamma(e_\beta)=\beta(\chi(\gamma))e_\beta$ for $\gamma\in\Gamma$ and for $n\geq \max\{n(\alpha),n(\beta)\}$,
\[
\mathrm{Fil}^i(L_{n}\otimes_L D(\alpha,\beta))=\left\{
         \begin{array}{lll}
          L_n\otimes_LD(\alpha,\beta) & \text{if $i\leq-(k-1)$}; \\
          L_n\cdot(e_\alpha+G(\alpha\beta^{-1})e_\beta)  & \text{if $-(k-2)\leq i\leq0$}; \\
          0  & \text{if $i\geq1$}.
         \end{array}
       \right.
\]
\item[(ii)]if $\alpha=\beta$, then $\varphi(e_\alpha)=\alpha(p)e_\alpha$, $\varphi(e_\beta)=\beta(p)(e_\beta-e_\alpha)$ and $\gamma(e_\alpha)=\alpha(\chi(\gamma))e_\alpha$, $\gamma(e_\beta)=\beta(\chi(\gamma))e_\beta$ for $\gamma\in\Gamma$ and for $n\geq n(\alpha)$,
\[
\mathrm{Fil}^i(L_{n}\otimes_L D(\alpha,\beta))=\left\{
         \begin{array}{lll}
          L_n\otimes_LD(\alpha,\beta) & \text{if $i\leq-(k-1)$}; \\
          L_n\cdot e_\beta  & \text{if $-(k-2)\leq i\leq0$}; \\
          0  & \text{if $i\geq1$}.
         \end{array}
       \right.
\]
\end{enumerate}
\end{definition}

\begin{proposition}(\cite[Proposition 4.14]{C08b})
If $V$ is a 2-dimensional irreducible crystabelian representation of $G_{\Q}$ with Hodge-Tate weights $(0,k-1)$, then there exists a unique pair $(\alpha,\beta)$ such that $D(\alpha,\beta)=\D_{\rm cris}(V)$. Conversely, for any pair $(\alpha,\beta)$, there exists a unique two dimensional irreducible crystabelian representation $V$ of $G_{\Q}$ with Hodge-Tate weights $(0,k-1)$ such that $\D_{\rm cris}(V)=D(\alpha,\beta)$.
\end{proposition}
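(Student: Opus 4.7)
The plan is to invoke Fontaine's equivalence of categories between crystabelian $L$-linear representations of $G_{\Qp}$ of dimension two with Hodge--Tate weights $(0,k-1)$ and weakly admissible filtered $(\varphi, G_{\Qp})$-modules over $L$ of rank two whose $G_{\Qp}$-action factors through a finite abelian quotient of $\Gamma$, and then classify such filtered modules by direct bookkeeping. Given such a $V$ we set $D=\D_{\cris}(V)$, and given such a pair $(\alpha,\beta)$ we will produce $D(\alpha,\beta)$ as in Definition~2.2.

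For the direction $V \mapsto (\alpha,\beta)$, the $\Gamma$-action on $D$ factors through the finite abelian group $\Gamma/\Gamma_{n(V)}$, and (after enlarging $L$ to contain the relevant roots of unity, which is harmless since $\D_{\cris}$ commutes with scalar extension) $D$ decomposes into $\Gamma$-eigencomponents. Two subcases arise. In the first, the two $\Gamma$-characters $\chi_1,\chi_2$ are distinct; then $D=Le_\alpha\oplus Le_\beta$ as $\Gamma$-eigenspaces, and since $\varphi$ commutes with $\Gamma$ it preserves this decomposition. Declare $\alpha|_{\Zp^\times},\beta|_{\Zp^\times}$ to be $\chi_1,\chi_2$ (transported to $\Zp^\times$ via local class field theory) and $\alpha(p),\beta(p)$ to be the two $\varphi$-eigenvalues. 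In the second, $\Gamma$ acts through a single character on $D$; irreducibility of $V$ then forces $\varphi$ to be non-semisimple, because otherwise any $\varphi$-eigenline would be simultaneously $\varphi$- and $\Gamma$-stable and hence define a subobject in the Fontaine category. One thereby recovers a single character $\alpha=\beta$ on $\Zp^\times$ and a single $\varphi$-eigenvalue $\alpha(p)$.

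The filtration is a $\Gamma$-stable line in $L_n\otimes_L D$ (for $n$ sufficiently large, using the diagonal action). In the distinct-character case, writing this line as $L_n(e_\alpha+c\,e_\beta)$, $\Gamma$-stability translates into $\gamma(c)=(\alpha\beta^{-1})(\chi(\gamma))\,c$, which determines $c\in L_n$ up to $L^\times$; rescaling $e_\beta$ we may normalize $c=G(\alpha\beta^{-1})$ as in Definition~2.2(i). One must also rule out the degenerate possibility that the filtration line coincides with the eigenline $L_n e_\alpha$ or $L_n e_\beta$, which is where weak admissibility together with the bound $\val(\alpha(p))<k-1$ from the conventions is used, and which also requires $G(\alpha\beta^{-1})\neq 0$ (classical, since $\alpha\beta^{-1}$ restricts to a nontrivial character of $(\Zp/p^{m(\alpha,\beta)}\Zp)^\times$). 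In the coincident-character case, a similar analysis shows the line is defined over $L$, and irreducibility combined with the non-semisimplicity of $\varphi$ forces it to be the unique non-$\varphi$-stable line; choose $e_\beta$ to generate it and write $\varphi$ in Jordan form relative to an $e_\alpha$ spanning the $\varphi$-eigenline, recovering Definition~2.2(ii). Uniqueness of $(\alpha,\beta)$ is built into the construction.

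For the converse, given $(\alpha,\beta)$, form $D(\alpha,\beta)$ as in Definition~2.2. Weak admissibility is then straightforward: $t_H(D(\alpha,\beta))=k-1$ from the filtration jumps, while $t_N(D(\alpha,\beta))=\val(\alpha(p))+\val(\beta(p))=k-1$ by the running conventions; and the only candidate $(\varphi,\Gamma)$-stable rank-one sub in case~(i) is $Le_\alpha$ or $Le_\beta$, both of which inherit the trivial filtration (since the filtration line is neither, thanks to $G(\alpha\beta^{-1})\neq 0$), so that $t_H=0<\val(\alpha(p))=t_N$ on any such sub; in case~(ii) there is no $(\varphi,\Gamma)$-stable line, so weak admissibility of sub-objects is vacuous. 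Fontaine's theorem then yields a unique $V$ with $\D_{\cris}(V)=D(\alpha,\beta)$, and irreducibility follows from the above sub-object analysis. The main technical obstacle is the filtration-normalization step of paragraph three: identifying the correct $\Gamma$-transformation law of the coefficient $c$ and verifying non-vanishing of the Gauss sum so as to exclude the pathological case where the filtration degenerates onto an eigenline and destroys irreducibility.
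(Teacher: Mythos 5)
The paper does not prove this statement itself but cites Colmez's Proposition~4.14, so there is no internal argument to compare against; evaluating your attempt on its own terms, there is a genuine gap in the case analysis of your second paragraph. You conclude from irreducibility that $\varphi$ must be non-semisimple whenever $\Gamma$ acts on $\D_{\rm cris}(V)$ through a single character, on the grounds that a $\varphi$-eigenline would then be $\varphi$- and $\Gamma$-stable and ``hence define a subobject in the Fontaine category.'' But this does not contradict irreducibility of $V$: under Fontaine's equivalence, a $\varphi$- and $\Gamma$-stable line $\ell\subset\D_{\rm cris}(V)$ corresponds to a subrepresentation of $V$ only if $\ell$, with its induced filtration, is itself weakly admissible, i.e.\ only if $t_H(\ell)=t_N(\ell)$. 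Lines satisfying $t_H(\ell)<t_N(\ell)$ are entirely harmless; they are exactly what the sub-object inequality in the definition of weak admissibility of $\D_{\rm cris}(V)$ permits. Consequently the sub-case where $\Gamma$ acts through a single character while $\varphi$ is semisimple with two distinct eigenvalues does occur for irreducible $V$ --- indeed this is the generic crystalline case, take $\alpha,\beta$ unramified with $\alpha(p)\neq\beta(p)$ --- and it falls under Definition~2.2(i) with $\alpha\neq\beta$ even though $\alpha|_{\Z^\times}=\beta|_{\Z^\times}$. The dichotomy governing Definition~2.2 is $\alpha\neq\beta$ versus $\alpha=\beta$ as characters of the full group $\Q^\times$, not the coarser split along the $\Gamma$-action that you use. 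Your third paragraph therefore omits the semisimple-$\varphi$, scalar-$\Gamma$ sub-case; to fix it you would note that there $\alpha\beta^{-1}$ is unramified so $G(\alpha\beta^{-1})=1$, the $\Gamma$-stability of the filtration line is automatic, and the normalization $L_n(e_\alpha+e_\beta)$ still matches Definition~2.2(i) after rescaling $e_\beta$, with non-degeneracy of the filtration line against the eigenlines again forced by weak admissibility exactly as you argue in the converse direction.

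A smaller issue is your parenthetical ``after enlarging $L$.'' The pair $(\alpha,\beta)$ asserted to exist must be $L$-valued per the paper's conventions, so if the $\Gamma$-action on $\D_{\rm cris}(V)$ failed to split over $L$ you would only obtain characters valued in an extension, and the claim would not follow for the original $L$. One either has to show this situation is excluded (Schur's lemma would force $\varphi$ to act through a quadratic extension $L'/L$, and one must analyze whether a weakly admissible filtration can exist there), or one takes $L$ large enough from the outset, as is implicit in Colmez's setting. As written this point is elided.
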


Henceforth we denote by $V_{\alpha,\beta}$ the crystabelian representation $V$ such that $\D_{\mathrm{cris}}(V)=D(\alpha,\beta)$. We have $n(V_{\alpha,\beta})=\max(n(\alpha),n(\beta))$ and $m(V_{\alpha,\beta})=\max(n(\alpha\beta^{-1}),1)=m(\alpha,\beta)$.

\begin{corollary}
If $V$ is a $2$-dimensional irreducible crystabelian representation of $G_{\Q}$, then there exists unique pair $(\alpha,\beta)$ and $n\in\mathbb{Z}$ such that $V$ is isomorphic to $V_{\alpha,\beta}(n)$.
\end{corollary}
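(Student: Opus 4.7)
I would prove the corollary by reducing to Proposition 2.3 via a cyclotomic twist. Since $\chi=x|x|$ is crystalline, the crystabelian property is preserved by twisting by any integer power of $\chi$, and $V\otimes\chi^n$ has its Hodge-Tate weights shifted by $n$. Let $(a,b)$ with $a\le b$ be the Hodge-Tate weights of $V$, and put $n=a$, $k=b-a+1$. Provided $k\ge 2$, the representation $V\otimes\chi^{-n}$ is again $2$-dimensional, irreducible and crystabelian, with Hodge-Tate weights $(0,k-1)$, so Proposition 2.3 supplies the unique pair $(\alpha,\beta)$ with $V\otimes\chi^{-n}\cong V_{\alpha,\beta}$; equivalently, $V\cong V_{\alpha,\beta}(n)$.

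The remaining point is to exclude $k=1$. If $a=b$, then $V\otimes\chi^{-a}$ would be crystabelian with Hodge-Tate weights $(0,0)$, so for $m$ large enough its restriction to $G_{F_m}$ would be a crystalline representation with all Hodge-Tate weights zero, hence unramified. Consequently $V\otimes\chi^{-a}$ factors through $\Gal(F_m\cdot\Q^{\mathrm{unr}}/\Q)$, which is abelian as the Galois group of a compositum of two abelian extensions of $\Q$. An absolutely irreducible representation of an abelian group is one-dimensional, contradicting the irreducibility of the $2$-dimensional $V$.

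Uniqueness is then immediate: given an isomorphism $V_{\alpha,\beta}(n)\cong V_{\alpha',\beta'}(n')$, matching Hodge-Tate weights forces $n=n'$ and $k=k'$, so $V_{\alpha,\beta}\cong V_{\alpha',\beta'}$ and the uniqueness clause of Proposition 2.3 yields $(\alpha,\beta)=(\alpha',\beta')$. I expect the only substantive step to be the exclusion of equal Hodge-Tate weights; everything else is routine bookkeeping about how cyclotomic twists interact with the crystabelian structure and with the invariants attached to $V_{\alpha,\beta}$.
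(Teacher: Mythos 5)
The paper states this corollary without giving its own proof, so there is no paper argument to compare against; your proposal is a correct and natural way to fill in the gap. Reducing to Proposition 2.3 by a cyclotomic twist, and then showing that the degenerate case of equal Hodge--Tate weights cannot occur, is exactly the right strategy, and your argument for the latter --- trivial filtration plus weak admissibility forces slope zero, hence $V\otimes\chi^{-a}$ is unramified over $F_m$ and factors through the abelian group $\Gal(F_m\cdot\Q^{\mathrm{unr}}/\Q_p)$ --- is sound. One small point worth making explicit: the step ``a two-dimensional irreducible representation cannot factor through an abelian group'' is valid for \emph{absolutely} irreducible representations (over a non-algebraically-closed coefficient field $L$, an abelian group can have a two-dimensional $L$-irreducible representation coming from a quadratic extension of $L$). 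In this paper, as in Colmez's classification it rests on, ``irreducible'' is to be read as absolutely irreducible, so your argument goes through; but it would be cleanest to flag that you are invoking absolute irreducibility at that point. The uniqueness bookkeeping at the end is correct.
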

\subsection{$\m$-modules}
In this subsection, we recall some of the basic theory of $\m$-modules of $p$-adic representations. The theory of $\m$-modules is the main ingredient of Colmez's construction of $p$-adic local Langlands correspondence for $\G$ as will be explained in next section. The notion of $\m$-modules will also be used in 2.3. For our purpose, we restrict to the case of $G_{\Q}$-representations. We refer the reader to the papers \cite{Fo91}, \cite{CC98}, \cite{LB02}, \cite{BB06} and \cite{C07} for more details.

We begin by recall some of the rings used in the theory of $\m$-modules.
\begin{enumerate}
\item[(i)]Let $\calE_L^+$ denote the ring $L\otimes_{\OO_L}\OO_L[[T]]$.
\item[(ii)]Let $\OO_{\calE_L}$ be the ring consisting of series $\sum_{i\in\mathbb{Z}}a_iT^i$ such that $a_i\in\OO_L$ and $a_{i}\ra0$ as $i\ra-\infty$. We equip $\OO_{\calE_L}$ with a valuation $w$ by setting $w(g(T))=\min_{i\in\mathbb{Z}}\val(a_i)$ if $g(T)=\sum_{i\in\mathbb{Z}}a_iT^i$. One can show that $\OO_{\calE_L}$ is a complete discrete valuation ring with respect to $w$. The fraction field of $\OO_{\calE_L}$ is $\calE_L=\OO_{\calE_L}[1/p]$; this is a local field of dimension $2$.
\item[(iii)]Let $\calE^{]0,r]}_L$ be the ring of formal series $g(T)=\sum_{i\in\mathbb{Z}}a_iT^i$ such that $g(T)$ is convergent on the annulus $r\geq\val(T)>0$. We define a norm $\|\cdot\|_{r}$ on $\calE^{]0,r]}_L$ by the formula
\[
\|g(T)\|_{r}=\sup_{i\in\mathbb{Z}}|a_i|p^{-ri}.
\]
Let $\r_L=\bigcup_{r>0}\calE^{]0,r]}_L$. In other words, $\r_L$ is the set of $p$-adic holomorphic functions on the boundary of the open unit disk. Let $\r^+_L=\r_L\cap L[[T]]$.

\item[(iv)]Let $\calE^{(0,r]}_L=\calE_L\cap\calE^{]0,r]}_L$. Then $\calE_L^{(0,r]}$ can be regarded as the subring of $\calE^{]0,r]}_L$ consisting of series with bounded coefficients. Let $\calE^\dagger_L=\bigcup_{r>0}\calE^{(0,r]}_L=\r_L\cap\calE_L$ and $\OO_{\calE^\dagger_L}=\r_L\cap\OO_{\calE_L}$. One can show that $\OO_{\calE^\dagger_L}$ is a discrete valuation ring with respect to $w$, and $\calE^\dagger_{L}$ is the fraction field of $\OO_{\calE^\dagger_L}$. The ring $\OO_{\calE_L}$ is the completion of $\OO_{\calE^\dagger_L}$ with respect to $w$.
\end{enumerate}

We equip $\OO_{\calE_L}$ with the weak topology by taking $\{\pi_L^i\OO_{\calE_L}+T^j\OO_L[[T]]\}_{i,j\geq0}$ as a basis of open neighborhoods of $0$. The weak topology on $\calE_L=\bigcup_{k\geq0}\pi_L^{-k}\OO_{\calE_L}$ is the inductive limit topology.
This topology induces the $(\pi_L,T)$-adic topology on $\calE_L^+$. We equip $\r^+_L$ with the Fr\'echet topology defined by the set of norms $\{\|\cdot\|_{r}\}_{r>0}$.

Let $R$ denote any of the rings $\calE_L^+,\OO_{\calE_L},\calE_L,\calE^\dagger_L,\r^+_L$ and $\r_L$. We equip the ring $R$ with commuting actions of $\varphi$ and $\Gamma$ by setting $\varphi(g(T))=g((1+T)^p-1)$ and $\gamma(g(T))=g((1+T)^{\chi(\gamma)}-1)$ for any $g(T)\in R$ and $\gamma\in\Gamma$. It is not difficult to see that $\Gamma$ acts on $R$ by isometries, and $\varphi$ is continuous. The ring $R$ is a finite free $\varphi(R)$-module of rank $p$ with a basis $\{(1+T)^i\}_{0\leq i\leq p-1}$. Thus for any $g\in R$, we can write $g$ in the form $g=\sum_{i=0}^{p-1}(1+T)^i\varphi(g_i)$ uniquely. We define the operator $\psi:R\ra R$ by the formula $\psi(g)=g_0$. Then it follows that $g_i=\psi((1+T)^{-i}g), \psi(\phi(g)h)=g\psi(h)$ for any $g,h\in R$, and $\psi$ commutes with $\Gamma$.

A \emph{$\varphi$-module} over $\oe$ is a finite type $\oe$-module $D$ equipped with a $\varphi$-semilinear $\oe$-morphism $\varphi:D\ra D$. We call $D$ \emph{\'etale} if the natural $\oe$-linear map $\varphi^\ast D=\oe\otimes_{\varphi,\oe}D\ra D$, sending $g\otimes x$ to $g\varphi(x)$ for $g\in\oe$ and $x\in D$, is an isomorphism. A \emph{$\varphi$-module} over $\calE_L$ is a finite dimensional $\calE_L$-vector space $D$ equipped with a $\varphi$-semilinear $\calE_L$-morphism $\varphi:D\ra D$. A $\varphi$-module $D$ over $\calE_L$ is called \emph{\'etale} if $D$ has an $\oe$-lattice which is $\varphi$-stable and \'etale. We define the notion of $\varphi$-modules over $\calE_L^\dagger$ and $\r_L$ similarly. If $D^\dagger$ is a $\varphi$-module over $\calE^\dagger_L$, then $D=\calE_L\otimes_{\calE^\dagger_L}D^\dagger$ is a $\varphi$-module over $\calE_L^\dagger$, and we call $D^\dagger$ \emph{\'etale} if $D$ is. A $\varphi$-module $D_\rig$ over $\r_L$ is called \emph{\'etale} if $D_\rig$ is pure of slope $0$ in the sense of Kedlaya \cite{Ke06}. We have the following result \cite[Proposition 1.5.5]{Ke06}.
\begin{theorem}
The functor $D^\dagger\mapsto\r_L\otimes_{\calE^\dagger_L}D^\dagger$, from the category of \'etale $\varphi$-modules over $\calE_L^\dagger$ to the category of \'etale $\varphi$-modules over $\r_L$, is an equivalence of categories.
\end{theorem}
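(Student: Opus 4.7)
The plan is to verify that the base-change functor $D^\dagger \mapsto D_\rig := \r_L \otimes_{\calE^\dagger_L} D^\dagger$ is both fully faithful and essentially surjective. Since the categories are abelian and the functor is $L$-linear and compatible with tensor products and duals, it suffices, via the usual internal-Hom trick, to show that for any étale $\varphi$-module $D^\dagger$ over $\calE^\dagger_L$ one has $(D^\dagger)^{\varphi=1} = (D_\rig)^{\varphi=1}$, and separately that every étale $\varphi$-module over $\r_L$ lies in the essential image.

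For fully faithful, faithfulness is immediate from the inclusion $\calE^\dagger_L \hookrightarrow \r_L$. For fullness, pick an étale $\varphi$-stable $\OO_{\calE^\dagger_L}$-lattice $M \subset D^\dagger$ and a basis in which $\varphi$ acts by a matrix $A \in \mathrm{GL}_n(\OO_{\calE^\dagger_L})$. An element $v = \sum g_i e_i \in D_\rig$ with $\varphi(v) = v$ corresponds to a coefficient vector $g \in \r_L^n$ satisfying $A\,\varphi(g) = g$, i.e. $g = A^{-1}\varphi^{-1}(\cdots\varphi^{-k}(A^{-1})\cdots)\varphi^{-k}(g)$ for any $k$. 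Using that $A, A^{-1}$ have entries of bounded $w$-valuation and that the norms $\|\cdot\|_r$ contract under $\varphi^{-1}$ as $r\to 0$, this recursion forces the Laurent coefficients of each $g_i$ to be bounded and hence $g_i \in \calE^\dagger_L$, proving the desired equality of $\varphi$-invariants.

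For essential surjectivity, given an étale (pure of slope $0$) $\varphi$-module $D_\rig$ over $\r_L$, the strategy is to invoke Kedlaya's slope filtration theorem. The unit-root half of that theorem, applied after base change to the perfect Robba ring, produces a basis in which $\varphi$ acts by a matrix with bounded entries and unit determinant; the étaleness hypothesis then lets one descend this unit-root basis back to $\r_L$, and a further Frobenius-equivariant refinement lands the basis in $\calE^\dagger_L$, producing an étale $\calE^\dagger_L$-lattice $D^\dagger \subseteq D_\rig$ whose base change recovers $D_\rig$. The main obstacle is precisely this essential surjectivity step: fully faithfulness reduces to a formal boundedness argument, but descending an étale structure from $\r_L$ down to $\calE^\dagger_L$ is the technical heart of Kedlaya's theorem, requiring successive approximation of $\varphi$-stable lattices over the tilted/perfect ring and careful control of the Newton polygons, and it is where I would expect to spend the bulk of the work.
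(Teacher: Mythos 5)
The paper gives no proof of this statement at all: it is stated as a direct citation to Kedlaya, \cite[Proposition 1.5.5]{Ke06}. So there is no ``paper's argument'' to compare against; what you are really attempting is a sketch of Kedlaya's theorem itself. Your overall plan (faithfulness from flatness, fullness via $\varphi$-invariants of the internal Hom, essential surjectivity via the slope filtration theory) is the right shape for that.

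However, there is a concrete error in the fullness step. From $A\varphi(g)=g$ you write the iteration $g = A^{-1}\varphi^{-1}(\cdots\varphi^{-k}(A^{-1})\cdots)\varphi^{-k}(g)$, but $\varphi$ is \emph{not} bijective on $\r_L$: the map $T\mapsto (1+T)^p-1$ makes $\r_L$ a free module of rank $p$ over $\varphi(\r_L)$, so $\varphi^{-1}(g)$ is not defined, and neither are the norms ``contracting under $\varphi^{-1}$.'' (The imperfect Robba ring is precisely the one where Frobenius is not invertible; it is only over the perfect/extended Robba ring that $\varphi$ becomes bijective.) The correct forward iteration is $g = A\varphi(A)\cdots\varphi^{k-1}(A)\,\varphi^k(g)$. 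Extracting boundedness of the Laurent coefficients of $g$ from this identity is genuinely more delicate than your sketch suggests, since $\varphi^k$ pushes the domain of convergence toward the boundary while the product $A\varphi(A)\cdots\varphi^{k-1}(A)$ a priori accumulates growth; one needs to exploit that $A\in\mathrm{GL}_n(\OO_{\calE^\dagger_L})$ (étaleness), not just that $A$ has entries of bounded $w$-valuation, and to control $\|\varphi^j(A)\|_{s}$ uniformly as $s\to 0$. Your essential surjectivity paragraph correctly identifies the hard part and defers it to Kedlaya's descent from the perfect Robba ring, which is consistent with the paper's citation, but as written the fullness argument has a gap that needs to be repaired along the lines above.
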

For any $R$ of the rings $\OO_{\calE_L},\calE_L,\calE^\dagger_L$ and $\r_L$, a $\m$-module over $R$ is a $\varphi$-module $D$ over $R$ equipped with a continuous semilinear $\Gamma$-action which commutes with $\varphi$. We call $D$ \emph{\'etale} if $D$ is \'etale as a $\varphi$-module over $R$. If $D$ is an \'etale $\varphi$-module over $R$ and if $x\in D$, then we can write $x=\sum_{i=0}^{p-1}(1+T)^i\varphi(x_i)$ where $x_i\in D$ is uniquely determined for $0\leq i\leq p-1$. We define the operator $\psi:D\ra D$ by the formula $\psi(x)=x_0$. It follows that $x_i=\psi((1+T)^{-i}x), \psi(\phi(g)x)=g\psi(x), \psi(g(\varphi(x))=\psi(g)x$ for any $g\in R$ and $x\in D$. If $D$ is further an \'etale $\m$-module, then $\psi$ commutes with $\Gamma$.

If $D$ is an \'etale $\m$-module over $\calE_L$ (resp. $\oe$), then $\mathrm{V}(D)=(\widehat{\calE_L^{ur}}\otimes_{\calE_L} D)^{\varphi=1}$ (resp. $\mathrm{V}(D)=(\OO_{\widehat{\calE_L^{ur}}}\otimes_{\oe} D)^{\varphi=1}$) is an $L$-linear (resp. free $\OO_L$-) representation of $G_{\Q}$. One can show that $\dim_L(\mathrm{V}(D))=\dim_{\calE_L}D$ (resp. $\rank_{\OO_L}(\mathrm{V}(D))=\rank_{\OO_{\calE_L}}D$). We have the following result \cite[$\mathrm{A}3.4$]{Fo91}.

\begin{theorem}
The functor $D\mapsto\mathrm{V}(D)$, from the category of \'etale $(\varphi,\Gamma)$-modules over $\calE_L$ (resp. $\OO_{\calE_L}$) to the category of $L$-linear (resp. free $\OO_L$-) representations of $G_{\Q}$, is an equivalence of categories. The inverse functor is given by $\D(V)=(\widehat{\calE_L^{ur}}\otimes_L V)^{\mathrm{Gal}(\overline{\mathbb{Q}}_p/F_{\infty})}$ (resp. $\D(M)=(\OO_{\widehat{\calE_L^{ur}}}\otimes_{\OO_L} M)^{\overline{\mathbb{Q}}_p/F_{\infty})}$).
\end{theorem}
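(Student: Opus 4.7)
The plan is to construct the two functors $\D$ and $\mathrm{V}$ as quasi-inverses. Fix an algebraic closure $\overline{\mathbb{Q}}_p$ and set $H=\mathrm{Gal}(\overline{\mathbb{Q}}_p/F_\infty)$, so that $\Gamma=G_{\Qp}/H$. The core heuristic is that $\OO_{\widehat{\calE_L^{ur}}}$ should be viewed as the $H$-equivariant ``strict henselization'' of $\oe$, in which every \'etale $\varphi$-module becomes trivial; once one proves this trivialization, the Galois descent along $H$ produces the equivalence and the reconstruction formulas $\D(M)=(\OO_{\widehat{\calE_L^{ur}}}\otimes_{\OO_L}M)^{H}$ and $\mathrm{V}(D)=(\OO_{\widehat{\calE_L^{ur}}}\otimes_{\oe}D)^{\varphi=1}$ follow formally. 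I would handle the $\OO_L$-case first and then deduce the $\calE_L$-case by inverting $p$, since every \'etale $\varphi$-module over $\calE_L$ carries a $\varphi$-stable \'etale $\oe$-lattice by the very definition of \'etaleness.

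The main step, and the one requiring real work, is the \emph{trivialization lemma}: for any \'etale $\varphi$-module $D$ of rank $d$ over $\oe$, the $\OO_L$-module $\widetilde{D}^{\varphi=1}$ is free of rank $d$, and the natural map
\[
\OO_{\widehat{\calE_L^{ur}}}\otimes_{\OO_L}\widetilde{D}^{\varphi=1}\longrightarrow\widetilde{D}:=\OO_{\widehat{\calE_L^{ur}}}\otimes_{\oe}D
\]
is an isomorphism. I would prove this by a Lang-theorem/successive-approximation argument. Reducing modulo the maximal ideal of $\OO_L$, one is left with a $\varphi$-module over the residue field of $\OO_{\widehat{\calE_L^{ur}}}$, which is the separable closure of the residue field of $\oe$; over such a field Lang's theorem (or an Artin--Schreier argument) produces a basis of $\varphi$-invariants. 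The \'etaleness of $D$ means that $1\otimes\varphi$ is bijective on $\widetilde{D}$, so one can solve $\varphi(x)=x$ by Newton-type iteration, using the $\pi_L$-adic (and then the weak-topology) completeness of $\OO_{\widehat{\calE_L^{ur}}}$ to ensure convergence of the successive lifts. This also shows $\widetilde{D}^{\varphi=1}$ has the expected rank and that the displayed comparison map is surjective; injectivity comes from the fact that $(\OO_{\widehat{\calE_L^{ur}}})^{\varphi=1}=\OO_L$, which forces $\varphi$-invariants to be $\OO_L$-linearly independent iff they are $\OO_{\widehat{\calE_L^{ur}}}$-linearly independent.

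With the trivialization in hand, the remaining verifications are mostly formal. The group $H$ acts on $\widetilde{D}$ through its action on $\OO_{\widehat{\calE_L^{ur}}}$, hence acts $\OO_L$-linearly on $\widetilde{D}^{\varphi=1}=\mathrm{V}(D)$; combining with the $\Gamma$-action on $\oe\subset\OO_{\widehat{\calE_L^{ur}}}$, the $\m$-structure on $D$ extends to a continuous $G_{\Qp}$-action on $\mathrm{V}(D)$. Conversely, for $M$ a free $\OO_L$-representation of $G_{\Qp}$, taking $H$-invariants of $\OO_{\widehat{\calE_L^{ur}}}\otimes_{\OO_L}M$ yields an $\oe$-module equipped with commuting semilinear actions of $\varphi$ and of $\Gamma=G_{\Qp}/H$; a Hilbert~90 argument for the pro-$p$ group $H$ acting on $\OO_{\widehat{\calE_L^{ur}}}$ shows that the natural maps $\OO_{\widehat{\calE_L^{ur}}}\otimes_{\oe}\D(M)\to\OO_{\widehat{\calE_L^{ur}}}\otimes_{\OO_L}M$ and $\OO_{\widehat{\calE_L^{ur}}}\otimes_{\OO_L}\mathrm{V}(D)\to\widetilde{D}$ are isomorphisms, from which the unit and counit of the adjunction between $\D$ and $\mathrm{V}$ are isomorphisms. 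Inverting $p$ and remembering that \'etale $\varphi$-modules over $\calE_L$ are exactly those admitting a $\varphi$-stable \'etale $\oe$-lattice then yields the $L$-linear version. The principal obstacle throughout is the trivialization lemma; everything downstream is Galois-theoretic bookkeeping.
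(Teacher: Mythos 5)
The paper does not prove this theorem; it cites Fontaine, [Fo91, A3.4]. Your proposal reconstructs Fontaine's original two-step argument and is essentially correct: the trivialization lemma --- that $\widetilde{D}^{\varphi=1}$ is free of rank $d$ over $\OO_L$ and generates $\widetilde{D}:=\OO_{\widehat{\calE_L^{ur}}}\otimes_{\oe}D$ over $\OO_{\widehat{\calE_L^{ur}}}$ --- is the crux, and your d\'evissage (Lang/Artin--Schreier over the residue field followed by $\pi_L$-adic successive approximation, using bijectivity of $1\otimes\varphi$ on $\widetilde{D}$) is the standard proof of it; likewise the use of Hilbert~90 for $H$ to recover $D$ from $\D(M)$, and the passage to the isogeny category by choosing a $\varphi$-stable \'etale $\oe$-lattice, match the classical treatment. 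One small slip: $H=\Gal(\overline{\mathbb{Q}}_p/F_\infty)$ is \emph{not} pro-$p$ (it has large prime-to-$p$ quotients, e.g.\ from tame ramification over $F_\infty$), so describing the Hilbert~90 step as being ``for the pro-$p$ group $H$'' is inaccurate. Fortunately the vanishing you actually need, $H^1_{\mathrm{cont}}(H,\mathrm{GL}_d(\OO_{\widehat{\calE_L^{ur}}}))=1$, does not rely on pro-$p$-ness: it reduces mod $\pi_L$ to ordinary Galois descent for vector spaces over the residue field extension and then lifts by the same $\pi_L$-adic approximation used in the trivialization lemma, so your argument goes through unchanged.
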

Let $\bb^{\dagger,r},\bb^\dagger$ and $\aa^\dagger$ be the rings constructed in \cite[1.3]{LB02}. Here $\bb^\dagger=\bigcup_{r>0}\bb^{\dagger,r}$ is a subfield of $\widehat{\calE_L^{ur}}$, and $\aa^\dagger$ is contained in $\bb^\dagger$. Both $\aa^\dagger$ and $\bb^\dagger$ are stable under the $\varphi,\Gamma$-actions. For any $r>0$, Let $\D^{\dagger,r}(V)=(\bb^{\dagger,r}\otimes_LV)^{\mathrm{Gal}(\overline{\mathbb{Q}}_p/F_{\infty})}$. Let $\D^\dagger(V)=\bigcup_{r>0}\D^{\dagger,r}(V)=
(\bb^{\dagger}\otimes_LV)^{\mathrm{Gal}(\overline{\mathbb{Q}}_p/F_{\infty})}$ and $\D^{\dagger}(M)=(\aa^{\dagger}\otimes_{\OO_L}M)^{\mathrm{Gal}(\overline{\mathbb{Q}}_p/F_{\infty})}$.
We have the following result \cite{CC98}.
\begin{theorem}
There exists an $r(V)$ such that
$\D(V)=\calE_L\otimes_{\calE^{(0,r]}_L}\D^{\dagger,r}(V)$ if $r\geqslant r(V) $.
Equivalently, $\D^{\dagger}(V)$ is an \'etale $\m$-module over
$\calE_L^\dagger$ with $\dim_{\calE^\dagger_L}(\D^\dagger(V))=\dim_LV$. As a consequence, the functor $\D^\dagger$, from the category of $L$-linear (free $\OO_L$-) representations of $G_{\Q}$ to the category of \'etale $\m$-modules over $\calE_L^\dagger$ (resp. $\OO_{\calE_L^\dagger}$), is an equivalence of categories. The inverse functor is given by $\mathrm{V}(D^\dagger)=(\widehat{\calE_L^{ur}}\otimes_{\calE_L^\dagger} D^\dagger)^{\varphi=1}$ (resp. $\mathrm{V}(D^\dagger)=(\OO_{\widehat{\calE_L^{ur}}}\otimes_{\OO_{\calE_L^\dagger}} D^\dagger)^{\varphi=1}$).
\end{theorem}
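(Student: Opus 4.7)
The plan is to prove the first assertion --- existence of $r(V)$ such that $\D(V)=\calE_L\otimes_{\calE^{(0,r]}_L}\D^{\dagger,r}(V)$ for all $r\geq r(V)$ --- since everything else then follows formally. I reduce to constructing, for some sufficiently large $r$, an $\calE_L$-basis of $\D(V)$ whose entries all lie in $\D^{\dagger,r}(V)$. Such a basis produces a map $(\calE^{(0,r]}_L)^d \to \D^{\dagger,r}(V)$ (with $d=\dim_L V$) which becomes an isomorphism after tensoring up to $\calE_L$; by faithful flatness of $\calE_L$ over $\calE^{(0,r]}_L$ the map itself is forced to be an isomorphism, and $\D^{\dagger,r}(V)$ then has the claimed structure.

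To produce such a basis I would proceed in two stages. First, work with the larger ring $\widetilde{\bb}^{\dagger,r}$ and set $\widetilde{\D}^{\dagger,r}(V):=(\widetilde{\bb}^{\dagger,r}\otimes_L V)^{\Gal(\overline{\bbQ}_p/F_\infty)}$. Using Fontaine's almost-\'etale descent for the tower $\overline{\bbQ}_p/F_\infty$ --- equivalently, verifying the Tate--Sen axioms for the family $\{\widetilde{\bb}^{\dagger,r}\}_{r>0}$ in the framework of \cite{LB02} --- one shows that any finite-dimensional continuous $L$-linear representation of $\Gal(\overline{\bbQ}_p/F_\infty)$ can be trivialized over $\widetilde{\bb}^{\dagger,r}$ once $r$ is large, yielding that $\widetilde{\D}^{\dagger,r}(V)$ is free of rank $d$ over its natural base $(\widetilde{\bb}^{\dagger,r})^{\Gal(\overline{\bbQ}_p/F_\infty)}$. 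The second, and main, step is decompletion: to move a basis from $(\widetilde{\bb}^{\dagger,r})^{\Gal(\overline{\bbQ}_p/F_\infty)}$ down to the much smaller ring $\calE^{(0,r]}_L$. This is where the real technical content lies: one exploits the $\Gamma$-action and the existence of normalized traces to show that, for a topological generator $\gamma$ of a suitable open subgroup of $\Gamma$, the operator $\gamma-1$ is invertible with controlled norm on the complement of $\calE^{(0,r]}_L$ inside $(\widetilde{\bb}^{\dagger,r})^{\Gal(\overline{\bbQ}_p/F_\infty)}$. Iterating this correction modifies the tilde-basis by a convergent sequence of coboundaries so as to land inside $\D^{\dagger,r'}(V)\subset\D(V)$ for some $r'\geq r$; the convergence of this procedure is what forces the overconvergence constant $r(V)$ to depend genuinely on $V$, and it is clearly the hardest input of the whole argument.

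Once this is established, the equivalence of categories is formal. Given an \'etale $\m$-module $D^\dagger$ over $\calE^\dagger_L$, base change gives an \'etale $\m$-module $D:=\calE_L\otimes_{\calE^\dagger_L} D^\dagger$ over $\calE_L$, to which Theorem~2.4 associates an $L$-representation $V:=\mathrm{V}(D)$. By the first part applied to $V$, the submodule $\D^\dagger(V)\subset D$ is $\m$-stable, \'etale, and of full rank $d$; a rigidity argument --- both $\D^\dagger(V)$ and $D^\dagger$ recover the same $\varphi=1$ invariants inside $\widehat{\calE_L^{ur}}\otimes_L V$ --- then gives $\D^\dagger(V)=D^\dagger$, so the functor $\D^\dagger$ is essentially surjective and hence an equivalence. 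The formula for the inverse functor follows by comparing with Theorem~2.4 after base changing to $\calE_L$, and the $\OO_L$-integral version goes through verbatim with the rings $\OO_{\calE^\dagger_L}$, $\aa^\dagger$, and $\OO_{\widehat{\calE_L^{ur}}}$ in place of their $L$-counterparts.
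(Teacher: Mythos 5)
The paper does not actually prove this theorem: it is Cherbonnier--Colmez's overconvergence theorem, stated with a citation to \cite{CC98}, so there is no ``paper's own proof'' against which to compare. Your sketch is a correct outline of the argument in \cite{CC98} (later axiomatized as the Tate--Sen formalism by Berger--Colmez): almost-\'etale descent over the tower $\overline{\bbQ}_p/F_\infty$ in the ``tilde'' rings, followed by a decompletion step using normalized traces and the quasi-invertibility of $\gamma-1$ on the complement of the small ring.

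Two technical caveats are worth flagging. First, ``trivialize over $\bt^{\dagger,r}$'' is slightly misleading: one first trivializes the cocycle after passing to the full completed ring, and then the Tate--Sen conditions (existence of continuous normalized traces with controlled norms, and invertibility of $\gamma-1$ on the relevant complements) are used to correct the trivializing matrix so that it lands in $\bt^{\dagger,r'}$ for some $r'\geq r$; the two stages you describe are intertwined rather than sequential, and $r(V)$ emerges from this correction process. Second, the reduction of ``$\D^{\dagger,r}(V)$ is free of rank $d$'' to faithful flatness of $\calE_L$ over $\calE^{(0,r]}_L$ is not quite right as stated: exhibiting an $\calE_L$-basis of $\D(V)$ inside $\D^{\dagger,r}(V)$ does not by itself bound $\D^{\dagger,r}(V)$ from above (it could a priori contain elements not in the $\calE^{(0,r]}_L$-span of that basis), and \cite{CC98} establish the freeness of $\D^{\dagger,r}(V)$ by a separate argument controlling denominators in $\bb^{\dagger,r}$. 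Your formal deduction of the equivalence of categories and the formula for the quasi-inverse from the overconvergence statement is fine.
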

 Let $\D_\rig^{\dagger,r}(V)=\calE^{]0,r]}_L\otimes_{\calE^{(0,r]}_L}\D^{\dagger,r}(V)$ and $\D_\rig^\dagger(V)=\bigcup_{r>0}\D_\rig^{\dagger,r}(V)=\r_L\otimes_{\calE_L}\D^\dagger(V)$. Combining Theorem 2.7 and Theorem 2.5, we get the following result.
\begin{theorem}
We have that $\D_\rig^{\dagger,r}(V)$ is a free $\calE^{]0,r]}_L$-module with $\rank_{\calE^{]0,r]}_L}(\D_\rig^{\dagger,r}(V))=\dim_LV$ for $r$ sufficiently large. As a consequence, the functor $\D_\rig^\dagger$, from the category of $L$-linear representations of $G_{\Q}$ to the category of $\m$-modules over $\r_L$, is an equivalence of categories.
\end{theorem}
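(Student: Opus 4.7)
The plan is to deduce Theorem 2.8 directly from the combination of Theorem 2.5 (Kedlaya's slope equivalence) and Theorem 2.7 (Cherbonnier--Colmez's overconvergence theorem). Both assertions will fall out by formal manipulations, with essentially no new input needed beyond these two inputs.

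For the first assertion, I would start from Theorem 2.7, which says that for $r \geq r(V)$ sufficiently large, $\D^{\dagger,r}(V)$ is a free $\calE^{(0,r]}_L$-module of rank $\dim_L V$. Since by definition $\D_\rig^{\dagger,r}(V) = \calE^{]0,r]}_L \otimes_{\calE^{(0,r]}_L} \D^{\dagger,r}(V)$, and base change along the ring extension $\calE^{(0,r]}_L \hookrightarrow \calE^{]0,r]}_L$ preserves freeness and rank, the claim that $\D_\rig^{\dagger,r}(V)$ is free of rank $\dim_L V$ over $\calE^{]0,r]}_L$ is immediate. Taking the colimit over $r$ and tensoring with $\r_L$ shows that $\D_\rig^\dagger(V) = \r_L \otimes_{\calE^\dagger_L} \D^\dagger(V)$ is a free $\r_L$-module of rank $\dim_L V$.

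For the equivalence of categories, the idea is to compose Theorem 2.7 (which identifies $L$-linear $G_{\Q}$-representations with étale $\m$-modules over $\calE^\dagger_L$ via $\D^\dagger$) with Theorem 2.5 (which identifies étale $\varphi$-modules over $\calE^\dagger_L$ with étale $\varphi$-modules over $\r_L$ via $D^\dagger \mapsto \r_L \otimes_{\calE^\dagger_L} D^\dagger$). The one point that needs to be addressed is that Theorem 2.5 is stated only at the level of $\varphi$-modules, whereas we need the equivalence on $\m$-modules. This is where some care is required, but it is a formal consequence of the fully faithfulness in Theorem 2.5: given an étale $\m$-module $D^\dagger$ over $\calE^\dagger_L$, each element $\gamma \in \Gamma$ is an automorphism of the underlying étale $\varphi$-module, so by functoriality of the base change it extends to an automorphism of $\r_L \otimes_{\calE^\dagger_L} D^\dagger$ commuting with $\varphi$, giving the desired $\Gamma$-action. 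Conversely, given an étale $\m$-module $D_\rig$ over $\r_L$, Theorem 2.5 produces an étale $\varphi$-module $D^\dagger$ over $\calE^\dagger_L$ with $\r_L \otimes_{\calE^\dagger_L} D^\dagger \cong D_\rig$, and the $\Gamma$-action on $D_\rig$ descends to $D^\dagger$ again by fully faithfulness (applied to each $\gamma$).

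The only potential obstacle is verifying that this descent/extension procedure for the $\Gamma$-action is genuinely functorial and respects continuity, but this follows formally once one notes that $\Gamma$ acts through ring automorphisms of both $\calE^\dagger_L$ and $\r_L$ commuting with $\varphi$, so the equivalence of Theorem 2.5 is automatically $\Gamma$-equivariant in the appropriate sense. Combining everything: the functor $\D_\rig^\dagger = \r_L \otimes_{\calE^\dagger_L} \D^\dagger(-)$ is the composition of two equivalences and hence an equivalence, with quasi-inverse $D_\rig \mapsto \mathrm{V}(D^\dagger)$ where $D^\dagger$ is the descent of $D_\rig$ to $\calE^\dagger_L$ provided by Theorem 2.5.
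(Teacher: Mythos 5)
Your proposal matches the paper's intended approach exactly: the paper's ``proof'' of Theorem 2.8 is the single sentence \emph{``Combining Theorem 2.7 and Theorem 2.5, we get the following result,''} and your argument fills in precisely that combination (base change preserves freeness and rank, and the two equivalences compose). The only caveat is that the step you wave off as ``formal'' --- descending a continuous semilinear $\Gamma$-action from $\r_L$ to $\calE^\dagger_L$ along Kedlaya's equivalence, with preservation of continuity --- does carry genuine content (it is a standard twisted-module descent argument, due to Berger/Kedlaya, not a purely categorical triviality), but it is indeed the expected bridge and the paper omits it as well.
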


If $D$ is a finite type $\oe$-module of rank $d$, then we equip $D$ with the weak topology induced from the weak topology of $\oe$.
\begin{definition}
A \emph{treillis} of a finite type $\oe$-module $D$ is a compact $\OO_L[[T]]$-submodule $N$ of $D$ such that the image of $N$ in $D/\pi_LD$ is a $k_L[[T]]$-lattice. A treillis of a finite dimensional $\calE_L$-vector space $D$ is a treillis of an $\oe$-lattice of $D$.
\end{definition}
\begin{proposition}(\cite[Proposition 2.17]{C07})
If $D$ is an \'etale $\varphi$-module over $\oe$, then there exists a unique treillis $D^\sharp$ of $D$ satisfying the following properties:
\begin{enumerate}
\item[(i)]for every $x\in D$ and $i\in \mathbb{N}$, there exists $n(x,i)\in\mathbb{N}$ such that $\psi^n(x)\in D^\sharp+\mathfrak{m}_L^iD$ if $n\geq n(x,i)$;
\item[(ii)]$\psi(D^\sharp)=D^\sharp$.
\end{enumerate}
Moreover,
\begin{enumerate}
\item[(iii)]if $N$ is a treillis of $D$ and $i\in\mathbb{N}$, then there exists $n(N,i)$ such that $\psi^n(N)\subseteq D^\sharp+\mathfrak{m}_L^iD$ if and only if $n\geq n(N,i)$;
\item[(iv)]if $N$ is a treillis of $D$ stable under $\psi$ such that $\psi(N)=N$, then $TD^\sharp\subseteq N\subseteq D^\sharp$.
\end{enumerate}
\end{proposition}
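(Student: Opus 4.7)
The plan is to construct $D^\sharp$ as a limit of the $\psi$-iterates of a chosen $\varphi$-stable treillis and then verify the characterizing properties. Since $D$ is an \'etale $\varphi$-module over $\oe$, one can pick a basis adapted to the \'etale structure and let $D^+$ be the corresponding $\OO_L[[T]]$-span, so that $\varphi(D^+)\subseteq D^+$. The identity $\psi\circ\varphi=\mathrm{id}$ then forces $D^+\subseteq\psi(D^+)$, and hence the sequence $(\psi^n(D^+))_{n\geq 0}$ is non-decreasing. The technical heart of the proof will be a quantitative contraction estimate: modulo $\MM_L^i D$, there exists a constant $C_i$ such that $\psi$ sends $T^{-k}D^+$ into $T^{-\lceil k/p\rceil+C_i}D^+$. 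This rests on the congruence $\varphi(T)\equiv T^p\pmod p$ together with the canonical decomposition $x=\sum_{j=0}^{p-1}(1+T)^j\varphi(\psi((1+T)^{-j}x))$, which shows that a pole of order less than $p$ in $x$ cannot survive $\psi$. Iterating yields $\bigcup_n\psi^n(D^+)\subseteq T^{-pC_i/(p-1)}D^+$ modulo $\MM_L^i D$ for every $i$, so the union is contained in a fixed treillis; define $D^\sharp$ to be the weak closure of this union.

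With $D^\sharp$ defined this way, property (ii) follows because $\psi$ is continuous and permutes the cofinal family $\{\psi^n(D^+)\}_n$, so $\psi(D^\sharp)=D^\sharp$ passes to the closure. For property (i), given $x\in D$ and $i\geq 0$, choose $k$ with $x\in T^{-k}D^+$; the contraction estimate yields $\psi^n(x)\in D^++\MM_L^i D$ for all $n$ sufficiently large, and since $D^+\subseteq D^\sharp$ we conclude $\psi^n(x)\in D^\sharp+\MM_L^i D$ eventually. The refinement (iii) is obtained by applying the same estimate to a finite set of $\OO_L[[T]]$-generators of $N$: this gives an explicit $n(N,i)$ beyond which $\psi^n(N)\subseteq D^\sharp+\MM_L^i D$, and conversely for smaller $n$ the contraction has not yet absorbed the $T$-adic poles of $N$ modulo $\MM_L^i$, so the inclusion cannot hold.

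Uniqueness is proved by comparing two candidates $M$ and $D^\sharp$: property (i) applied to a finite generating set of $M$ yields $M\subseteq D^\sharp+\MM_L^i D$ for every $i$, and the $\MM_L$-adic closedness of $D^\sharp$ then gives $M\subseteq D^\sharp$; exchanging the roles yields equality. For (iv), if $N$ is a $\psi$-stable treillis with $\psi(N)=N$, the same reasoning gives $N\subseteq D^\sharp$; the lower bound $TD^\sharp\subseteq N$ follows by re-running the existence construction with $T^{-1}N$ in place of $D^+$ (after a controlled $\varphi$-adjustment making $T^{-1}N$ $\varphi$-stable up to bounded error) and invoking the uniqueness just proved. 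The main obstacle throughout is the contraction estimate: $\psi$ is not contracting for the natural topology on $\oe$, and one must exploit both the combinatorial form $x=\sum(1+T)^j\varphi(x_j)$ and the congruence $\varphi(T)\equiv T^p\pmod p$ to extract a genuine $1/p$-contraction on $T$-adic poles modulo each power of $\MM_L$; every convergence and uniqueness statement above relies on this quantitative input.
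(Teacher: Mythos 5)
The paper gives no proof for this statement; it simply cites \cite[Proposition 2.17]{C07}. So your proposal must stand on its own, and the overall plan---construct $D^\sharp$ as a limit of $\psi$-iterates, controlled by a quantitative $T$-adic contraction estimate for $\psi$ modulo each power of $\MM_L$---is indeed in the right spirit and close to the structure of Colmez's argument. But the very first step contains a genuine gap, and it propagates.

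You assert that, because $D$ is \'etale, one can pick a basis ``adapted to the \'etale structure'' so that the $\OO_L[[T]]$-span $D^+$ satisfies $\varphi(D^+)\subseteq D^+$, from which $D^+\subseteq\psi(D^+)$ and hence the monotonicity of $(\psi^n(D^+))_n$. The \'etale condition only guarantees that the matrix of $\varphi$ lies in $\mathrm{GL}_d(\oe)$; it does not produce a $\varphi$-stable treillis. Since $\varphi(T)\equiv T^p\pmod p$, applying $\varphi$ to a bounded $\OO_L[[T]]$-lattice multiplies the mod-$\pi_L$ pole order by $p$, and over $\oe$ the $\varphi$-matrix may have poles of unbounded order as the $\pi_L$-adic precision increases (that is the extra freedom the completed ring $\oe$ affords). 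In general no $\varphi$-stable treillis exists; the ones that do are tied to ``finite height'' phenomena and are special. This is precisely why the usual proof works with an arbitrary treillis $M$ and establishes an \emph{eventual} containment of the $\psi^n(M)$ rather than starting from a $\varphi$-stable object. Without a replacement for this step, your increasing sequence never starts.

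Two further points. Your uniqueness argument applies (i) to a finite generating set of a competing treillis $M$, but $\psi$ is not $\OO_L[[T]]$-linear (it obeys $\psi(\varphi(f)x)=f\psi(x)$), so control of $\psi^n$ on generators does not control it on arbitrary $\OO_L[[T]]$-combinations; you need the \emph{uniform} statement (iii), which really comes from the contraction estimate applied to a whole bounded neighborhood, not from (i). And for the lower bound $TD^\sharp\subseteq N$ in (iv), the proposed ``controlled $\varphi$-adjustment making $T^{-1}N$ $\varphi$-stable up to bounded error'' is not a well-defined operation; the crude bound $\psi(T^{-k}\,\cdot\,)\subseteq T^{-\lceil k/p\rceil-C_i}\,\cdot$ (note the sign of $C_i$ should be negative in the exponent, not positive as you wrote, to match your later fixed-point bound $pC_i/(p-1)$) only yields $T^m D^\sharp\subseteq N$ for some unspecified $m$, and pinning down the exact factor $T$ requires a sharper analysis of $\psi$ on $T^{-1}\OO_L[[T]]$ that your sketch does not supply.
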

\begin{proposition}(\cite[Corollaire 2.31]{C07})
If $D$ is an \'etale $\varphi$-module over $\oe$, then the set of $\psi$-stable treillis of $D$ admits a unique minimal element $D^\natural$, and $\psi(D^\natural)=D^\natural$.
\end{proposition}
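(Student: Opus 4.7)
The plan is to realise the minimal $\psi$-stable treillis as the eventual intersection $\bigcap_{n\geq 0}\psi^n(N)$ for any $N$ in the set $\mathcal{T}$ of $\psi$-stable treillis of $D$, and then verify that this intersection is independent of the choice of $N$. By Proposition 2.10(ii) the treillis $D^\sharp$ satisfies $\psi(D^\sharp)=D^\sharp$, so $\mathcal{T}$ is nonempty. For any $N\in\mathcal{T}$ the chain $N\supseteq\psi(N)\supseteq\psi^2(N)\supseteq\cdots$ is a decreasing sequence of compact subsets of $N$, and I set
\[
N^\natural:=\bigcap_{n\geq 0}\psi^n(N).
\]

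I would first check that $\psi(N^\natural)=N^\natural$; the inclusion ``$\subseteq$'' is immediate, and for the reverse, given $x\in N^\natural$ I pick $y_n\in\psi^n(N)$ with $\psi(y_n)=x$, use compactness of $N$ to extract a subsequence $y_{n_k}\to y\in N$, and observe that $y$ lies in each closed set $\psi^n(N)$, so $y\in N^\natural$ and $\psi(y)=x$ by continuity of $\psi$. Next, applying Proposition 2.10(iii) to $N$ gives $\psi^n(N)\subseteq D^\sharp+\mathfrak{m}_L^i D$ for all sufficiently large $n$, so $N^\natural\subseteq\bigcap_i(D^\sharp+\mathfrak{m}_L^i D)=D^\sharp$, the last equality holding because $D^\sharp$ is closed in the weak topology.

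The central step is to verify that $N^\natural$ is itself a treillis, namely that its image in $D/\pi_L D$ is a $k_L[[T]]$-lattice. Once this is established, Proposition 2.10(iv) applied to $N^\natural$ (which is $\psi$-stable with $\psi(N^\natural)=N^\natural$) yields $TD^\sharp\subseteq N^\natural\subseteq D^\sharp$, placing $N^\natural$ between two explicit lattices and confirming the lattice property a posteriori. For uniqueness of the minimum, given $N,N'\in\mathcal{T}$ the inclusions $N^\natural,\,(N')^\natural\subseteq N\cap N'$, together with the common sandwich $TD^\sharp\subseteq N^\natural,\,(N')^\natural\subseteq D^\sharp$ and an iteration of the construction on $N\cap N'$, force $N^\natural=(N')^\natural$; we then define $D^\natural:=N^\natural$, independent of $N$, and obtain the unique minimal element of $\mathcal{T}$ with $\psi(D^\natural)=D^\natural$ by construction.

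The main obstacle is precisely the verification that $N^\natural$ has $k_L[[T]]$-lattice image modulo $\pi_L$ \emph{without} circular appeal to Proposition 2.10(iv); I expect this to proceed by reducing modulo $\pi_L$ to the residual \'etale $\varphi$-module over $k_L(\!(T)\!)$, where the structure theory is explicit enough to identify the minimal $\psi$-stable $k_L[[T]]$-lattice directly, and then lifting the lattice property through a Nakayama-type argument in the weak topology on $D$.
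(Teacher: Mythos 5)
The paper does not prove this statement; it is quoted verbatim from Colmez (\cite[Corollaire 2.31]{C07}), so there is no in-paper argument to compare against. I will therefore assess your proposal on its own terms.

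Your candidate $N^\natural=\bigcap_{n\geq 0}\psi^n(N)$ is the right object, and the two steps you carry out are sound: the compactness argument gives $\psi(N^\natural)=N^\natural$, and Proposition 2.10(iii) together with the closedness of $D^\sharp$ in the weak topology gives $N^\natural\subseteq D^\sharp$. But the two steps you flag as incomplete are genuine gaps, and the routes you sketch to fill them are not the right ones.

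For the treillis property, the reduction-mod-$\pi_L$/Nakayama route is unnecessary and not where the content lies. What is missing is the \emph{lower} bound: one should show directly that $TD^\sharp\subseteq N^\natural$. This follows from the identity $\psi(T^p x)=T\psi(x)$, hence $\psi^m(T^{p^m}D^\sharp)=T\psi^m(D^\sharp)=TD^\sharp$; choosing $m$ with $T^{p^m}D^\sharp\subseteq N$ gives $TD^\sharp\subseteq\psi^m(N)$, and since $\psi(TD^\sharp)\supseteq TD^\sharp$ this persists for all larger $n$, so $TD^\sharp\subseteq N^\natural$. Combined with $N^\natural\subseteq D^\sharp$ this places $N^\natural$ between two treillis, which \emph{is} the treillis property, with no circular appeal to 2.10(iv).

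For uniqueness, the argument as written is circular: the inclusion $N^\natural\subseteq N\cap N'$ requires $N^\natural\subseteq N'$, and that is exactly the minimality you are trying to prove; formally all you know a priori is $N^\natural\subseteq N$. The phrase ``an iteration of the construction on $N\cap N'$'' does not resolve this, since $(N\cap N')^\natural\subseteq N^\natural$ gives an inclusion in the wrong direction. The clean fix is to observe that, by 2.10(iv), every treillis $M$ with $\psi(M)=M$ lies in $[TD^\sharp,D^\sharp]$; setting $D^\natural$ to be the intersection of all such $M$ produces a $\psi$-stable treillis (again in $[TD^\sharp,D^\sharp]$), and for any $\psi$-stable treillis $N$ one has $D^\natural\subseteq N^\natural\subseteq N$, so $D^\natural$ is the minimum. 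Finally, $\psi(D^\natural)\subseteq D^\natural$ gives $(D^\natural)^\natural\subseteq D^\natural$, and minimality forces equality, whence $\psi(D^\natural)=D^\natural$.

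Summary: your global strategy is the natural one and the first two verifications are correct, but the treillis property needs the lower bound $TD^\sharp\subseteq N^\natural$ (not a Nakayama argument), and the uniqueness argument as written assumes what it must prove.
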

We let $\D^\sharp(M)$ denote the treillis associated to $\D(M)$ by Proposition 2.10. If $V$ is an $L$-linear representation of $G_{\Q}$, we choose $M$ a $G_{\Q}$-invariant lattice of $V$, and put $\D^\sharp(V)=\D^\sharp(M)\otimes_{\OO_L}L$; it is independent of the choice of $M$. We define $\D^\natural(M)$, $\D^\natural(V)$ similarly.

\subsection{Wach modules of crystabelian representations of $G_{\Q}$}
In this subsection, we recall some of the basic theory of Wach modules of crystabelian representations of $G_{\Q}$ developed in \cite{BB06}. The notation of Wach modules is used to relate Berger-Breuil's and Colmez's constructions in the case of crystabelian representations as we will see in section 3.

Let $\bb^+=A_S^+[1/p]$ be the ring constructed in \cite[B1.8]{Fo91}. The ring $\bb^+$ is contained in $\widehat{\calE_L^{ur}}$ and stable under the $\varphi,\Gamma$-actions. We define $\D^+(V)=(\bb^+\otimes_{\Q}V)^{\mathrm{Gal}(\overline{\mathbb{Q}}_p/F_{\infty})}$, which is a finite type $\calE_L^+$-submodule of $\D(V)$. Recall that a Hodge-Tate representation is called positive if its Hodge-Tate weights are all $\leq0$. We have the following result \cite[Th\'eor\'eme 3.1.1]{BB06}.

\begin{theorem}
If $V$ is a positive crystabelian representation, then there exists a unique $\calE_L^+$-submodule $\mathrm{N}(V)$ of $\D^+(V)$ satisfying the following conditions:
\begin{enumerate}
\item[(i)]we have $\D(V)=\calE_L\otimes_{\calE_L^+}\mathrm{N}(V)$;
\item[(ii)]the $\Gamma$-action preserves $\mathrm{N}(V)$ and is finite on $\mathrm{N}(V)/T\mathrm{N}(V)$;
\item[(iii)]there exists $h\geq0$ such that $T^h\D^+(V)\subset \mathrm{N}(V)$.
\end{enumerate}
The module $\mathrm{N}(V)$ is also stable under the $\varphi$-action.
\end{theorem}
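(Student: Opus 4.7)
The plan is to reduce to Berger's classical Wach module theorem for positive crystalline representations by exploiting the fact that the Fontaine rings $\calE_L^+$, $\bb^+$, and the module $\D^+(V)$ are intrinsic to the fixed field $F_\infty$ of $G_{F_\infty}$. Since $F_n\subset F_\infty$, we have $F_n(\mu_{p^\infty})=F_\infty$, so $G_{F_\infty}$ is the same kernel group whether we view $V$ as a representation of $G_{\Q}$ or of $G_{F_n}$. Consequently the ring $\calE_L^+=(\bb^+)^{G_{F_\infty}}$ and the module $\D^+(V)=(\bb^+\otimes_{\Q}V)^{G_{F_\infty}}$ are unchanged under restriction, and only the acting Iwasawa group changes, from $\Gamma=\mathrm{Gal}(F_\infty/\Q)$ to the open normal subgroup $\Gamma_n=\mathrm{Gal}(F_\infty/F_n)$.

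First I would fix an integer $n\geq n(V)$ so that $V|_{G_{F_n}}$ is positive crystalline, and apply Berger's classical Wach module theorem to $V|_{G_{F_n}}$. This produces a $\calE_L^+$-submodule $N\subset\D^+(V)$, stable under $\varphi$ and $\Gamma_n$, satisfying $T^h\D^+(V)\subset N$ for some $h\geq0$, satisfying $\calE_L\otimes_{\calE_L^+}N=\D(V)$, and on which $\Gamma_n$ acts trivially modulo $T$. I would then set $\mathrm{N}(V):=N$ and upgrade $\Gamma_n$-stability to $\Gamma$-stability via uniqueness: since $\Gamma$ is abelian, $\Gamma_n$ is normal in $\Gamma$, so for each $\gamma\in\Gamma$ the translate $\gamma(N)$ is again a $\Gamma_n$-stable $\calE_L^+$-submodule of $\D^+(V)$ satisfying the three Wach conditions for $V|_{G_{F_n}}$ (using that $\gamma(T)/T$ is a unit in $\calE_L^+$ and that $\D^+(V)$ is $\Gamma$-stable); by the uniqueness half of Berger's theorem, $\gamma(N)=N$. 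Since $\Gamma_n$ acts trivially on $N/TN$, the $\Gamma$-action on $N/TN$ factors through the finite quotient $\Gamma/\Gamma_n\cong\mathrm{Gal}(F_n/\Q)$, which gives condition (ii); conditions (i), (iii), and the $\varphi$-stability of $\mathrm{N}(V)$ are inherited directly from Berger's construction.

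For uniqueness, suppose $N_1,N_2$ both satisfy (i)--(iii). Condition (ii) implies that the $\Gamma$-action on each $N_i/TN_i$ factors through a finite quotient of $\Gamma\cong\Z^\times$, and since $\{\Gamma_m\}_{m\geq1}$ is a neighborhood basis of the identity in $\Gamma$, we may choose $m\geq n(V)$ with $\Gamma_m$ acting trivially on both $N_1/TN_1$ and $N_2/TN_2$. Then each $N_i$ satisfies Berger's Wach module axioms for the positive crystalline representation $V|_{G_{F_m}}$, and the uniqueness in the crystalline case forces $N_1=N_2$. I expect the main obstacle to lie not in the descent step itself, which becomes formal once the setup is clear, but in confirming the ring-theoretic compatibility behind the setup: one must verify that the Wach module produced by Berger's theorem for $V|_{G_{F_n}}$ genuinely lives inside the same $\D^+(V)$ over the same $\calE_L^+$ used in the statement, so that both the descent and the uniqueness arguments are meaningful. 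This in turn reduces to checking that the Fontaine rings are invariant under passage to intermediate cyclotomic fields inside $F_\infty$, a fact implicit in the construction of $(\varphi,\Gamma)$-modules but requiring some care to pin down.
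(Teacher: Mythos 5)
Your reduction strategy is attractive but rests on a citation that does not say what you need. \emph{Berger's classical Wach module theorem} (from \emph{Limites de repr\'esentations cristallines}) is stated and proved under the standing hypothesis that the base field $K$ is a finite \emph{unramified} extension of $\Q$. For $n\geq1$ the field $F_n=\Q(\mu_{p^n})$ is totally ramified over $\Q$, so the theorem you invoke for the crystalline representation $V|_{G_{F_n}}$ simply does not apply. You correctly anticipate that the ring-theoretic side is harmless: since $F_n(\mu_{p^\infty})=F_\infty$ we have $H_{F_n}=H_{\Q}$, hence $\mathbf{A}_{F_n}^+=\mathbf{A}_{\Q}^+$, $\calE_{F_n,L}^+=\calE_L^+$, and $\D^+(V|_{G_{F_n}})=\D^+(V)$ with only the acting group shrinking from $\Gamma$ to $\Gamma_n$. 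But this is not where the difficulty lies. In the unramified case the existence argument hinges on the fact that $\D_{\mathrm{cris}}(V)$ is a module over $K$ itself (because $K=K_0$), so the filtration can be compared directly to the reduction of a putative Wach lattice modulo $T$ over $\mathbf{A}_K^+/T\cong\OO_K$. For $K=F_n$ totally ramified one has $K_0=\Q$: now $\D_{\mathrm{cris}}(V|_{G_{F_n}})$ is an $L$-vector space, the filtration lives on $L_n\otimes_L\D_{\mathrm{cris}}$, yet $\calE_L^+/T\cong L$ has no room for $L_n$. Berger's existence proof does not transport to this setting, and recovering the filtration is precisely what forces Berger and Breuil to introduce the localization maps $\iota_m$ and the condition ``$\iota_m(g)\in\mathrm{Fil}^0(L_m((t))\otimes_L\D_{\mathrm{cris}}(V))$ for $m\geq m(V)$'' (compare Definition 3.12 and Proposition 2.14 in this paper). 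That machinery is a genuinely new ingredient, not a consequence of the unramified theory.

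Concretely: the descent step (translate $N$ by $\gamma\in\Gamma$, invoke uniqueness over $F_n$ to force $\gamma(N)=N$) and the uniqueness argument (shrink to a common $\Gamma_m$ acting trivially on $N_i/TN_i$ and invoke uniqueness over $F_m$) are both sound in form, but both take as input an existence-and-uniqueness theorem for Wach modules of crystalline representations of $G_{F_n}$ over the ring $\calE_L^+$. That input is not the unramified crystalline theorem, and establishing it is essentially the content of Th\'eor\`eme 3.1.1 of Berger--Breuil, which is what you set out to prove. As written the argument is therefore circular; to repair it you would either need to locate and cite a Wach module theorem valid for totally ramified cyclotomic base fields, or redo the existence argument with the $\iota_m$/filtration bookkeeping, at which point you have reproduced the Berger--Breuil proof rather than bypassed it.
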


For any $m\geq1$, the map $\iota_m=\varphi^{-m}: \bb^+\ra\bdR^+$ induces a map $\iota_m:\D^+(V)\ra\bdR^+\otimes_{\Q}V$. We extend it to a map $\iota_m:\r^+_L[1/t]\otimes_{\calE_L^+}\D^+(V)\ra\bdR\otimes_{\Q}V$ by setting $\iota_m(T)=\eps^{(m)}e^{t/p^{m}}-1$.  Here $\iota_m$ is a special case of the \emph{localization map}. For the construction and general properties of the localization map, we refer the reader to \cite{LB02} for more details.

For a general crystabelian representation $V$, we may choose an integer $h\geq0$ such that $V(-h)$ is positive, and we define $\mathrm{N}(V)=T^{-h}\mathrm{N}(V(-h))$; it is independent of the choice of $h$. We call $\mathrm{N}(V)$ the \emph{Wach module} of $V$.

\begin{proposition}(\cite[Th\'eor\'eme 3.2.1]{BB06})
If $V$ is a positive crystabelian representation, then $\D_{\mathrm{cris}}(V)=(\r_L^+\otimes_{\calE_L^+}\mathrm{N}(V))^{\Gamma_n}$ for $n$ sufficiently large.
\end{proposition}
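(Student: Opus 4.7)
The plan is to produce, for $n$ sufficiently large, a natural $L$-linear isomorphism
\[
M^{\Gamma_n} \cong \D_{\mathrm{cris}}(V), \qquad M := \r_L^+ \otimes_{\calE_L^+} \mathrm{N}(V),
\]
by first building a $\varphi,\Gamma$-equivariant injection $M \hookrightarrow \bcris \otimes_{\Q} V$ and then matching dimensions via a $T$-adic filtration of $M$.

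For the injection, observe that $\mathrm{N}(V) \subset \D^+(V) = (\bb^+ \otimes_{\Q} V)^{\mathrm{Gal}(\overline{\mathbb{Q}}_p/F_\infty)}$ sits inside $(\bcris \otimes_{\Q} V)^{\mathrm{Gal}(\overline{\mathbb{Q}}_p/F_\infty)}$ via $\bb^+ \subset \bcris$. Combining this with the natural $\varphi,\Gamma$-equivariant embedding $\r_L^+ \hookrightarrow \bcris$ sending $T$ to $\varepsilon - 1$ yields a $\varphi,\Gamma$-equivariant map $M \to \bcris \otimes_{\Q} V$; its injectivity follows from the injectivity of $\r_L^+ \hookrightarrow \bcris$ together with the freeness of $\mathrm{N}(V)$ over the PID $\calE_L^+ = L[[T]]$ (which holds since, by Theorem 2.12(iii), $\mathrm{N}(V)$ is sandwiched between two finite free $\calE_L^+$-modules, hence finitely generated and torsion-free). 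For $n \geq n(V)$, taking $\Gamma_n$-invariants --- which on the image amounts to taking $G_{F_n}$-invariants of $\bcris \otimes_{\Q} V$ --- then yields an $L$-linear injection $M^{\Gamma_n} \hookrightarrow (\bcris \otimes_{\Q} V)^{G_{F_n}} = \D_{\mathrm{cris}}(V)$, whose target has $L$-dimension $d := \dim_L V$ by weak admissibility.

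For the reverse inequality, I would apply Theorem 2.12(ii) to pick $n_1 \geq n(V)$ with $\Gamma_{n_1}$ acting trivially on $\mathrm{N}(V)/T\mathrm{N}(V)$, and then filter $M$ by the submodules $T^k M$. Since $\gamma(T) = \chi(\gamma) T + O(T^2)$, each quotient $T^k M / T^{k+1} M$ is, as a $\Gamma_n$-module, isomorphic to the Tate twist $(\mathrm{N}(V)/T\mathrm{N}(V))(\chi^k)$; for a topological generator $\gamma$ of $\Gamma_n$ with $n \geq n_1$ and any $k \geq 1$, the scalar $\chi(\gamma)^k - 1$ lies in $L^\times$, so $\gamma - 1$ acts invertibly there. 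One may therefore lift each element of $\mathrm{N}(V)/T\mathrm{N}(V)$ to a $\Gamma_n$-invariant element of $M$ by successive approximation, yielding $\dim_L M^{\Gamma_n} \geq d$ and hence the desired equality.

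The main obstacle is the convergence of this successive-approximation lifting in the Fr\'echet topology of $\r_L^+$: at the $k$-th stage one divides by $\chi(\gamma)^k - 1$, whose $p$-adic valuation drops like $-\val(k)$, but the $T^k$-factor contributes Gauss-norm $p^{-rk}$ in $\|\cdot\|_r$ for every $r > 0$, and since $rk$ dominates $\val(k)$ the correction series converges in every Fr\'echet semi-norm. Quantifying this balance, while verifying that the limit remains compatible with $\varphi$ so that the resulting isomorphism respects all the structure, parallels Berger's original Wach-module construction in the crystalline case and is where the real technical work lies.
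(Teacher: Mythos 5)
The paper does not prove this proposition; it is quoted directly from \cite[Th\'eor\`eme 3.2.1]{BB06}, so there is no in-paper proof to compare against. Your forward inclusion $(\r_L^+\otimes_{\calE_L^+}\mathrm{N}(V))^{\Gamma_n}\hookrightarrow\D_{\mathrm{cris}}(V)$ is essentially correct: the embedding $\r_L^+\hookrightarrow\bcris$ via $T\mapsto\varepsilon-1$ combined with $\mathrm{N}(V)\subset\D^+(V)=(\bb^+\otimes_{\Q}V)^{\mathrm{Gal}(\overline{\mathbb{Q}}_p/F_\infty)}$ gives a $(\varphi,\Gamma)$-equivariant injection $M:=\r_L^+\otimes_{\calE_L^+}\mathrm{N}(V)\hookrightarrow(\bcris\otimes_{\Q}V)^{\mathrm{Gal}(\overline{\mathbb{Q}}_p/F_\infty)}$, and $\Gamma_n$-invariants land in $(\bcris\otimes_{\Q}V)^{G_{F_n}}=\D_{\mathrm{cris}}(V)$, whose $L$-dimension equals $\dim_L V$ because $V|_{G_{F_{n(V)}}}$ is crystalline (weak admissibility is not the right appeal here).

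The reverse inequality, however, has a genuine gap, and the heuristic you offer --- that ``$rk$ dominates $\val(k)$'' --- does not suffice. It only bounds one step of the recursion in isolation. In fact the recursion for the $T^l$-coefficient $y_l$ feeds back on $y_1,\dots,y_{l-1}$ through the higher-order terms of $\gamma(T)^k-T^k$, and each $y_j$ already carries a factor $(\chi(\gamma)^j-1)^{-1}$ of size $p^{n+\val(j)}$. Bounding the feedback multipliers by their $p$-adic sizes (roughly $p^{\val(l)+\max_{m\leq l}\val(m)}$, coming from the unbounded denominators in $\log(1+T)$ inside $\gamma(T)-T$) and iterating yields $|y_l|\lesssim p^{c\,l/(p-1)}$ for some $c>0$, which is \emph{exponential} in $l$. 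Such growth is not dominated by $p^{-rl}$ for small $r>0$, so the candidate series is only visibly in the $T$-adic completion $L[[T]]\otimes_{\calE_L^+}\mathrm{N}(V)$, not in $\r_L^+\otimes_{\calE_L^+}\mathrm{N}(V)$. Closing this requires exhibiting real cancellation in the recursion, or reorganizing the argument around the infinitesimal operator $\nabla=\lim_{\gamma\to1}(\gamma-1)/\log\chi(\gamma)$ and the structure of $(\varphi,\Gamma)$-modules over the Robba ring --- which is in fact the route Berger--Breuil take (following Berger's treatment of the crystalline case, using the localization maps $\iota_m$ and properties of $\mathrm{M}(V)$), rather than a direct successive-approximation lifting.
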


Thus for a positive crystabelian representation $V$, we have $\r_L^+\otimes_L\D_{\mathrm{cris}}(V)\subseteq\r_L^+\otimes_{\calE_L^+}\mathrm{N}(V)$. Moreover, if the Hodge-Tate weights of $V$ are in the interval $[-h,0]$ for some $h\geq0$, then we have $\r_L^+\otimes_{\calE_L^+}\mathrm{N}(V)\subseteq t^{-h}\r_L^+\otimes_L\D_{\mathrm{cris}}(V)$ \cite[Corollaire 3.2.7]{BB06}.

Using the map $\iota_m$, we get
\[
L_m[[t]]\otimes_L\D_{\mathrm{cris}}(V)\subseteq L_m[[t]]\otimes_{\calE_L^+}^{\iota_m}\mathrm{N}(V)
\subseteq t^{-h}L_m[[t]]\otimes_L\D_{\mathrm{cris}}(V).
\]
We further have the following result \cite[Lemme 3.3.1]{BB06}.
\begin{proposition}
If $m\geq0$, then the image $L_m[[t]]\otimes_{\calE_L^+}^{\iota_m}\mathrm{N}(V)$ in $L_m((t))\otimes_L\D_{\mathrm{cris}}(V)$ is contained in $\mathrm{Fil}^0(t^{-h}L_m[[t]]\otimes_L\D_{\mathrm{cris}}(V))$, and if $m\geq m(V)$, then the map
\[
L_m[[t]]\otimes_{\calE_L^+}^{\iota_m}\mathrm{N}(V)\ra\mathrm{Fil}^0(t^{-h}L_m[[t]]\otimes_L\D_{\mathrm{cris}}(V))
\]
is an isomorphism.
\end{proposition}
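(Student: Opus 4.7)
My plan is to prove the containment for all $m\geq 0$ first, and then upgrade it to an isomorphism under the hypothesis $m\geq m(V)$. The geometric input for the containment is that $\iota_m=\varphi^{-m}$ is a manifestation of Fontaine's localization $\bb^+\hookrightarrow\bdR^+$; the numerical input for the upgrade is the weak admissibility of $\D_{\cris}(V)$.

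\textbf{Containment.} Applying $\iota_m$ to the inclusion $\r_L^+\otimes_{\calE_L^+}\mathrm{N}(V)\subseteq t^{-h}\r_L^+\otimes_L\D_{\cris}(V)$ recalled just above the statement immediately yields
\[
L_m[[t]]\otimes_{\calE_L^+}^{\iota_m}\mathrm{N}(V)\subseteq t^{-h}L_m[[t]]\otimes_L\D_{\cris}(V).
\]
Separately, the factorization $\iota_m\colon\bb^+\hookrightarrow\bdR^+$ combined with the defining inclusion $\mathrm{N}(V)\subseteq\D^+(V)=(\bb^+\otimes_{\Q}V)^{\mathrm{Gal}(\overline{\mathbb{Q}}_p/F_\infty)}$ forces the image to land inside $\bdR^+\otimes_{\Q}V=\mathrm{Fil}^0(\bdR\otimes_{\Q}V)$. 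Intersecting the two containments and invoking the de Rham comparison identifies this intersection with $\mathrm{Fil}^0(t^{-h}L_m[[t]]\otimes_L\D_{\cris}(V))$.

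\textbf{Isomorphism when $m\geq m(V)$.} Denote the source and target of the candidate map by $S_m$ and $F_m$. Both are free $L_m[[t]]$-modules of rank $d:=\dim_LV$: the source because $\mathrm{N}(V)$ is free of rank $d$ over $\calE_L^+$ (Theorem 2.12), the target because the Hodge filtration on $\D_{\cris}(V)$ is exhaustive and separated. It therefore suffices to compare $L_m$-lengths inside the common super-lattice $\Lambda_m:=t^{-h}L_m[[t]]\otimes_L\D_{\cris}(V)$. A direct computation with the Hodge filtration yields
\[
\mathrm{length}_{L_m}(\Lambda_m/F_m)=hd-t_H(V),
\]
where $t_H(V)$ is the Hodge number of $V$. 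For the source, the hypothesis $m\geq m(V)$ allows us to invoke Proposition 2.13 to recover $\D_{\cris}(V)$ as the $\Gamma_m$-invariants of $\r_L^+\otimes_{\calE_L^+}\mathrm{N}(V)$; a determinant-of-Frobenius computation then gives
\[
\mathrm{length}_{L_m}(\Lambda_m/S_m)=hd-t_N(V),
\]
where $t_N(V)=\val(\det(\varphi\mid\D_{\cris}(V)))$ is the Newton number. Weak admissibility of $\D_{\cris}(V)$ asserts $t_H(V)=t_N(V)$, so the two lengths agree and the inclusion $S_m\hookrightarrow F_m$ must be an equality.

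\textbf{Main obstacle.} The delicate step is the length computation for $S_m$, since it requires tracking an explicit Wach-module basis through the localization $\iota_m$. The hypothesis $m\geq m(V)$ enters precisely here: it guarantees that the Gauss-sum twists appearing in the filtration of $D(\alpha,\beta)$ (Definition 2.2) are already rational over $L_m$, so that the $\Gamma_m$-invariant description furnished by Proposition 2.13 matches $\D_{\cris}(V)$ as a filtered $L_m$-module and not merely as an underlying vector space. For $m<m(V)$ the filtration on $L_m\otimes_L\D_{\cris}(V)$ is not even defined, and the statement must fail.
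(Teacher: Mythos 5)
The paper does not actually prove this statement; it cites it as [Lemme 3.3.1] of Berger--Breuil, so there is no proof in the source to compare against, and your argument has to stand or fall on its own.

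Your containment argument is sound in outline: applying $\iota_m$ to the inclusion $\r_L^+\otimes_{\calE_L^+}\mathrm{N}(V)\subseteq t^{-h}\r_L^+\otimes_L\D_{\cris}(V)$, combining with $\iota_m(\mathrm{N}(V))\subseteq\bdR^+\otimes_{\Q}V$ (via $\mathrm{N}(V)\subseteq\D^+(V)\subseteq\bb^+\otimes V$ and $\iota_m\colon\bb^+\hookrightarrow\bdR^+$), and using the de Rham description $\mathrm{Fil}^j(L_m\otimes_L\D_{\cris}(V))=\bigl(L_m\otimes_L\D_{\cris}(V)\bigr)\cap t^j\bdR^+\otimes V$ is exactly what is needed. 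The strategy for the isomorphism -- both sides are free $L_m[[t]]$-modules of rank $d$ inside $\Lambda_m$, compare lengths -- is also a legitimate route. The length of $\Lambda_m/F_m$ is correctly $hd-t_H(V)$.

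The genuine gap is the length of $\Lambda_m/S_m$. You assert that "a determinant-of-Frobenius computation then gives $\mathrm{length}_{L_m}(\Lambda_m/S_m)=hd-t_N(V)$," but you never carry it out, and this step is precisely where the content of the theorem lives: it encodes the relation between the position of the Wach module inside $\r_L^+[1/t]\otimes_L\D_{\cris}(V)$, the Frobenius of $\mathrm{N}(V)$, and the Hodge filtration. Reducing to rank one by taking $\wedge^d$ is the right idea, but one then needs the explicit rank-one Wach module (a period of the shape $\prod_{n\geq c}(\varphi^n(q)/p)^{-a}$) together with a careful analysis of $\iota_m$ on that period to see that its $t$-adic valuation is exactly $a=t_H$ once $m\geq m(V)$ -- and this analysis is also the actual reason the threshold $m(V)$ appears, not the rationality of the Gauss sums. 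Note too that the number the computation naturally produces on the source side is $t_H$ (the filtration jumps control the elementary divisors of $\mathrm{N}(V)$ in $\D_{\cris}$), not $t_N$; invoking weak admissibility to convert $t_N$ into $t_H$ is not obviously the mechanism at work. As written, the crux is a black box. Finally, your closing remark that for $m<m(V)$ "the filtration on $L_m\otimes_L\D_{\cris}(V)$ is not even defined" is inaccurate -- the proposition's first half is asserted for all $m\geq0$, and the filtration is defined for every $m$ via $\bdR$; what happens for $m<m(V)$ is rather that the filtration line degenerates over $L_m$ (the Gauss sum $G(\alpha\beta^{-1})$ lies in $L_{m(V)}$ but not in $L_m$), so the two lengths in your comparison are no longer equal and the map fails to be onto.
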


\section{$p$-adic local Langlands correspondence for $\G$}

\subsection{Breuil's $p$-adic local Langlands program of $\G$}
In this subsection, we give a sketch of the motivation of Breuil's $p$-adic local Langlands program of $\G$, and we
show that $B(\alpha)/L(\alpha)$ is the admissible unitary representation corresponds to $V_{\alpha,\beta}$ as announced in section 1. The main source of our exposition is Emerton's article \cite{E06}.

Let $l$ be a prime, and let $V$ be a $2$-dimensional continuous representation of $G_{\mathbb{Q}_l}$ over $\overline{\mathbb{Q}}_p$. Applying either the recipe of Deligne (\cite{D73}) if $l\neq p$, or the recipe of Fontaine (\cite{F94}) if $l=p$ and $V$ is potentially semistable, we may attach $V$ a Frobenius semi-simple Weil-Deligne representation $\sigma^{\mathrm{ss}}(V):\mathrm{WD}_{\mathbb{Q}_l}\ra \mathrm{GL}_2(\overline{\mathbb{Q}}_p)$, which corresponds to an admissible smooth representation $\pi_l(V):=\pi_l(\sigma^{\mathrm{ss}}(V))$ of $\mathrm{GL}_2(\mathbb{Q}_l)$ via the classical local Langlands correspondence $\pi_l$.

In case $l\neq p$, Deligne's procedure to construct $\sigma(V)$ from $V$ is convertible. So if $V$ is Frobenius semi-simple (as is conjectured to be the case when $V$ is the restriction to $G_{\mathbb{Q}_l}$ of a global $p$-adic Galois representation attached to a cuspidal newform), then it is determined up to isomorphism by the associated $\mathrm{GL}_2(\mathbb{Q}_l)$-representation $\pi_l(V)$.

On the other hand, if $l=p$ and $V$ is potentially semistable, then the construction of $\sigma(V)$ involves passing to the potentially semistable Dieudonn\'e module $\D_{\mathrm{pst}}(V)$ of $V$, and then forgetting the Hodge filtration. In general, for a given $(\varphi,N,G_{\Q})$-module, one can equip it with an admissible filtration (a filtration so that it becomes an admissible filtered $(\varphi,N,G_{\Q})$-module) in many different ways. Therefore $V$ is usually not uniquely determined by $\pi_p(V)$.

Breuil conjectured that there should be a $p$-adic local Langlands correspondence which attaches $V$ a $p$-adic Banach space representation $\mathrm{B}(V)$. This representation $\mathrm{B}(V)$ should determine $V$ up to isomorphism. (Breuil's original conjecture limited to the case that $V$ is potentially semistable; Colmez constructed this correspondence for all irreducible $V$ later on, as will be explained in subsection 3.2.) For our purpose, we restrict to the case when $V$ has distinct Hodge-Tate weights $k_1<k_2$. Consider the following locally algebraic representation
\[
\tilde{\pi}_p(V):=\pi^{\mathrm{m}}_p(V)\otimes \mathrm{Sym}^{k_2-k_1-1}L^2\otimes\mathrm{det}^{k_1+1}\otimes((x|x|)^{-1}\circ\det),
\]
which encodes the Hodge-Tate weights of $V$, where $\pi^{\mathrm{m}}_p$ is a modified version of the classical local Langlands correspondence for $\mathrm{GL}_2$ introduced by Breuil (for more details about $\pi^{\mathrm{m}}_p$, see \cite[2.1.1]{E06}). Breuil's idea is that the representation $\mathrm{B}(V)$ should be regarded as a completion of $\tilde{\pi}_p(V)$ with respect to certain $\G$-invariant norm, and that this extra data should determine the Hodge filtration uniquely. Note that our definition of $\tilde{\pi}_p(V)$ differs by a twist of $(x|x|)^{-1}\circ\det$ from the definition of $\tilde{\pi}_p(V)$ given in \cite[3.3.1(7)]{E06}. This because Emerton normalizes the $p$-adic local Langlands correspondence for $\G$ by requiring that the central character of $\mathrm{B}(V)$ is equal to $\det V(x|x|)$ (\cite[3.3.1(2)]{E06}). But the normalization chosen by Breuil and Colmez, which is the one we use in this paper, satisfies that the central character of $\mathrm{B}(V)$ is equal to $\det V(x|x|)^{-1}$.

Back to the case $V_{\alpha,\beta}$, if we view $\alpha,\beta$ as characters of $\mathrm{W}_{\Q}^{\mathrm{ab}}$ via the isomorphism $\Q^\times\cong W_{\Q}^{\mathrm{ab}}$ provided by the local Artin map, then we have $\sigma^{\mathrm{ss}}(V_{\alpha,\beta})=\sigma(V_{\alpha,\beta})=L\cdot e_{\alpha}\oplus L\cdot e_{\beta}$ (with trivial monodromy action) by Fontaine's recipe. Recall that if $\alpha\beta^{-1}\neq|x|^{\pm1}$, then we have $\pi^{\mathrm{m}}_p(L\cdot e_{\alpha}\oplus L\cdot e_{\beta})=(\Ind^{\G}_{\mathrm{B}(\Q)}\beta|x|\otimes\alpha)^{\mathrm{sm}}$; while if $\alpha\beta^{-1}=|x|$ (resp. $|x|^{-1}$), then $\pi^{\mathrm{m}}_p(V_{\alpha,\beta})=(\Ind^{\G}_{\mathrm{B}(\Q)}\alpha|x|\otimes\beta)^{\mathrm{sm}}$ (resp. $(\Ind^{\G}_{\mathrm{B}(\Q)}\beta|x|\otimes\alpha)^{\mathrm{sm}}$).  It follows that
\begin{equation*}
\begin{split}
\tilde{\pi}_p(V_{\alpha,\beta})&=\pi^{\mathrm{m}}_p(V_{\alpha,\beta})\otimes \mathrm{Sym}^{k-2}L^2\otimes\mathrm{det}\otimes((x|x|)^{-1}\circ\det)\\
&=\left\{
         \begin{array}{ll}
       \pi(\beta)  & \text{if $\alpha\neq\beta|x|$}\\
          \pi(\alpha)  & \text{if $\alpha=\beta|x|$}
         \end{array}
       \right.\\
&=\pi(\alpha)
\end{split}
\end{equation*}
by intertwining operators. It is not difficult to see that the Hodge filtration of $\D_{\mathrm{cris}}(V_{\alpha,\beta})$ is the only admissible filtration (up to isomorphism) of the $(\varphi,G_{\Q})$-module $\sigma(V_{\alpha,\beta})$ (in fact, for a two dimensional potentially semistable representation $V$ of $G_{\Q}$, the $(\varphi,N,G_{\Q})$-module $\D_{\mathrm{pst}}(V)$ has a unique admissible filtration if and only if $V$ is irreducible and potentially crystalline and $\sigma(V)$ is abelian). Hence we should have $\mathrm{B}(V_{\alpha,\beta})$ to be the universal unitary completion of $\pi(\alpha)$, i.e. $B(\alpha)/L(\alpha)$ by proposition 1.3, according to Breuil's idea. However, a priori it is not clear that whether $B(\alpha)/L(\alpha)$ is nonzero. Inspired by the work of Colmez \cite{C04}, Berger and Breuil showed that $B(\alpha)/L(\alpha)$ is nonzero by means of $\m$-modules as will be explained in subsection 3.4.

\subsection{Colmez's construction of the $p$-adic local Langlands correspondence for $\G$}
We recall Colmez's construction of the $p$-adic local Langlands correspondence for $\G$ and his identification of locally analytic vectors in this subsection. We refer the reader to \cite{C08} for a complete treatment. We start with the notion of products of $(\varphi,\Gamma)$-modules with open subsets of $\Q$. For this see \cite[III.1]{C08c} for more details.

Let $D$ be a finite free \'etale $\m$-module over $\OO_{\calE_L}$, and let $D^\dagger,D_\rig$ be the corresponding \'etale $\m$-modules over $\calE^\dagger_L,\r_L$, respectively. For any $a\in\Z^\times$, let $\sigma_a$ denote the element of $\Gamma$ such that $\chi(\sigma_a)=a$. We equip $D$ with a $P(\Z)=\left(\begin{smallmatrix}\Z^\times&\Z\\0&1\end{smallmatrix}\right)$-action by setting $\left(\begin{smallmatrix}a&b\\0&1\end{smallmatrix}\right)z=(1+T)^b\sigma_a(z)$ for any $z\in D$. For any subset $i+p^n\Z$ of $\Z$, we set $\Res_{i+p^n\Z}(z)=(1+T)^i\varphi^n\psi^n((1+T)^{-i}z)$. This is independent of the choice of the representative $i$. In general, if $U$ is an open compact subgroup of $\Z$, and if $k$ is sufficiently large such that $U$ is a union of some translations of $p^k\Z$, then the $\OO_L$-linear map $\sum_{a\in U\mathrm{mod}p^k\Z}\Res_{a+p^k\Z}$ is independent of the choice of $k$, and we denote it by $\Res_U$. For any $\OO_L$-submodule $M$ of $D$ stable under $P(\Z)$ and $\psi$-actions, we define the $\OO_L$-submodule $M\boxtimes U$ of $D$ as the image $\Res_UM$, which is stable under the $P(\Z)$-action. For example, it is clear that $D\boxtimes\Z=D$ and $D\boxtimes\Z^\times=D^{\psi=0}$.

If $M$ is further stable under $\varphi$, we define $M\boxtimes\Q$ as the set of sequences $(z^{(n)})_{n\in\mathbb{N}}$ of elements of $M$, such that $\psi(z^{(n+1)})=z^{(n)}$ for any $n\in\mathbb{N}$, and we identify $M$ as a submodule of $M\boxtimes\Q$ by sending $z\in M$ to $(\varphi^n(z))_{n\in\mathbb{N}}$. We extend the $P(\Z)$, $\psi$ and $\varphi$-actions to $M\boxtimes\Q$ by the formulas
\[
\left(\begin{smallmatrix}a&b\\0&1\end{smallmatrix}\right)((z^{(n)})_{n\in\mathbb{N}})
=(\left(\begin{smallmatrix}a&b\\0&1\end{smallmatrix}\right)z^{(n)})_{n\in\mathbb{N}}, \psi((z^{(n)})_{n\in\mathbb{N}})=(z^{(n-1)})_{n\in\mathbb{N}},\varphi((z^{(n)})_{n\in\mathbb{N}})
=(z^{(n+1)})_{n\in\mathbb{N}},
\]
where we put $z^{(-1)}=0$.
For $U$ open compact in $\Z$, we define the map $\Res_U:M\boxtimes\Q\ra M\boxtimes\Q$ by the formula
\[
\Res_U((z^{(n)})_{n\in\mathbb{N}})=(\varphi^n(\Res_U(z^{(0)})))_{n\in\mathbb{N}}\in M\boxtimes\Q,
\]
where $\Res_U(z^{(0)})\in M\boxtimes U\subset M$. Thus $\Res_U(M\boxtimes\Q)\subset M\boxtimes U\subset M$, where $M$ is identified as a submodule of $M\boxtimes\Q$ as above. If $U$ is a open compact subset of $\Q$, and if $k\in\mathbb{N}$ such that $p^kU\subset\Z$, then we define $M\boxtimes U\subset M\boxtimes\Q$ and $\Res_U:M\boxtimes\Q\ra M\boxtimes U$ as
\[
M\boxtimes U=\psi^k(M\boxtimes p^kU)\quad\text{and}\quad \Res_U=\psi^k\circ\Res_{p^kU}\circ\varphi^k;
\]
they are independent of the choice of $k$. Moreover, when $U$ is contained in $\Z$, this definition coincides with the definition above, regarding $U$ as a compact open subset of $\Z$. Note that all the constructions above apply to $D[1/p],D^\dagger,D_\rig$.

From now on, we further suppose $\rank_{\OO_{\calE}}D=2$. Then $\wedge^2D$ is of the form $\OO_{\calE}\otimes\delta_D'$ for some continuous character $\delta_D':\Q^\times\ra \OO_L^\times$. Let $\delta_D$ be the character defined by $\delta_D(z)=(z|z|)^{-1}\delta_D'(z)$. If $g=\left(\begin{smallmatrix}a&b \\c&d\end{smallmatrix}\right)\in\G$ and $U$ is open compact in $\Q$ such that $-\frac{d}{c}$ is not in $U$, then we set $g(i)=\frac{ai+b}{ci+d}$ for any $i\in U$. For any $z\in D\boxtimes U$, the operator $H_g:D\boxtimes U\ra D\boxtimes U$ is defined as
\[
H_g(z)=\lim_{n\ra\infty}\sum_{i\in U\mathrm{mod}p^n\Z}\delta_D(ci+d)\left(\begin{smallmatrix}
 g'(i)&g(i) \\
 0&1
\end{smallmatrix}\right)\Res_{p^n\Z}(\left(\begin{smallmatrix}
 1&-i \\
 0&1
\end{smallmatrix}\right)z).
\]
Here $g'(i)=\frac{ad-bc}{(ci+d)^2}$ is the derivative of $g(i)$. Put $w=\w$. Let $w_{D}$ be the restriction of $H_{w}$ on $D\boxtimes\Z^\times$, hence
\[
w_{D}(z)=\lim_{n\ra\infty}\sum_{i\in \Z^\times\mathrm{mod}p^n\Z}\delta_D(i)\left(\begin{smallmatrix}
 -i^{-2}&i^{-1} \\
 0&1
\end{smallmatrix}\right)\Res_{p^n\Z}(\left(\begin{smallmatrix}
 1&-i \\
 0&1
\end{smallmatrix}\right)z).
\]
We define
\[
D\boxtimes\mathbf{P}^1=\{z=(z_1,z_2)\in D\times D, \Res_{\Z^\times}(z_2)=w_D(\Res_{\Z^\times}(z_1))\}.
\]
For any $U$ open compact in $\Q$ and $z=(z_1,z_2)\in D\boxtimes\mathbf{P}^1$, we define $\Res_U(z)\in D\boxtimes U$ by
\[
\Res_U(z)=\Res_{U\cap\Z}(z_1)+H_w(\Res_{wU\cap p\Z}(z_2))=\Res_{U\cap p\Z}(z_1)+H_w(\Res_{wU\cap\Z}(z_2)).
\]
The last equality holds as $\Res_{\Z^\times}(z_2)=w_D(\Res_{\Z^\times}(z_1))$.
\begin{theorem}(\cite[Th\'eor\`{e}me II.1.4]{C08})
There exists a unique $G$-action on $D\boxtimes\mathbf{P}^1$ such that
\[
\Res_U(g\cdot z)=H_g(\Res_{g^{-1}U\cap\Z}(z_1))+H_{gw}(\Res_{(gw)^{-1}U\cap p\Z}(z_2))
\]
for any $g\in\G$ and $U$ open compact in $\Q$.
\end{theorem}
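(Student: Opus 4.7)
Any $z \in D \boxtimes \mathbf{P}^1$ is recovered from its restrictions: $z_1 = \Res_{\Z}(z)$ and $z_2 = \Res_{w\Z}(z)$, with the compatibility $\Res_{\Z^\times}(z_2) = w_D(\Res_{\Z^\times}(z_1))$ built in; more generally $z$ is determined by the whole family $(\Res_U(z))_U$ as $U$ ranges over open compact subsets of $\Q$. Hence the prescribed formula for $\Res_U(g\cdot z)$ pins down $g\cdot z$, so any action on $D\boxtimes\mathbf{P}^1$ satisfying the stated identity is unique.

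\textbf{Existence, step one: well-definedness.} The first task is to check that the formula yields a bona fide element of $D\boxtimes\mathbf{P}^1$. One must verify: (a) convergence in the weak topology of the defining limits for $H_g(\Res_{g^{-1}U\cap\Z}(z_1))$ and $H_{gw}(\Res_{(gw)^{-1}U\cap p\Z}(z_2))$, which follows from the compactness of $U$ modulo $p^n\Z$ and the continuity of the $\Gamma$- and $\varphi$-actions; (b) additivity, namely $\Res_U(g\cdot z) = \Res_{U_1}(g\cdot z) + \Res_{U_2}(g\cdot z)$ whenever $U = U_1\sqcup U_2$, which is baked into the definitions of $H_g$ and $\Res$; and (c) the crucial gluing relation $\Res_{\Z^\times}((g\cdot z)_2) = w_D(\Res_{\Z^\times}((g\cdot z)_1))$, which is precisely what distinguishes $D\boxtimes\mathbf{P}^1$ from $D\times D$.

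\textbf{Existence, step two: group law.} Once $g\cdot z$ is well-defined, the cocycle identity $(g_1 g_2)\cdot z = g_1 \cdot (g_2 \cdot z)$ has to be verified. Here one exploits the Bruhat decomposition: $\G$ is generated by $\B$ and $w$. On $\B$ the formula recovers the natural extension of the $P(\Z)$-action on $D$ to $D\boxtimes\Q$ through $\varphi$ and $\psi$, so the action is already known to be associative there. For $g = w$ the formula reduces (up to the twist by $\delta_D$) to the swap $(z_1,z_2)\mapsto(z_2,z_1)$, and $w^2$ acting as $\delta_D(-1)$ follows from $w_D\circ w_D$ acting as a central scalar on $D\boxtimes\Z^\times$. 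The remaining mixed relations are of the form $b\cdot(w\cdot z) = (bw)\cdot z$ for $b\in\B$, and $w\cdot(b\cdot z)=(wb)\cdot z$, which unpack via the same two-summand formula.

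\textbf{Main obstacle.} The hard part is the gluing condition (c) together with the mixed-generator associativity in step two. Both are proved by decomposing the open compact $U$ into the disjoint pieces $U\cap\Z$, $U\cap p^{-1}\Z^\times$ and $U\cap p\Z$, then matching, piece by piece, the $H_g$-contribution against the $H_{gw}$-contribution via the defining identity $\Res_{\Z^\times}(z_2) = w_D(\Res_{\Z^\times}(z_1))$. The key algebraic input is that $H_w$ intertwines the $P(\Z)$-action on $D\boxtimes\Z^\times$ with its conjugate by the involution $i\mapsto 1/i$ on $\Z^\times$, which reflects the geometric fact that $w$ exchanges $0$ and $\infty$ on $\mathbf{P}^1(\Q)$. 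Once these compatibilities are checked on the generators of $\G$, the action extends uniquely and formally to all of $\G$ by the uniqueness argument in the first paragraph.
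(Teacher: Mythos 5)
This theorem is cited from Colmez's \cite[Th\'eor\`eme II.1.4]{C08} and the present paper does not reprove it, so the comparison has to be with Colmez's own argument. Your outline gets the high-level shape right (uniqueness because $z$ is determined by its family of restrictions; existence by verifying well-definedness and the group law), but it contains two concrete errors and omits the central technical ingredient. First, the claim that ``$w^2$ acts as $\delta_D(-1)$'' is false: with $w=\left(\begin{smallmatrix}0&1\\1&0\end{smallmatrix}\right)$ one has $w^2=\mathrm{Id}$, so $w^2$ must act trivially, and what is actually required is that $w_D$ be an honest \emph{involution} on $D\boxtimes\Z^\times$ --- a genuine lemma (the $H_w\circ H_w=\mathrm{Id}$ computation), not a formal consequence of acting ``as a central scalar.'' Second, the ``disjoint pieces $U\cap\Z$, $U\cap p^{-1}\Z^\times$, $U\cap p\Z$'' are not disjoint, since $U\cap p\Z\subset U\cap\Z$; the decomposition that actually feeds the definition of $\Res_U$ on $D\boxtimes\mathbf{P}^1$ is $U=(U\cap\Z)\sqcup(U\setminus\Z)$, with $U\setminus\Z$ corresponding under $w$ to $wU\cap p\Z$.

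More substantively, reducing to Bruhat generators does not by itself yield the group law, and what your argument is missing is the multiplicativity $H_{g_1g_2}=H_{g_1}\circ H_{g_2}$ on the relevant $D\boxtimes U$. This cocycle relation is where the analytic work lives: one must control the limits defining $H_g$ in the weak topology well enough that they converge and commute with applying a further $H$-operator. It is this relation --- not a case-by-case check on generators --- that simultaneously gives the gluing condition that $(g\cdot z)_1$ and $(g\cdot z)_2$ cohere across $\Z^\times$ and the associativity of the prospective action; the involutivity of $w_D$ and the $u_b$-type relations in Proposition 3.2(v) are then corollaries of it. Your ``main obstacle'' paragraph gestures at the right geometric picture ($w$ swaps $0$ and $\infty$ on $\mathbf{P}^1(\Q)$), but any proof has to ride on this multiplicativity of $g\mapsto H_g$, and as written yours does not; checking relations among generators ad hoc would simply reproduce the cocycle computation in pieces.
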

The following proposition describes the $G$-action more precisely.
\begin{proposition}(\cite[Proposition II.1.8]{C08})
The $\G$-action on $D\boxtimes\mathbf{P}^1$ satisfies that if $z=(z_1,z_2)$, then
\begin{enumerate}
\item[(i)]$\w z=(z_2,z_1)$;
\item[(ii)]if $a\in\Q^\times$, then $\left(\begin{smallmatrix}
 a&0 \\
 0&a
\end{smallmatrix}\right)z=(\delta_D(a)z_1,\delta_D(a)z_2)$;
\item[(iii)]if $a\in\Z^\times$, then $\left(\begin{smallmatrix}
 a&0 \\
 0&1
\end{smallmatrix}\right)z=(\left(\begin{smallmatrix}
 a&0 \\
 0&1
\end{smallmatrix}\right)z_1,\delta_D(a)\left(\begin{smallmatrix}a^{-1}&0\\0&1\end{smallmatrix}\right)z_2)$;
\item[(iv)]if $z'=\left(\begin{smallmatrix}p&0\\0&1\end{smallmatrix}\right)z$, then $\Res_{p\Z}z'=\left(\begin{smallmatrix}p&0\\0&1\end{smallmatrix}\right)z_1$ and $\Res_{\Z}\w z'=\delta_D(p)\psi(z_2)$;
\item[(v)]if $b\in p\Z$, and if $z'=\left(\begin{smallmatrix}1&b\\0&1\end{smallmatrix}\right)z$, then $\Res_{\Z}z'=\left(\begin{smallmatrix}1&b\\0&1\end{smallmatrix}\right)z_1$ and $\Res_{p\Z}\w z'=u_b(\Res_{p\Z}(z_2))$, where $u_b=\delta^{-1}(1+b)\left(\begin{smallmatrix}1&-1\\0&1\end{smallmatrix}\right)\circ w_D\circ\left(\begin{smallmatrix}(1+b)^2&b(1+b)\\0&1\end{smallmatrix}\right)\circ w_D\circ \left(\begin{smallmatrix}1&1/(1+b)\\0&1\end{smallmatrix}\right)$ on $D\boxtimes p\Z$.
\end{enumerate}
\end{proposition}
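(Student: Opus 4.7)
The plan is to derive each of parts (i)--(v) directly from the defining formula for the $\G$-action recorded in Theorem 3.1,
\[
\Res_U(g\cdot z)=H_g(\Res_{g^{-1}U\cap\Z}(z_1))+H_{gw}(\Res_{(gw)^{-1}U\cap p\Z}(z_2)),
\]
by specializing $g$ and $U$ and using the explicit formulas for $H_g$, $w_D$, and $\Res_U$ stated just before. Since an element of $D\boxtimes\mathbf{P}^1$ is determined by the pair $(\Res_\Z(g\cdot z),\Res_\Z(\w g\cdot z))$, each claim reduces to computing these two components for the relevant group element.

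Parts (i)--(iii) are essentially formal. For (i), take $g=\w$; using $\w^2=I$ and $\w\cdot\Z\cap\Z=\Z^\times$, the two summands reassemble, via the compatibility $\Res_{\Z^\times}(z_2)=w_D(\Res_{\Z^\times}(z_1))$ defining $D\boxtimes\mathbf{P}^1$, into $z_2$, and by symmetry the second component is $z_1$. For (ii), a scalar matrix fixes every point of $\mathbf{P}^1$, so the definition of $H_g$ degenerates to multiplication by $\delta_D(a)$ on each component, using $\delta_D'(x)=(x|x|)\,\delta_D(x)$. Part (iii) then follows from (i), (ii), and the direct $P(\Z)$-action on $D$ via the matrix identity $\w\left(\begin{smallmatrix}a&0\\0&1\end{smallmatrix}\right)\w=\left(\begin{smallmatrix}a&0\\0&a\end{smallmatrix}\right)\left(\begin{smallmatrix}a^{-1}&0\\0&1\end{smallmatrix}\right)$.

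For (iv), specialize to $U=p\Z$: the intersection $(\left(\begin{smallmatrix}p&0\\0&1\end{smallmatrix}\right)\w)^{-1}p\Z\cap p\Z$ is empty, so the $H_{gw}$-summand vanishes and only $H_g$ contributes, giving $\left(\begin{smallmatrix}p&0\\0&1\end{smallmatrix}\right)z_1$. The second formula is obtained by first applying (i) to rewrite $\w z'=\left(\begin{smallmatrix}1&0\\0&p\end{smallmatrix}\right)\cdot\w z$, decomposing $\left(\begin{smallmatrix}1&0\\0&p\end{smallmatrix}\right)=\left(\begin{smallmatrix}p&0\\0&p\end{smallmatrix}\right)\left(\begin{smallmatrix}p^{-1}&0\\0&1\end{smallmatrix}\right)$, and recognizing $\left(\begin{smallmatrix}p^{-1}&0\\0&1\end{smallmatrix}\right)$ as the operator that implements $\psi$ after $\Res_\Z$, by the construction of $\psi$ as the left $\varphi$-inverse on $D$.

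The main obstacle is part (v). The first equation is immediate since $b\in\Z$ places $g$ in $P(\Z)$, which acts directly on $z_1$. For the second, rewrite $\Res_{p\Z}(\w z')=\Res_{p\Z}(g'\cdot\w z)$ with $g'=\w g\w=\left(\begin{smallmatrix}1&0\\b&1\end{smallmatrix}\right)$; a short check shows $(g'\w)^{-1}p\Z\cap p\Z$ is concentrated at a single point (the preimage of $\infty$), so the $H_{g'\w}$-summand vanishes and the task reduces to identifying $H_{g'}$ restricted to $D\boxtimes p\Z$ with $u_b$. This is the computational core of the proposition: one expands the defining infinite sum for $H_{g'}$ and reassembles it, via two applications of the compatibility $\Res_{\Z^\times}(z_2)=w_D(\Res_{\Z^\times}(z_1))$, into the stated composition; the two copies of $w_D$ arise in this way, the prefactor $\delta^{-1}(1+b)=\delta_D(1+b)^{-1}$ comes from the central Jacobian factor $\delta_D(ci+d)(ci+d)^{-2}$ in the definition of $H_{g'}$, and the three upper-triangular factors in $u_b$ arise from rewriting the shifts $\left(\begin{smallmatrix}g'(i)'&g'(i)\\0&1\end{smallmatrix}\right)$ in a form compatible with $w_D$. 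I expect the bulk of the work to be bookkeeping: tracking the $\delta_D$-twists, the supports of the $\Res$-operators in the telescoping limits defining $H_{g'}$, and verifying convergence. The computation is essentially forced by Theorem 3.1 but intricate enough that each summand must be followed carefully.
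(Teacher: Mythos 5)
First, a structural note: the paper does not prove this proposition at all. It is stated as a citation of Colmez (Proposition II.1.8 of \cite{C08}), so there is no ``paper's own proof'' to compare against. Your proposal is an attempt to prove it from scratch, and I will assess it on those terms.

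Parts (i)--(iv) of your plan are essentially correct: specialize the defining formula for $\Res_U(g\cdot z)$, use the support computations to kill one of the two summands, and in (iii)--(iv) chain the earlier parts with elementary matrix identities. One small inaccuracy: in (v) you say $(g'w)^{-1}p\Z\cap p\Z$ is ``concentrated at a single point (the preimage of $\infty$).'' Direct computation shows it is empty: with $g'=\left(\begin{smallmatrix}1&0\\b&1\end{smallmatrix}\right)$ and $b\in p\Z$, $(g'w)(i)=\frac{1}{i+b}$, so $(g'w)^{-1}p\Z$ requires $\val(i+b)\leq -1$, impossible for $i\in p\Z$.

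The genuine gap is in your explanation of the core identity $H_{g'}=u_b$ on $D\boxtimes p\Z$. You propose to ``expand the defining infinite sum for $H_{g'}$ and reassemble it, via two applications of the compatibility $\Res_{\Z^\times}(z_2)=w_D(\Res_{\Z^\times}(z_1))$, into the stated composition,'' and you claim ``the two copies of $w_D$ arise in this way.'' This cannot be right. The compatibility relation is a condition on the pair $(z_1,z_2)\in D\boxtimes\mathbf{P}^1$, not a property of arbitrary elements of $D\boxtimes p\Z$, whereas the claimed equality $H_{g'}=u_b$ is an identity of operators on the module $D\boxtimes p\Z$ and must hold for any $y\in D\boxtimes p\Z$, independent of whether $y$ arises as $\Res_{p\Z}(z_2)$ for some $z\in D\boxtimes\mathbf{P}^1$. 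The compatibility is used exactly once in part (v), namely implicitly when you rewrite $\w z'=g'\cdot\w z$ and treat $\w z=(z_2,z_1)$ as a legitimate element of $D\boxtimes\mathbf{P}^1$. The two copies of $w_D$ in $u_b$ come instead from a Bruhat-type matrix decomposition of the lower-triangular matrix $g'$ into upper-triangular pieces and two $w$'s; a check shows
\[
\left(\begin{smallmatrix}1&1/(1+b)\\0&1\end{smallmatrix}\right)\,
w\,
\left(\begin{smallmatrix}(1+b)^2&b(1+b)\\0&1\end{smallmatrix}\right)\,
w\,
\left(\begin{smallmatrix}1&-1\\0&1\end{smallmatrix}\right)
=(1+b)\left(\begin{smallmatrix}1&0\\b&1\end{smallmatrix}\right),
\]
with the central scalar $(1+b)$ absorbed by the factor $\delta_D(1+b)^{-1}$. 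To convert this matrix identity into the operator identity for $H$, what you actually need is the multiplicativity (cocycle) property $H_{g_1g_2}=H_{g_1}\circ H_{g_2}$ on matching domains, which is a nontrivial convergence/telescoping statement about the defining limits and requires its own proof (Colmez establishes it separately). Your proposal neither invokes this property nor supplies a substitute argument, so part (v) as written has a real hole. (As a side remark, the order of composition in the displayed formula for $u_b$ does matter: with the standard ``rightmost first'' reading of $\circ$, the matrix product as written in the statement does not reduce to a scalar multiple of $\left(\begin{smallmatrix}1&0\\b&1\end{smallmatrix}\right)$; it does with the order I displayed above, so you should be careful which convention the source uses.)
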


For any $z\in D\boxtimes\mathbf{P}^1$, by \cite[Proposition II.1.14(i)]{C08}, $(\Res_{\Z}\left(\begin{smallmatrix}p^n&0\\0&1\end{smallmatrix}\right))_{n\in\mathbb{N}}$ is an element of $D\boxtimes\Q$; we denote this element by $\Res_{\Q}z$. We define $D^\natural\boxtimes\mathbf{P}^1=\{z\in D\boxtimes\mathbf{P}^1,\Res_{\Q}z\in D^\natural\boxtimes\Q\}$.

Let $\mathrm{Rep}_{\mathrm{tors}}\G$ be the category of smooth $\OO_L[\G]$-modules which are of finite length and admit central characters. Let $\mathrm{Rep}_{\OO_L}\G$ be the category of $\OO_L[\G]$-modules $\Pi$ which are separated and complete for the $p$-adic topology, $p$-torsion free, and satisfies $\Pi/p^n\Pi\in\mathrm{Rep}_{\mathrm{tors}}\G$ for any $n\in\mathbb{N}$.

\begin{theorem}(\cite[Th\'eor\`eme II.3.1]{C08})
Keep notations as above. The following are true.
\begin{enumerate}
\item[(i)]The submodule $D^\natural\boxtimes\mathbf{P}^1$ of $D\boxtimes\mathbf{P}^1$ is stable under $\G$.
\item[(ii)]The representation $\Pi(D)=(D\boxtimes\mathbf{P}^1)/(D^\natural\boxtimes\mathbf{P}^1)$ is an object of $\mathrm{Rep}_{\OO_L}\G$ with central character $\delta_D$, and $D^\natural\boxtimes\mathbf{P}^1$ is naturally isomorphic to $\Pi(D)^\ast\otimes(\delta_D\circ\det)$. Thus we have the following exact sequence
\[
0\longrightarrow\Pi(D)^\ast\otimes(\delta_D\circ\det)\longrightarrow D\boxtimes\mathbf{P}^1\longrightarrow\Pi(D)\longrightarrow0.
\]
\end{enumerate}
\end{theorem}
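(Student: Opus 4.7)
The plan is to prove the two parts in succession. For part (i), I use the fact that $\G$ is generated by the Borel subgroup $\mathrm{B}(\Q)$ together with $w = \w$, so $\G$-stability of $D^\natural\boxtimes\mathbf{P}^1$ reduces to two checks. The first is stability under $\mathrm{B}(\Q)$, which can be read off from cases (ii)--(v) of Proposition 2.15. There the action on the $z_1$-component is expressed through $P(\Z) = \left(\begin{smallmatrix}\Z^\times & \Z \\ 0 & 1\end{smallmatrix}\right)$, the matrix $\left(\begin{smallmatrix}p & 0 \\ 0 & 1\end{smallmatrix}\right)$ (which acts as $\varphi$ on $z_1$), and translations by $p\Z$; the module $D^\natural$ is stable under all of these by the characterization in Proposition 2.11 combined with $\psi(D^\natural) = D^\natural$ and $\varphi(D^\natural) \subseteq D^\natural$. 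The corresponding compatibility on the $z_2$-component is either forced by the defining constraint $\Res_{\Z^\times}(z_2) = w_D(\Res_{\Z^\times}(z_1))$ or expressed through the shifted operator $u_b$ of Proposition 2.15(v).

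The second and more delicate check is stability under $w$. By Proposition 2.15(i), $w$ swaps the two components, so given $z = (z_1, z_2) \in D^\natural\boxtimes\mathbf{P}^1$, the question is whether $\Res_\Q(wz)$ again lies in $D^\natural\boxtimes\Q$. Combining this with the explicit formulas for $\Res_\Z$ and $\Res_{p\Z}$ of a translated element, the problem reduces to proving that the involution $w_D$ on $D\boxtimes\Z^\times = D^{\psi=0}$ preserves the trace of $D^\natural$ inside $D^{\psi=0}$. Using the defining limit formula for $w_D$ and the minimality characterization of $D^\natural$ from Propositions 2.10 and 2.11, one shows that $w_D$ sends a $\psi$-stable treillis to another $\psi$-stable treillis of the same type, and uniqueness of the minimal such treillis then forces $w_D$ to preserve $D^\natural \cap D^{\psi=0}$.

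For part (ii), once $\G$-stability is in hand, $\Pi(D) = (D\boxtimes\mathbf{P}^1)/(D^\natural\boxtimes\mathbf{P}^1)$ is an $\OO_L[\G]$-module whose central character is $\delta_D$ directly from Proposition 2.15(ii). Admissibility is checked modulo $p^n$: the reduction $(D/p^nD)\boxtimes\mathbf{P}^1$ is an increasing union of its $\OO_L[[T]]$-finite-length smooth $\G$-subrepresentations obtained by $\Res$-restricting to compact opens of $\Q$, and quotienting by $(D^\natural/p^nD^\natural)\boxtimes\mathbf{P}^1$ yields an object of $\mathrm{Rep}_{\mathrm{tors}}\G$ with the prescribed central character. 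To get $D^\natural\boxtimes\mathbf{P}^1 \cong \Pi(D)^\ast \otimes(\delta_D\circ\det)$, I would construct a $\G$-equivariant topological pairing
\[
(D\boxtimes\mathbf{P}^1) \times (\check{D}\boxtimes\mathbf{P}^1) \longrightarrow \OO_L
\]
from the natural residue pairing on $\OO_{\calE_L}$ between $D$ and its dual $\m$-module $\check{D}$, and verify that $D^\natural\boxtimes\mathbf{P}^1$ is the orthogonal of $\check{D}^\natural\boxtimes\mathbf{P}^1$ under this pairing, twisted appropriately by $\delta_D\circ\det$; the exact sequence then follows from the quotient.

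The main obstacle is the stability under $w$ in part (i): it is the only place where one must genuinely reconcile the group-theoretic definition of $w_D$ (as a delicate limit of matrix operations on $D$) with the abstract, minimality-based description of $D^\natural$. The duality in part (ii) is nontrivial but should follow formally once one writes down the residue pairing and exploits the manifest symmetry of the $\boxtimes\mathbf{P}^1$ construction under swapping $D$ with $\check{D}$ and $z_1$ with $z_2$.
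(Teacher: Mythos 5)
This theorem is not proved in the paper you are reading: it is quoted verbatim from Colmez, \cite[Th\'eor\`eme II.3.1]{C08}, and the paper makes no attempt to reprove it. So there is no proof in the paper against which your sketch can be compared; what you have written is an outline of how one might approach Colmez's own theorem.

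As an outline of Colmez's argument it is in the right orbit but elides the genuinely hard steps. For part (i), the reduction to $\mathrm{B}(\Q)$ and $w$ is correct, but your treatment of $w$-stability does not go through as stated: $w_D$ is only defined on $D\boxtimes\Z^\times = D^{\psi=0}$, and $D^\natural\cap D^{\psi=0}$ is not itself a treillis, so the "uniqueness of the minimal $\psi$-stable treillis" characterization of $D^\natural$ cannot be applied to it directly. Moreover the set $D^\natural\boxtimes\mathbf{P}^1$ is defined by the condition $\Res_\Q z\in D^\natural\boxtimes\Q$, not by requiring each component to lie in $D^\natural$, so that even the $\mathrm{B}(\Q)$-stability is not an immediate consequence of $\psi(D^\natural)=D^\natural$ and $\varphi(D^\natural)\subset D^\natural$. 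The real proof involves sandwiching $D^\natural\boxtimes\mathbf{P}^1$ between two explicitly described $\G$-stable submodules and then using the boundedness of the limit defining $w_D$ together with Proposition 2.10(iii)-(iv). For part (ii), admissibility is the single hardest ingredient of Colmez's whole construction; the one-sentence dismissal via "increasing union of finite-length smooth subrepresentations" hides the substance (one needs Colmez's finiteness theorems for torsion $\m$-modules and a careful d\'evissage). The duality statement, by contrast, is correctly located: it does follow from the residue pairing and the orthogonality of $D^\natural\boxtimes\mathbf{P}^1$ with $\check D^\natural\boxtimes\mathbf{P}^1$, combined with $\check D\cong D\otimes\delta_D^{-1}$, and this is how the paper records it (Theorems 3.8 and 3.9).
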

We denote $\Pi(\check{D})$ by $\check{\Pi}(D)$. Here $\check{D}=\mathrm{Hom}_{\r_L}(D,\calE_L\frac{dT}{1+T})$ is the Tate dual of $D$, where the $\varphi,\Gamma$-actions on $dT/(1+T)$ are defined as $\varphi(dT/(1+T))=dT/(1+T),\gamma(dT/(1+T))=\chi(\gamma)dT/(1+T)$. It is clear that if $D=\D(V)$, then $\check{D}=\D(\check{V})$; here we denote by $\check{V}$ the Tate dual of $V$. Note that $\check{D}\cong D\otimes\delta_D^{-1}$. It follows that $\check{\Pi}(D)\cong\Pi(D)\otimes(\delta_D^{-1}\circ\det)$; so $D^\natural\boxtimes\mathbf{P}^1$ is naturally isomorphic to $(\check{\Pi}(D))^\ast$. The $w_D$-action induces an involution on $D[1/p]\boxtimes\Z^\times$, and the $\G$-action naturally extends to
\[
D[1/p]\boxtimes\mathbf{P}^1=\{(z_1,z_2)\in D[1/p]\times D[1/p], w_D(\Res_{\Z^\times}z_1)=\Res_{\Z^\times}z_2\}.
\]
We set $\Pi(D[1/p])=\Pi(D)[1/p]$ and $\check{\Pi}(D[1/p])=\check{\Pi}(D)[1/p]$; they are admissible unitary representations of $\G$.

If $C$ is a pro-$p$ cyclic group, and if $c$ is a topological generator of $C$, the set of $g(c-1)$ for $g(T)\in\OO_L[[T]]$ is independent of the choice of $c$; the resulting ring is denoted by $\Lambda_L(C)$. For any ring $R$ of $\calE^{(0,r]}_L$, $\calE_L^\dagger$, $\r^+_L$, $\calE_L^{]0,r]}$ and $\r_L$, we define $R(C)$ similarly. Let $\Delta$ be the torsion subgroup of $\Gamma$, then $\Gamma=\Delta\times\Gamma_1$.
We define $\Lambda_L(\Gamma)=\OO_L[\Delta]\otimes\Lambda_L(\Gamma_1)$, and we define $R(\Gamma)=L[\Delta]\otimes R(\Gamma_1)$.
For any $h\geq1$, it is clear that $\Lambda_L(\Gamma)$ (resp. $R(\Gamma)$) is finite free over $\Lambda_L(\Gamma_h)$ (resp. $R(\Gamma_h)$), and $\Lambda_L(\Gamma)$ (resp. $R(\Gamma)$) has a $\Lambda_L(\Gamma_h)$-basis (resp. $R(\Gamma_h)$-basis) consisting of elements in $\Gamma$. For a finite free module $M$ over $\Lambda_L(\Gamma)$ (resp. $R(\Gamma)$) equipped with a continuous semilinear $\Gamma$-action, we define a continuous action of $\Lambda_L(\Gamma_1)$ (resp. $R(\Gamma_1)$) on $M$ by setting
\[
(\sum a_i(\gamma_1-1)^i)(m)=\sum a_i((\gamma_1-1)^i(m))
\]
where $\gamma_1$ is a topological generator of $\Gamma_1$. We further extend this action to a continuous action of $\Lambda_L(\Gamma)$ (resp. $R(\Gamma)$) on $M$ by setting $(f\otimes g)(m)=f(g(m))$.

Suppose $\mathrm{V}(D^\dagger)=V$, and $\dim_LV=d$. Let $D^{\dagger,r}=\D^{\dagger,r}(V)$ and $D^{\dagger,r}_\rig=\D^{\dagger,r}_\rig(V)$. We have the following result \cite[Th\'eor\`eme V.1.12]{C08}.
\begin{theorem}For $r$ sufficiently large, the following are true.
\begin{enumerate}
\item[(i)]If $s\geq r$, then $D^{\dagger,s}\boxtimes\Z^\times$ is a free $\calE_L^{(0,s]}$-module of rank $d$ generated by $D^{\dagger,r}\boxtimes\Z^\times$;
\item[(ii)]If $s\geq r$, then $D^{\dagger,s}_\rig\boxtimes\Z^\times$ is a free $\calE_L^{]0,s]}$-module of rank $d$ generated by $D^{\dagger,r}\boxtimes\Z^\times$.
\end{enumerate}
As a consequence, $D^\dagger\boxtimes\Z^\times$ is a free $\calE^\dagger_L(\Gamma)$-module of rank $d$, and \[
D_\rig\boxtimes\Z^\times=\r_L(\Gamma)\otimes_{\calE^\dagger_L(\Gamma)}D^\dagger\boxtimes\Z^\times
\]
is a free $\r_L(\Gamma)$-module of rank $d$.
\end{theorem}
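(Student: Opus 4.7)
The plan is to reduce everything to the rank one trivial module $D^{\dagger,r}=\calE_L^{(0,r]}$ and then to leverage Theorem 2.7 to propagate the result to arbitrary $D$. Under the identifications recalled in the paper, $D^{\dagger,r}\boxtimes\Z^\times$ coincides with $(D^{\dagger,r})^{\psi=0}$ via $z\mapsto z-\varphi\psi(z)$ (since $\Res_{p\Z}=\varphi\psi$ and $\Res_\Z=\mathrm{id}$), and analogously for $D^{\dagger,s}_\rig$. So the task is to understand $(D^{\dagger,r})^{\psi=0}$ as a module over $\calE_L^{(0,r]}(\Gamma)$.

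First I would treat the rank one case by establishing the Mellin transform isomorphism
\[
\mathrm{Mel}\colon\calE_L^{(0,r]}(\Gamma)\longrightarrow (\calE_L^{(0,r]})^{\psi=0},\qquad \lambda\longmapsto \lambda\cdot(1+T).
\]
Surjectivity amounts to interpolating any $f\in(\calE_L^{(0,r]})^{\psi=0}$, which we write via the étale decomposition as $f=\sum_{i=1}^{p-1}(1+T)^i\varphi(g_i)$, by a distribution: split $\lambda$ according to the cosets $\Gamma/\Gamma_1$, and within each coset use the Amice-type identification $\Lambda_L(\Gamma_1)\cong\OO_L[[X]]$ via $\gamma_0-1\mapsto X$ (where $\gamma_0$ is a topological generator of $\Gamma_1$). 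The analytic heart of the step is comparing $\|\cdot\|_r$ on $(\calE_L^{(0,r]})^{\psi=0}$ to a suitable norm on $\calE_L^{(0,r]}(\Gamma_1)$; once the two-sided inequality is proved, both $\mathrm{Mel}$ and its inverse are continuous, and injectivity is automatic from linear independence of the characters $(1+T)^a$ for $a\in\Z^\times$. At the level of $\OO_{\calE_L}$ this is Fontaine's classical theorem, and the new content is to track convergence radii.

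Next I would handle general $D$ by transferring a basis. Take $r\geq r(V)$ large enough that $D^{\dagger,r}$ is free of rank $d$ over $\calE_L^{(0,r]}$ (Theorem 2.7), with basis $e_1,\dots,e_d$, and set
\[
f_i:=\Res_{\Z^\times}\bigl((1+T)e_i\bigr)=(1+T)e_i-\varphi\psi\bigl((1+T)e_i\bigr)\in(D^{\dagger,r})^{\psi=0}.
\]
The claim is that $(f_1,\dots,f_d)$ is a basis of $(D^{\dagger,r})^{\psi=0}$ over $\calE_L^{(0,r]}(\Gamma)$. Writing any element of $(D^{\dagger,r})^{\psi=0}$ in the étale decomposition $\bigoplus_{i=1}^{p-1}(1+T)^i\varphi(D^{\dagger,r/p})$ and expanding each coefficient in the basis $e_1,\dots,e_d$, the rank one Mellin transform applied componentwise produces the unique $\lambda_i\in\calE_L^{(0,r]}(\Gamma)$ with $\sum_i\lambda_i f_i$ equal to the given element. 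Freeness is inherited from the rank one case by the same componentwise argument, and independence of $s\geq r$ (the assertion that the same generators work for larger radii) follows from the compatibility $\calE_L^{(0,s]}\otimes_{\calE_L^{(0,r]}}(\mathrm{Mel}\text{ at radius }r)=\mathrm{Mel}\text{ at radius }s$.

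Finally, the Robba version $D^{\dagger,s}_\rig\boxtimes\Z^\times$ follows by extending scalars along $\calE_L^{(0,s]}\hookrightarrow\calE_L^{]0,s]}$, since the Mellin transform is obviously compatible with this base change; then passing to the direct limit over $s$ gives the freeness of $D^\dagger\boxtimes\Z^\times$ over $\calE_L^\dagger(\Gamma)$ and the asserted identification
\[
D_\rig\boxtimes\Z^\times=\r_L(\Gamma)\otimes_{\calE_L^\dagger(\Gamma)}(D^\dagger\boxtimes\Z^\times).
\]
The main obstacle is Step 1: establishing the two-sided norm equivalence on $\calE_L^{(0,r]}(\Gamma_1)$ that makes Mellin a topological isomorphism at each finite radius. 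Upper bounds (showing $\mathrm{Mel}$ is continuous) are straightforward, but the lower bound needed for continuity of the inverse requires a careful analysis of how the substitution $\gamma_0\mapsto(1+T)^{\chi(\gamma_0)}$ distorts Gauss norms, and is the delicate part of the argument.
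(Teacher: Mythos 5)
The theorem you are trying to prove is Theorem 3.4, which the paper does not prove: it is stated as an import from Colmez's \cite[Th\'eor\`eme V.1.12]{C08}. So there is no ``paper's own proof'' to match against; you are independently attacking a cited result. That is fine in principle, but your argument has a genuine gap at the crucial reduction step.

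The problem is in Step 2, the ``transferring a basis'' reduction of the rank-$d$ case to $d$ copies of the rank-one case. You choose an $\calE_L^{(0,r]}$-basis $e_1,\dots,e_d$ of $D^{\dagger,r}$, set $f_i=\Res_{\Z^\times}((1+T)e_i)$, and propose to solve for coefficients $\lambda_i\in\calE_L^{(0,r]}(\Gamma)$ by ``the rank one Mellin transform applied componentwise.'' But the $\calE_L^{(0,r]}(\Gamma)$-module structure on $(D^{\dagger,r})^{\psi=0}$ is defined via the \emph{semilinear} $\Gamma$-action on $D$, not the naive action on coefficients. For a generic étale $(\varphi,\Gamma)$-module of rank $d\geq 2$ (and in particular for the irreducible $D$'s this paper cares about), neither $\varphi$ nor $\Gamma$ acts diagonally in the basis $e_1,\dots,e_d$: $\varphi(e_j)=\sum_l A_{lj}e_l$ and $\gamma(e_j)=\sum_l B_{lj}(\gamma)e_l$ with nontrivial matrices $A$ and $B(\gamma)$. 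Hence a decomposition $D^{\dagger,r}\cong\bigoplus_j\calE_L^{(0,r]}e_j$ is \emph{not} a decomposition of $(\varphi,\Gamma)$-modules, $(D^{\dagger,r})^{\psi=0}$ does not split as $\bigoplus_j(\calE_L^{(0,r]}e_j)^{\psi=0}$, and the desired coefficients $\lambda_i$ cannot be obtained by feeding each coordinate into the rank-one Mellin isomorphism. The ``componentwise'' step, which is the only place where the rank-one analysis is leveraged, is simply not available.

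The same objection already bites for rank one: if $D^{\dagger,r}=\calE_L^{(0,r]}(\delta)$ with $\delta$ nontrivial, the $\Gamma$-action (and hence the $\calE_L^{(0,r]}(\Gamma)$-module structure on the $\psi=0$ part) is a twist of the trivial one. This particular case can be untwisted by an explicit change of variable, but that device already hints that the proof must engage with the actual $(\varphi,\Gamma)$-structure of $D$ rather than a bare $\calE_L^{(0,r]}$-basis. Colmez's actual argument runs through the $\Lambda_L(\Gamma)$-module structure of $(1-\varphi)D^{\psi=1}$ (which is free of rank $d$ by the theory of Iwasawa cohomology, \cite[Th\'eor\`eme I.5.2]{C08}) and then descends to the overconvergent and Robba levels by controlling how $(1-\varphi)D^{\psi=1}$ sits inside $D^{\dagger,r}\boxtimes\Z^\times$. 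The first step of your plan (the rank-one Mellin transform with quantitative norm control at finite radius) is a reasonable and indeed necessary ingredient, but between it and the general statement you need the genuinely rank-$d$ input about $D^{\psi=1}$, not a formal basis-transport.

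Incidentally, Proposition 3.7 in the paper (which is \cite[Corollaire V.1.6(iii)]{C08}) is stated precisely to make the consequence of the present theorem usable later; it would not be available to you as an ingredient if you were deriving the theorem from scratch, so the logical dependencies would need to be untangled carefully if you pursue this further.
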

The following proposition follows from \cite[Lemme V.2.4]{C08}
\begin{proposition}
$D^\dagger\boxtimes\Z^\times$ is stable under the action of $w_D$.
\end{proposition}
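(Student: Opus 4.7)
The plan is a reduction to Colmez's analytic estimate in \cite[Lemme V.2.4]{C08}, which is designed for exactly this situation. By Theorem 3.4, for all sufficiently large $r$ the module $D^{\dagger,r}\boxtimes\Z^\times$ is a finite free $\calE_L^{(0,r]}$-module, and $D^{\dagger}\boxtimes\Z^\times=\bigcup_{r}D^{\dagger,r}\boxtimes\Z^\times$. It therefore suffices to prove that for each $r$ large enough there exists $s\geq r$ with $w_D(D^{\dagger,r}\boxtimes\Z^\times)\subseteq D^{\dagger,s}\boxtimes\Z^\times$, for then $w_D$ automatically preserves the union.

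Next, I would unwind the defining formula
\[
w_D(z)=\lim_{n\to\infty}\sum_{i\in\Z^\times\bmod p^n\Z}\delta_D(i)\left(\begin{smallmatrix}-i^{-2}&i^{-1}\\0&1\end{smallmatrix}\right)\Res_{p^n\Z}\left(\left(\begin{smallmatrix}1&-i\\0&1\end{smallmatrix}\right)z\right)
\]
as a limit of partial sums $w_D^{(n)}(z)$. For $z\in D^{\dagger,r}\boxtimes\Z^\times$, each partial sum $w_D^{(n)}(z)$ visibly lies in $D^{\dagger}\boxtimes\Z^\times$: the matrices $\left(\begin{smallmatrix}1&-i\\0&1\end{smallmatrix}\right)$ and $\left(\begin{smallmatrix}-i^{-2}&i^{-1}\\0&1\end{smallmatrix}\right)$ belong to $P(\Z)$ (after accounting for the scalar $\delta_D(i)$ in the center), while $\Res_{p^n\Z}=\varphi^n\psi^n$ up to translation, so all these operators preserve the overconvergent subspace. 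The remaining and essential question is whether the limit stays inside $D^{\dagger,s}\boxtimes\Z^\times$ for some $s\geq r$, and this is precisely what Lemme V.2.4 of \cite{C08} provides: it produces an $s$ and a uniform bound on $\|w_D^{(n)}(z)\|_s$, so that the sequence $(w_D^{(n)}(z))$ is Cauchy in the Banach topology of $D^{\dagger,s}\boxtimes\Z^\times$. Since the latter is complete and closed in $D[1/p]\boxtimes\Z^\times$, the limit $w_D(z)$ lies in $D^{\dagger,s}\boxtimes\Z^\times$ as required.

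The main obstacle is of course the analytic estimate itself, which lies in Colmez's lemma rather than in any purely formal manipulation: one must control the contribution of the summands indexed by $i$ of large $p$-adic valuation, and understand how the action of the matrices $\left(\begin{smallmatrix}-i^{-2}&i^{-1}\\0&1\end{smallmatrix}\right)$ interacts with the norms $\|\cdot\|_s$ defining the overconvergent topology. Once that input is available, the stability of $D^{\dagger}\boxtimes\Z^\times$ under $w_D$ is a direct consequence, and the proposition follows.
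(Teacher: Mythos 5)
Your proposal follows exactly the same route as the paper, which simply cites Colmez's Lemme V.2.4 of \cite{C08} without further elaboration. Your additional unwinding of the defining limit, the reduction to finding a uniform $s$ with $w_D(D^{\dagger,r}\boxtimes\Z^\times)\subseteq D^{\dagger,s}\boxtimes\Z^\times$, and the observation that the partial sums land in $D^\dagger\boxtimes\Z^\times$ because the matrices $\left(\begin{smallmatrix}-i^{-2}&i^{-1}\\0&1\end{smallmatrix}\right)$ lie in $P(\Z)$ and $\Res_{p^n\Z}$ preserves overconvergence, are a correct and useful expansion of what that citation is doing.
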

For any character $\tau:\Z^\times\ra \OO_L^\times$ and $n\in\mathbb{Z}$, suppose $|\tau(1+p^h\Z)-1|<1$ for some $h\geq1$. Then $\lambda(\gamma-1)\ra\lambda(\tau(\chi(\gamma))\gamma^n-1)$ for any $\lambda(\gamma-1)\in\r_L(\Gamma_{h})$ defines an $L$-linear automorphism on $\r_L(\Gamma_h)$. We can extend this automorphism uniquely to $\r_L(\Gamma)$ by sending $\gamma$ to $\tau(\chi(\gamma))\gamma^n$ for any $\gamma\in\Gamma$. The resulting automorphism on $\r_{L}(\Gamma)$ is independent of the choice of $h$, and we denote it by $T_{\tau,n}$. It is obvious that $T_{\tau_1,n_1}\circ T_{\tau_2,n_2}=T_{\tau_1\tau_2,n_1+n_2}$. We use $T_{\tau}$ to denote $T_{\tau,0}$ for simplicity. Both $\r^+_L(\Gamma)$ and $\calE^\dagger_L(\Gamma)$ are stable under the action of $T_{\tau,n}$.

Applying the proposition above, we extend the action of $w_D$ to $D_\rig\boxtimes\Z^\times=\r_L(\Gamma)\otimes_{\calE^\dagger_L(\Gamma)}D^\dagger\boxtimes\Z^\times$ by the formula $w_D(\lambda\otimes z)=T_{\delta_D,-1}(\lambda)\otimes w_D(z)$ for $\lambda\in\r_L(\Gamma)$ and $z\in D^\dagger\boxtimes\Z^\times$. Then we define
\[
D_\rig\boxtimes\mathbf{P}^1=\{(z_1,z_2)\in D_\rig\times D_\rig, \Res_{\Z^\times}z_2=w_D(\Res_{\Z^\times} z_1)\}.
\]

\begin{proposition}(\cite[Propositions V.2.8, V.2.9]{C08})
\begin{enumerate}
\item[(i)]$D^\dagger\boxtimes\mathbf{P}^1=\{(z_1,z_2)\in D\boxtimes\mathbf{P}^1,z_1,z_2\in D^\dagger\}$
is stable under the action of $\G$;
\item[(ii)]The $\G$-action on $D^\dagger\boxtimes\mathbf{P}^1$ extends to a continuous $\G$-action on $D_\rig\boxtimes\mathbf{P}^1$ satisfying the formulas listed in Proposition 3.2.
\end{enumerate}
\end{proposition}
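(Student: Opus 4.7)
\emph{Overall approach.} The plan is to use the explicit formulas of Proposition 3.2, which describe the $\G$-action on $D\boxtimes\mathbf{P}^1$ in terms of a generating family for $\G$: the scalar matrices, the anti-diagonal involution $w$, the diagonal matrices $\left(\begin{smallmatrix}a&0\\0&1\end{smallmatrix}\right)$ for $a\in\Z^\times$, the element $\left(\begin{smallmatrix}p&0\\0&1\end{smallmatrix}\right)$, and the unipotent matrices $\left(\begin{smallmatrix}1&b\\0&1\end{smallmatrix}\right)$ for $b\in p\Z$. To prove part (i) it suffices to check that each of these preserves the subset $D^\dagger\boxtimes\mathbf{P}^1$; part (ii) will then follow by extending each operator continuously from $D^\dagger$ to $D_\rig$.

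\emph{Cases (i)--(iv) of Proposition 3.2.} For these easy generators, each component of $gz$ is obtained from $z_1$ and $z_2$ by one of the operations: scalar multiplication by an element of $L^\times$, the $\Gamma$-action $\sigma_a$, the Frobenius $\varphi$, or the inverse Frobenius $\psi$. All preserve $D^\dagger$ by its $\m$-module structure. When the formula only prescribes $\Res_{p\Z}$ of a new component (as in case (iv)), the complementary $\Res_{\Z^\times}$-part is recovered from the compatibility $\Res_{\Z^\times}(z_2')=w_D(\Res_{\Z^\times}(z_1'))$ combined with the identity $\Res_{\Z^\times}=\mathrm{id}-\varphi\psi$; since Proposition 3.5 ensures that $w_D$ preserves $D^\dagger\boxtimes\Z^\times$, these reconstructions stay inside $D^\dagger$.

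\emph{The unipotent case (v).} The new first component is $z_1'=(1+T)^b z_1$, which lies in $D^\dagger$ because $b\in p\Z$ implies $(1+T)^b\in\OO_{\calE_L^\dagger}$. The new second component is determined by $u_b(\Res_{p\Z}(z_2))$ together with the $w_D$-reconstruction of $\Res_{\Z^\times}(z_2')$ from $\Res_{\Z^\times}(z_1')=(1+T)^b z_1-\varphi\psi((1+T)^b z_1)\in D^\dagger\boxtimes\Z^\times$, which by Proposition 3.5 lands in $D^\dagger$. The operator $u_b$ is a composition of two applications of $w_D$ with three matrix actions, each of which is built from multiplication by some $(1+T)^c\in\OO_{\calE_L^\dagger}$ ($c\in\Z$) and a $\Gamma$-action $\sigma_a$ ($a\in\Z^\times$); both types preserve $D^\dagger$. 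Tracking the support at each step and invoking Proposition 3.5 each time $w_D$ appears yields $u_b(D^\dagger\boxtimes p\Z)\subset D^\dagger\boxtimes p\Z$, completing case (v) and hence part (i).

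\emph{Part (ii) and main obstacle.} For (ii), the operators $\varphi$, $\psi$, $\sigma_a$, and multiplication by $(1+T)^c$ extend continuously from $D^\dagger$ to $D_\rig$ as part of the $\m$-module structure, and the extension of $w_D$ to $D_\rig\boxtimes\Z^\times$ is the one recalled just before the proposition via Theorem 3.4 and the formula $w_D(\lambda\otimes z)=T_{\delta_D,-1}(\lambda)\otimes w_D(z)$. Plugging these extensions into the formulas of Proposition 3.2 yields operators on $D_\rig\boxtimes\mathbf{P}^1$ that restrict to the given $\G$-action on $D^\dagger\boxtimes\mathbf{P}^1$ and are continuous in each generator; the group-theoretic relations for $\G$ then propagate from $D^\dagger\boxtimes\mathbf{P}^1$ by density, giving the required continuous $\G$-action on $D_\rig\boxtimes\mathbf{P}^1$. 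The main obstacle is the support tracking in case (v): one must verify that the inputs to each instance of $w_D$ inside $u_b$ lie in $D^\dagger\boxtimes\Z^\times$ (where Proposition 3.5 applies) rather than merely in $D^\dagger$. Once this is organized carefully, the rest is a mechanical transcription of the identities of Proposition 3.2.
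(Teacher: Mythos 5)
The paper does not prove this proposition; it is cited verbatim from Colmez's \cite[Propositions V.2.8, V.2.9]{C08}, so there is no in-paper argument to compare your reconstruction against. That said, your sketch is a reasonable high-level outline of the expected strategy — verify stability generator by generator using the formulas of Proposition 3.2, with Proposition 3.5 (stability of $D^\dagger\boxtimes\Z^\times$ under $w_D$) as the key input for the $w$- and $u_b$-steps, and then pass to the overconvergent-rigid completion for (ii).

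Two points deserve caution. First, in case (v), tracking supports correctly through $u_b$ really does require checking that each input to $w_D$ lies in $D^\dagger\boxtimes\Z^\times$: starting from $D^\dagger\boxtimes p\Z$, the translate by $\left(\begin{smallmatrix}1&1/(1+b)\\0&1\end{smallmatrix}\right)$ lands in $D^\dagger\boxtimes(\tfrac{1}{1+b}+p\Z)\subset D^\dagger\boxtimes\Z^\times$ because $b\in p\Z$ forces $1/(1+b)\in\Z^\times$; you flag this but do not carry it out, and similar checks are needed at each subsequent $w_D$. Second, and more substantively, part (ii) asserts a \emph{continuous} $\G$-action on $D_\rig\boxtimes\mathbf{P}^1$, not merely a well-defined one. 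Your argument extends each generator individually and invokes density to propagate the group relations, but this establishes only an abstract action; continuity of the two-variable map $\G\times D_\rig\boxtimes\mathbf{P}^1\to D_\rig\boxtimes\mathbf{P}^1$ requires uniform estimates (in particular for the extended $w_D$ on $D_\rig\boxtimes\Z^\times$ and for $u_b$ as $b$ varies), which is exactly the technical content of Colmez's proof of V.2.8–V.2.9 and is not supplied by the density argument. Without those estimates, the sketch does not yet constitute a complete proof of (ii).
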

By \cite[Th\'eor\`eme I.5.2]{C08}, we know that $(1-\varphi)D^{\psi=1}$ is a free $\Lambda_L(\Gamma)$-module of rank $d$. The following proposition will be used in subsection 4.1.
\begin{proposition}(\cite[Corollaire V.1.6(iii)]{C08})
The inclusion $(1-\varphi)D^{\psi=1}\subset D_\rig\boxtimes\Z^\times$ induces an isomorphism from $\r_L(\Gamma)\otimes_{\Lambda_L(\Gamma)}(1-\varphi)D^{\psi=1}$ to $D_\rig\boxtimes\Z^\times$.
\end{proposition}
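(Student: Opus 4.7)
The plan is to verify that (a) both the source and the target of the asserted map are free $\r_L(\Gamma)$-modules of the same finite rank $d:=\dim_L V$, and (b) the natural map $\iota:\r_L(\Gamma)\otimes_{\Lambda_L(\Gamma)}(1-\varphi)D^{\psi=1}\to D_\rig\boxtimes\Z^\times$ is surjective; injectivity then follows automatically from the rank comparison. The inclusion itself should be produced in two stages: from $\psi\circ\varphi=\mathrm{id}_D$ (which can be read off from the decomposition $x=\sum_{i=0}^{p-1}(1+T)^i\varphi(x_i)$ used to define $\psi$) one gets $\psi((1-\varphi)x)=x-x=0$ for $x\in D^{\psi=1}$, so $(1-\varphi)D^{\psi=1}\subseteq D^{\psi=0}=D\boxtimes\Z^\times$; combining this with the Cherbonnier--Colmez overconvergence of $D^{\psi=1}$ (a consequence of Theorem 2.7 and the Iwasawa-theoretic analysis in \cite[\S I]{C08}) one lands inside $(D^\dagger)^{\psi=0}=D^\dagger\boxtimes\Z^\times\subseteq D_\rig\boxtimes\Z^\times$. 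Theorem 3.4(ii) makes the target free of rank $d$ over $\r_L(\Gamma)$, while the quoted \cite[Th\'eor\`{e}me I.5.2]{C08} makes $(1-\varphi)D^{\psi=1}$ free of rank $d$ over $\Lambda_L(\Gamma)$, and its base change to $\r_L(\Gamma)$ remains free of the same rank.

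For surjectivity I would descend one step to the overconvergent level. By Theorem 3.4(i), $D^\dagger\boxtimes\Z^\times$ is free of rank $d$ over $\calE_L^\dagger(\Gamma)$ and $D_\rig\boxtimes\Z^\times=\r_L(\Gamma)\otimes_{\calE_L^\dagger(\Gamma)}D^\dagger\boxtimes\Z^\times$, so it suffices to show that the $\calE_L^\dagger(\Gamma)$-submodule of $D^\dagger\boxtimes\Z^\times$ generated by $(1-\varphi)D^{\psi=1}$ exhausts $D^\dagger\boxtimes\Z^\times$. I would establish this generation statement by first doing the rank one case $D^\dagger=\calE_L^\dagger(\delta)$, where the classical Amice isomorphism $\Lambda_L(\Gamma)\xrightarrow{\sim}(\calE_L^+)^{\psi=0}$ given by $\mu\mapsto\int(1+T)^{\chi(\gamma)}d\mu(\gamma)$, twisted by $\delta$, produces an explicit $\calE_L^\dagger(\Gamma)$-basis of $(D^\dagger)^{\psi=0}$ that is visibly a $\Lambda_L(\Gamma)$-generator of $(1-\varphi)D^{\psi=1}$. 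The general rank $d$ case can then be reduced to the rank one case by a filtration argument, using the slope filtration from Theorem 2.4 to write $D_\rig^\dagger$ (possibly after enlarging $L$) as a successive extension of rank one objects and combining the rank one statements.

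I expect the main obstacle to be the passage from $\Lambda_L(\Gamma)$ to $\calE_L^\dagger(\Gamma)$. Even granted freeness of rank $d$ on both sides, it is not automatic that a $\Lambda_L(\Gamma)$-basis of $(1-\varphi)D^{\psi=1}$ remains $\calE_L^\dagger(\Gamma)$-linearly independent inside $D^\dagger\boxtimes\Z^\times$ and generates over $\calE_L^\dagger(\Gamma)$; this amounts to showing that the change-of-basis matrix between a chosen $\Lambda_L(\Gamma)$-basis of the source and an $\calE_L^\dagger(\Gamma)$-basis of the target has determinant a unit in $\calE_L^\dagger(\Gamma)$. This unit determinant calculation is the delicate step of Colmez's argument in \cite[\S V.1]{C08}; once it is available, the final tensoring up from $\calE_L^\dagger(\Gamma)$ to $\r_L(\Gamma)$ to obtain the claimed isomorphism is immediate from Theorem 3.4.
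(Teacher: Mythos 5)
The paper does not give a proof here at all: the statement is quoted verbatim from Colmez, \cite[Corollaire V.1.6(iii)]{C08}, with no argument supplied. So strictly speaking there is no ``paper's own proof'' to match your proposal against, and any assessment has to be of your sketch on its own terms.

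Your overall plan (free of the same rank on both sides, prove surjectivity, deduce injectivity; isolate the delicate unit-determinant comparison at the overconvergent level) is a plausible reading of what Colmez's argument must do, and the identification of the main inputs --- \cite[Th\'eor\`eme I.5.2]{C08} for freeness of $(1-\varphi)D^{\psi=1}$ over $\Lambda_L(\Gamma)$, and the paper's Theorem 3.4 for freeness of $D_\rig\boxtimes\Z^\times$ over $\r_L(\Gamma)$ --- is reasonable. However, there is a genuine gap in your surjectivity step. You propose to ``use the slope filtration from Theorem 2.4 to write $D_\rig^\dagger$ (possibly after enlarging $L$) as a successive extension of rank one objects.'' Theorem 2.4 is not a slope-filtration theorem; it is the equivalence of categories between \'etale $\varphi$-modules over $\calE_L^\dagger$ and over $\r_L$. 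More to the point, \'etale means pure of slope $0$, so the slope filtration of $D_\rig$ is the trivial one-step filtration and gives no reduction of rank. And one cannot replace it with a filtration by rank $1$ $(\varphi,\Gamma)$-submodules: that would amount to saying every $D$ is triangulable, i.e.\ that every $2$-dimensional $p$-adic Galois representation is trianguline, which is false. Since your rank $1$ base case leans on the twisted Amice/Mellin isomorphism and hence on the $\Gamma$-structure, you cannot even retreat to a $\varphi$-only filtration. So the d\'evissage you describe cannot be set up, and the surjectivity claim --- the heart of the matter --- is unproved.

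A second concern is a possible circularity. Your argument invokes Theorem 3.4 (which is \cite[Th\'eor\`eme V.1.12]{C08}) to know that $D_\rig\boxtimes\Z^\times$ is free of rank $d$ over $\r_L(\Gamma)$, and then uses this to conclude the map from $\r_L(\Gamma)\otimes_{\Lambda_L(\Gamma)}(1-\varphi)D^{\psi=1}$ is an isomorphism. But the statement you are proving is \cite[Corollaire V.1.6(iii)]{C08}, which precedes V.1.12 in Colmez's development of \S V.1; V.1.12 in fact uses the structural results of V.1.1--V.1.6 as input. If you want a self-contained argument rather than a gloss on Colmez, you would need to prove the relevant freeness without circling back through V.1.12, which again lands on the hard analytic estimates you flagged as ``the delicate step'' but did not carry out.
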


For $\omega=gdT$ a differential $1$-form with $g=\sum_{k\in\mathbb{Z}}a_kT^k\in\calE_L$, we define the residue $\res_0(w)=a_{-1}$.
We define the  pairing $\{$ , $\}$:$\check{D}\times D\ra L$ by the formula
\[
\{x,y\}=\res_0((\sigma_{-1}\cdot x)(y)).
\]
We further extend $\{$ , $\}$ to a pairing $\{$ , $\}_{\mathbf{P}^1}:\check{D}\boxtimes\mathbf{P}^1\times D\boxtimes\mathbf{P}^1\ra L$ by the formula
\[
\{(z_1,z_2),(z_1',z_2')\}_{\mathbf{P}^1}=\{z_1,z_1'\}+\{\Res_{p\Z}z_2,\Res_{p\Z}z_2'\}.
\]
\begin{theorem}(\cite[Th\'eor\`eme II.1.13]{C08})
The pairing $\{$ , $\}$ is perfect and $\G$-equivariant.
\end{theorem}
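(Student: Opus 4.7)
The plan is to prove the theorem in three stages: first establish the underlying pairing $\{\cdot,\cdot\}$ on $\check D\times D$, then check $\G$-equivariance of its $\mathbf P^1$-extension on generators of $\G$, and finally deduce perfectness of $\{\cdot,\cdot\}_{\mathbf P^1}$.

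Stage 1 sets up the pairing on $\check D\times D$. I would verify that $\{x,y\}=\res_0((\sigma_{-1}\cdot x)(y))$ is well defined and perfect. Well-definedness is immediate from the definition $\check D=\mathrm{Hom}_{\r_L}(D,\calE_L\frac{dT}{1+T})$. Perfectness follows by combining the perfectness of the residue pairing $\calE_L\times\calE_L\frac{dT}{1+T}\ra L$ with the defining property of $\check D$ as the $\r_L$-linear dual of $D$ valued in differential forms; the twist by $\sigma_{-1}$ is present so that $\{\gamma x,\gamma y\}=\{x,y\}$ for $\gamma\in\Gamma$. In the same stage I would record the compatibilities $\{(1+T)^bx,y\}=\{x,(1+T)^{-b}y\}$, $\{\varphi x,y\}=\{x,\psi y\}$, and the orthogonality $\{\check D^{\psi=0},\varphi(D)\}=0$; these reduce to direct residue computations using $\psi\circ\varphi=\mathrm{id}$ on $\calE_L$ and the behavior of $\res_0$ under $(1+T)^b$-translation, and together they yield $P(\Z)$-equivariance of the pairing.

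Stage 2: by Proposition 3.2 the $\G$-action on $D\boxtimes\mathbf P^1$ is determined by its action on the scalars, on $\left(\begin{smallmatrix}a&b\\0&1\end{smallmatrix}\right)$ with $a\in\Z^\times, b\in\Z$, on $\left(\begin{smallmatrix}p&0\\0&1\end{smallmatrix}\right)$, on $\left(\begin{smallmatrix}1&b\\0&1\end{smallmatrix}\right)$ with $b\in p\Z$, and on $\w$. Equivariance under scalars follows from $\delta_{\check D}\delta_D=1$, and equivariance under the $B(\Z)$-block is immediate from Stage 1. Using $\w\cdot(z_1,z_2)=(z_2,z_1)$, the $\w$-invariance of $\{\cdot,\cdot\}_{\mathbf P^1}$ unfolds to
\[
\{x_2,y_2\}+\{\Res_{p\Z}x_1,\Res_{p\Z}y_1\}=\{x_1,y_1\}+\{\Res_{p\Z}x_2,\Res_{p\Z}y_2\}.
\]
Decomposing $x_i=\Res_{\Z^\times}x_i+\Res_{p\Z}x_i$ and killing cross-terms via the orthogonality from Stage 1, this reduces to the single identity $\{w_{\check D}(u),w_D(v)\}=\{u,v\}$ for $u\in\check D\boxtimes\Z^\times$ and $v\in D\boxtimes\Z^\times$. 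The cases of $\left(\begin{smallmatrix}p&0\\0&1\end{smallmatrix}\right)$ and the $p\Z$-unipotent reduce similarly to the same adjointness via parts (iv) and (v) of Proposition 3.2.

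The hard part will be this mutual adjointness of $w_D$ and $w_{\check D}$. I would prove it by expanding both $w_D(v)$ and $w_{\check D}(u)$ using the infinite sums in the definition given just before Theorem 3.1, interchanging sum and residue (justified by weak continuity of $\res_0$), and pairing termwise using the Stage 1 $P(\Z)$-equivariance; the identity $\delta_{\check D}\delta_D=1$ makes the $\delta_D(i)$-twists cancel, while the substitution $i\mapsto -i^{-1}$ realigns the two sums. Convergence of the resulting series in $L$ must be checked separately. Once this is settled, perfectness of $\{\cdot,\cdot\}_{\mathbf P^1}$ follows by parameterizing $D\boxtimes\mathbf P^1$ as pairs $(z_1,z')\in D\times\varphi(D)$ with $z_1$ arbitrary and $z'=\Res_{p\Z}z_2$ arbitrary (the restriction $\Res_{\Z^\times}z_2=w_D(\Res_{\Z^\times}z_1)$ being forced), under which $\{\cdot,\cdot\}_{\mathbf P^1}$ becomes the orthogonal sum of the perfect Stage 1 pairing on $\check D\times D$ and its restriction to $\varphi(\check D)\times\varphi(D)$, hence is itself perfect.
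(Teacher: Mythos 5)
The paper does not prove this theorem: it is stated and cited verbatim from Colmez \cite[Th\'eor\`eme II.1.13]{C08}, and no argument for it appears anywhere in the text, so there is no in-paper proof to compare your outline against. Read as a reconstruction of Colmez's argument, the outline has the right skeleton: set up the pairing on $\check D\times D$ and its $P(\Z)$- and $(\varphi,\psi)$-compatibilities, reduce $\G$-equivariance of $\{\ ,\ \}_{\mathbf{P}^1}$ to generators, isolate the adjointness $\{w_{\check D}u,w_Dv\}=\{u,v\}$, and deduce perfectness from the splitting $D\boxtimes\mathbf{P}^1\cong D\times\varphi(D)$.

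The two places the sketch still owes real work are the two you yourself flag, and the plan as written does not discharge them. For the adjointness: expanding $w_D$ and $w_{\check D}$ as (limits of) double sums and ``pairing termwise'' requires the orthogonality $\{\Res_Ux,\Res_Vy\}=0$ for $U\cap V=\emptyset$, which you never record in Stage 1, to kill the $i\neq j$ cross-terms, and it requires a uniformity estimate to pass a single limit in $n$ through both sums at once; neither is automatic, and together they are where essentially all of the technical content sits. For the generators $\left(\begin{smallmatrix}p&0\\0&1\end{smallmatrix}\right)$ and $\left(\begin{smallmatrix}1&b\\0&1\end{smallmatrix}\right)$ with $b\in p\Z$: the claim that these ``reduce similarly to the same adjointness'' hides real bookkeeping. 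For $g=\left(\begin{smallmatrix}p&0\\0&1\end{smallmatrix}\right)$, Theorem 3.1 gives $(gz)_1=H_g(z_1)+H_{gw}(\Res_{p\Z^\times}z_2)$, so the first slot of the $\mathbf{P}^1$-pairing already mixes $z_1$ and $z_2$; Proposition 3.2(iv) hands you only $\Res_{p\Z}(gz)_1=\varphi(z_1)$ and the full $(gz)_2=\delta_{\check D}(p)\psi(z_2)$, and reducing $\{(gz)_1,(gy)_1\}$ to Stage 1 identities plus the $w$-adjointness takes a nontrivial unwinding of $H_g$ and $H_{gw}$. So this is a plan with the right shape, but the two hard steps are asserted rather than proved.
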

\begin{theorem}(\cite[Th\'eor\`eme II.2.11]{C08})
$D^\natural\boxtimes\mathbf{P}^1$ and $\check{D}^\natural\boxtimes\mathbf{P}^1$ are orthogonal complements of each other.
\end{theorem}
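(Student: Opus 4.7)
The plan is to prove the two inclusions implied by ``orthogonal complements'' via two steps: first establish that the pairing vanishes on products of $\natural$-parts, then upgrade to equality by combining perfection of the pairing (Theorem 3.8) with the structural isomorphism of Theorem 3.3(ii).

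For the orthogonality step, I would expand, for $z = (z_1, z_2) \in D^\natural\boxtimes\mathbf{P}^1$ and $z' = (z'_1, z'_2) \in \check{D}^\natural\boxtimes\mathbf{P}^1$,
\begin{equation*}
\{z', z\}_{\mathbf{P}^1} = \{z'_1, z_1\} + \{\Res_{p\Z}z'_2, \Res_{p\Z}z_2\}.
\end{equation*}
The defining condition $\Res_{\Q}(z) \in D^\natural\boxtimes\Q$ gives $z_1 \in D^\natural$ directly. By Proposition 3.2(i) the first component of $w\cdot z$ is $z_2$, and since $D^\natural\boxtimes\mathbf{P}^1$ is $\G$-stable by Theorem 3.3(i), the elements $\Res_{\Z}\!\left(\begin{smallmatrix}p^n&0\\0&1\end{smallmatrix}\right)wz$ also lie in $D^\natural$, which controls $\Res_{p\Z}z_2$ via the $\psi$-compatibility of $\Res_\Q(wz)$. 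Granting the $\varphi$-$\psi$ adjointness $\{\varphi x, y\} = \{x, \psi y\}$ of the residue pairing and the minimality of $D^\natural$ as a $\psi$-stable treillis (Proposition 2.11), the two summands cancel—precisely the mechanism one expects from the fact that $w$ swaps the two charts $\Z$ and $p^{-1}\Z$ covering $\mathbf{P}^1(\Q)$.

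For the upgrade, the orthogonality just established gives inclusions $D^\natural\boxtimes\mathbf{P}^1 \subseteq (\check{D}^\natural\boxtimes\mathbf{P}^1)^\perp$ and symmetrically. By perfection (Theorem 3.8) the orthogonal complement of $\check{D}^\natural\boxtimes\mathbf{P}^1$ is identified with the continuous dual of the quotient $\check{D}\boxtimes\mathbf{P}^1/\check{D}^\natural\boxtimes\mathbf{P}^1$, which by Theorem 3.3(ii) is $\check{\Pi}(D) = \Pi(\check{D})$; this dual, suitably twisted, matches the representation $\Pi(D)^\ast\otimes(\delta_D\circ\det) \cong D^\natural\boxtimes\mathbf{P}^1$ already provided by Theorem 3.3(ii). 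So the inclusion is forced to be an equality, yielding the stated complementarity. The main obstacle is the explicit vanishing in the orthogonality step: although $z_1$ is fully governed by $D^\natural$, the second entry $z_2$ is coupled to $z_1$ only on $\Z^\times$ through the involution $w_D$, so $\Res_{p\Z}z_2$ must be accessed indirectly through the $H_g$ operators of Proposition 3.2, and the interplay of these operators with $\varphi$-$\psi$ adjointness and with the minimality characterization of $D^\natural$ is the technical heart of the proof.
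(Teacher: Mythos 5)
The paper does not give its own proof of this statement; it is cited directly as Th\'eor\`eme II.2.11 of \cite{C08}, so the comparison is against what a correct proof would need to contain rather than against a proof in the text. With that caveat, your proposal has two genuine gaps, one in each step.

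In the orthogonality step you write that ``the two summands cancel,'' invoking $\varphi$--$\psi$ adjointness and the minimality of $D^\natural$, but you never exhibit the cancellation, and in fact the mechanism you describe does not obviously hold: it is \emph{not} the case that $\{\check{D}^\natural, D^\natural\} = 0$ under the residue pairing on $\check{D}\times D$, so neither $\{z_1', z_1\}$ nor $\{\Res_{p\Z}z_2', \Res_{p\Z}z_2\}$ vanishes individually, and a cancellation between them would require controlling how $z_2$ is tied to $z_1$ through $w_D$ --- precisely what you flag as ``the technical heart'' without carrying it out. Saying that $w$ swaps the two charts of $\mathbf{P}^1$ is a plausible intuition, but it is not an argument; the actual computation involves boundedness estimates for the $H_g$ operators acting between treillis, which is the real content of Colmez's II.2.11 and which you defer entirely. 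Acknowledging the obstacle does not discharge it.

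The upgrade step is worse, because it is logically invalid rather than merely incomplete. From perfection you obtain an abstract identification $(\check{D}^\natural\boxtimes\mathbf{P}^1)^\perp \cong \check{\Pi}(D)^\ast$, and Theorem 3.3(ii) gives an abstract identification $D^\natural\boxtimes\mathbf{P}^1 \cong \check{\Pi}(D)^\ast$; from this plus the inclusion established in step one you conclude equality of subspaces. But two $\OO_L$-submodules of $D\boxtimes\mathbf{P}^1$ with one contained in the other and both abstractly isomorphic as topological $\OO_L[\G]$-modules need not be equal (already $p\OO_L \subsetneq \OO_L$ is a counterexample at the level of modules); you would need a genuine rigidity argument --- for instance that both are treillis which are minimal for some explicit property, or that the inclusion is forced to be an isometry --- and none is offered. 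There is also a circularity concern that you should be wary of even from within this paper's presentation: the identification $D^\natural\boxtimes\mathbf{P}^1 \cong \Pi(D)^\ast\otimes(\delta_D\circ\det)$ in Theorem 3.3(ii) is Th\'eor\`eme II.3.1 of \cite{C08}, which appears \emph{after} II.2.11 in Colmez's own development; in the original source, that identification is constructed \emph{using} the orthogonality statement you are trying to prove, so leaning on it here would be circular. A self-contained proof needs to work directly with the characterization of $D^\natural$ as the minimal $\psi$-stable treillis (Proposition 2.11) and its behaviour under the duality $\check{D} \leftrightarrow D$, not with the downstream consequence stated as Theorem 3.3(ii).
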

We define the pairing $\{$ , $\}_{\mathbf{P}^1}:\check{D}_\rig\boxtimes\mathbf{P}^1\times D_\rig\boxtimes\mathbf{P}^1\ra L$ similarly; it is also perfect and $\G$-equivariant. Let $D^\natural_{\rig}\boxtimes\mathbf{P}^1$ denote the orthogonal complement of $\check{D}^\natural\boxtimes\mathbf{P}^1$ in $D_\rig\boxtimes\mathbf{P}^1$ with respect to $\{$ , $\}_{\mathbf{P}^1}$.
\begin{theorem}(\cite[Th\'eor\`eme V.2.12]{C08})
\begin{enumerate}
\item[(i)]$\Pi(D)_{\mathrm{an}}=(D^\dagger[1/p]\boxtimes\mathbf{P}^1)/(D^\natural[1/p]\boxtimes\mathbf{P}^1)$ and $D^\natural_{\rig}\boxtimes\mathbf{P}^1=(\check{\Pi}(D)_{\mathrm{an}})^\ast$.
\item[(ii)]The natural map $(D^\dagger[1/p]\boxtimes\mathbf{P}^1)/(D^\natural[1/p]\boxtimes\mathbf{P}^1)\ra
(D_\rig\boxtimes\mathbf{P}^1)/(D^\natural_\rig\boxtimes\mathbf{P}^1)$ is an isomorphism.
\end{enumerate}
\end{theorem}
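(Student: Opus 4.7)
The plan is to prove (i) first, splitting it into an embedding step and a dual identification step, and then to deduce (ii) from (i) via the Robba-ring analogue of Theorem~3.9. For the embedding $(D^\dagger[1/p]\boxtimes\mathbf{P}^1)/(D^\natural[1/p]\boxtimes\mathbf{P}^1)\hookrightarrow\Pi(D)_{\mathrm{an}}$, I would first endow $D^\dagger[1/p]\boxtimes\mathbf{P}^1$ with its natural compact-type locally convex topology, using the $\calE_L^{(0,r]}$-structure of Theorem~3.4(i) on the $\Z^\times$-part and the cover $\mathbf{P}^1=\Z\cup w\cdot\Z$ to glue. Then for each $z\in D^\dagger[1/p]\boxtimes\mathbf{P}^1$, I would verify that the orbit map $g\mapsto g\cdot z$ is locally analytic as a map into $D^\dagger[1/p]\boxtimes\mathbf{P}^1$. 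By Iwahori factorization this reduces to analyticity on a pro-$p$ Iwahori: the formulas of Proposition~3.2 make $\left(\begin{smallmatrix}a&b\\0&1\end{smallmatrix}\right)\cdot z=(1+T)^b\sigma_a(z)$ manifestly analytic in $(a,b)$, while analyticity under $w$ and under the lower unipotent follows from Proposition~3.5 (the stability of $D^\dagger\boxtimes\Z^\times$ under $w_D$) combined with the continuity assertion of Proposition~3.6(ii).

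For the second identification in (i), and simultaneously for surjectivity of the above embedding, I would dualize. Proposition~1.10 gives
\begin{equation*}
(\check\Pi(D)_{\mathrm{an}})^*_b\;\cong\;D(H,L)\otimes_{\Lambda_L(H)}\check\Pi(D)^*
\end{equation*}
for any open compact $H\subset\G$, and Theorem~3.3 applied to $\check D$ identifies $\check\Pi(D)^*$ with $\check D^\natural\boxtimes\mathbf{P}^1$ up to the twist $\delta_D\circ\det$. Choosing $H$ to be a pro-$p$ open subgroup of the Iwahori compatible with $\Gamma\cong\Zp^\times$, Proposition~3.7 together with the Amice-style identification $D(\Gamma,L)\cong\r_L(\Gamma)$ (and the twist $T_{\delta_D,-1}$ encoding the $w$-equivariance) yields $D(H,L)\otimes_{\Lambda_L(H)}(\check D^\natural\boxtimes\Z^\times)\cong\check D_\rig\boxtimes\Z^\times$. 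Propagating this across the $\G$-translates of $\Z^\times$ that cover $\mathbf{P}^1$ (using Iwasawa decomposition and the formulas of Proposition~3.2) gives $D(H,L)\otimes_{\Lambda_L(H)}(\check D^\natural\boxtimes\mathbf{P}^1)\cong\check D^\natural_\rig\boxtimes\mathbf{P}^1$, hence $D^\natural_\rig\boxtimes\mathbf{P}^1=(\check\Pi(D)_{\mathrm{an}})^*$. Combining this with the embedding step and the perfectness of $\{\,,\,\}_{\mathbf{P}^1}$ (Theorem~3.8) forces the embedding to be surjective, completing (i).

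For (ii), the natural map is induced by the inclusion $D^\dagger[1/p]\subset D_\rig$. By (i) the source is $\Pi(D)_{\mathrm{an}}$; on the other hand, the Robba-ring analogue of Theorem~3.9 identifies $D^\natural_\rig\boxtimes\mathbf{P}^1$ as the orthogonal complement of $\check D^\natural\boxtimes\mathbf{P}^1$ in $D_\rig\boxtimes\mathbf{P}^1$, so perfectness of the extended pairing identifies the target $(D_\rig\boxtimes\mathbf{P}^1)/(D^\natural_\rig\boxtimes\mathbf{P}^1)$ with the continuous dual of $\check D^\natural_\rig\boxtimes\mathbf{P}^1=(\Pi(D)_{\mathrm{an}})^*$, which is $\Pi(D)_{\mathrm{an}}$. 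Checking that these two identifications agree under the natural map is a direct unwinding of the pairings.

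The main obstacle will be the computation in the dualization step: extending the comparison $D(H,L)\otimes_{\Lambda_L(H)}(-)\cong\r_L(\Gamma)\otimes_{\calE^\dagger_L(\Gamma)}(-)$ from the $\Z^\times$-piece of $\mathbf{P}^1$ (where it comes from Proposition~3.7) to the full $\mathbf{P}^1$. This requires gluing across $\mathbf{P}^1=\Z\cup w\Z$ and verifying that the $w$-action — which on $D_\rig\boxtimes\Z^\times$ is defined only after passing through the twisted automorphism $T_{\delta_D,-1}$ of $\r_L(\Gamma)$ — factors correctly through the distribution-algebra tensor product together with the central-character twist $\delta_D\circ\det$ appearing in Theorem~3.3. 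Essentially this is the claim that Colmez's entire $\mathbf{P}^1$-machinery respects the overconvergent-to-analytic dictionary, and careful bookkeeping of this compatibility is the technical heart of the argument.
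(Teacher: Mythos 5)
The paper does not actually prove this statement: it is quoted verbatim from Colmez (Th\'eor\`eme V.2.12 of \cite{C08}) and used as a black box, so there is no internal proof to compare your proposal against. Measured against Colmez's own argument, your overall architecture is the right one --- Proposition 1.10 in the form $(\check{\Pi}(D)_{\mathrm{an}})^\ast\cong D(H,L)\otimes_{\Lambda[[H]]}\check{\Pi}(D)^\ast$, the identification $\check{\Pi}(D)^\ast\cong D^\natural\boxtimes\mathbf{P}^1$ from Theorem 3.3, and the $\Lambda_L(\Gamma)\to\r_L(\Gamma)$ base change underlying Proposition 3.7 are indeed the pillars of the proof.

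That said, the two steps you dispose of in a sentence are exactly where the difficulty sits. First, you cannot obtain $D(H,L)\otimes_{\Lambda[[H]]}(D^\natural\boxtimes\mathbf{P}^1)\cong D^\natural_\rig\boxtimes\mathbf{P}^1$ by ``propagating across the $\G$-translates of $\Z^\times$ that cover $\mathbf{P}^1$'': $\Res_{\Z^\times}$ is not equivariant for an Iwahori subgroup $H$, $D^\natural\boxtimes\Z^\times$ is not a $\Lambda[[H]]$-module, and the target $D^\natural_\rig\boxtimes\mathbf{P}^1$ is \emph{defined} globally as an orthogonal complement rather than by gluing local pieces. Colmez instead relates $D^\natural\boxtimes\mathbf{P}^1$ to $D^{\psi=1}$ through the exact sequences of \cite[\S II]{C08}, and only then invokes the analogue of Proposition 3.7, which concerns $(1-\varphi)D^{\psi=1}$ and not $D^\natural\boxtimes\Z^\times$; bridging these two modules is a substantive step, not bookkeeping. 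Second, surjectivity of $(D^\dagger[1/p]\boxtimes\mathbf{P}^1)/(D^\natural[1/p]\boxtimes\mathbf{P}^1)\to\Pi(D)_{\mathrm{an}}$ --- that every locally analytic vector is overconvergent --- does not ``follow from perfectness of $\{\,,\,\}_{\mathbf{P}^1}$''; to pass from the computation of the dual back to the representation one needs coadmissibility of $D^\natural_\rig\boxtimes\mathbf{P}^1$ over the Fr\'echet--Stein algebra $D(H,L)$ together with the level-by-level freeness statements of Theorem 3.4, and this is where most of the work in \cite[V.2]{C08} is concentrated. A smaller but genuine gap: local analyticity of the orbit maps under the lower unipotent needs more than the continuity supplied by Proposition 3.6(ii), since $u_b$ in Proposition 3.2(v) involves $w_D$ twice and $w_D$ is itself defined by a limit, so quantitative estimates on $D^{\dagger,s}\boxtimes\Z^\times$ are unavoidable there as well.
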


For $V$ a $2$-dimensional $L$-linear representation of $G_{\Q}$, we set $\Pi(V)=\Pi(\D(V))$ and $\check{\Pi}(V)=\Pi(\D(\check{V}))$.

\subsection{Amice transformation}
For any $h\in\mathbb{N}$, let $\mathrm{LA}_h$ denote the space of functions $f:\Z\ra L$ such that $f$ is analytic on $a+p^h\Z$ for any $a\in\Z$. If $f\in\mathrm{LA}_h$, then for any $z_0\in\Z$, we expand $f$ on $z_0+p^h\Z$ in the form
\[
f(z)|_{z_0+p^h\Z}=\sum_{i=0}^\infty a_{h,i}(z_0)(\frac{z-z_0}{p^h})^i,
\]
where $a_{h,i}(z_0)$ is a sequence of elements in $L$ such that $|a_{h,i}|\ra 0$ as $i\ra\infty$. We set $\|f\|_{h,z_0}=\max_i\{|a_{h,i}|\}$ and $\|f\|_{\mathrm{LA}_h}=\sup_{z_0\in\Z}\|f\|_{z_0,h}$. Let $\mathrm{LA}=\cup_h \mathrm{LA}_h$ denote the space of $L$-valued locally analytic functions on $\Z$. A \emph{continuous distribution} on $\Z$ is an $L$-linear homomorphism from $\mathrm{LA}$ to $L$ such that the restriction to each $\mathrm{LA}_h$ is continuous. Let $\mathcal{D}_{\mathrm{cont}}(\Z,L)$ denote the set of continuous distributions on $\Z$. We set, for any $h\in\mathbb{N}$, a norm $\|\cdot\|_{\mathrm{LA}_h}$ on $\mathcal{D}_{\mathrm{cont}}(\Z,L)$ by the formula
\[
\|\mu\|_{\mathrm{LA}_h}=\sup_{f\in\mathrm{LA}_h-{0}}\frac{|\int_{\Z}fd\mu(z)|}{\|f\|_{\mathrm{LA}_h}}
\]
We equip $\mathcal{D}_{\mathrm{cont}}(\Z,L)$ with the Fr\'echet topology defined by the norms $\|\cdot\|_{\mathrm{LA}_h}$ for $h\in\mathbb{N}$.

Let $\mu\in\mathcal{D}_{\mathrm{cont}}(\Z,L)$. For any $\gamma\in\Gamma$, we define $\gamma(\mu)\in\mathcal{D}_{\mathrm{cont}}(\Z,L)$ by the formula
\[
\int_{\Z}f(z)d\gamma(\mu)(z)=\int_{\Z}f(\chi(\gamma)z)d\mu(z).
\]
We define $\varphi(\mu),\psi(\mu)\in\mathcal{D}_{\mathrm{cont}}(\Z,L)$ by the formulas
\[
\int_{\Z}f(z)d\varphi(\mu)(z)=\int_{\Z}f(pz)d\mu(z),\int_{\Z}f(z)d\psi(\mu)(z)=\int_{p\Z}f(\frac{z}{p})d\mu(z),
\]
respectively. It is clear that $\psi(\varphi(\mu))=\mu$ and $\varphi(\psi(\mu))$ is the restriction of $\mu$ on $p\Z$.

For any $\mu\in\mathcal{D}_{\mathrm{cont}}(\Z,L)$, we associate it with the Amice transformation $\A(\mu)$, which is an element of $\r^+_L$ defined as
\[
\A(\mu)=\sum_{n=0}^\infty T^n\int_{\Z}{{z}\choose{n}}d\mu(z)=\int_{\Z}(1+T)^zd\mu(z).
\]
If $h\in\mathbb{N}$, put $\rho_{h}=p^{-\frac{1}{(p-1)p^h}}$. Note that $\rho_{h}=|\eta-1|$ for any $\eta\in\mu_{p^{h+1}}$.
\begin{proposition}(Amice transformation)
The map $\mu\ra\A(\mu)$ is a topologically isomorphism from $\mathcal{D}_{\mathrm{cont}}(\Z,L)$ to $\r^+_L$ respecting the $\varphi,\Gamma$ and $\psi$-actions. Moreover, we have
\[
\|\A(\mu)\|_{\rho_h}\leq \|\mu\|_{\mathrm{LA}_h}\leq p\|\A(\mu)\|_{\rho_{h+1}}.
\]
\end{proposition}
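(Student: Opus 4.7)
The proposition is the classical Amice transform, so the plan is to work through the Mahler basis and to reduce everything to a norm comparison on that basis, then check operator compatibility by evaluating on the generating function $(1+T)^z$.

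The first step will be to recognize that the coefficients of $\A(\mu)$ are the pairings $c_n(\mu)=\int_{\Zp}\binom{z}{n}d\mu(z)$ against the Mahler basis, since $(1+T)^z=\sum_{n\ge 0}\binom{z}{n}T^n$. Thus $\A$ is tautologically the map dual to Mahler expansion $f=\sum a_n(f)\binom{z}{n}$. The nontrivial input I would invoke (or recall) is Amice's characterization of $\mathrm{LA}_h$ in terms of Mahler coefficients: a continuous function $f:\Zp\to L$ lies in $\mathrm{LA}_h$ if and only if $|a_n(f)|\rho_h^n\to 0$, and one has the comparison
\[
\tfrac{1}{p}\sup_n|a_n(f)|\rho_{h+1}^n\le \|f\|_{\mathrm{LA}_h}\le \sup_n|a_n(f)|\rho_h^n,
\]
with the right inequality following from $\bigl|\binom{z}{n}\bigr|_{\mathrm{LA}_h}\le\rho_h^n$ (because on a coset $z_0+p^h\Zp$ the Taylor coefficients of $\binom{z}{n}$ in the variable $(z-z_0)/p^h$ are explicitly bounded), and the left inequality coming from writing $\binom{z}{n}$ as an orthonormalish basis for the slightly larger space $\mathrm{LA}_{h+1}$.

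With this comparison in hand, the duality between $\mathcal{D}_{\mathrm{cont}}(\Zp,L)$ and $\mathrm{LA}$ immediately translates into a statement about power series coefficients: the continuity of $\mu$ on $\mathrm{LA}_h$ with norm $\|\mu\|_{\mathrm{LA}_h}$ matches (up to the factor of $p$) the condition $|c_n(\mu)|\rho_h^n$ bounded, i.e.\ $\A(\mu)\in\calE_L^{]0,1/(p-1)p^h]}$ with norm $\|\A(\mu)\|_{\rho_h}=\sup_n|c_n(\mu)|\rho_h^n$. Dualizing the two inequalities above yields
\[
\|\A(\mu)\|_{\rho_h}\le \|\mu\|_{\mathrm{LA}_h}\le p\,\|\A(\mu)\|_{\rho_{h+1}},
\]
which is the claimed estimate. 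Bijectivity is then formal: a power series $g=\sum c_nT^n\in\r_L^+$ satisfies $|c_n|\rho^n\to 0$ for every $\rho<1$, so the formula $\mu(f)=\sum c_na_n(f)$ defines the unique preimage, with the norm bounds certifying continuity on each $\mathrm{LA}_h$; the inverse is automatically continuous by the open mapping theorem for Fr\'echet spaces (or directly from the same inequalities).

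Finally, to see that $\A$ intertwines the $\varphi$, $\Gamma$ and $\psi$ actions, I would verify the identities on the kernel $(1+T)^z$ and then integrate against $\mu$. For $\gamma\in\Gamma$, $\gamma((1+T)^z)=(1+T)^{\chi(\gamma)z}$, so $\gamma(\A(\mu))=\int_{\Zp}(1+T)^{\chi(\gamma)z}d\mu(z)=\int_{\Zp}(1+T)^{z}d(\gamma\mu)(z)=\A(\gamma\mu)$ by the very definition of $\gamma\mu$. The same computation with $\chi(\gamma)$ replaced by $p$ gives $\varphi\A(\mu)=\A(\varphi\mu)$. For $\psi$, decompose $(1+T)^z=\sum_{i=0}^{p-1}(1+T)^i\varphi\bigl((1+T)^{(z-i)/p}\bigr)\,\mathbf{1}_{i+p\Zp}(z)$; applying $\psi$ to $\A(\mu)$ kills all terms with $i\ne 0$ and returns $\int_{p\Zp}(1+T)^{z/p}d\mu(z)=\A(\psi\mu)$ by the definition of $\psi\mu$.

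The main obstacle is the norm comparison on the Mahler basis side, i.e.\ Amice's theorem itself; granting it, everything else is either a duality calculation or a check on $(1+T)^z$.
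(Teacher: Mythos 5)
Your proposal is correct and follows essentially the same route as the paper, which simply cites Colmez for the norm comparison and leaves the $\varphi,\Gamma,\psi$-intertwining as an exercise; what you have written out is precisely the content of that cited result (the Mahler-basis characterization of $\mathrm{LA}_h$ and its dual translation to growth of $\A(\mu)$ on the disk of radius $\rho_h$) together with the evaluation-on-$(1+T)^z$ check that the paper omits. The only point worth tightening is the bound on $\bigl\|\binom{z}{n}\bigr\|_{\mathrm{LA}_h}$: the precise exponent involves $\sum_{j\geq 1}\lfloor n/p^{h+j}\rfloor$ rather than $n/((p-1)p^h)$ exactly, and the slack between these two is exactly what produces the factor of $p$ and the shift $h\mapsto h+1$ in the two-sided estimate, so you should not expect the sharper equality $\|\A(\mu)\|_{\rho_h}=\|\mu\|_{\mathrm{LA}_h}$ that your sketch momentarily suggests.
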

\begin{proof}
It is straightforward to verify that the Amice transformation commutes with $\varphi,\Gamma$ and $\psi$-actions. We leave it as an exercise for the reader. The rest is exactly \cite[Th\'eor\`eme 2.3]{C05}.
\end{proof}
Thus for any $\mu\in\mathcal{D}_{\mathrm{cont}}(\Z,L)$, we have
\[
\A(\mu)\in(\r^+_L)^{\psi=0}\Longleftrightarrow0=\varphi(\psi(\A(\mu)))=\A(\varphi(\psi(\mu)))\Longleftrightarrow
\varphi(\psi(\mu))=0
\Longleftrightarrow\mathrm{Supp}(\mu)\subseteq\Z^\times.
\]
We will need this equivalence later.

\subsection{$\mathrm{B}(V_{\alpha,\beta})\cong\Pi(V_{\alpha,\beta})$}
In this subsection, we will explain the compatibility of Colmez and Berger-Breuil's constructions in the case when $V\in\mathscr{S}_*^{\mathrm{cris}}$ is not exceptional (for $V_{\alpha,\beta}$, this is equivalent to $\alpha\neq\beta$). Since every element of $\mathscr{S}_*^{\mathrm{cris}}$ is a twist of $V_{\alpha,\beta}$ for some $(\alpha,\beta)$, it reduces to show that $\mathrm{B}(V_{\alpha,\beta})$ is naturally isomorphic to $\Pi(V_{\alpha,\beta})$ for any $(\alpha,\beta)$ such that $\alpha\neq\beta$. This is the main result of \cite{BB06}. First note that the central character of $B(\alpha)/L(\alpha)$ is $\delta(z)=(\alpha\beta)(z)|z|^{-1}z^{k-2}$, which coincides with the central character $\delta_{D}$ (here $D=\D(V_{\alpha,\beta})$) of $\Pi(V_{\alpha,\beta})$. From now on, we suppose $\alpha\neq\beta$.
\begin{definition}
For any crystabelian representation $V$, we define $\mathrm{M}(V)$ as the set of elements $g\in \r^+_L[1/t]\otimes_L \D_{\mathrm{cris}}(V)$ such that $\iota_m(g)\in \mathrm{Fil}^0(L_m((t))\otimes_L\D_{\rm cris}(V))$ for every $m\geq m(V)$.
\end{definition}

\begin{proposition}(\cite[Proposition 3.3.3]{BB06})
If $V$ is a crystabelian representation with Hodge-Tate weights in $[-h,0]$ for some $h\geq0$, then the $\r_L^+$-module $\mathrm{M}(V)$ is free of rank $\dim_LV$, and it satisfies
\[
T^{-h}\r^+_L\otimes_{\calE_L^+}\mathrm{N}(V)\subseteq\mathrm{M}(V)\subseteq\varphi^{m(V)-1}(T)^{-h}
\r^+_L\otimes_{\calE_L^+}\mathrm{N}(V).
\]
\end{proposition}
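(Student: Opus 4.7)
My plan is to prove the two inclusions by direct calculation with the localization maps $\iota_m$, using Proposition 2.14 as the key input, and then to obtain freeness from the resulting sandwich.

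For the first inclusion $T^{-h}\r_L^+\otimes_{\calE_L^+}\mathrm{N}(V)\subseteq\mathrm{M}(V)$, the crucial observation will be that $\iota_m(T)=\eps^{(m)}e^{t/p^m}-1$ reduces modulo $t$ to $\eps^{(m)}-1\neq 0$ for $m\geq m(V)\geq 1$, so $\iota_m(T)$ is a unit of $L_m[[t]]$. Given $n\in\r_L^+\otimes_{\calE_L^+}\mathrm{N}(V)$ and $m\geq m(V)$, Proposition 2.14 gives $\iota_m(n)\in\mathrm{Fil}^0(t^{-h}L_m[[t]]\otimes_L\D_{\mathrm{cris}}(V))$, and multiplication by the unit $\iota_m(T)^{-h}\in L_m[[t]]$ preserves this filtration; therefore $\iota_m(T^{-h}n)\in\mathrm{Fil}^0(L_m((t))\otimes_L\D_{\mathrm{cris}}(V))$.

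For the second inclusion, I will take $g\in\mathrm{M}(V)$ and set $\tilde g=\varphi^{m(V)-1}(T)^h g$; the aim is to show $\tilde g\in\r_L^+\otimes_{\calE_L^+}\mathrm{N}(V)$. A direct computation yields
\[
\iota_m\bigl(\varphi^{m(V)-1}(T)\bigr)=(\eps^{(m)})^{p^{m(V)-1}}e^{p^{m(V)-1-m}t}-1,
\]
which is a unit in $L_m[[t]]$ exactly when $m\geq m(V)$, and vanishes to order $1$ at $t=0$ when $0\leq m\leq m(V)-1$. Hence for $m\geq m(V)$ the condition $\iota_m(\tilde g)\in\mathrm{Fil}^0$ is immediate from $g\in\mathrm{M}(V)$, while for $m<m(V)$ the factor $\iota_m(\varphi^{m(V)-1}(T))^h$ contributes a zero of order $h$ in $t$ that absorbs the pole of $\iota_m(g)$ coming from $g\in\r_L^+[1/t]\otimes_L\D_{\mathrm{cris}}(V)$. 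To conclude $\tilde g\in\r_L^+\otimes_{\calE_L^+}\mathrm{N}(V)$, I will invoke the characterization of $\r_L^+\otimes_{\calE_L^+}\mathrm{N}(V)$ inside $\r_L^+[1/t]\otimes_L\D_{\mathrm{cris}}(V)$ as the collection of elements whose $\iota_m$-images (together with their Galois conjugates) satisfy the Fil${}^0$ conditions for all $m\geq 0$; this follows by combining the isomorphism half of Proposition 2.14 with the chain $\r_L^+\otimes_L\D_{\mathrm{cris}}(V)\subseteq\r_L^+\otimes_{\calE_L^+}\mathrm{N}(V)\subseteq t^{-h}\r_L^+\otimes_L\D_{\mathrm{cris}}(V)$ and a rigid-analytic gluing over the open unit disk.

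Finally, the sandwich exhibits $\mathrm{M}(V)$ as a torsion-free $\r_L^+$-submodule of the free module $\varphi^{m(V)-1}(T)^{-h}\r_L^+\otimes_{\calE_L^+}\mathrm{N}(V)$ of rank $d:=\dim_L V$, containing the free submodule $T^{-h}\r_L^+\otimes_{\calE_L^+}\mathrm{N}(V)$ of the same rank; the quotient of the two flanking modules is torsion, supported at the finite zero set of $\varphi^{m(V)-1}(T)/T$, and hence finitely generated. Consequently $\mathrm{M}(V)$ is finitely generated, and being torsion-free over the B\'ezout domain $\r_L^+$ it is free, necessarily of rank $d$ by localizing away from these zeros. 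The main obstacle is the gluing step at the intermediate levels $m<m(V)$: the definition of $\mathrm{M}(V)$ imposes no Hodge-filtration constraint there, so one must verify carefully that multiplication by $\varphi^{m(V)-1}(T)^h$ supplies \emph{precisely} the vanishing needed—uniformly in all Galois conjugates of the $\iota_m$—to identify $\tilde g$ with an element of the Wach module.
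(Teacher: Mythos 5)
The paper gives no proof of Proposition 3.13; it is cited directly from \cite[Proposition 3.3.3]{BB06}, so there is no argument in the paper itself to compare against, and I will assess your blind proof on its own merits. Your first inclusion is fine: for $m\geq m(V)\geq 1$ the constant term of $\iota_m(T)$ is $\eps^{(m)}-1\in L_m^{\times}$, so $\iota_m(T)$ is a unit of $L_m[[t]]$, multiplication by $\iota_m(T)^{-h}$ preserves the $L_m[[t]]$-module $\mathrm{Fil}^0(t^{-h}L_m[[t]]\otimes_L\D_{\mathrm{cris}}(V))$, and together with the containment half of Proposition~2.14 this gives $T^{-h}\r_L^+\otimes_{\calE_L^+}\mathrm{N}(V)\subseteq\mathrm{M}(V)$. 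Your closing freeness argument is also correct once the sandwich is in place.

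The obstacle you flag in the second inclusion is, however, a genuine gap, and ``rigid-analytic gluing'' by itself cannot close it. Two distinct problems occur. First, the pole-absorption step at $m<m(V)$ tacitly assumes that $\iota_m(g)$ has a pole of order at most $h$ at $t=0$, but Definition~3.12 imposes the $\mathrm{Fil}^0$ condition \emph{only} for $m\geq m(V)$, and $g\in\r_L^+[1/t]\otimes_L\D_{\mathrm{cris}}(V)$ gives no uniform bound at the remaining levels. Concretely, take $V$ trivial, so $h=0$, $m(V)=1$, $\mathrm{N}(V)=\calE_L^+$: the element $g=1/T=(t/T)\cdot t^{-1}$ lies in $\r_L^+[1/t]$, and for every $m\geq 1$ one has $\iota_m(g)=(\eps^{(m)}e^{t/p^m}-1)^{-1}\in L_m[[t]]=\mathrm{Fil}^0(L_m((t)))$, so $g\in\mathrm{M}(V)$ as defined; yet $\varphi^{m(V)-1}(T)^{h}g=g=1/T\notin\r_L^+=\r_L^+\otimes_{\calE_L^+}\mathrm{N}(V)$. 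Either Definition~3.12 is omitting a constraint present in \cite{BB06}, or the argument needs extra a priori input (for example some form of $\psi$- or $\varphi$-control on $\mathrm{M}(V)$) before any local calculation can start. Second, even granting a pole bound, the characterization of $\r_L^+\otimes_{\calE_L^+}\mathrm{N}(V)$ as the set of elements with $\iota_m$-image in $\mathrm{Fil}^0(t^{-h}L_m[[t]]\otimes_L\D_{\mathrm{cris}}(V))$ for all $m\geq 0$ does not follow from Proposition~2.14: that proposition asserts the map $L_m[[t]]\otimes_{\calE_L^+}^{\iota_m}\mathrm{N}(V)\to\mathrm{Fil}^0(\cdots)$ is an \emph{isomorphism} only for $m\geq m(V)$, and merely an injection for $m<m(V)$, so the $\mathrm{Fil}^0$ condition at small $m$ is strictly weaker than lying locally in the image of the Wach module, and a gluing argument would have to bridge exactly that gap. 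In short, your strategy is right for $m\geq m(V)$, but the second inclusion requires genuinely new input at the levels $m<m(V)$ that neither the stated definition of $\mathrm{M}(V)$ nor Proposition~2.14 supplies.
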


\begin{corollary}
The $\r^+_L$-module $\mathrm{M}(V_{\alpha,\beta})$ is contained in $\D^\dagger_{\rig}(V_{\alpha,\beta})$.
\end{corollary}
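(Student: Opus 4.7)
The plan is to deduce the corollary directly from the upper bound supplied by Proposition 3.15, combined with the standard inclusion of the Wach module inside $\D^\dagger$. Since the Hodge filtration on $\D_{\cris}(V_{\alpha,\beta})$ has jumps only at $0$ and $-(k-1)$, the Hodge--Tate weights of $V_{\alpha,\beta}$ lie in the interval $[-(k-1),0]$. Taking $h=k-1$ in Proposition 3.15 therefore yields
\[
\mathrm{M}(V_{\alpha,\beta}) \subseteq \varphi^{m(V_{\alpha,\beta})-1}(T)^{-(k-1)} \, \r_L^+ \otimes_{\calE_L^+} \mathrm{N}(V_{\alpha,\beta}).
\]
Thus it suffices to show that this right-hand side is contained in $\D_\rig^\dagger(V_{\alpha,\beta}) = \r_L \otimes_{\calE_L^\dagger} \D^\dagger(V_{\alpha,\beta})$.

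Two ingredients are needed. First, $\mathrm{N}(V_{\alpha,\beta}) \subseteq \D^\dagger(V_{\alpha,\beta})$: by Theorem 2.12 the Wach module is by construction an $\calE_L^+$-submodule of $\D^+(V_{\alpha,\beta})$, and the standard period-ring inclusion $\bb^+ \subseteq \bb^\dagger$ forces $\D^+(V_{\alpha,\beta}) \subseteq \D^\dagger(V_{\alpha,\beta})$. Second, setting $m = m(V_{\alpha,\beta})$, I would check that $\varphi^{m-1}(T)^{-1} \in \r_L$: indeed, the zeros of $\varphi^{m-1}(T) = (1+T)^{p^{m-1}} - 1$ are exactly $\eta - 1$ for $\eta$ ranging over the $p^{m-1}$-th roots of unity, all of which satisfy $|\eta - 1| < 1$, so $\varphi^{m-1}(T)^{-1}$ is analytic on some annulus $r \leq |T| < 1$ and hence belongs to the Robba ring.

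Combining these, we have
\[
\varphi^{m-1}(T)^{-(k-1)} \, \r_L^+ \otimes_{\calE_L^+} \mathrm{N}(V_{\alpha,\beta}) \subseteq \r_L \otimes_{\calE_L^\dagger} \D^\dagger(V_{\alpha,\beta}) = \D_\rig^\dagger(V_{\alpha,\beta}),
\]
which, together with Proposition 3.15, gives the desired inclusion. I do not foresee any serious obstacle here: the corollary is essentially bookkeeping, distilling the intended relationship between the ``Hodge-theoretic'' module $\mathrm{M}(V_{\alpha,\beta})$ built from $\D_{\cris}(V_{\alpha,\beta})$ and the $(\varphi,\Gamma)$-module $\D_\rig^\dagger(V_{\alpha,\beta})$ once one has the explicit sandwiching from Proposition 3.15 in hand.
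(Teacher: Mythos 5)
There is a genuine gap, coming from a sign/convention confusion about Hodge--Tate weights that then propagates through the argument. The paper states explicitly that $V_{\alpha,\beta}$ has Hodge--Tate weights $(0,k-1)$, and in the paper's convention (where the cyclotomic character has weight $1$, i.e.\ $w(\chi)=1$) the Hodge--Tate weights are the \emph{negatives} of the filtration jump indices in $\D_{\cris}$. So the jumps at $i=-(k-1)$ and $i=0$ in Definition~2.2 correspond to Hodge--Tate weights $k-1$ and $0$, not to weights in $[-(k-1),0]$ as you assert. Consequently $V_{\alpha,\beta}$ is \emph{not} positive for $k\geq 2$, and your application of Proposition~3.13 (which you call 3.15) directly to $V_{\alpha,\beta}$ with $h=k-1$ is illegitimate: that proposition's hypothesis, that the Hodge--Tate weights lie in $[-h,0]$, is simply not satisfied.

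The same issue breaks your second ingredient. Theorem~2.12 places $\mathrm{N}(V)$ inside $\D^+(V)$ \emph{only for positive} $V$. Since $V_{\alpha,\beta}$ is not positive, the paper's general definition is $\mathrm{N}(V_{\alpha,\beta})=T^{-(k-1)}\mathrm{N}(V_{\alpha,\beta}(1-k))$, which involves negative powers of $T$ and is therefore not contained in $\D^+(V_{\alpha,\beta})$ (it does land in $\D^\dagger(V_{\alpha,\beta})$, because $T^{-1}\in\calE_L^\dagger$, but that is not what your cited theorem gives). The paper's proof sidesteps both problems by first applying Proposition~3.13 and Theorem~2.12 to the genuinely positive twist $V_{\alpha,\beta}(1-k)$, whose Hodge--Tate weights $\{1-k,0\}$ do lie in $[-(k-1),0]$, obtaining $\mathrm{M}(V_{\alpha,\beta}(1-k))\subseteq\D^\dagger_{\rig}(V_{\alpha,\beta}(1-k))$, and then transporting the result back to $V_{\alpha,\beta}$ by observing that $\r^+_L\otimes_L\D_{\cris}(V_{\alpha,\beta})=t^{1-k}\r^+_L\otimes_L\D_{\cris}(V_{\alpha,\beta}(1-k))$ and that the two $\mathrm{Fil}^0$ conditions defining $\mathrm{M}$ coincide, so that $\mathrm{M}(V_{\alpha,\beta})=\mathrm{M}(V_{\alpha,\beta}(1-k))$. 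Your computation that $\varphi^{m-1}(T)^{-1}\in\r_L$ is fine and is implicitly used in the paper's argument too, but the twist to $V_{\alpha,\beta}(1-k)$ is the essential missing step, not optional bookkeeping.
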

\begin{proof}
Applying the above proposition to the positive crystabelian representation $V_{\alpha,\beta}(1-k)$, we get $\mathrm{M}(V_{\alpha,\beta}(1-k))\subseteq\varphi^{m(V_{\alpha,\beta})-1}(T)^{1-k}\r^+_L\otimes_{\calE^+}\mathrm{N}
(V_{\alpha,\beta}(1-k))\subseteq
\D^\dagger_{\rig}(V_{\alpha,\beta}(1-k))$.
Since $\r^+_L\otimes_L \D_{\mathrm{cris}}(V_{\alpha,\beta})=t^{1-k}\r^+_L\otimes_L \D_{\mathrm{cris}}(V_{\alpha,\beta}(1-k))$
and $\mathrm{Fil}^0(L_m[[t]]\otimes_L\D_{\rm cris}(V_{\alpha,\beta}))=\mathrm{Fil}^0(L_m[[t]]\otimes_L\D_{\rm cris}(V_{\alpha,\beta}(1-k)))$, we conclude $\mathrm{M}(V_{\alpha,\beta})\subseteq\D^\dagger_{\rig}(V_{\alpha,\beta})$.
\end{proof}

\begin{lemma} (\cite[Lemme 5.1.2]{BB06})
Let $m\geq m(V_{\alpha,\beta})$, and $c_\alpha,c_\beta\in\r^+_L$. Let $\mu_\alpha=\A^{-1}(c_\alpha),\mu_\beta=\A^{-1}(c_\beta)$ denote the corresponding locally analytic distributions over $\Z$. Then the condition
\[
\iota_m(c_\alpha e_{\alpha}+c_\beta e_{\beta})\in \mathrm{Fil}^0(L_m[[t]]\otimes_L\D_{\mathrm{cris}}(V_{\alpha,\beta}))
\]
is equivalent to
\[
G(\beta^{-1}\alpha,\mathbb{\eta}_{p^m}^{p^{m-m(V)}})\alpha_p^{m}\int_{\Z}z^j
\eta^z_{p^m}d\mu_\alpha(z)=\beta_p^{m}\int_{\Z}z^j\eta^z_{p^m}d\mu_\beta(z)
\]
for every $j\in\{0,\dots,k-2\}$ and every primitive $p^m$-th roots of unity $\eta_{p^m}$ in $\overline{\mathbb{Q}}_p$.
\end{lemma}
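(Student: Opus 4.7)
The plan is to directly unfold the filtration membership into an explicit congruence on the $\iota_m$-expansions modulo $t^{k-1}$, then compute both sides via the Amice transform and match coefficients, ending with a Galois-conjugation argument to pass from $\eta_{p^m}=\eps^{(m)}$ to arbitrary primitive $p^m$-th roots of unity.

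First I would make the filtration explicit. Because $\mathrm{Fil}^i D(\alpha,\beta)$ for $-(k-2)\leq i\leq 0$ is the single line $L_{m(V)}\cdot(e_\alpha+G(\alpha\beta^{-1})e_\beta)$ and $\mathrm{Fil}^{-(k-1)}$ is everything, tensoring with $L_m[[t]]$ collapses the filtration to
\[
\mathrm{Fil}^0\bigl(L_m[[t]]\otimes_L D(\alpha,\beta)\bigr)=L_m[[t]]\cdot\bigl(e_\alpha+G(\alpha\beta^{-1})e_\beta\bigr)+t^{k-1}L_m[[t]]\otimes_L D(\alpha,\beta).
\]
Combining this with the crucial observation that $\iota_m=\varphi^{-m}$ acts on the crystalline basis by the $\varphi$-eigenvalues of $e_\alpha,e_\beta$ (so $\iota_m(e_\alpha)=\alpha_p^m e_\alpha$ and $\iota_m(e_\beta)=\beta_p^m e_\beta$, since $\alpha_p=\alpha(p)^{-1}$ and $\beta_p=\beta(p)^{-1}$), the membership condition reduces, after eliminating the unknown scalar in $L_m[[t]]$, to the single congruence
\[
\beta_p^m\,\iota_m(c_\beta)\equiv\alpha_p^m\,G(\alpha\beta^{-1})\,\iota_m(c_\alpha)\pmod{t^{k-1}}.
\]

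Next I would expand via the Amice transform. For $c=\A(\mu)$ one checks readily that
\[
\iota_m(c)=\int_\Z(\eps^{(m)})^z e^{tz/p^m}d\mu(z)\equiv\sum_{j=0}^{k-2}\frac{t^j}{j!\,p^{mj}}\int_\Z z^j(\eps^{(m)})^z d\mu(z)\pmod{t^{k-1}},
\]
by truncating the Taylor expansion of $e^{tz/p^m}$. Substituting into the congruence above, the common denominators $j!\,p^{mj}$ cancel, and matching the coefficient of each $t^j$ for $j=0,\dots,k-2$ yields exactly the lemma's identity at $\eta_{p^m}=\eps^{(m)}$, using $G(\alpha\beta^{-1})=G(\beta^{-1}\alpha,\eps^{(m(V))})$ and $(\eps^{(m)})^{p^{m-m(V)}}=\eps^{(m(V))}$.

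Finally, the identity for a general primitive $p^m$-th root $\eta_{p^m}=(\eps^{(m)})^a$, $a\in\Z^\times$, follows by applying the Galois element $\sigma_a\in\mathrm{Gal}(L_m/L)$: since $\mu_\alpha,\mu_\beta$ are $L$-valued, $\sigma_a$ merely replaces $(\eps^{(m)})^z$ by $\eta_{p^m}^z$ in the integrals, while by the standard rule $\sigma_a G(\tau,\eta)=G(\tau,\eta^a)$ the Gauss-sum factor becomes $G(\beta^{-1}\alpha,\eta_{p^m}^{p^{m-m(V)}})$. Since $\mathrm{Gal}(L_m/L)$ acts transitively on primitive $p^m$-th roots of unity, the whole family of identities is mutually equivalent. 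The one real obstacle lies in the first paragraph: correctly tracking how $\iota_m=\varphi^{-m}$ interacts with the crystalline basis to produce the $\alpha_p^m,\beta_p^m$ factors, without which one would obtain an answer off by $(\alpha_p/\beta_p)^m$; once this normalization is in hand, the remainder is routine expansion and standard Gauss-sum manipulation.
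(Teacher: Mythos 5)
Your proof is correct and follows exactly the argument behind \cite[Lemme 5.1.2]{BB06}, which the paper simply cites rather than reproving. The filtration unwinds as you say, the crucial normalization $\iota_m(e_\alpha)=\alpha_p^m e_\alpha$, $\iota_m(e_\beta)=\beta_p^m e_\beta$ comes from $\iota_m=\varphi^{-m}$ applied to the $\varphi$-eigenvectors in $\D_{\mathrm{cris}}$, the Amice-transform expansion $\iota_m(\A(\mu))=\int_{\Z}(\eps^{(m)})^z e^{tz/p^m}d\mu(z)$ truncated mod $t^{k-1}$ gives the identity for $\eta_{p^m}=\eps^{(m)}$, and the Galois-stability of the filtration (which follows from $\gamma(G(\alpha\beta^{-1}))=(\alpha\beta^{-1})(\chi(\gamma))G(\alpha\beta^{-1})$, so that $\gamma(e_\alpha+G(\alpha\beta^{-1})e_\beta)=\alpha(\chi(\gamma))(e_\alpha+G(\alpha\beta^{-1})e_\beta)$) makes the full family of identities over all primitive $\eta_{p^m}$ equivalent to the single one at $\eps^{(m)}$.
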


\begin{corollary}
Let $\mu_\alpha\in A(\alpha)^\ast$ and $\mu_\beta\in A(\beta)^\ast$. We regard $\mu_\alpha|_{\Z}$, $\mu_\beta|_{\Z}$ as elements of $\mathcal{D}(\Z,L)$, and let $c_\alpha=\A(\mu_\alpha|_{\Z}), c_\beta=\A(\mu_\beta|_{\Z})$. If $\mu_\alpha$ and $\mu_\beta$ are related by the condition
\begin{equation}
\int_{\Q}fd\mu_\beta(z)=\frac{1}{C(\alpha_p,\beta_p)}\int_{\Q}I(f)d\mu_\alpha(z)
\end{equation}
for any $f\in\pi(\beta)$, then we have $c_\alpha e_\alpha+c_\beta e_\beta\in \mathrm{M}(V_{\alpha,\beta})$. Here $C(\alpha_p,\beta_p)$ is the constant we defined in 1.3.
\end{corollary}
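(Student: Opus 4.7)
The plan is to verify the explicit criterion supplied by Lemma 3.16: for every $m\geq m(V_{\alpha,\beta})$, every $j\in\{0,\ldots,k-2\}$, and every primitive $p^m$-th root of unity $\eta_{p^m}$, I must show
\[
G(\beta^{-1}\alpha,\eta_{p^m}^{p^{m-m(V_{\alpha,\beta})}})\,\alpha_p^m\int_{\Z}z^j\eta_{p^m}^z\,d\mu_\alpha=\beta_p^m\int_{\Z}z^j\eta_{p^m}^z\,d\mu_\beta.
\]
Writing $\eta_{p^m}=e^{2\pi iy}$ with $y\in p^{-m}\Z^\times$ (so $\val(y)=-m$), the idea is to feed the test function
\[
f(x)=x^j\cdot 1_{\Z}(x)\cdot e^{2\pi ixy}
\]
into the hypothesis $(3.4)$. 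First I would check that $f\in\pi(\beta)$: on $\Z$ the function is locally (on each coset of $p^m\Z$) a polynomial of degree $j\leq k-2$ and it vanishes outside $\Z$; the condition at infinity is trivial on $p\Z\smallsetminus\{0\}$, while on $\Z^\times$ the relevant twist reduces to $z^{k-2-j}$ times a locally constant factor, hence is locally polynomial of degree $k-2-j\leq k-2$.

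Because $f$ is supported on $\Z$, the left-hand side of $(3.4)$ is simply $\int_{\Z}z^j\eta_{p^m}^z\,d\mu_\beta$. The heart of the argument is the explicit computation of $I(f)$. Using the Mackey decomposition $\pi(\beta)\cong\mathrm{Sym}^{k-2}L^2\otimes_L\pi^{\mathrm{sm}}(\beta)$ together with the realization of the $\mathrm{Sym}^{k-2}$-factor as polynomial multiplication in the Iwahori coordinate, $f$ corresponds to the pure tensor $X^jY^{k-2-j}\otimes g$ with $g=1_{\Z}\cdot e^{2\pi ixy}\in\pi^{\mathrm{sm}}(\beta)$. Since $I=\mathrm{id}_{\mathrm{Sym}^{k-2}}\otimes I^{\mathrm{sm}}$, one obtains $I(f)(z)=z^j\,I^{\mathrm{sm}}(g)(z)$. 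The hypothesis $n+\val(y)=-m\leq-m(V_{\alpha,\beta})$ of Lemma 1.4 (with $n=0$) is exactly satisfied, yielding
\[
I^{\mathrm{sm}}(g)(z)=C(\alpha_p,\beta_p)\bigl(\beta_p/\alpha_p\bigr)^{-m}G\bigl(\beta^{-1}\alpha,\eta_{p^m}^{p^{m-m(V_{\alpha,\beta})}}\bigr)\,1_{\Z}(z)\,e^{2\pi izy}.
\]
In particular $I(f)$ is again supported on $\Z$; substituting into $(3.4)$ cancels the factor $C(\alpha_p,\beta_p)$, and multiplying through by $\beta_p^m$ produces exactly the identity demanded by Lemma 3.16. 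Hence $\iota_m(c_\alpha e_\alpha+c_\beta e_\beta)\in\mathrm{Fil}^0\bigl(L_m[[t]]\otimes_L\D_{\mathrm{cris}}(V_{\alpha,\beta})\bigr)$ for every $m\geq m(V_{\alpha,\beta})$, i.e.\ $c_\alpha e_\alpha+c_\beta e_\beta\in\mathrm{M}(V_{\alpha,\beta})$ by Definition 3.13.

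The main technical obstacle is the clean identification $I(f)(z)=z^j\,I^{\mathrm{sm}}(g)(z)$: one must track how the function-model realization $F\mapsto F\!\left(\begin{smallmatrix}0&1\\-1&z\end{smallmatrix}\right)$ interacts with the $\mathrm{Sym}^{k-2}\otimes\pi^{\mathrm{sm}}$ factorization, verifying concretely that $X^jY^{k-2-j}\otimes g$ corresponds to $z^jg(z)$ on the open Bruhat cell. A secondary bookkeeping step is to confirm that $I^{\mathrm{sm}}(1_{\Z}\cdot e^{2\pi ixy})$ really does vanish outside $\Z$ under our hypothesis on $\val(y)$, so that $\int_{\Q}I(f)\,d\mu_\alpha$ collapses to an integral over $\Z$; this is implicit in the statement of Lemma 1.4 but can be made explicit by evaluating the defining integral for $z\notin\Z$ and invoking the orthogonality of $\beta\alpha^{-1}$ against $e^{2\pi ixy}$ on the relevant cosets.
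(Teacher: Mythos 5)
Your proposal is correct and takes essentially the same route as the paper's proof: feed the test functions $z^j\,1_{\Z}\cdot e^{2\pi izy}$ (with $\val(y)=-m$) into the hypothesis, compute $I$ on them via the factorization $I=\mathrm{id}_{\mathrm{Sym}^{k-2}}\otimes I^{\mathrm{sm}}$ together with Lemma 1.6 (your ``Lemma 1.4''), and match the result against the criterion of Lemma 3.15 (your ``Lemma 3.16''). You spell out slightly more than the paper does---notably the check that $f\in\pi(\beta)$ and the reduction $I(f)=z^j\,I^{\mathrm{sm}}(g)$---but the argument is the same.
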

\begin{proof}
For any $0\leq j\leq k-2$, $y\in\Q^\times$ such that $\val(y)\leq-m(V)$, by $(3.6)$ and Lemma 1.6, we have
\begin{equation}
\begin{split}
\int_{\Z}z^je^{2\pi izy}d\mu_\beta(z)&=\int_{\Q}\frac{1}{C(\alpha_p,\beta_p)}I(1_{p^n\Z}\cdot z^je^{2\pi izy})d\mu_\alpha(z)\\
&=G(\beta^{-1}\alpha,e^{\frac{2\pi iy}{p^{\val(y)+m(V)}}})(\frac{\beta_p}{\alpha_p})^{\val(y)}\int_{\Z}z^je^{2\pi izy}d\mu_\alpha(z).
\end{split}
\end{equation}
Now for any $m\geq m(V_{\alpha,\beta})$ and a primitive $p^m$-th root of unity $\eta_{p^m}$, we choose $y_0$ such that $e^{2\pi iy_0}=\eta_{p^m}$; so $\val(y_0)=-m\leq-m(V_{\alpha,\beta})$. Setting $y=y_0$ in $(3.7)$, we obtain
\begin{equation}
\begin{split}
\beta_p^{m}\int_{\Z}z^j\eta^z_{p^m}d\mu_\beta(z)=G(\beta^{-1}\alpha,\eta_{p^m}^{p^{m-m(V)}})\alpha_p^{m}\int_{\Z}z^j\eta_{p^m}^{z}
d\mu_\alpha(z).
\end{split}
\end{equation}
We conclude $c_\alpha e_\alpha+c_\beta e_\beta\in \mathrm{M}(V_{\alpha,\beta})$ by Lemma 3.15.
\end{proof}

For any $g\in(\r^+_L)^{\psi=0}$, we set $\left(\begin{smallmatrix}
 0&1 \\
 1&0
\end{smallmatrix}\right)_\alpha g=\A(\left(\begin{smallmatrix}
 0&1 \\
 1&0
\end{smallmatrix}\right)(\A^{-1}(g)))$ (resp. $\left(\begin{smallmatrix}
 0&1 \\
 1&0
\end{smallmatrix}\right)_\beta g=\A(\left(\begin{smallmatrix}
 0&1 \\
 1&0
\end{smallmatrix}\right)(\A^{-1}(g)))$) $\in(\r^+_L)^{\psi=0}$ where we regard $\A^{-1}(g)$ as an element of $A(\alpha)^\ast$ (resp. $A(\beta)^\ast$) supported in $\Z^\times$.

Suppose $z=c_\alpha e_\alpha+c_\beta e_\beta\in\D^\dagger_\rig(V_{\alpha,\beta})\boxtimes\Z^\times\cap(\r^+_Le_\alpha\oplus\r^+_Le_\beta)$. We would have $0=\psi(z)=\alpha_p\psi(c_\alpha)e_\alpha+\beta_p\psi(c_\beta)e_\beta$, yielding $c_\alpha,c_\beta\in(\r^+_L)^{\psi=0}$. We define
\[\left(\begin{smallmatrix}
 0&1 \\
 1&0
\end{smallmatrix}\right)z=\left(\begin{smallmatrix}
 0&1 \\
 1&0
\end{smallmatrix}\right)_\alpha(c_\alpha)e_\alpha+\left(\begin{smallmatrix}
 0&1 \\
 1&0
\end{smallmatrix}\right)_\beta(c_\beta)e_\beta.
\]

We now construct a map $\mathcal{F}$ from $(B(\alpha)/L(\alpha))^\ast$ to
\[
\Pi(V_{\alpha,\beta})^\ast\cong
(\D^\natural(V_{\alpha,\beta})\boxtimes\mathbf{P}^1)\otimes(\delta^{-1}\circ\det).
\]
Let $i_{\alpha}$ denote the natural morphism $A(\alpha)\ra B(\alpha)/L(\alpha)$; the dual map is denoted by $i_{\alpha}^\ast$. We set $i_\beta$ and $i_\beta^\ast$ similarly. For any $\mu_\alpha\in(B(\alpha)/L(\alpha))^\ast$, we associate $\mu_\alpha$ with $\mu_\beta=\frac{1}{C(\alpha_p,\beta_p)}\mu_\alpha\circ\widehat{I}\in(B(\beta)/L(\beta))^\ast$.  We regard $\mu_\alpha$ and $\mu_\beta$ as elements of $A(\alpha)^\ast$ and $A(\beta)^\ast$ via $i_\alpha^\ast$ and $i_\beta^\ast$ respectively. Suppose $c_\alpha=\A(\mu_\alpha|_{\Z}), c_\beta=\A(\mu_\beta|_{\Z})$ and $c_\alpha'=\A((\left(\begin{smallmatrix}
 0&1 \\
 1&0
\end{smallmatrix}\right)\mu_\alpha)|_{\Z}), c_\beta'=\A((\left(\begin{smallmatrix}
 0&1 \\
 1&0
\end{smallmatrix}\right)\mu_\beta)|_{\Z})$. Let $z_\alpha=c_\alpha e_\alpha+c_\beta e_\beta$ and  $z'_\alpha=c'_\alpha e_\alpha+c'_\beta e_\beta$. By Corollary 3.14 we first have $z_\alpha\in\mathrm{M}(V_{\alpha,\beta})$. From \cite[Lemme 5.2.6]{BB06}, the fact that $\mu_\alpha$ and $\mu_\beta$ are of orders $\val(\alpha_p)$ and $\val(\beta_p)$ respectively further ensures that $z_\alpha\in \D^\sharp(V_{\alpha,\beta})$. From \cite[Corollaire II.5.21]{C07}, we get $\D^\sharp(V_{\alpha,\beta})=\D^\natural(V_{\alpha,\beta})$ because $V_{\alpha,\beta}$ is irreducible; hence $z_\alpha\in \D^\natural(V_{\alpha,\beta})$. Similarly we have $z'_\alpha\in\D^\natural(V_{\alpha,\beta})$ because $\widehat{I}$ is $\G$-equivariant.

\begin{lemma}(\cite[Lemme II.3.13]{C08})
For any $z\in\D^\natural({V_{\alpha,\beta}})\boxtimes\Z^\times$, we have $w_D(z)=\left(\begin{smallmatrix}
 0&1 \\
 1&0
\end{smallmatrix}\right)(z)$.
\end{lemma}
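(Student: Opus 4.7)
The strategy is to identify both operators with the single action of $\w \in \G$, but under two a priori different dualities, and then to verify that these dualities match up. On one hand, by Theorem 3.3, the inclusion $\D^\natural(V_{\alpha,\beta})\boxtimes\mathbf{P}^1 \cong (\check{\Pi}(V_{\alpha,\beta}))^\ast\otimes(\delta\circ\det)$ is $\G$-equivariant; so by Proposition 3.2(i), applied to the image of $z \in \D^\natural(V_{\alpha,\beta})\boxtimes\Z^\times$ as $(z, w_D(z))\in \D^\natural\boxtimes\mathbf{P}^1$, the operator $w_D$ computes the action of the group element $\w$ on the dual of $\check{\Pi}(V_{\alpha,\beta})_{\mathrm{an}}$. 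On the other hand, the Amice-transformed $\w$ on $z = c_\alpha e_\alpha + c_\beta e_\beta$ is, by construction, the dual of the action of $\w$ on the distributions $\mu_\alpha \in A(\alpha)^\ast$ and $\mu_\beta \in A(\beta)^\ast$, i.e. the dual of $\w$ acting on $A(\alpha)\oplus A(\beta)$.

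Thus the equality reduces to the $\w$-equivariance of the construction $\mu_\alpha \mapsto z_\alpha$ of $\mathcal{F}$. This in turn follows from two compatibilities: (i) the $\G$-equivariance of the intertwining operator $\widehat{I}$ in Proposition 1.5, which ensures that the assignment $\mu_\alpha\mapsto\mu_\beta = C(\alpha_p,\beta_p)^{-1}\mu_\alpha\circ\widehat{I}$ intertwines the $\w$-actions on $(B(\alpha)/L(\alpha))^\ast$ and $(B(\beta)/L(\beta))^\ast$; and (ii) the compatibility of Colmez's pairing $\{,\}_{\mathbf{P}^1}$ (Theorems 3.7, 3.8) with the natural evaluation pairing between $B(\alpha)/L(\alpha)$ and its dual, when the latter is restricted via $i_\alpha^\ast$ to distributions supported on $\Z^\times$.

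The main obstacle is this second compatibility: the pairing $\{,\}_{\mathbf{P}^1}$ is defined in an abstract way via residues on $\check{D}\boxtimes\mathbf{P}^1 \times D\boxtimes\mathbf{P}^1$, and one must verify that after the Amice transform it agrees with the evaluation pairing coming from the Breuil--Berger description of $B(\alpha)/L(\alpha)$ as a quotient of a continuous induction. To carry this out concretely, I would use Proposition 3.6 to reduce to checking the equality $w_D(z)=\w(z)$ on generators of $\D^\natural(V_{\alpha,\beta})\boxtimes\Z^\times$ coming from $(1-\varphi)\D(V_{\alpha,\beta})^{\psi=1}$, then evaluate both sides on the test elements $(1+T)^i$ with $i \in \Z^\times$ (whose Amice preimages are Dirac distributions at $i$), using the explicit formula for $w_D$ of Theorem 3.1 on the one hand and the fact that $\w$ acts on $\Z^\times$ by inversion $x \mapsto 1/x$ on the other. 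The $\psi$-stability of $\D^\natural$ and its irreducibility then propagate the equality to all of $\D^\natural(V_{\alpha,\beta})\boxtimes\Z^\times$.
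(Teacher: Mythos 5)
The paper does not actually prove this lemma: it is cited verbatim from Colmez, \cite[Lemme II.3.13]{C08}, and no argument is given here. So there is no in-paper proof to match your proposal against, and I have to assess its internal soundness.

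Your main line of argument is circular within the logical structure of this paper. You want to deduce $w_D=\w$ on $\D^\natural\boxtimes\Z^\times$ from the $\G$-equivariance of $\mathcal{F}$ (equivalently, from Theorem 3.3 / Proposition 3.2 applied to the image of $z$ as an element of $\D^\natural\boxtimes\mathbf{P}^1$ coming from $(B(\alpha)/L(\alpha))^\ast$). But $\mathcal{F}$ is only well-defined as a map into $\D^\natural(\check{V}_{\alpha,\beta})\boxtimes\mathbf{P}^1$ because the pair $(z_\alpha,z_\alpha')$ satisfies Colmez's defining condition $\Res_{\Z^\times}z_\alpha'=w_D(\Res_{\Z^\times}z_\alpha)$, and the paper obtains that from $\Res_{\Z^\times}z_\alpha'=\w(\Res_{\Z^\times}z_\alpha)$ precisely by invoking the lemma you are trying to prove. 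In other words, before the lemma is available one only knows that $z_\alpha,z_\alpha'$ individually lie in $\D^\natural$ and are related by $\w$; there is no a priori $\G$-equivariant target to appeal to. The same objection applies to ``applying Proposition 3.2(i) to the image of $z$ as $(z,w_D(z))\in\D^\natural\boxtimes\mathbf{P}^1$'': Proposition 3.2(i) gives $\w(z_1,z_2)=(z_2,z_1)$ as a fact about the $\G$-action on $D\boxtimes\mathbf{P}^1$, and it says nothing about whether the abstract operator $w_D$ agrees with the explicit, Amice-transform operator $\w_\alpha e_\alpha+\w_\beta e_\beta$ on $\r_L^+e_\alpha\oplus\r_L^+e_\beta$, which is the content of the lemma.

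Your closing paragraph gestures at a direct computation, but as written it does not cohere. ``Evaluate both sides on the test elements $(1+T)^i$'' is not a meaningful operation here: $w_D$ and $\w$ are endomorphisms of a module, not functionals, and the generators coming from $(1-\varphi)D^{\psi=1}$ are the inputs of those endomorphisms, not test functions to pair against. What one would actually have to do is unwind Colmez's limit formula for $w_D(z)$, diagonalize it with respect to the $\varphi$-eigenbasis $e_\alpha,e_\beta$ of $\r_L^+\otimes\D_{\mathrm{cris}}$, and match the result with the distribution-theoretic inversion $z\mapsto 1/z$ on $\Z^\times$ — which is essentially Colmez's proof of \cite[Lemme II.3.13]{C08} and requires work that the proposal does not supply. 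Also, ``irreducibility propagates the equality'' is not a valid step as stated: $\Lambda_L(\Gamma)$-linearity, not irreducibility, is the mechanism that lets one pass from generators to all of $\D^\natural\boxtimes\Z^\times$, and even that requires knowing the correct twist $T_{\delta,-1}$ on the coefficient ring as in the proof of Lemma 4.5 of this paper.
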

Note that $\D^\natural({V_{\alpha,\beta}})\subseteq\mathrm{M}(V_{\alpha,\beta})\subseteq\r^+_Le_\alpha\oplus\r^+_Le_\beta$ following \cite[Corollaire 3.3.10]{BB06}. So $\left(\begin{smallmatrix}
 0&1 \\
 1&0
\end{smallmatrix}\right)(z)$ is defined for any $z\in\D^\natural({V_{\alpha,\beta}})\boxtimes\Z^\times$. By the definition of $z_\alpha$ and $z'_\alpha$ we see that $\Res_{\Z^\times}z'_{\alpha}=\left(\begin{smallmatrix}
 0&1 \\
 1&0
\end{smallmatrix}\right)\Res_{\Z^\times}z_\alpha=w_D(\Res_{\Z^\times}z_\alpha)$. Hence $(z_\alpha,z'_\alpha)$ is an element of $\D^\natural({V_{\alpha,\beta}})\boxtimes\mathbf{P}^1$. We pick a basis $e$ of the one dimensional representation $\delta^{-1}\circ\det$. We define $\mathcal{F}$ by setting $\mathcal{F}(\mu_{\alpha})=(z_\alpha,z'_\alpha)\otimes e$. The following result which is the combination of \cite[Proposition 3.4.6]{BB06} and \cite[Proposition II.3.8]{C08}.
\begin{theorem}
The dual of $\mathcal{F}$ is a topological isomorphism from $\Pi(V_{\alpha,\beta})$ to $(B(\alpha)/L(\alpha))$ as $L$-Banach space representations of $\G$. Furthermore, the $\B$-action on $B(\alpha)/L(\alpha)$ is topologically irreducible.
\end{theorem}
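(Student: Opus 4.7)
The plan is to show that $\mathcal{F}$ is a continuous $\G$-equivariant bijection of Banach spaces (after dualizing) and invoke the open mapping theorem; topological irreducibility of the $\B$-action will then follow from the minimality of $\D^\natural$ among $\psi$-stable treillis combined with the irreducibility of $V_{\alpha,\beta}$ as a $G_{\Q}$-representation.

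First I would check that $\mathcal{F}$ is well-defined, continuous, and $\G$-equivariant. Well-definedness is essentially already assembled in the paragraph preceding the theorem: Corollary 3.16 places $z_\alpha\in\mathrm{M}(V_{\alpha,\beta})$, the order-$\val(\alpha_p)$ bound on $\mu_\alpha$ coming from the Banach-space structure on $B(\alpha)/L(\alpha)$ promotes $z_\alpha$ to $\D^\sharp(V_{\alpha,\beta})$, and the irreducibility of $V_{\alpha,\beta}$ gives $\D^\sharp = \D^\natural$. Continuity then follows from the norm comparisons of the Amice transform (Proposition 3.11). For $\G$-equivariance one splits the action into the $\B$-part, which is routine by matching the formulas in Proposition 3.2(ii)--(v) against the corresponding transformations of distributions under the Amice map, and the $w$-part, which is forced by the very definition of $z'_\alpha$ as the Amice image of $w\cdot\mu_\alpha$ together with Lemma 3.17 identifying $w_{D}$ with $\left(\begin{smallmatrix}0&1\\1&0\end{smallmatrix}\right)$ on $\D^\natural(V_{\alpha,\beta})\boxtimes\Z^\times$.

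For bijectivity, injectivity is easy: if $z_\alpha=z'_\alpha=0$ then $\mu_\alpha|_{\Z}=0$ and $(w\mu_\alpha)|_{\Z}=0$, and since $\Z$ together with $w\Z$ covers $\mathbf{P}^1(\Q)$ this forces $\mu_\alpha=0$. For surjectivity, given $(z_1,z_2)\in\D^\natural(V_{\alpha,\beta})\boxtimes\mathbf{P}^1$, decompose $z_1=c_\alpha e_\alpha+c_\beta e_\beta$ (possible because $\D^\natural\subseteq\mathrm{M}(V_{\alpha,\beta})\subseteq\r^+_L e_\alpha\oplus\r^+_L e_\beta$), invert Amice to obtain distributions $\mu_\alpha,\mu_\beta$ on $\Z$, and extend to $\Q$ using the $\psi$-coherence built into the $\boxtimes\mathbf{P}^1$ construction. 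The constraint $z_1\in\mathrm{M}(V_{\alpha,\beta})$ translates by Lemma 3.15 exactly into the intertwining identity $\int f\,d\mu_\beta = C(\alpha_p,\beta_p)^{-1}\int I(f)\,d\mu_\alpha$, which shows that $\mu_\alpha$ factors through $B(\alpha)/L(\alpha)$ and that $\mu_\beta$ is the prescribed twist of $\mu_\alpha\circ\widehat{I}$. The open mapping theorem then upgrades $\mathcal{F}^\ast$ to a topological isomorphism. For the topological irreducibility of the $\B$-action on $B(\alpha)/L(\alpha)$, by duality it suffices to rule out a nontrivial proper closed $\B$-stable subspace of $\D^\natural(V_{\alpha,\beta})\boxtimes\mathbf{P}^1$; using the formulas of Proposition 3.2 one sees such a subspace must contain a $\psi$-stable treillis inside $\D(V_{\alpha,\beta})$, whence minimality of $\D^\natural$ (Proposition 2.11) combined with the irreducibility of $V_{\alpha,\beta}$ forces it to be everything.

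The main obstacle I anticipate is the surjectivity step, specifically verifying that the distributions produced from an arbitrary $(z_1,z_2)\in\D^\natural(V_{\alpha,\beta})\boxtimes\mathbf{P}^1$ are genuinely of order $\val(\alpha_p)$ so as to extend to continuous functionals on $B(\alpha)$, and that they annihilate the subspace $L(\alpha)$. The order bound requires careful control of the growth of the coefficients of elements of $\D^\natural(V_{\alpha,\beta})$ when expressed in the $\r^+_L$-basis $(e_\alpha,e_\beta)$ of $\mathrm{M}(V_{\alpha,\beta})$; the vanishing on $L(\alpha)$ reduces to the direct check that the explicit generators $z^j$ and $(\beta\alpha^{-1})(z-a)|z-a|^{-1}(z-a)^{k-2-j}$ for $0\leq j<\val(\alpha_p)$ pair trivially with elements of $\D^\natural(V_{\alpha,\beta})\boxtimes\mathbf{P}^1$, and this is ultimately where the precise shape of the Hodge filtration on $D(\alpha,\beta)$ enters.
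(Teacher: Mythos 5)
The paper does not give a proof of this statement; it records it as a direct combination of \cite[Proposition 3.4.6]{BB06} and \cite[Proposition II.3.8]{C08}. Your proposal attempts an independent re-proof of those two results, which is a much larger undertaking than the paper itself carries out, and as written it has two genuine gaps.

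The first is the one you flag yourself, and it is not a minor lacuna: showing that an arbitrary $(z_1,z_2)\in\D^\natural(V_{\alpha,\beta})\boxtimes\mathbf{P}^1$ produces distributions of order exactly $\val(\alpha_p)$ (so that they extend to $B(\alpha)^\ast$) and that these distributions kill $L(\alpha)$ is essentially the entire content of \cite[Proposition 3.4.6]{BB06}. Also, your claim that ``$z_1\in\mathrm{M}(V_{\alpha,\beta})$ translates by Lemma 3.15 exactly into the intertwining identity'' only gives the integral identities against the dense family $z^j\eta_{p^m}^z$; upgrading this to the intertwining identity $\int f\,d\mu_\beta = C(\alpha_p,\beta_p)^{-1}\int I(f)\,d\mu_\alpha$ for all $f\in\pi(\beta)$ requires running Corollary 3.16 in reverse together with an explicit density and continuity argument, which you do not supply. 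Without the order bound you also cannot invoke the open mapping theorem in the form you want, since a priori the map $\mathcal{F}$ is only defined between weak-$\ast$ duals.

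The second gap is the irreducibility argument. A nonzero proper closed $\B$-stable $L$-subspace of $\D^\natural(V_{\alpha,\beta})\boxtimes\mathbf{P}^1$ does not in any obvious way yield a $\psi$-stable \emph{treillis} of $\D(V_{\alpha,\beta})$ (you would need to extract an $\OO_L[[T]]$-submodule with the right compactness and non-degeneracy, and $\B$-stability only gives stability under $\left(\begin{smallmatrix}p&0\\0&1\end{smallmatrix}\right)$-type operators, not under $\psi$), and even if it did, minimality of $\D^\natural$ among $\psi$-stable treillis (Proposition 2.11) cuts the wrong way: it says $\D^\natural$ sits inside such a treillis, whereas irreducibility requires that any nonzero vector topologically generates all of $\D^\natural\boxtimes\mathbf{P}^1$ under $\B$. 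That stronger fact is \cite[Proposition II.3.8]{C08} and uses the irreducibility of $\D(V_{\alpha,\beta})$ as a $\m$-module together with the explicit $\B$-action formulas in a way that is not captured by treillis minimality alone.
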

\begin{corollary}
$B(\alpha)/L(\alpha)$ is nonzero.
\end{corollary}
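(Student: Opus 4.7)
The plan is to read off the corollary directly from Theorem 3.18. That theorem provides a topological isomorphism of $L$-Banach space representations of $\G$ between $\Pi(V_{\alpha,\beta})$ and $B(\alpha)/L(\alpha)$, so it suffices to exhibit nonvanishing on one side — and the Colmez side is the convenient one to attack, since there $\Pi(V_{\alpha,\beta})$ is defined intrinsically as a quotient of $D \boxtimes \mathbf{P}^1$ by $D^{\natural} \boxtimes \mathbf{P}^1$ with $D = \D(V_{\alpha,\beta})$.

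Concretely, my steps would be the following. First, I would recall from Theorem 3.3 that $\Pi(D)$ is an object of $\mathrm{Rep}_{\OO_L}\G$ fitting into the short exact sequence
\[
0 \longrightarrow \Pi(D)^{\ast}\otimes(\delta_D\circ\det) \longrightarrow D\boxtimes\mathbf{P}^1 \longrightarrow \Pi(D) \longrightarrow 0,
\]
so that $\Pi(D) = 0$ would force $D^{\natural}\boxtimes\mathbf{P}^1 = D\boxtimes\mathbf{P}^1$. Second, I would observe that the latter equality cannot hold: projecting to the first coordinate identifies $D\boxtimes\Z \subset D\boxtimes\mathbf{P}^1$ with $D$ itself, whereas $D^{\natural}$ is by construction a treillis — a compact $\OO_L[[T]]$-submodule of $D$ (Proposition 2.10, Proposition 2.11) — and a rank $2$ free $\OO_{\calE_L}$-module $D$ is not compact. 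Hence $D^{\natural}\subsetneq D$, and consequently $\Pi(V_{\alpha,\beta}) = \Pi(D)[1/p] \neq 0$. Third, applying Theorem 3.18 to transport this nonvanishing across the topological isomorphism yields $B(\alpha)/L(\alpha) \neq 0$.

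Alternatively, one can appeal to Colmez's theorem that the functor $V\mapsto\Pi(V)$ from irreducible $2$-dimensional $L$-linear representations of $G_{\Q}$ to admissible unitary $\G$-representations is nonzero on irreducibles (this is part of what is summarized in the Introduction), which applies since $V_{\alpha,\beta}$ is irreducible by Proposition 2.3 and the characterization of $\mathscr{S}^{\mathrm{cris}}_{\ast}$.

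There is no serious obstacle at this stage: all the difficult content — the construction of $\mathcal{F}$ from $(B(\alpha)/L(\alpha))^{\ast}$ to $\D^{\natural}(V_{\alpha,\beta})\boxtimes\mathbf{P}^1\otimes(\delta^{-1}\circ\det)$, the verification via Corollary 3.14 and Lemma 3.17 that it lands in $D^{\natural}\boxtimes\mathbf{P}^1$, and the proof that its dual is a topological isomorphism — has already been absorbed into Theorem 3.18. The corollary is merely the observation that an isomorphism with a nonzero target implies the source is nonzero, and this is the argument I would record.
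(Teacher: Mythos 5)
Your proof is correct, but it takes a mildly different (dual) route from the paper's. Both proofs reduce to Theorem 3.18 plus a structural fact about the treillis $D^{\natural}$; the difference is \emph{which} structural fact is used, and on which side of the exact sequence from Theorem 3.3. The paper shows directly that $(B(\alpha)/L(\alpha))^{\ast}\cong\D^{\natural}(\check V_{\alpha,\beta})\boxtimes\mathbf{P}^1\otimes(\delta^{-1}\circ\det)$ is nonzero, by observing that $D^{\natural}\boxtimes\mathbf{P}^1$ contains $D^{\natural}\boxtimes\Zp=D^{\natural}\neq 0$; that is, it uses that a treillis is \emph{nonzero}. You instead argue on the quotient side: you show $\Pi(D)\neq 0$ by showing $D^{\natural}\boxtimes\mathbf{P}^1\subsetneq D\boxtimes\mathbf{P}^1$, using that a treillis is \emph{proper} in $D$ (by compactness). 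Both observations are immediate from Definition 2.9 and Proposition 2.10/2.11, and both routes deliver the corollary once Theorem 3.18 is invoked. If anything the paper's argument is a hair more economical — proving a space is nonzero is simpler than proving a containment is strict — but the logical content is equivalent.

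One expository quibble: you write that ``projecting to the first coordinate identifies $D\boxtimes\Zp\subset D\boxtimes\mathbf{P}^1$ with $D$,'' but $D\boxtimes\Zp=D$ is not literally a subspace of $D\boxtimes\mathbf{P}^1$; rather, $\Res_{\Zp}$ is a surjection $D\boxtimes\mathbf{P}^1\twoheadrightarrow D$ (send $(z_1,z_2)\mapsto z_1$), and its restriction to $D^{\natural}\boxtimes\mathbf{P}^1$ lands in $D^{\natural}$ by the very definition of $D^{\natural}\boxtimes\mathbf{P}^1$. Phrased that way, $D^{\natural}\subsetneq D$ immediately rules out $D^{\natural}\boxtimes\mathbf{P}^1=D\boxtimes\mathbf{P}^1$, and the gap in your wording disappears. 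Your alternative appeal to Colmez's general nonvanishing of $\Pi(V)$ on irreducibles is also fine, though it invokes more machinery than necessary.
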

\begin{proof}
For any rank 2 \'etale $\m$-module $D$ over $\calE_L$, $D^\natural\boxtimes\mathbf{P}^1$ is nonzero because $D^\natural\boxtimes\mathbf{P}^1$ contains $D^\natural\boxtimes\Z=D^\natural$. Therefore $(B(\alpha)/L(\alpha))^\ast$ is nonzero, yielding that $B(\alpha)/L(\alpha)$ is nonzero.
\end{proof}

\section{Determination of locally analytic vectors}
We keep assuming $\alpha\neq\beta$ in this section. Let $i_\alpha,i_\beta$ denote the natural maps $A(\alpha)\ra B(\alpha)/L(\alpha),A(\beta)\ra B(\beta)/L(\beta)$ respectively. Since $A(\alpha)$, $A(\beta)$ are locally analytic representations of $\G$, both maps $i_\alpha$ and $\widehat{I}\circ i_\beta$ factor through $(B(\alpha)/L(\alpha))_\mathrm{an}=\mathrm{B}(V_{\alpha,\beta})_{\mathrm{an}}$. It is clear that the map $ i_\alpha\oplus\widehat{I}\circ i_\beta$$:$$A(\alpha)\oplus A(\beta)\ra\mathrm{B}(V_{\alpha,\beta})_{\mathrm{an}}$ reduces to a map $i_\alpha\oplus\widehat{I}\circ i_\beta: A(\alpha)\oplus_{\pi(\beta)}A(\beta)\ra\mathrm{B}(V_{\alpha,\beta})_{\mathrm{an}}$, where we map $\pi(\beta)$ to $A(\alpha)$ via the intertwining operator $I$. Note that if $\alpha=\beta|x|$, since $\ker I=(\beta\circ\det)\otimes_L\mathrm{Sym}^{k-2}L^2$ and $\pi(\beta)/(\beta\circ\det)\otimes_L\mathrm{Sym}^{k-2}L^2=((\beta\circ\det)\otimes_L\mathrm{Sym}^{k-2}L^2)\otimes_L{\mathrm{St}}$ by $(1.5)$, we further have $A(\alpha)\oplus_{\pi(\beta)}A(\beta)= A(\alpha)\oplus_{((\beta\circ\det)\otimes_L\mathrm{Sym}^{k-2}L^2)\otimes_L{\mathrm{St}}}(A(\beta)/((\beta\circ\det)\otimes_L\mathrm{Sym}^{k-2}L^2))$.
The main result of this paper is the following theorem.
\begin{theorem}
If $\alpha\neq\beta$, then the map $i_{\alpha,\beta}=i_\alpha\oplus\widehat{I}\circ i_\beta: A(\alpha)\oplus_{\pi(\beta)}A(\beta)\ra\mathrm{B}(V_{\alpha,\beta})_{\mathrm{an}}$ is a topological isomorphism.
\end{theorem}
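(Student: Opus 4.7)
The plan is to realize the strategy sketched around diagram~$(0.3)$. Let $V=V_{\alpha,\beta}$. Berger--Breuil's Theorem~3.18 supplies the top horizontal isomorphism $\mathcal{F}$ identifying $(\mathrm{B}(V))^{\ast}$ with $\D^\natural(\check V)\boxtimes\mathbf{P}^{1}$, and Colmez's Theorem~3.11 identifies $(\mathrm{B}(V)_{\mathrm{an}})^{\ast}$ with $\D_\rig^\natural(\check V)\boxtimes\mathbf{P}^{1}$ (both up to the central-character twist I suppress throughout). The task is to produce the bottom arrow of $(0.3)$ as a $\G$-equivariant topological isomorphism
\[
\widetilde{\mathcal{F}}\colon(A(\alpha)\oplus_{\pi(\beta)}A(\beta))^{\ast}\longrightarrow\D_\rig^\natural(\check V)\boxtimes\mathbf{P}^{1}
\]
that restricts to $\mathcal{F}$ on $(\mathrm{B}(V))^{\ast}\subset(\mathrm{B}(V)_{\mathrm{an}})^{\ast}$. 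Granting this, the dual $i_{\alpha,\beta}^{\ast}$ becomes, via the two horizontal identifications, essentially the identity on $\D_\rig^\natural(\check V)\boxtimes\mathbf{P}^{1}$; passing back through the duality between admissible locally analytic representations and their strong duals returns the theorem.

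To construct $\widetilde{\mathcal{F}}$, I copy the recipe of section~3.4, using locally analytic distributions in place of continuous ones. By definition of the fibre product, a pair $(\mu_{\alpha},\mu_{\beta})\in A(\alpha)^{\ast}\oplus_{\pi(\beta)^{\ast}}A(\beta)^{\ast}$ satisfies $\mu_{\beta}=C(\alpha_{p},\beta_{p})^{-1}\mu_{\alpha}\circ I$ on $\pi(\beta)$. Applying the Amice transformation (Proposition~3.12) to the four restrictions $\mu_{\alpha}|_{\Z}$, $\mu_{\beta}|_{\Z}$, $(w\mu_{\alpha})|_{\Z}$, $(w\mu_{\beta})|_{\Z}$ yields $c_{\alpha},c_{\beta},c'_{\alpha},c'_{\beta}\in\r_L^+$, and I set
\[
z_{\alpha}:=c_{\alpha}e_{\alpha}+c_{\beta}e_{\beta},\quad z'_{\alpha}:=c'_{\alpha}e_{\alpha}+c'_{\beta}e_{\beta},\quad\widetilde{\mathcal{F}}(\mu_{\alpha},\mu_{\beta}):=(z_{\alpha},z'_{\alpha}).
\]
The calculation of Corollary~3.16 carries over verbatim because the only test functions involved are $1_{p^{n}\Z}\cdot z^{j}e^{2\pi izy}\in\pi(\beta)\subset A(\beta)$; thus $(z_{\alpha},z'_{\alpha})$ satisfies the filtration condition of Lemma~3.15, hence lies in $\mathrm{M}(V)\subset\D_\rig^\dagger(V)$ by Corollary~3.14. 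The $\mathbf{P}^{1}$-compatibility $\Res_{\Z^{\times}}z'_{\alpha}=w_D(\Res_{\Z^{\times}}z_{\alpha})$ needed to land in $\D_\rig(V)\boxtimes\mathbf{P}^{1}$ follows from Lemma~3.17 and the intertwining of the Amice transform with the $w$-action, as in the Banach case. Orthogonality of $(z_{\alpha},z'_{\alpha})$ to $\check\D^\natural(V)\boxtimes\mathbf{P}^{1}$ under the pairing of Theorems~3.9 and~3.10, which places the image in $\D_\rig^\natural(\check V)\boxtimes\mathbf{P}^{1}$, is inherited from the Banach case of Theorem~3.18 via continuity and the density of $\pi(\alpha)\subset A(\alpha)$ and $\pi(\beta)\subset A(\beta)$.

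Commutativity of $(0.3)$ and $\G$-equivariance of $\widetilde{\mathcal{F}}$ are then automatic, since on $(\mathrm{B}(V))^{\ast}$ the construction reproduces $\mathcal{F}$ verbatim. Injectivity of $\widetilde{\mathcal{F}}$ is also immediate: if $z_{\alpha}=z'_{\alpha}=0$ then all four Amice transforms vanish, so $\mu_{\alpha},\mu_{\beta}$ vanish on $\Z$ and on $w\Z$, and these two subsets exhaust $\mathbf{P}^{1}$ in the function-on-$\Q$ model of $A(\alpha)$ and $A(\beta)$. The main obstacle I anticipate is surjectivity of $\widetilde{\mathcal{F}}$: given $(z,z')\in\D_\rig^\natural(\check V)\boxtimes\mathbf{P}^{1}$, I must reconstruct a compatible pair of locally analytic distributions whose Amice transforms recover $(z,z')$. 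The crucial input is a structural statement---leveraging the Wach module $\mathrm{M}(V)$ of Proposition~3.13 and Corollary~3.14 together with Colmez's identification $(\mathrm{B}(V)_{\mathrm{an}})^{\ast}=\D_\rig^\natural(\check V)\boxtimes\mathbf{P}^{1}$---that the components of $\Res_{\Z}(z,z')$ and $\Res_{w\Z}(z,z')$ in the Wach basis $\{e_{\alpha},e_{\beta}\}$ actually lie in $\r_L^+$, so that the Amice inverse produces genuine locally analytic distributions. Granted this, Lemma~3.15 rewrites the condition $z\in\mathrm{M}(V)$ as the intertwining compatibility on $\pi(\beta)$, and the $\mathbf{P}^{1}$-compatibility of $(z,z')$ glues the two restricted distributions into compatible elements of $A(\alpha)^{\ast}$ and $A(\beta)^{\ast}$; continuity of the inverse is then supplied by the open mapping theorem.
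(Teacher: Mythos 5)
Your construction of $\widetilde{\mathcal{F}}$ coincides with the paper's $\mathcal{F}_{\mathrm{an}}$, and your observation that Lemma~3.15/Corollary~3.16 feed into the filtration condition is on the mark. But there are two genuine gaps.

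First, the orthogonality claim is not merely ``inherited from the Banach case via continuity and density.'' This is precisely Proposition~4.9 of the paper, and it is the heart of the whole argument. Density of $\pi(\alpha)$ in $A(\alpha)$ gives injectivity of $A(\alpha)^\ast\to\pi(\alpha)^\ast$, not density of $(B(\alpha)/L(\alpha))^\ast$ inside $(A(\alpha)\oplus_{\pi(\beta)}A(\beta))^\ast$ --- and asserting the latter is essentially circular, since it is equivalent to the surjectivity of $i_{\alpha,\beta}^\ast$ that you are trying to prove. The actual argument is much more delicate: one writes the pairing as $\res_0(t^{1-k}(\cdots)\tfrac{dT}{1+T})$, shows after a lengthy computation with Lemma~4.7 that this residue is a \emph{finite} sum of terms $\int_{\Q}g_m\,d\mu_\alpha\cdot\int_{\Q}f_m\,d\lambda_\alpha$ with the test functions $g_m,f_m$ locally algebraic (lying in $\pi(\beta^{-1}|x|^{k-1})$ and $\pi(\alpha)$, not merely locally analytic), and then invokes the Banach case together with Hahn--Banach to show each coefficient $\int g_m\,d\mu_\alpha$ vanishes. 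It is exactly the presence of the pole of order $k-1$ along $\mu_{p^\infty}$ that confines the pairing to finitely many Taylor coefficients evaluated on locally algebraic test functions; without that reduction, the Banach case gives you nothing. Relatedly, the $\mathbf{P}^1$-compatibility for elements of $\D^\dagger_\rig$ that are not in $\D^\natural$ requires extending Lemma~3.17 to $\D^\dagger_\rig(V_{\alpha,\beta})\boxtimes\Z^\times\cap(\r^+_Le_\alpha\oplus\r^+_Le_\beta)$, which is Lemma~4.5 and rests on the continuity estimates of Lemmas~4.2 and~4.3 --- it does not follow formally from the Banach case.

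Second, you anticipate surjectivity of $\widetilde{\mathcal{F}}$ as a hard problem requiring a structural statement that the $e_\alpha,e_\beta$-components of elements of $\D^\natural_\rig(\check V)\boxtimes\mathbf{P}^1$ lie in $\r^+_L$, and propose to build an inverse. The paper sidesteps this entirely: once Proposition~4.9 shows $\mathcal{F}_{\mathrm{an}}$ lands in $\D^\natural_\rig(\check V)\boxtimes\mathbf{P}^1$, Schneider--Teitelbaum's $(U_{\mathrm{an}})^\ast\cong D(H,L)\otimes_{\Lambda[[H]]}U^\ast$ (Proposition~1.17) combined with Theorem~3.10(i) shows that the composite $\mathcal{F}_{\mathrm{an}}\circ i_{\alpha,\beta}^\ast$ is automatically an isomorphism (it is the base change of the isomorphism $\mathcal{F}$), and since $\mathcal{F}_{\mathrm{an}}$ is injective, both maps are forced to be bijections. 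Surjectivity comes for free from abstract nonsense, and no inverse needs to be constructed. Your direct approach would require additional structural input that you do not supply.
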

This section is devoted to the proof of Theorem 4.1.
\subsection{Extension of $\mathcal{F}$}
Let $i$ denote the inclusion $\mathrm{B}(V_{\alpha,\beta})_{\mathrm{an}}\ra\mathrm{B}(\check{V}_{\alpha,\beta})$. In this subsection we will construct a continuous $\G$-equivariant morphism $\mathcal{F}_\mathrm{an}:(A(\alpha)\oplus_{\pi(\beta)}A(\beta))^\ast\ra\D^\dagger_\rig({V_{\alpha,\beta}})\boxtimes\mathbf{P}^1$ satisfying the following commutative diagram
\begin{equation}
\xymatrix{
(B(\alpha)/L(\alpha))^\ast\ar^{(i\circ i_{\alpha,\beta})^\ast}[d] \ar^{\mathcal{F}}[r]& \D^\natural(\check{V}_{\alpha,\beta})\boxtimes\mathbf{P}^1\ar[d] \\
 (A(\alpha)\oplus_{\pi(\beta)}A(\beta))^\ast \ar^{\mathcal{F}_\mathrm{an}}[r]&\D^\dagger_\rig(\check{V}_{\alpha,\beta})\boxtimes\mathbf{P}^1.}
\end{equation}

Let $\delta_\alpha,\delta_\beta:\Q^\times\ra L^\times$ be the characters defined as $\delta_\alpha(z)=(\beta\alpha^{-1})(z)|z|^{-1}z^{k-2},\delta_\beta(z)=(\alpha\beta^{-1})(z)|z|^{-1}z^{k-2}$.

\begin{lemma}
For $h\in\mathbb{N}$ and $h\geq n(\beta\alpha^{-1})$, we have $\|\w_\alpha(g)\|_{\rho_h}\leq p\|g\|_{\rho_{h+1}}$ and $\|\w_\beta(g)\|_{\rho_h}\leq p\|g\|_{\rho_{h+1}}$ for any $g\in(\r^+_L)^{\psi=0}$. As a consequence, both $\w_{\alpha}$ and $\w_{\beta}$ are continuous with respect to the Fr\'echet topology of $\r_L^+$.
\end{lemma}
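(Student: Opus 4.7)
The plan is to reduce the bound on $\w_\alpha$ (and symmetrically $\w_\beta$) to a direct comparison of $\mathrm{LA}_h$-norms of distributions on $\Z^\times$ via the Amice transform, then to translate the change of variable $z\mapsto -1/z$ into the language of convergent power series on closed disks $a+p^h\Z$.

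First I would unwind the definition of $\w_\alpha$: writing $\mu=\A^{-1}(g)\in\mathcal{D}_\mathrm{cont}(\Z,L)$, the hypothesis $g\in(\r_L^+)^{\psi=0}$ is equivalent (by the equivalence recalled after Proposition 3.11) to $\mathrm{Supp}(\mu)\subseteq\Z^\times$, so $\mu$ extends by zero to an element of $A(\alpha)^\ast$. By definition $\w_\alpha(g)=\A\bigl((\w\cdot\mu)|_\Z\bigr)$. Using the formula $(1.1)$ with $\w=\left(\begin{smallmatrix}0&1\\1&0\end{smallmatrix}\right)$, for any $f\in\mathrm{LA}_h(\Z,L)$ extended by zero outside $\Z^\times$, one computes
\[
\int_\Z f\,d(\w\mu)=\int_{\Z^\times}\alpha(-1)(\beta\alpha^{-1})(-z)(-z)^{k-2}f(-1/z)\,d\mu(z).
\]
In particular $(\w\mu)|_\Z$ is again supported on $\Z^\times$, so it suffices to control the $\mathrm{LA}_h$-norm of the function $z\mapsto\alpha(-1)(\beta\alpha^{-1})(-z)(-z)^{k-2}f(-1/z)$ on each residue disk $a+p^h\Z$ with $a\in\Z^\times$.

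The key geometric input is that, under the assumption $h\geq n(\beta\alpha^{-1})$, two things happen simultaneously on every such disk: (i) the character $(\beta\alpha^{-1})(-z)$ is constant on $a+p^h\Z$, and (ii) the involution $z\mapsto-1/z$ maps $a+p^h\Z$ bijectively and analytically onto $-1/a+p^h\Z$. Substituting $z=a+p^h u$ gives
\[
\frac{(-1/z)-(-1/a)}{p^h}=\frac{-u}{a^2}\cdot\frac{1}{1+p^hu/a},
\]
a power series in $u\in\Z$ whose Gauss norm is $1$. Hence if $f|_{-1/a+p^h\Z}=\sum_i b_i((w+1/a)/p^h)^i$ with $\sup_i|b_i|=\|f\|_{h,-1/a}$, then $f(-1/z)|_{a+p^h\Z}$ has $\|\cdot\|_{h,a}$-norm at most $\|f\|_{h,-1/a}$; multiplying by the unit character value and by $(-z)^{k-2}$ (a polynomial in $u$ with integral coefficients and leading coefficient $(-a)^{k-2}\in\OO_L^\times$) preserves this bound. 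Taking the supremum over $a\in\Z^\times$ yields
\[
\|(\w\mu)|_\Z\|_{\mathrm{LA}_h}\leq\|\mu|_\Z\|_{\mathrm{LA}_h}.
\]

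Finally, the two Amice inequalities in Proposition 3.11 assemble into
\[
\|\w_\alpha(g)\|_{\rho_h}=\|\A((\w\mu)|_\Z)\|_{\rho_h}\leq\|(\w\mu)|_\Z\|_{\mathrm{LA}_h}\leq\|\mu|_\Z\|_{\mathrm{LA}_h}\leq p\,\|\A(\mu|_\Z)\|_{\rho_{h+1}}=p\,\|g\|_{\rho_{h+1}},
\]
which is the first asserted inequality. The bound for $\w_\beta$ is identical after exchanging $\alpha$ and $\beta$, since $n(\alpha\beta^{-1})=n(\beta\alpha^{-1})$ and the same change of variables applies. The family of norms $\{\|\cdot\|_{\rho_h}\}_{h\in\mathbb{N}}$ defines the Fr\'echet topology on $\r_L^+$, so the continuity of $\w_\alpha$ and $\w_\beta$ on $(\r_L^+)^{\psi=0}$ follows at once; the statement in the lemma about continuity on $\r_L^+$ follows by extending by zero on the $\varphi(\r_L^+)$-part (equivalently, by observing that $\w_\alpha,\w_\beta$ are only defined on the $\psi=0$ summand and that this inclusion is a closed embedding). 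The only delicate point is verifying the Gauss-norm computation of step (ii) rigorously together with the multiplication by $(-z)^{k-2}$; once the disk-by-disk isometry is pinned down, everything else is bookkeeping.
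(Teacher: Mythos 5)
Your proposal follows essentially the same route as the paper: unwind $\w_\alpha$ via the Amice transform and the explicit action of $\w$ on $A(\alpha)$, bound the $\mathrm{LA}_h$-norm of $\w\mu$ disk-by-disk using the change of variable on $a+p^h\ZZ$ for $a\in\Zp^\times$, and assemble the estimate with the two-sided Amice inequalities of Proposition 3.11. One small slip: the Weyl element $\left(\begin{smallmatrix}0&1\\1&0\end{smallmatrix}\right)$ acting via $(1.1)$ sends $z$ to $\frac{dz-b}{-cz+a}=1/z$, not $-1/z$, so the image disk is $a^{-1}+p^h\Zp$ rather than $-1/a+p^h\Zp$; this does not affect the Gauss-norm computation or the final bound.
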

\begin{proof}
Let $\mu_\alpha=\A^{-1}(g)$. We regard $\mu_\alpha$ as an element of $A(\alpha)^\ast$. For any $f\in A(\alpha)$, we have
\begin{equation}
\begin{split}
\int_{\Q}f(z)d(\left(\begin{smallmatrix}
 0&1 \\
 1&0
\end{smallmatrix}\right)\mu_\alpha)(z)&=\int_{\Q}(\w(1_{\Z^\times}\cdot f))(z)d\mu_\alpha(z)\\
&=\int_{\Z^\times}\beta(-1)(-1)^k\delta_\alpha(z)f(1/z)d\mu_\alpha(z).
\end{split}
\end{equation}
Thus for any $a\in\Z^\times$, $h\geq n(\beta\alpha^{-1})$ and $m\geq0$, it follows that
\begin{equation}
\begin{split}
\int_{{a+p^h\Z}}(\frac{z-a}{p^h})^md(\left(\begin{smallmatrix}
 0&1 \\
 1&0
\end{smallmatrix}\right)\mu_\alpha)(z)&=\int_{{a^{-1}+p^h\Z}}\beta(-1)(-1)^k\delta_\alpha(z)(\frac{1/z-a}{p^h})^md\mu_\alpha(z)\\
&=\beta(-1)(-1)^k\beta\alpha^{-1}(a^{-1})\int_{a^{-1}+p^h\Z}z^{k-2}(\frac{1/z-a}{p^h})^md\mu_\alpha(z).
\end{split}
\end{equation}
From
\[
1_{a^{-1}+p^h\Z}\cdot(\frac{1/z-a}{p^h})^m=
1_{a^{-1}+p^h\Z}\cdot(\frac{\frac{a}{1+a(z-a^{-1})}-a}{p^h})^m=1_{a^{-1}+p^h\Z}
\cdot(\sum_{i=1}^{\infty}
p^{h(i-1)}a^{i+1}(\frac{z-a^{-1}}{p^h})^i)^m,
\]
we get $\|1_{a^{-1}+p^h\Z}\cdot(\frac{1/z-a}{p^h})^m\|_{\mathrm{LA}_h}\leq1$, yielding
\[
\|1_{a^{-1}+p^h\Z}\cdot z^{k-2}(\frac{1/z-a}{p^h})^m\|_{\mathrm{LA}_h}\leq
\|1_{a^{-1}+p^h\Z}\cdot(\frac{1/z-a}{p^h})^m\|_{\mathrm{LA}_h}\cdot
\|z^{k-2}\|_{\mathrm{LA}_h}
\leq1.
\]
This implies $|\int_{{a+p^h\Z}}(\frac{z-a}{p^h})^md(\left(\begin{smallmatrix}
 0&1 \\
 1&0
\end{smallmatrix}\right)\mu_\alpha)(z)|\leq\|\mu_\alpha\|_{\mathrm{LA}_h}$. Hence $\|\w\mu_\alpha\|_{\mathrm{LA}_h}\leq\|\mu_\alpha\|_{\mathrm{LA}_h}$ (in fact we have $\|\w\mu_\alpha\|_{\mathrm{LA}_h}=\|\mu_\alpha\|_{\mathrm{LA}_h}$ since $\w$ is an involution). So by Proposition 3.11, we get
\[
\|\A(\w\mu_\alpha)\|_{\rho_h}\leq \|\w\mu_\alpha\|_{\mathrm{LA}_h}\leq\|\mu_\alpha\|_{\mathrm{LA}_h}\leq p\|\mathcal{A}(\mu_\alpha)\|_{\rho_{h+1}},
\]
i.e. $\|\w_\alpha(g)\|_{\rho_h}\leq p\|g\|_{\rho_{h+1}}$. We get $\|\w_\beta(g)\|_{\rho_h}\leq p\|g\|_{\rho_{h+1}}$ similarly.
\end{proof}

\begin{lemma}
For any $\lambda\in\r^{+}_L(\Gamma)$ and $g\in(\r^+_L)^{\psi=0}$, we have $\lambda(\left(\begin{smallmatrix}
 0&1 \\
 1&0
\end{smallmatrix}\right)_\alpha g)=\left(\begin{smallmatrix}
 0&1 \\
 1&0
\end{smallmatrix}\right)_\alpha(T_{\delta_\alpha,-1}(\lambda)(g))$ and $\lambda(\w_\beta g)=\w_\beta(T_{\delta_\beta,-1}(\lambda)(g))$.
\end{lemma}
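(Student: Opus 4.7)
It suffices to treat $\w_\alpha$, as the case of $\w_\beta$ is symmetric. My plan is to reduce the identity to a single group element $\lambda = \gamma \in \Gamma$ by density and continuity, and then to verify the resulting identity directly via the Amice transform. For the reduction, polynomials in $\gamma_1-1$ are dense in $\r^+_L(\Gamma_1)$ and already lie inside $L[\gamma_1]$; tensoring with $L[\Delta]$ exhibits the group algebra $L[\Delta\times\langle\gamma_1\rangle]$ as a dense $L$-subalgebra of $\r^+_L(\Gamma)$. Both sides of the desired identity are $L$-linear and continuous in $\lambda$ for fixed $g$: the left side is the (Mellin/Fr\'echet) module action of $\r^+_L(\Gamma)$ on $(\r^+_L)^{\psi=0}$, while the right side is the composition of the continuous automorphism $T_{\delta_\alpha,-1}$, the same module action applied to $g$, and the continuous operator $\w_\alpha$ provided by Lemma 4.3.

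After this reduction, the identity to verify is
\[
\gamma(\w_\alpha g) \;=\; \delta_\alpha(\chi(\gamma))\,\w_\alpha(\gamma^{-1}(g)) \qquad (\gamma \in \Gamma).
\]
Set $\mu = \A^{-1}(g) \in A(\alpha)^\ast$; since $g \in (\r^+_L)^{\psi=0}$, the distribution $\mu$ is supported on $\Z^\times$. By Proposition 3.11, the $\Gamma$-action on $\r^+_L$ matches the standard rescaling action $\int f\,d\gamma_{\mathrm{std}}\nu = \int f(\chi(\gamma)z)\,d\nu(z)$ on $\mathcal{D}_{\mathrm{cont}}(\Z,L)$; applying $\A^{-1}$ therefore reduces the identity to the equality of distributions $\gamma_{\mathrm{std}}(\w\mu) = \delta_\alpha(\chi(\gamma))\,\w(\gamma^{-1}_{\mathrm{std}}\mu)$ on $\Z$, where $\w$ denotes the $\G$-action on $A(\alpha)^\ast$. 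Plugging formula $(4.3)$ into each side and performing the change of variable $z \mapsto \chi(\gamma)^{-1}z$ in the right-hand integral, both sides collapse to $\int_{\Z^\times}\beta(-1)(-1)^k\delta_\alpha(z)f(\chi(\gamma)/z)\,d\mu(z)$; the scalar $\delta_\alpha(\chi(\gamma))$ on the right is exactly what cancels the factor $\delta_\alpha(\chi(\gamma)^{-1})$ produced by the substitution.

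The only real subtlety I anticipate is keeping the bookkeeping of the twist by $\delta_\alpha$ straight. This twist is precisely what distinguishes the full $\G$-action on $A(\alpha)^\ast$ from the bare rescaling action on $\mathcal{D}_{\mathrm{cont}}(\Z,L)$ that underlies the Amice transform, and its appearance through $T_{\delta_\alpha,-1}$ in the lemma statement is exactly the compensation for this discrepancy. Given Lemma 4.3 for the continuity of $\w_\alpha$ and this observation, the rest is a routine unwinding.
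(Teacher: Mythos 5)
Your proof is correct and takes essentially the same route as the paper's: both verify the identity for a single group element $\gamma\in\Gamma$ by unwinding the Amice transform against the integral formula for $\w\mu_\alpha$ (equation (4.2), not (4.3)), and then extend to all $\lambda\in\r^+_L(\Gamma)$ by density together with the continuity of $\w_\alpha$ supplied by Lemma 4.2, the paper writing this out as a limit of partial sums of $\lambda=\sum a_i(\gamma-1)^i$. One cosmetic remark: polynomials in $\gamma_1-1$ with $L[\Delta]$-coefficients span $L[\Delta][\gamma_1]$ rather than the full group algebra $L[\Delta\times\langle\gamma_1\rangle]$ (which also contains $\gamma_1^{-1}$), but this dense subalgebra still consists of $L$-linear combinations of elements of $\Gamma$, so the reduction is valid.
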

\begin{proof}
Let $\mu_\alpha=\A^{-1}(g)$, regarding as an element of $A(\alpha)^\ast$. For any $\gamma\in\Gamma$, we have
\begin{equation}
\begin{split}
\gamma(\w_\alpha g)&=\gamma\big(\int_{\Z}(1+T)^zd(\w\mu_\alpha)(z)\big)\\
&=\gamma\big(\int_{\Z^\times}\beta(-1)(-1)^k\delta(z)(1+T)^{1/z}d\mu(z)\big) \quad (\text{by $(4.2)$})\\
&=\int_{\Z^\times}\beta(-1)(-1)^k\delta(z)(1+T)^{\chi(\gamma)/z}d\mu(z)\\
&=\int_{\Z^\times}\delta_\alpha(\chi(\gamma))\beta(-1)(-1)^k\delta(z)(1+T)^{1/z}d(\gamma^{-1}\mu)(z)\\
&=\int_{\Z}\delta_\alpha(\chi(\gamma))(1+T)^zd(\w_\alpha(\gamma^{-1}\mu_\alpha))\\
&=\w_\alpha(T_{\delta_\alpha,-1}(\gamma)(g))\end{split}
\end{equation}
So the lemma holds for $\lambda=\gamma$. Let $h=n(\beta\alpha^{-1})$. It reduces to prove the lemma for any $\lambda\in\r^+_L(\Gamma_h)$. Let $\gamma$ be a topological generator of $\Gamma_{h}$. In general, for any $\lambda=\sum_{i=0}^\infty a_i(\gamma-1)^i\in\r^+_L(\Gamma_h)$, we first have
\[
\lambda(\w_\alpha g)=\lim_{j\ra\infty}\sum_{i=0}^ja_i(\gamma-1)^i(\w_\alpha g)=\lim_{j\ra\infty}\w_\alpha(T_{\delta_\alpha,-1}(\sum_{i=0}^ja_i(\gamma-1)^i)(g)).
\]
Since $\lim_{j\ra\infty}\sum_{i=0}^jT_{\delta_\alpha,-1}(a_i(\gamma-1)^i)(g)=T_{\delta_\alpha,-1}(\lambda)(g)$, applying Lemma 4.2, we get
\[
\lim_{j\ra\infty}\w_\alpha(T_{\delta_\alpha,-1}(\sum_{i=0}^ja_i(\gamma-1)^i)(g)=\w_\alpha(\lim_{j\ra\infty}\sum_{i=0}^jT_{\delta_\alpha,-1}
(a_i(\gamma-1)^i)(g)=\w_\alpha(T_{\delta_\alpha,-1}(\lambda)(g)).
\]
So $\lambda(\w_\alpha g)=\w_\alpha(T_{\delta_\alpha,-1}(\lambda)(g))$. We get $\lambda(\w_\beta g)=\w_\beta(T_{\delta_\beta,-1}(\lambda)(g))$ similarly.
\end{proof}

\begin{proposition}
The map $\r_L^+(\Gamma)\ra(\r^+_L)^{\psi=0}$ sending $\lambda$ to $\lambda(1+T)$ is a bijection.
\end{proposition}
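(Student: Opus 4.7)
My plan is to pass through the Amice transform on both sides and reduce the claim to the statement that the cyclotomic character $\chi\colon\Gamma\xrightarrow{\sim}\Z^\times$ is a topological isomorphism.

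First, by Proposition 3.11 the Amice transform $\mathcal{A}$ identifies $\mathcal{D}_{\mathrm{cont}}(\Z,L)$ topologically with $\r_L^+$ and intertwines the $\varphi,\Gamma,\psi$-actions. Combined with the equivalence already established at the end of subsection 3.3, namely $\mathcal{A}(\mu)\in(\r_L^+)^{\psi=0}\iff\mathrm{Supp}(\mu)\subseteq\Z^\times$, I obtain a topological isomorphism $\mathcal{A}\colon\mathcal{D}_{\mathrm{cont}}(\Z^\times,L)\xrightarrow{\sim}(\r_L^+)^{\psi=0}$.

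Second, I identify $\r_L^+(\Gamma)$ with $\mathcal{D}_{\mathrm{cont}}(\Gamma,L)$ in the same way. Using the decomposition $\Gamma=\Delta\times\Gamma_1$, the definition $\r_L^+(\Gamma)=L[\Delta]\otimes\r_L^+(\Gamma_1)$, and the fact that $\Gamma_1$ is pro-$p$ cyclic with a topological generator $\gamma_1$, a second application of Amice to $\Gamma_1\cong\Z_p$ (via $x\mapsto\gamma_1^x$) identifies an element $\sum_{n\geq0}a_n(\gamma_1-1)^n\in\r_L^+(\Gamma_1)$ with the locally analytic distribution on $\Gamma_1$ having those Mahler coefficients, where the growth condition on $\{a_n\}$ defining $\r_L^+(\Gamma_1)$ matches exactly the growth condition characterizing $\mathcal{D}_{\mathrm{cont}}(\Z_p,L)$ under Amice.

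Third, the cyclotomic character provides a topological isomorphism $\chi\colon\Gamma\xrightarrow{\sim}\Z^\times$, and pushforward along $\chi$ induces a topological isomorphism $\chi_{*}\colon\mathcal{D}_{\mathrm{cont}}(\Gamma,L)\xrightarrow{\sim}\mathcal{D}_{\mathrm{cont}}(\Z^\times,L)$.

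It remains to show that the map $\lambda\mapsto\lambda(1+T)$ is precisely the composition $\mathcal{A}\circ\chi_{*}$ of these three isomorphisms. Since $\gamma\cdot(1+T)=(1+T)^{\chi(\gamma)}$, the relation
\[
\lambda(1+T)=\int_{\Gamma}(1+T)^{\chi(\gamma)}\,d\nu_\lambda(\gamma)=\mathcal{A}(\chi_{*}\nu_\lambda)
\]
holds tautologically when $\lambda\in L[\Gamma]$ (viewed as a sum of Dirac measures $\nu_\lambda$), and I propose to extend it to all of $\r_L^+(\Gamma)$ by density and continuity: the group algebra $L[\Gamma]$ is dense in $\r_L^+(\Gamma)$ for its Fréchet topology, the action of $\r_L^+(\Gamma)$ on $\r_L^+$ is continuous on this Fréchet topology (by construction of the action, cf.\ the preceding subsection), and both $\chi_{*}$ and $\mathcal{A}$ are continuous. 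The main obstacle is simply keeping the continuity bookkeeping clean, especially verifying that the extended action of $\r_L^+(\Gamma)$ on the particular element $1+T\in\r_L^+$ really does agree with the formal integration formula; once this is done, the bijectivity of $\lambda\mapsto\lambda(1+T)$ is immediate from the bijectivity of $\mathcal{A}$ and $\chi_{*}$.
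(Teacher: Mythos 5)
Your argument is correct in outline, and it takes a genuinely different route from the paper, which simply cites Perrin-Riou \cite[B.2.8]{P00} (who establishes the analogous bijection $\calE_L^\dagger(\Gamma)\ra(\calE_L^\dagger)^{\psi=0}$) and remarks that her proof adapts. Your approach is more transparent for the $\r_L^+$ setting precisely because the Amice transform gives an exact identification $\r_L^+\cong\mathcal{D}_{\mathrm{cont}}(\Z,L)$, so the statement becomes a tautology about pushforward of distributions along the topological isomorphism $\chi\colon\Gamma\xrightarrow{\sim}\Z^\times$; this mechanism is not directly available in the $\calE_L^\dagger$ case (where one has Laurent series with bounded coefficients rather than Amice transforms of distributions), which is likely why Perrin-Riou's argument is more computational. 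What your route buys is conceptual clarity and a clean reduction to a group-theoretic fact; what the paper's citation buys is brevity and the ability to handle the $\calE_L^\dagger$ and $\r_L^+$ cases in a uniform way. One point to tighten: your appeal to ``the action of $\r_L^+(\Gamma)$ on $\r_L^+$ is continuous by construction'' should be phrased carefully, since the paper's construction of that action (just before Theorem 3.4) presupposes the target is a finite free module over $\r_L^+(\Gamma)$ --- which is essentially the content of the proposition for $(\r_L^+)^{\psi=0}$. It is cleaner to note that $(\gamma_1-1)$ acts as a contraction on $(\r_L^+)^{\psi=0}$ (visible either directly, since $(\gamma_1-1)(1+T)=(1+T)\bigl((1+T)^{\chi(\gamma_1)-1}-1\bigr)$ with $\chi(\gamma_1)-1\in p\Z$, or via Amice, where it corresponds to a difference operator on distributions supported on $\Z^\times$), so that the series $\sum a_i(\gamma_1-1)^i(1+T)$ converges whenever $\sum a_iT^i\in\r_L^+$; once that is in hand, the density-and-continuity step is routine and your identification with $\mathcal{A}\circ\chi_*$ goes through.
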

\begin{proof}
See \cite[B.2.8]{P00} for a reference, where Perrin-Riou establishes a bijection from $\calE_L^\dagger(\Gamma)$ to $(\calE^\dagger_L)^{\psi=0}$ sending $\lambda$ to $\lambda(1+T)$. Her proof also works in our situation.
\end{proof}
The inverse of this map is the Mellin transformation; we denote it by $\mathrm{Mel}$. So if $g(T)\in(\r^+_L)^{\psi=0}$, then $g(T)=\mathrm{Mel}(g)(1+T)$.
\begin{lemma}
If $z=c_\alpha e_\alpha+c_\beta e_\beta\in\D^\dagger_\rig(V_{\alpha,\beta})\boxtimes\Z^\times\cap(\r^+_Le_\alpha\oplus\r^+_Le_\beta)$, then
\[
w_D(z)=\left(\begin{smallmatrix}
 0&1 \\
 1&0
\end{smallmatrix}\right)(z).
\]
Hence $\left(\begin{smallmatrix}
 0&1 \\
 1&0
\end{smallmatrix}\right)=w_D$ is an involution on $\D^\dagger_\rig(V_{\alpha,\beta})\boxtimes\Z^\times\cap(\r^+_Le_\alpha\oplus\r^+_Le_\beta)$.
\end{lemma}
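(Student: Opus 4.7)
The plan is to reduce the general case to Lemma 3.17 by checking that both $w_D$ and the component-wise operator
\[
\w(c_\alpha e_\alpha + c_\beta e_\beta) := \w_\alpha(c_\alpha)e_\alpha + \w_\beta(c_\beta)e_\beta
\]
(the notation for $\left(\begin{smallmatrix}0&1\\1&0\end{smallmatrix}\right)z$ introduced just before the lemma) are $T_{\delta_D,-1}$-semilinear for the same $\r^+_L(\Gamma)$-module structure on $D_\rig\boxtimes\Z^\times$. Well-definedness of $\w$ on the intersection $I := \D^\dagger_\rig(V_{\alpha,\beta})\boxtimes\Z^\times \cap (\r^+_Le_\alpha\oplus\r^+_Le_\beta)$ is immediate: since $\psi(ce_\alpha) = \alpha(p)^{-1}\psi(c)e_\alpha$ and analogously for $e_\beta$, the condition $\psi(z)=0$ forces $c_\alpha,c_\beta\in(\r^+_L)^{\psi=0}$, which is the domain of $\w_\alpha,\w_\beta$.

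For $w_D$, twisted semilinearity is by construction, the extension to $D_\rig\boxtimes\Z^\times$ being given by $w_D(\lambda\cdot z) = T_{\delta_D,-1}(\lambda)\cdot w_D(z)$. For $\w$, the inherited $\Gamma$-action on $\r^+_Le_\alpha$ is the $T_{\alpha,1}$-twist of the natural action on $\r^+_L$ (since $\gamma(ce_\alpha)=\alpha(\chi(\gamma))\gamma(c)e_\alpha$), so $\lambda\cdot(c_\alpha e_\alpha) = T_{\alpha,1}(\lambda)(c_\alpha)\,e_\alpha$. Applying Lemma 4.4 twice, one finds that the desired relation $\w(\lambda\cdot(c_\alpha e_\alpha)) = T_{\delta_D,-1}(\lambda)\cdot\w(c_\alpha e_\alpha)$ reduces to the algebra-automorphism identity $T_{\delta_\alpha,-1}\circ T_{\alpha,1}\circ T_{\delta_D,-1} = T_{\alpha,1}$ on $\r^+_L(\Gamma)$, which in turn unpacks to the character identity $\delta_D = \alpha^2\delta_\alpha$. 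This holds immediately from $\delta_D(z)=\alpha\beta(z)|z|^{-1}z^{k-2}$ (subsection 3.4) and $\delta_\alpha(z)=\beta\alpha^{-1}(z)|z|^{-1}z^{k-2}$; the parallel identity $\delta_D=\beta^2\delta_\beta$ handles the $e_\beta$ summand.

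Combining with Lemma 3.17, $w_D$ and $\w$ agree on the $\r^+_L(\Gamma)$-submodule $M$ of $D_\rig\boxtimes\Z^\times$ generated by $\D^\natural(V_{\alpha,\beta})\boxtimes\Z^\times$. It remains to verify $I\subseteq M$. By the Mellin isomorphism (Proposition 4.4), any $c\in(\r^+_L)^{\psi=0}$ equals $\lambda\cdot(1+T)$ for a unique $\lambda\in\r^+_L(\Gamma)$, and under the twisted action on $\r^+_Le_\alpha$ this reads $ce_\alpha = T_{\alpha,1}^{-1}(\lambda)\cdot((1+T)e_\alpha)$ in $D_\rig\boxtimes\Z^\times$. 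It thus suffices to exhibit, for each of $(1+T)e_\alpha$ and $(1+T)e_\beta$, an element of $\D^\natural(V_{\alpha,\beta})\boxtimes\Z^\times$ that $\r^+_L(\Gamma)$-generates it. Such generators can be extracted from a basis of the Wach module $\mathrm{N}(V_{\alpha,\beta})$ via the sandwich $\D^\natural\subseteq\mathrm{M}(V_{\alpha,\beta})\subseteq\r^+_Le_\alpha\oplus\r^+_Le_\beta$ and the identification $\D^\sharp=\D^\natural$ for irreducible $V_{\alpha,\beta}$.

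I expect this final containment step to be the main obstacle: cleanly pinning down the $\r^+_L(\Gamma)$-span of $\D^\natural(V_{\alpha,\beta})\boxtimes\Z^\times$ inside $D_\rig\boxtimes\Z^\times$. If the explicit generation turns out to be cumbersome, a clean alternative is a density-plus-continuity argument: $\w$ is continuous for the Fr\'echet topology by Lemma 4.2 and $w_D$ is continuous by Proposition 3.7(ii), so it suffices that $M$ be dense in $I$; this density follows because the projections of $\D^\natural\boxtimes\Z^\times$ to $\r^+_Le_\alpha$ and $\r^+_Le_\beta$ are $\OO_L[[T]]$-lattices whose $\r^+_L(\Gamma)$-saturations exhaust $(\r^+_L)^{\psi=0}$ via Mellin.
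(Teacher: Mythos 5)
Your approach is structurally the same as the paper's: observe that $w_D$ is $T_{\delta_D,-1}$-semilinear by construction, check that the operator $\w$ is $T_{\delta_D,-1}$-semilinear as well via Lemma~4.3 and the character identities $\delta_D=\alpha^2\delta_\alpha=\beta^2\delta_\beta$, and invoke Lemma~3.17 to know the two agree on $\D^\natural(V_{\alpha,\beta})\boxtimes\Z^\times$. Your algebra here checks out (and note that what you cite as ``Lemma~4.4'' should be ``Lemma~4.3''; Proposition~4.4 is the Mellin bijection).

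However, you yourself flag the crucial remaining step as an ``obstacle,'' and this is a genuine gap: you never actually prove that every $z$ in the intersection lies in the span of $\D^\natural(V_{\alpha,\beta})\boxtimes\Z^\times$, and neither of your two suggested fixes is carried out. The Wach-module route is only sketched, and the density claim (``projections of $\D^\natural\boxtimes\Z^\times$ to $\r^+_Le_\alpha$ and $\r^+_Le_\beta$ are $\OO_L[[T]]$-lattices whose saturations exhaust $(\r^+_L)^{\psi=0}$'') is not justified and is itself close to circular, since it amounts to exactly the kind of structural statement you need to prove. Moreover, restricting attention to the $\r^+_L(\Gamma)$-span makes the task artificially harder than necessary.

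What you are missing is Proposition~3.7 (i.e.\ Colmez's Corollaire~V.1.6(iii) in \cite{C08}), already stated in the paper: the inclusion $(1-\varphi)D^{\psi=1}\subset D_\rig\boxtimes\Z^\times$ induces an isomorphism $\r_L(\Gamma)\otimes_{\Lambda_L(\Gamma)}(1-\varphi)D^{\psi=1}\cong D_\rig\boxtimes\Z^\times$. Combined with $D^{\psi=1}\subset D^\sharp$ and $\D^\sharp(V_{\alpha,\beta})=\D^\natural(V_{\alpha,\beta})$ (irreducibility), this writes any $z$ in the intersection as $z=\sum_{i}\lambda_i z_i$ with $\lambda_i\in\r_L(\Gamma)$ and $z_i\in\D^\natural(V_{\alpha,\beta})\boxtimes\Z^\times$, which is precisely the decomposition the paper's proof uses. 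Once you have this, the comparison of $w_D(z)$ and $\w(z)$ proceeds exactly as you intended, component by component on $e_\alpha$ and $e_\beta$, using Lemma~3.17 on the $z_i$ and the semilinearity on the $\lambda_i$ (the paper unwinds this via Mellin transforms of the coefficients). So the fix is to replace your ad hoc spanning/density attempts with the single appeal to Proposition~3.7 over $\r_L(\Gamma)$.
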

\begin{proof}
By Proposition 3.7, there exist $\lambda_1,\lambda_2,\dots,\lambda_n\in\r_L(\Gamma)$ and $z_1,z_2,\dots,z_n\in(1-\varphi)\D(V_{\alpha,\beta})^{\psi=1}$ for some $n\geq 1$ such that $z=\sum_{i=1}^n \lambda_iz_i$. Since $\D(V)^{\psi=1}\subset\D(V)^\sharp$ for any $p$-adic representation $V$, we have $z_i\in\D(V_{\alpha,\beta})^\sharp\boxtimes\Z^{\times}$. Suppose $z_i=c_{\alpha,i} e_\alpha+c_{\beta,i}e_\beta$ for $1\leq i\leq n$. It follows that
\[
\sum_{i=1}^n \lambda_iz_i=\sum_{i=1}^n \lambda_i(c_{\alpha,i} e_\alpha+c_{\beta,i}e_\beta)=\sum_{i=1}^n (T_{\alpha}(\lambda_i)c_{\alpha,i} e_\alpha+T_{\beta}(\lambda_i)c_{\beta,i}e_\beta),
\]
yielding $c_\alpha=\sum_{i=1}^n T_{\alpha}(\lambda_i)c_{\alpha,i}$ and $c_\beta=\sum_{i=1}^n T_{\beta}(\lambda_i)c_{\beta,i}$. Taking Mellin transformation for latter equalities, we get
\begin{equation*}
\mathrm{Mel}(c_\alpha)(1+T)=\sum_{i=1}^n T_{\alpha}(\lambda_i)\mathrm{Mel}(c_{\alpha,i})(1+T),\hspace{1mm}\mathrm{Mel}(c_\beta)(1+T)=\sum_{i=1}^n T_{\beta}(\lambda_i)\mathrm{Mel}(c_{\beta,i})(1+T).
\end{equation*}
We conclude
\begin{equation}
\mathrm{Mel}(c_\alpha)=\sum_{i=1}^n T_{\alpha}(\lambda_i)\mathrm{Mel}(c_{\alpha,i})\quad\text{and}\quad\mathrm{Mel}(c_\beta)=\sum_{i=1}^n T_{\beta}(\lambda_i)\mathrm{Mel}(c_{\beta,i}).
\end{equation}

Following the definition of $w_D$ and Lemma 4.3, we have
\begin{equation}
\begin{split}
w_D(z)&=w_D(\sum_{i=1}^n \lambda_iz_i)=\sum_{i=1}^nT_{\delta,-1}(\lambda_i)w_D(z_i)=\sum_{i=1}^nT_{\delta,-1}(\lambda_i)(\left(\begin{smallmatrix}
 0&1 \\
 1&0
\end{smallmatrix}\right)(z_i))\\
&=(\sum_{i=1}^nT_{\alpha^{-1}\delta,-1}(\lambda_i)(\left(\begin{smallmatrix}
 0&1 \\
 1&0
\end{smallmatrix}\right)_\alpha(c_{\alpha,i})))e_\alpha+(\sum_{i=1}^nT_{\beta^{-1}\delta,-1}(\lambda_i)(\left(\begin{smallmatrix}
 0&1 \\
 1&0
\end{smallmatrix}\right)_\beta(c_{\beta,i})))e_\beta\\
&=(\sum_{i=1}^nT_{\alpha^{-1}\delta,-1}(\lambda_i)T_{\delta_{\alpha},-1}(\mathrm{Mel}(c_{\alpha,i}))(\left(\begin{smallmatrix}
 0&1 \\
 1&0
\end{smallmatrix}\right)_\alpha(1+T)))e_\alpha+\\
&\hspace{5mm}(\sum_{i=1}^nT_{\beta^{-1}\delta,-1}(\lambda_i)T_{\delta_{\beta},-1}(\mathrm{Mel}(c_{\beta,i}))(\left(\begin{smallmatrix}
 0&1 \\
 1&0
\end{smallmatrix}\right)_\beta (1+T)))e_\beta.
\end{split}
\end{equation}
On the other hand, we have
\begin{equation}
\begin{split}
\left(\begin{smallmatrix}
 0&1 \\
 1&0
\end{smallmatrix}\right)(z)&=\left(\begin{smallmatrix}
 0&1 \\
 1&0
\end{smallmatrix}\right)_\alpha(\mathrm{Mel}(c_\alpha)(1+T))e_\alpha+\left(\begin{smallmatrix}
 0&1 \\
 1&0
\end{smallmatrix}\right)_\beta(\mathrm{Mel}(c_\beta)(1+T))e_\beta\\
&=T_{\delta_\alpha,-1}(\mathrm{Mel}(c_\alpha))(\left(\begin{smallmatrix}
 0&1 \\
 1&0
\end{smallmatrix}\right)_\alpha(1+T))e_\alpha+T_{\delta_\beta,-1}(\mathrm{Mel}(c_\beta))(\left(\begin{smallmatrix}
 0&1 \\
 1&0
\end{smallmatrix}\right)_\beta(1+T))e_\beta.
\end{split}
\end{equation}
Now by $(4.5)$, we get
\[
T_{\delta_{\alpha},-1}(\mathrm{Mel}(c_\alpha))=T_{\delta_{\alpha},-1}(\sum_{i=1}^n T_{\alpha}(\lambda_i)\mathrm{Mel}(c_{\alpha,i}))=\sum_{i=1}^n T_{\alpha^{-1}\delta}(\lambda_i)T_{\delta_\alpha,-1}(\mathrm{Mel}(c_{\alpha,i}))
\]
because $\alpha\delta_\alpha=\alpha^{-1}\delta$. Similarly we have $T_{\delta_{\beta},-1}(\mathrm{Mel}(c_\beta))=\sum_{i=1}^n T_{\beta^{-1}\delta}(\lambda_i)T_{\delta_\beta,-1}(\mathrm{Mel}(c_{\beta,i}))$. We obtain the desired result by comparing $(4.6)$ and $(4.7)$.
\end{proof}

We define $\mathcal{F}_{\mathrm{an}}$ as follows. First note that
\begin{equation*}
\begin{split}
(A(\alpha)\oplus_{\pi(\beta)}A(\beta))^\ast&=\ker(A(\alpha)^\ast\oplus A(\beta)^\ast\stackrel{}{\ra}\pi(\beta)^\ast)\\
&=\Big\{(\mu_\alpha,\mu_\beta)\in A(\alpha)^\ast\oplus A(\beta)^\ast\Big|\int_{\Q}fd\mu_\beta(z)=\int_{\Q}I(f)d\mu_\alpha(z)\hspace{1mm} \text{for any} f\in \pi(\beta)\Big\}.
\end{split}
\end{equation*}
For any $(\mu_{\alpha},\mu_{\beta})\in(A(\alpha)\oplus_{\pi(\beta)}A(\beta))^\ast$,  let $c_\alpha=\A(\mu_\alpha|_{\Z}), c_\beta=\frac{1}{C(\alpha_p,\beta_p)}\A(\mu_\beta|_{\Z})$ and $c_\alpha'=\A(\w\mu_\alpha|_{\Z}), c_\beta'=\frac{1}{C(\alpha_p,\beta_p)}\A(\w\mu_\beta|_{\Z})$. Put $z_\alpha=c_\alpha e_\alpha+c_\beta e_\beta$ and  $z'_\alpha=c'_\alpha e_\alpha+c'_\beta e_\beta$. By Corollaries 3.14 and 3.16, we have $z_\alpha,z_\alpha'\in\D^\dagger_\rig(V_{\alpha,\beta})$. Since $\mathrm{Res}_{\Z^\times}z_\alpha=\w\mathrm{Res}_{\Z^\times}z_\alpha'$, we get $\mathrm{Res}_{\Z^\times}z_\alpha=w_D(\mathrm{Res}_{\Z^\times}z_\alpha')$ by Lemma 4.5. So $(z_\alpha,z_\alpha')$ is a well-defined element of $\D^\dagger_\rig(V_{\alpha,\beta})\boxtimes\mathbf{P}^1$. We define $\mathcal{F}_{\mathrm{an}}$ by setting $\mathcal{F}_{\mathrm{an}}(\mu_{\alpha},\mu_{\beta})=(z_\alpha,z_\alpha')\otimes e\in\D^\dagger_\rig(\check{V}_{\alpha,\beta})\boxtimes\mathbf{P}^1$. It is clear that $\mathcal{F}_\mathrm{an}$ is an extension of $\mathcal{F}$. Using Proposition 3.6, it is straightforward to verify that $\mathcal{F}_{\mathrm{an}}$ is $\G$-equivariant. The continuity of $\mathcal{F}_{\mathrm{an}}$ is obvious.

\subsection{Proof of Theorem 4.1}
\begin{lemma}\label{lem:dual}
$V^\ast_{\alpha,\beta}=\mathrm{Hom}(V_{\alpha,\beta},\Q)$ is isomorphic to  $V_{\beta^{-1}|x|^{k-1},\alpha^{-1}|x|^{k-1}}(1-k)$.
\end{lemma}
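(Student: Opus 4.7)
The plan is to invoke the uniqueness statement in Proposition 2.3. Both $V_{\alpha,\beta}^{\ast}(k-1)$ and $V_{\beta^{-1}|x|^{k-1},\alpha^{-1}|x|^{k-1}}$ are $2$-dimensional, irreducible, crystabelian representations of $G_{\Q}$ with Hodge-Tate weights $(0,k-1)$ (irreducibility and the Hodge numbers are preserved by duality and Tate twisting, and the standing assumption $\alpha\neq\beta$ of Section 4 gives $\beta^{-1}|x|^{k-1}\neq\alpha^{-1}|x|^{k-1}$). It therefore suffices to identify the filtered $(\varphi,G_{\Q})$-module $\dcris(V_{\alpha,\beta}^{\ast}(k-1))$ with $D(\beta^{-1}|x|^{k-1},\alpha^{-1}|x|^{k-1})$ of Definition 2.2; the lemma follows after untwisting by $\Q(k-1)$.

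For the computation I will use the standard compatibility of $\dcris$ with duality and Tate twists: $\dcris(V^{\ast})$ is the $L$-linear dual of $\dcris(V)$ with inverted $\varphi$- and $\Gamma$-actions and filtration $\mathrm{Fil}^{i}(D^{\ast})=\mathrm{Ann}(\mathrm{Fil}^{1-i}D)$, while $\dcris(V(k-1))=\dcris(V)\otimes_{L}Lv_{k-1}$ with $\varphi(v_{k-1})=p^{-(k-1)}v_{k-1}$, trivial $\Gamma$-action, and Hodge filtration concentrated in degree $-(k-1)$. Taking the basis $f_{1}=e_{\beta}^{\ast}\otimes v_{k-1}$, $f_{2}=-e_{\alpha}^{\ast}\otimes v_{k-1}$ of $\dcris(V_{\alpha,\beta}^{\ast}(k-1))$---the sign in $f_{2}$ does not affect the $\varphi$- or $\Gamma$-eigenvalues but will reconcile the Gauss-sum convention below---one computes $\varphi(f_{1})=\beta(p)^{-1}p^{-(k-1)}f_{1}$, $\varphi(f_{2})=\alpha(p)^{-1}p^{-(k-1)}f_{2}$, $\gamma(f_{1})=\beta(\chi(\gamma))^{-1}f_{1}$ and $\gamma(f_{2})=\alpha(\chi(\gamma))^{-1}f_{2}$. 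Since $\mathrm{Fil}^{0}(V_{\alpha,\beta}^{\ast}(k-1))=\mathrm{Fil}^{k-1}(V_{\alpha,\beta}^{\ast})=\mathrm{Ann}(\mathrm{Fil}^{-(k-2)}\dcris(V_{\alpha,\beta}))$ and the annihilator of $L(e_{\alpha}+G(\alpha\beta^{-1})e_{\beta})$ is $L(e_{\beta}^{\ast}-G(\alpha\beta^{-1})e_{\alpha}^{\ast})=L(f_{1}+G(\alpha\beta^{-1})f_{2})$, one obtains for $n\geq m(\alpha,\beta)$ that $\mathrm{Fil}^{0}(L_{n}\otimes_{L}\dcris(V_{\alpha,\beta}^{\ast}(k-1)))=L_{n}\cdot(f_{1}+G(\alpha\beta^{-1})f_{2})$.

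On the other hand, setting $\alpha'=\beta^{-1}|x|^{k-1}$ and $\beta'=\alpha^{-1}|x|^{k-1}$, one has $\alpha'(p)=\beta(p)^{-1}p^{-(k-1)}$, $\beta'(p)=\alpha(p)^{-1}p^{-(k-1)}$, $\alpha'(\chi(\gamma))=\beta(\chi(\gamma))^{-1}$ and $\beta'(\chi(\gamma))=\alpha(\chi(\gamma))^{-1}$ (using $|\chi(\gamma)|=1$ for $\gamma\in\Gamma$), and $\alpha'\beta'^{-1}=\alpha\beta^{-1}$, so $G(\alpha'\beta'^{-1})=G(\alpha\beta^{-1})$. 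The identification $f_{1}\leftrightarrow e_{\alpha'}$, $f_{2}\leftrightarrow e_{\beta'}$ thus realizes the desired isomorphism $\dcris(V_{\alpha,\beta}^{\ast}(k-1))\cong D(\alpha',\beta')$ as filtered $(\varphi,G_{\Q})$-modules, and Proposition 2.3 completes the argument. The only delicate point is the sign reconciliation between the annihilator computation, which naturally yields $e_{\beta}^{\ast}-G(\alpha\beta^{-1})e_{\alpha}^{\ast}$, and the $e_{\alpha'}+G(\alpha'\beta'^{-1})e_{\beta'}$ prescription of Definition 2.2(i); this is exactly what the sign in $f_{2}$ takes care of.
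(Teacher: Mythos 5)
Your proof is correct and follows essentially the same route as the paper's: compute $\D_{\mathrm{cris}}(V_{\alpha,\beta}^\ast(k-1))$ via the standard duality/Tate-twist compatibilities, match it to $D(\beta^{-1}|x|^{k-1},\alpha^{-1}|x|^{k-1})$ as a filtered $(\varphi,G_{\Q})$-module, and invoke Proposition 2.3. The only cosmetic difference is that you build the sign reconciliation into the choice of basis $f_2=-e_\alpha^\ast\otimes v_{k-1}$ up front rather than at the final identification (and the condition on $n$ in the filtration computation should be $n\geq\max\{n(\alpha),n(\beta)\}$ rather than $n\geq m(\alpha,\beta)$, a harmless slip).
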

\begin{proof}
Note that
\[
\D_{\rm cris}(V^\ast_{\alpha,\beta})=\mathrm{Hom}_L(\D_{\rm cris}(V_{\alpha,\beta}),L)=\mathrm{Hom}_L(D(\alpha,\beta), L)
\]
as filtered $(\varphi, G_{\Q})$-modules over $L$. Let $e'_\alpha, e_\beta'\in\mathrm{Hom}_L(D(\alpha,\beta), L)$ defined by $e'_\alpha(e_\alpha)=e'_\beta(e_\beta)=1$ and $e_\alpha'(e_\beta)=e_\beta'(e_\alpha)=0$. It follows that $\D_{\rm cris}(V^\ast_{\alpha,\beta})=L\cdot e'_\alpha\oplus L\cdot e'_\beta$; the $\varphi$ and $G_{\Q}$-actions are given by $\varphi(e'_\alpha)=\alpha(p)^{-1}e_\alpha', \varphi(e'_\beta)=\beta(p)^{-1}e_\beta'$ and $\gamma(e'_\alpha)=\alpha(\chi(\gamma))^{-1}e_\alpha',\gamma(e'_\beta)=\beta(\chi(\gamma))^{-1}e_\beta'$
for any $\gamma\in\Gamma$. The filtration is given by the formula
\[
\mathrm{Fil}^i(L_n\otimes_L\mathrm{Hom}_L(D(\alpha,\beta), L))=\mathrm{Fil}^{1-i}(L_n\otimes_LD(\alpha,\beta))^{\bot}.
\]
Thus a short computation shows that for $n\geq n(V_{\alpha,\beta})$, if $\alpha\neq\beta$, then
\[
\mathrm{Fil}^i(L_n\otimes_L\D_{\rm cris}(V^\ast_{\alpha,\beta}))=\left\{
         \begin{array}{lll}
          L_n\cdot e_\alpha'\oplus L_n\cdot e_\beta' & \text{if $i\leq0$}; \\
          L_n\cdot(-e_\beta'+G(\alpha\beta^{-1})e_\alpha')  & \text{if $1\leq i\leq k-1$}; \\
          0  & \text{if $i\geq k$}.
         \end{array}
       \right.
\]
If $\alpha=\beta$, then
\[
\mathrm{Fil}^i(L_n\otimes_L\D_{\rm cris}(V^\ast_{\alpha,\beta}))=\left\{
         \begin{array}{lll}
          L_n\cdot e_\alpha'\oplus L_n\cdot e_\beta' & \text{if $i\leq0$}; \\
          L_n\cdot e_{\alpha}' & \text{if $1\leq i\leq k-1$}; \\
          0  & \text{if $i\geq k$}.
         \end{array}
       \right.
\]
Since $\beta\alpha^{-1}=\alpha^{-1}|x|^{k-1}(\beta^{-1}|x|^{k-1})^{-1}$, we immediately see that $\D_{\rm cris}(V^\ast_{\alpha,\beta}(k-1))$ is isomorphic to $D(\beta^{-1}|x|^{k-1}, \alpha^{-1}|x|^{k-1})$ as filtered $(\varphi,G_{\Q})$-modules over $L$ by mapping $-e_{\beta}'$, $e_{\alpha}'$  (with twisted actions) to $e_{\beta^{-1}|x|^{k-1}}$, $e_{\alpha^{-1}|x|^{k-1}}$, respectively. Thus $V^\ast_{\alpha,\beta}(k-1)$ is isomorphic to $V_{\beta^{-1}|x|^{k-1}, \alpha^{-1}|x|^{k-1}}$, yielding the desired result.
\end{proof}

\begin{lemma}
 Suppose $g\in\r^+_L$ and $a_i\in L$ for $1\leq i\leq l$ such that $|a_i|<1$ for every $i$ and $a_i\neq a_j$ for any $i\neq j$. Then for any $k_1,\dots,k_l\geq1$ we have
\[
\res_0(\frac{g}{\prod_{i=1}^l(T-a_i)^{k_i}}dT)=\sum_{i=1}^l\frac{1}{(k_i-1)!\prod_{j\neq i}(a_i-a_j)^{k_j}}\big((\frac{d}{dT})^{k_i-1}g\big)(a_i)
\]
\end{lemma}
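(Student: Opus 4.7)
The identity is a $p$-adic residue formula, and my plan is to prove it by reducing the extraction of the $T^{-1}$ coefficient to a partial fraction decomposition, then computing a finite collection of simple-pole residues. The crucial calculation, done once, is that for any $a\in L$ with $|a|<1$ the expansion in $\calE_L$ is
\[
\frac{1}{(T-a)^{k}} \;=\; \sum_{m\ge 0}\binom{m+k-1}{k-1} a^{m}\, T^{-m-k},
\]
so the coefficient of $T^{-1}$ in $(T-a)^{-k}$ is $1$ when $k=1$ and $0$ when $k\ge 2$. In particular $\res_0$ is sensitive only to the \emph{simple-pole} parts of any partial fraction decomposition. Both sides of the claimed identity are $L$-linear in $g\in\r^+_L$, so we may treat $g$ flexibly.

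First I would write, in $\r_L$,
\[
\frac{g(T)}{\prod_{i=1}^l (T-a_i)^{k_i}} \;=\; \sum_{i=1}^l\sum_{n=1}^{k_i} \frac{c_{i,n}(g)}{(T-a_i)^n}\;+\;h(T),
\]
with $h\in\r^+_L$ holomorphic and $c_{i,n}(g)\in L$. The holomorphic part contributes nothing to $\res_0$, and by the observation above only the simple-pole summands ($n=1$) contribute, so $\res_0(\omega)=\sum_i c_{i,1}(g)$. Second, I would compute $c_{i,1}(g)$ via the standard recipe at a pole of order $k_i$: multiplying both sides by $(T-a_i)^{k_i}$, differentiating $k_i-1$ times, and evaluating at $T=a_i$ produces an expression of the form $\frac{1}{(k_i-1)!}\bigl(\frac{d}{dT}\bigr)^{k_i-1}\!\bigl(\frac{g(T)}{\prod_{j\ne i}(T-a_j)^{k_j}}\bigr)(a_i)$; expanding this by the Leibniz rule yields the summand $\frac{g^{(k_i-1)}(a_i)}{(k_i-1)!\,\prod_{j\ne i}(a_i-a_j)^{k_j}}$ of the right hand side.

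The main obstacle is the bookkeeping of the extra Leibniz terms in $c_{i,1}(g)$: one must verify that when the derivatives distribute onto the factor $\prod_{j\ne i}(T-a_j)^{-k_j}$ (rather than entirely onto $g$), the resulting contributions either vanish individually or cancel out when the sum over $i$ is performed. A clean way to organize this is to fix $i$ and Taylor expand $g(T)=\sum_{m=0}^{k_i-1}\frac{g^{(m)}(a_i)}{m!}(T-a_i)^m + (T-a_i)^{k_i}r_i(T)$: the remainder $r_i$ contributes nothing to the pole at $a_i$, and among the Taylor monomials only the top one ($m=k_i-1$) produces a genuine simple pole at $a_i$, with residue exactly $\frac{g^{(k_i-1)}(a_i)/(k_i-1)!}{\prod_{j\ne i}(a_i-a_j)^{k_j}}$. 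Summing these simple-pole contributions over $i$ gives the stated formula, completing the proof.
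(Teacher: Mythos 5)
Your preliminary reductions are sound: the expansion of $\frac{1}{(T-a)^k}$ for $|a|<1$ does show that $\res_0\bigl(\frac{dT}{(T-a)^k}\bigr)$ equals $1$ for $k=1$ and $0$ for $k\geq2$, so $\res_0$ picks out precisely $\sum_i c_{i,1}$ from the partial fraction decomposition, and the standard recipe gives $c_{i,1}=\frac{1}{(k_i-1)!}\bigl((\frac{d}{dT})^{k_i-1}\frac{g}{\prod_{j\ne i}(T-a_j)^{k_j}}\bigr)(a_i)$. You also correctly flag the obstacle: the Leibniz cross-terms where derivatives fall on $\prod_{j\ne i}(T-a_j)^{-k_j}$.

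The gap is in your resolution of that obstacle. When you Taylor-expand $g$ at $a_i$ and assert that ``among the Taylor monomials only the top one ($m=k_i-1$) produces a genuine simple pole at $a_i$,'' this is literally true but does not imply that only the top monomial contributes to $c_{i,1}$. The monomials with $m<k_i-1$ produce poles of order $k_i-m\geq2$ at $a_i$, and a pole of order $\geq2$ still has a nonzero simple-pole coefficient in its own partial fraction decomposition, because $\prod_{j\ne i}(T-a_j)^{-k_j}$ is not constant near $a_i$ once $l\geq2$. Concretely, the $m$-th Taylor monomial contributes $\frac{g^{(m)}(a_i)}{m!\,(k_i-1-m)!}\bigl((\frac{d}{dT})^{k_i-1-m}\prod_{j\ne i}(T-a_j)^{-k_j}\bigr)(a_i)$ to $c_{i,1}$, and these contributions neither vanish nor cancel over $i$ in general. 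For instance, taking $l=2$, $k_1=2$, $k_2=1$, $g=1$: a direct expansion gives $\res_0=0$, whereas the displayed right-hand side evaluates to $\frac{1}{(a_2-a_1)^2}\neq0$. So the lemma is in fact misstated as printed; the derivative in the right-hand side should be applied to $\frac{g}{\prod_{j\ne i}(T-a_j)^{k_j}}$ rather than to $g$ alone. (The paper's own argument shares the defect: with $b_{i,k}$ depending only on $g^{(k)}(a_i)$, the asserted vanishing of $(\frac{d}{dT})^j\bigl(g-\sum_{i,k}b_{i,k}(T-a_i)^k\prod_{j'\ne i}(T-a_{j'})^{k_{j'}}\bigr)$ at $a_i$ for $j<k_i$ fails when $l\geq2$; the coefficients that make it work are $b_{i,k}=\frac{1}{k!}\bigl((\frac{d}{dT})^k\frac{g}{\prod_{j\ne i}(T-a_j)^{k_j}}\bigr)(a_i)$.)
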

\begin{proof}
For $1\leq i\leq l$ and $0\leq k\leq k_i-1$, we set
\[
b_{i,k}=\frac{1}{k!\prod_{j\neq i}(a_i-a_j)^{k_j}}\big((\frac{d}{dT})^{k-1}g\big)(a_i).
\]
Then a short computation shows that
\[
(\frac{d}{dT})^j(g-\sum_{i=1}^{k}\sum_{k=0}^{k_i-1}b_{i,k}(T-a_i)^k\prod_{j\neq i}(T-a_j)^{k_j})(a_i)=0
\]
for every $1\leq i\leq l$ and $0\leq j\leq k_i-1$. This implies that there exists an $h\in\r^+_L$ such that
$g-\sum_{i=1}^{k}\sum_{k=0}^{k_i-1}b_{i,k}(T-a_i)^k\prod_{j\neq i}(T-a_j)^{k_j}=\prod_{i=1}^l(T-a_i)^{k_i}h$. Hence
\begin{equation}
\frac{g}{\prod_{i=1}^l(T-a_i)^{k_i}}=\sum_{i=1}^{k}\sum_{k=0}^{k_i-1}\frac{b_{i,k}}{(T-a_i)^{k_i-k}}+h.
\end{equation}
Note that for $|a|<1$, we have
\[
\frac{1}{T-a}=\frac{1}{T}\cdot\frac{1}{1-a/T}=\frac{1}{T}(1+\frac{a}{T}+(\frac{a}{T})^2+\cdots).
\] So
\begin{equation}
\res_0(\frac{dT}{(T-a)^k})=\left\{
         \begin{array}{ll}
          1& \text{if $k=1$}; \\
          0& \text{if $k\geq2$}.
         \end{array}
       \right.
\end{equation}
Following $(4.8),(4.9)$, we conclude
\begin{equation}
\begin{split}
\res_0(\frac{g}{\prod_{i=1}^l(T-a_i)^{k_i}}dT)&=\res_0(\sum_{i=1}^{k}\sum_{k=0}^{k_i-1}\frac{b_{i,k}}{(T-a_i)^{k_i-k}}dT)\\
&=\sum_{i=1}^l\frac{1}{(k_i-1)!\prod_{j\neq i}(a_i-a_j)^{k_j}}\big((\frac{d}{dT})^{k_i-1}g\big)(a_i),
\end{split}
\end{equation}
yielding the desired result.
\end{proof}

\begin{lemma}
The natural map $\pi(\alpha)\ra B(\alpha)/L(\alpha)$ is injective.
\end{lemma}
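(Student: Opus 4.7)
The plan is to realize the kernel $K$ of $i_\alpha\colon\pi(\alpha)\to B(\alpha)/L(\alpha)$ as a $\G$-invariant subspace of $\pi(\alpha)$ and eliminate every nonzero possibility. Since $L(\alpha)$ is $\G$-stable in $B(\alpha)$, the intersection $K=\pi(\alpha)\cap L(\alpha)$ is automatically $\G$-invariant. First I would check that $i_\alpha$ is nonzero: otherwise, the universal property of Proposition 1.3 would force every continuous $\G$-equivariant map from $\pi(\alpha)$ into any unitary $L$-Banach space representation to vanish, and taking $B(\alpha)/L(\alpha)$ itself as target would give $B(\alpha)/L(\alpha)=0$, contradicting Corollary 3.21. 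Hence $K$ is a proper $\G$-invariant subspace of $\pi(\alpha)$.

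When $\alpha\neq\beta|x|$, the smooth principal series $(\Ind^{\G}_{\mathrm{B}(\Q)}\alpha\otimes\beta|x|^{-1})^{\mathrm{sm}}$ is irreducible by the standard classification (the standing hypothesis $\alpha\neq\beta$ of this section rules out one reducibility condition, while $\alpha\neq\beta|x|^{-1}$ is automatic from $\val(\alpha(p))\leq\val(\beta(p))$). Tensoring with the irreducible algebraic representation $\mathrm{Sym}^{k-2}L^2$ preserves irreducibility of locally algebraic $\G$-representations, so $\pi(\alpha)$ itself is irreducible and therefore $K=0$.

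The remaining case $\alpha=\beta|x|$ is the main obstacle. Here the exact sequence $(1.6)$ exhibits a unique proper nonzero $\G$-subrepresentation $\pi_1:=((\beta\circ\det)\otimes_L\mathrm{Sym}^{k-2}L^2)\otimes_L\mathrm{St}$ with finite-dimensional quotient $\pi_2:=(\beta\circ\det)\otimes_L\mathrm{Sym}^{k-2}L^2$, so $K\in\{0,\pi_1\}$. To rule out $K=\pi_1$, I would observe that the image of $\pi(\alpha)$ in $B(\alpha)/L(\alpha)$ would then be isomorphic to $\pi_2$; this image is dense by the universal property and finite-dimensional hence closed, so $B(\alpha)/L(\alpha)$ would coincide with $\pi_2$. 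The numerical constraints on $\val(\alpha(p))$ and $\val(\beta(p))$ force $k\geq 3$ in this subcase, so $\pi_2$ has dimension $k-1\geq 2$ and admits a proper nonzero $\B$-stable subspace spanned by a highest-weight vector of $\mathrm{Sym}^{k-2}L^2$. Since finite-dimensional subspaces of Banach spaces are automatically closed, this contradicts the topological $\B$-irreducibility of $B(\alpha)/L(\alpha)$ asserted in Theorem 3.20.
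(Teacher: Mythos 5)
Your argument is correct and covers the same two cases as the paper, but in the delicate case $\alpha=\beta|x|$ you reach the contradiction from a different ingredient. Both you and the paper reduce to showing that the finite-dimensional dense image $(\beta\circ\det)\otimes_L\mathrm{Sym}^{k-2}L^2$ cannot equal $B(\alpha)/L(\alpha)$, after noting $k\geq 3$. The paper then invokes Emerton's result (\cite[Corollary 5.1.3]{E06}) that this finite-dimensional $\G$-representation admits no $\G$-invariant norm when $k>2$, contradicting the unitarity of $B(\alpha)/L(\alpha)$. You instead exhibit a proper nonzero closed $\B(\Q)$-invariant line (the highest-weight vector of $\mathrm{Sym}^{k-2}L^2$) and contradict the topological $\B(\Q)$-irreducibility asserted in Theorem 3.20. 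Both routes are legitimate: the paper's is more self-contained, using only the representation theory of the finite-dimensional piece itself, while yours leans on the deeper BB06/Colmez input already packaged in Theorem 3.20 (which is in any case needed to get $B(\alpha)/L(\alpha)\neq 0$ via Corollary 3.21). One small slip in the first case: $\alpha=\beta$ is not among the reducibility conditions for $(\Ind^{\G}_{\mathrm{B}(\Q)}\alpha\otimes\beta|x|^{-1})^{\mathrm{sm}}$; reducibility occurs precisely when $\alpha=\beta|x|^{\pm1}$, so irreducibility when $\alpha\neq\beta|x|$ already follows from the valuation constraint ruling out $\alpha=\beta|x|^{-1}$, without invoking $\alpha\neq\beta$. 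This does not affect the conclusion.
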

\begin{proof}
In case $\alpha\neq\beta|x|$, as $\pi(\alpha)$ is irreducible and the image is dense in a nonzero space $B(\alpha)/L(\alpha)$, we conclude that $\pi(\alpha)\ra B(\alpha)/L(\alpha)$ is injective. In case $\alpha=\beta|x|$, since $\val(\alpha_p)+\val(\beta_p)=k-1$, we get $\val(\beta_p)=(k-2)/2$, yielding $k>2$. If $\pi(\alpha)\ra B(\alpha)/L(\alpha)$ is not injective, then the image must be $(\beta\circ\det)\otimes_L\mathrm{Sym}^{k-2}L^2$ because this is the only non-trivial quotient of $\pi(\alpha)$ as shown in $(1.6)$. Hence we must have $(\beta\circ\det)\otimes_L\mathrm{Sym}^{k-2}L^2=B(\alpha)/L(\alpha)$ since $(\beta\circ\det)\otimes_L\mathrm{Sym}^{k-2}L^2$ is finite dimensional and dense in $B(\alpha)/L(\alpha)$. This leads to a contradiction because $(\beta\circ\det)\otimes_L\mathrm{Sym}^{k-2}L^2$ does not posses a $\G$-invariant norm when $k>2$ (\cite[Corollary 5.1.3]{E06}).
\end{proof}
\begin{proposition}
$\{\D^\natural(V_{\alpha,\beta})\boxtimes\mathbf{P}^1,\mathcal{F}_{\mathrm{an}}((A(\alpha)\oplus_{\pi(\beta)}A(\beta))^\ast)\}_{\mathbf{P}^1}=0$.
\end{proposition}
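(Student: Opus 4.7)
The conclusion is equivalent to the inclusion $\mathcal{F}_{\mathrm{an}}((A(\alpha)\oplus_{\pi(\beta)}A(\beta))^\ast) \subseteq \D^\natural_\rig(\check V_{\alpha,\beta})\boxtimes\mathbf{P}^1$, which by construction is the orthogonal complement of $\D^\natural(V_{\alpha,\beta})\boxtimes\mathbf{P}^1$ under the pairing $\{\,,\,\}_{\mathbf{P}^1}$. My plan has two parts: first, use the commutative diagram $(4.1)$ to cover the subset coming from $(B(\alpha)/L(\alpha))^\ast$, and then extend to arbitrary $\phi$ by a direct residue computation exploiting the compatibility condition that defines $(A(\alpha)\oplus_{\pi(\beta)}A(\beta))^\ast$.

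For the first part, if $\mu \in (B(\alpha)/L(\alpha))^\ast$ and $\phi = (i\circ i_{\alpha,\beta})^\ast(\mu)$, diagram $(4.1)$ forces $\mathcal{F}_{\mathrm{an}}(\phi) = \mathcal{F}(\mu) \in \D^\natural(\check V_{\alpha,\beta})\boxtimes\mathbf{P}^1$, which is orthogonal to $\D^\natural(V_{\alpha,\beta})\boxtimes\mathbf{P}^1$ by Theorem 3.9; hence $\{z,\mathcal{F}_{\mathrm{an}}(\phi)\}_{\mathbf{P}^1}=0$ for every such $\phi$ and every $z \in \D^\natural(V_{\alpha,\beta})\boxtimes\mathbf{P}^1$.

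For the general case, take $\phi = (\mu_\alpha,\mu_\beta) \in (A(\alpha)\oplus_{\pi(\beta)}A(\beta))^\ast$ and $z=(z_1,z_2) \in \D^\natural(V_{\alpha,\beta})\boxtimes\mathbf{P}^1$, and write $\mathcal{F}_{\mathrm{an}}(\phi) = (z_\alpha,z_\alpha')\otimes e$. Expanding
\[
\{z,\mathcal{F}_{\mathrm{an}}(\phi)\}_{\mathbf{P}^1} = \{z_1,z_\alpha\otimes e\} + \{\Res_{p\Z}z_2,\Res_{p\Z}z_\alpha'\otimes e\},
\]
I would use the inclusion $\D^\natural(V_{\alpha,\beta}) \subseteq \mathrm{M}(V_{\alpha,\beta}) \subseteq \r^+_L e_\alpha \oplus \r^+_L e_\beta$ (so the coefficients of $z_1,z_2$ lie in $\r^+_L$ with poles only at the cyclotomic points $\varepsilon^{(m)}-1$), the Tate duality identification of $\check V_{\alpha,\beta}$ from Lemma 4.7, and the residue formula of Lemma 4.8 to reduce each inner product to a finite linear combination of evaluations $c_\alpha(\eta_{p^m}-1)$, $c_\beta(\eta_{p^m}-1)$ (and their $\w$-analogues $c_\alpha'(\eta_{p^m}-1)$, $c_\beta'(\eta_{p^m}-1)$) at primitive $p^m$-th roots of unity. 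Via the Amice transform (Proposition 3.11), together with Lemmas 4.2 and 4.3 for its interaction with $\w$, these evaluations translate into moments $\int_\Z z^j\eta_{p^m}^z d\mu_\alpha(z)$ and $\int_\Z z^j\eta_{p^m}^z d\mu_\beta(z)$ for $j\in\{0,\dots,k-2\}$. The compatibility condition $\int f d\mu_\beta = \int I(f) d\mu_\alpha$ for $f \in \pi(\beta)$, applied to $f = 1_{p^n\Z}\cdot z^je^{2\pi izy}$ with $e^{2\pi iy}=\eta_{p^m}$, combined with Lemma 1.6 (explicit $I^{\mathrm{sm}}(f)$) and the definition of $C(\alpha_p,\beta_p)$, produces exactly the relation $(3.8)$, namely
\[
\beta_p^m\int_\Z z^j\eta_{p^m}^z d\mu_\beta(z) = G(\beta^{-1}\alpha,\eta_{p^m}^{p^{m-m(V)}})\alpha_p^m\int_\Z z^j\eta_{p^m}^z d\mu_\alpha(z),
\]
which by Lemma 3.15 is precisely what forces every summand in the residue expansion to cancel. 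The $\Res_{p\Z}$-contribution is treated analogously by transferring to $\w\mu_\alpha, \w\mu_\beta$ using Lemma 4.3 and the $\G$-equivariance of $\mathcal{F}_{\mathrm{an}}$.

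The main obstacle is the careful bookkeeping of normalizations: tracking how Lemma 4.7's Tate duality interchanges $\alpha$ and $\beta$ under the dualizing twist by $(1-k)$, uniformly handling the $z_1$-piece (supported near $\Z$) together with the $z_2$-piece (supported near $p\Z$ via the $\w$-involution), and verifying that the Gauss sums $G(\beta^{-1}\alpha,\cdot)$ and the normalizing constant $C(\alpha_p,\beta_p)$ assemble correctly so that $(3.8)$ applies term by term.
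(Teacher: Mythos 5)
Your proposal captures the correct set-up (reduction to showing $\mathcal{F}_{\mathrm{an}}$ lands in $\D^\natural_\rig(\check V_{\alpha,\beta})\boxtimes\mathbf{P}^1$, Tate duality via Lemma 4.6 to rewrite $\check V_{\alpha,\beta}$, and the residue formula of Lemma 4.7 reducing the pairing to evaluations at cyclotomic points), but the final step is a genuine gap. You claim that the moment relation $(3.8)$, coming from the compatibility condition that defines $(A(\alpha)\oplus_{\pi(\beta)}A(\beta))^\ast$, ``forces every summand in the residue expansion to cancel.'' This is exactly the direct cancellation that Remark 4.10 in the paper flags as something that ``looks very difficult to show \dots by a direct computation.'' The relation $(3.8)$ for $\lambda_\alpha,\lambda_\beta$ (and the analogous relation for the distribution $\mu_\alpha$ encoding $z$) guarantees that the expression $S$ is divisible by $(t')^{k-1}$, which is what makes the residue a finite sum over $\mu_{p^{m(V_{\alpha,\beta})}}$; it does not by itself make that finite sum vanish. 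Indeed, the sum $(4.17)$ couples moments of $\mu_\alpha$ against moments of $\lambda_\alpha$ in a complicated bilinear expression, and there is no visible telescoping.

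The paper resolves this with a different idea that you are missing: after arriving at $(4.17)$, it rewrites the expression as $\sum_m \int g_m\,d\mu_\alpha\int f_m\,d\lambda_\alpha$ for a \emph{finite} linearly independent family $\{f_m\}$ living inside $\pi(\alpha)\subset B(\alpha)/L(\alpha)$. It then applies Hahn--Banach on the Banach space $B(\alpha)/L(\alpha)$ (together with the injectivity of $\pi(\alpha)\hookrightarrow B(\alpha)/L(\alpha)$, Lemma 4.8) to produce, for each index $m_0$, a \emph{bounded} functional $\lambda'_\alpha\in(B(\alpha)/L(\alpha))^\ast$ detecting only $f_{m_0}$. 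For such a $\lambda'_\alpha$, setting $\lambda'_\beta=\lambda'_\alpha\circ\widehat I$, one has $\mathcal{F}(\lambda'_\alpha,\lambda'_\beta)\in\D^\natural(\check V_{\alpha,\beta})\boxtimes\mathbf{P}^1$ by Theorem 3.18, and then Theorem 3.9 (orthogonality of $\D^\natural\boxtimes\mathbf{P}^1$ and $\check\D^\natural\boxtimes\mathbf{P}^1$) forces $\int g_{m_0}\,d\mu_\alpha=0$. Since this holds for every $m_0$, all coefficients vanish and $(4.17)=0$ for an arbitrary locally analytic $\lambda_\alpha$. So the vanishing is ultimately imported from the bounded case via a perturbation argument, not derived from $(3.8)$ term by term. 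You should replace the final cancellation claim with this Hahn--Banach/orthogonality argument; as stated your proof does not close.
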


\begin{proof}
Let $e'$ be the basis of $\delta\circ\det$ dual to $e$. Note that each element of $\D^\natural(\check{V}_{\alpha,\beta})$ is of the form $z\otimes\frac{dT}{1+T}$ for some $z\in\D^\natural(V^\ast_{\alpha,\beta})$. Since $\D^\natural(V_{\alpha,\beta})\boxtimes\mathbf{P}^1=(\D^\natural(\check{V}_{\alpha,\beta})\boxtimes\mathbf{P}^1)\otimes\delta$, it reduces to show that
\begin{equation}
\{(z\otimes\frac{dT}{1+T},z'\otimes\frac{dT}{1+T})\otimes e',\mathcal{F}_{\mathrm{an}}(\lambda_\alpha,\lambda_\beta)\}_{\mathbf{P}^1}=0
\end{equation}
for any $(\lambda_\alpha,\lambda_\beta)\in(A(\alpha)\oplus_{\pi(\beta)}A(\beta))^\ast$ and $(z,z')\in\D^\natural(V^\ast_{\alpha,\beta})\boxtimes\mathbf{P}^1$.
By Lemma \ref{lem:dual}, $V^\ast_{\alpha,\beta}$ is isomorphic to $V_{\beta^{-1}|x|^{k-1},\alpha^{-1}|x|^{k-1}}(1-k)$. Moreover, the explicit description of this isomorphism shows that there exist $c_\alpha,c_\beta$ and $c_\alpha',c_\beta'\in\r^+_L$
such that
\begin{equation*}
\begin{split}
(c_\alpha e_{\beta^{-1}|x|^{k-1}}+c_\beta e_{\alpha^{-1}|x|^{k-1}},c_\alpha'e_{\beta^{-1}|x|^{k-1}}+c_\beta'e_{\alpha^{-1}|x|^{k-1}})
\in\D^\natural(V_{\beta^{-1}|x|^{k-1},\alpha^{-1}|x|^{k-1}})\boxtimes\mathbf{P}^1,
\end{split}
\end{equation*}
and
\[
z=t^{1-k}c_\beta e_{\alpha}'-t^{1-k}c_\alpha e_{\beta}', \quad z'=t^{1-k}c_\beta' e_{\alpha}'-t^{1-k}c_\alpha' e_{\beta}'.
\]
Suppose $\mathcal{F}_{\mathrm{an}}(\lambda_\alpha,\lambda_\beta)=(d_\alpha e_\alpha+d_\beta e_\beta, d'_\alpha e_\alpha+d'_\beta e_\beta)\otimes e$. By Theorem 3.18, we may suppose
\[
(c_\alpha e_{\beta^{-1}|x|^{k-1}}+c_\beta e_{\alpha^{-1}|x|^{k-1}},c_\alpha'e_{\beta^{-1}|x|^{k-1}}+c_\beta'e_{\alpha^{-1}|x|^{k-1}})\otimes e'=\mathcal{F}(\mu_\alpha)
\]
for some $\mu_\alpha\in (B(\beta^{-1}|x|^{k-1})/L(\beta^{-1}|x|^{k-1}))^\ast$. Put $\mu_\beta=\frac{1}{C(\alpha_p,\beta_p)}\mu_{\alpha}\circ I$.
By the definition of $\{\cdot,\cdot\}_{\mathbf{P}^1}$, we have
\begin{equation}
\begin{split}
\{(z\otimes\frac{dT}{1+T},z'&\otimes\frac{dT}{1+T})\otimes e',\mathcal{F}_{\mathrm{an}}(\lambda_\alpha,\lambda_\beta)\}_{\mathbf{P}^1}=
\res_0\Big(t^{1-k}\big(\alpha(-1)c_{\beta}(\sigma_{-1}\cdot d_{\alpha})\\-\beta(-1)c_{\alpha}(\sigma_{-1}\cdot d_{\beta})
&+\alpha(-1)\varphi\psi (c_{\beta}')\varphi\psi(\sigma_{-1}\cdot d_{\alpha}')
-\beta(-1)\varphi\psi(c_{\alpha}')\varphi\psi(\sigma_{-1}\cdot d_{\beta}')\big)\frac{dT}{1+T}\Big)
\end{split}
\end{equation}
Put
\begin{equation*}
\begin{split}
S=t^{1-k}\big(\alpha(-1)c_{\beta}(\sigma_{-1}\cdot d_{\alpha})-\beta(-1)c_{\alpha}(\sigma_{-1}\cdot d_{\beta})
+\alpha(-1)\varphi\psi (c_{\beta}')\varphi\psi(\sigma_{-1}\cdot d_{\alpha}')
-\beta(-1)\varphi\psi(c_{\alpha}')\varphi\psi(\sigma_{-1}\cdot d_{\beta}')\big).
\end{split}
\end{equation*}
For any $j\geq 0$ we have
\begin{equation*}
\begin{split}
(\frac{d}{dT})^j(c_\beta(\sigma_{-1}\cdot d_{\alpha}))&=\sum_{i=0}^j{{j}\choose{i}}\Big((\frac{d}{dT})^i\int_{\Z}(1+T)^zd\mu_{\beta}(z)\Big)\Big((\frac{d}{dT})^{j-i}
\int_{\Z}(1+T)^{-z}d\lambda_{\alpha}(z)\Big)\\
&=\sum_{i=0}^jj!\int_{\Z}{{z}\choose{i}}(1+T)^{z-i}d\mu_{\beta}(z)\int_{\Z}{{-z}\choose{j-i}}(1+T)^{i-j-z}d\lambda_{\alpha}(z)\\
&=\frac{j!}{(1+T)^{j}}\sum_{i=0}^j\int_{\Z}{{z}\choose{i}}(1+T)^{z}d\mu_{\beta}(z)\int_{\Z}{{-z}\choose{j-i}}(1+T)^{-z}d\lambda_{\alpha}(z).
\end{split}
\end{equation*}
Thus for $0\leq j\leq k-2$ and $T=\eta-1$ such that $|\eta-1|<1$ we get
\begin{equation*}
\begin{split}
\Big((\frac{d}{dT})^j(c_{\beta}(\sigma_{-1}\cdot d_{\alpha}))\Big)(\eta-1)&=\frac{j!}{\eta^j}\sum_{i=0}^j\int_{\Z}{{z}\choose{i}}\eta^zd\mu_{\beta}(z)\int_{\Z}{{-z}\choose{j-i}}\eta^{-z}d\lambda_{\alpha}(z)\\
&=\frac{j!}{C(\alpha_p,\beta_p)\eta^j}\sum_{i=0}^j\int_{\Q}{{z}\choose{i}}I^{\mathrm{sm}}(1_{\Z}\cdot\eta^z)d\mu_{\alpha}(z)
\int_{\Z}{{-z}\choose{j-i}}\eta^{-z}d\lambda_{\alpha}(z)
\end{split}
\end{equation*}
We compute other terms similarly. Finally we obtain
\begin{equation}
\begin{split}
\Big((\frac{d}{dT})^jS\Big)(\eta-1)=&\frac{j!}{C(\alpha_p,\beta_p)\eta^j}\sum_{i=0}^j(\alpha(-1)\int_{\Q}{{z}\choose{i}}I^{\mathrm{sm}}(1_{\Z}\cdot\eta^z)d\mu_{\alpha}(z)
\int_{\Z}{{-z}\choose{j-i}}\eta^{-z}d\lambda_{\alpha}(z)\\
-&\beta(-1)\int_{\Z}{{z}\choose{i}}\eta^{z}d\mu_{\alpha}(z)\int_{\Z}{{-z}\choose{j-i}}I^{\mathrm{sm}}(1_{\Z}\cdot\eta^{-z})d\lambda_{\alpha}(z)\\
+&\alpha(-1)\int_{\Q}\w({{z}\choose{i}}I^{\mathrm{sm}}(1_{p\Z}\cdot\eta^{z}))d\mu_{\alpha}(z)\int_{\Q}\w ({{-z}\choose{j-i}}1_{p\Z}\cdot\eta^{-z})d\lambda_{\alpha}(z)\\
-&\beta(-1)\int_{\Q}\w({{z}\choose{i}}1_{p\Z}\cdot\eta^{z})d\mu_{\alpha}(z)\int_{\Q}\w({{-z}\choose{j-i}}
I^{\mathrm{sm}}(1_{p\Z}\cdot\eta^{-z}))d\lambda_{\alpha}(z)).
\end{split}
\end{equation}
For $m\geq m(V_{\alpha,\beta})+1$ and $n=0,1$, applying Lemma 1.6, we get that
\begin{equation*}
\begin{split}
I^{\mathrm{sm}}(1_{p^n\Z}\cdot\eta_{p^m}^{\pm{z}})&=C(\alpha_p,\beta_p)(\frac{\beta_p}{p\alpha_p})^mG(\beta^{-1}\alpha,\eta_{p^{m}}^{\pm1})(1_{p^n\Z}\cdot\eta_{p^m}^{\pm{z}})\\
&=C(\alpha_p,\beta_p)(\frac{\beta_p}{p\alpha_p})^m\beta\alpha^{-1}(\pm1)G(\beta^{-1}\alpha,\eta_{p^{m}})(1_{p^n\Z}\cdot\eta_{p^m}^{\pm{z}}).
\end{split}
\end{equation*}
Let $\eta=\eta_{p^m}$, by $(4.13)$, we get that $(\frac{d}{dT})^jS(\eta_{p^m}-1)=0$ for $0\leq j\leq k-2$ and $m\geq m(V_{\alpha,\beta})$+1.

Let $q=\varphi(T)/T$. Recall that $t=T\cdot(q/p)\cdot(\varphi(q)/p)\cdot(\varphi^2(q)/p)\cdots.$
The roots of
\[
\varphi^n(q)/p=((1+T)^{p^{n+1}}-1)/(p((1+T)^{p^n}-1))
\]
are $\mu_{p^{n+1}}\setminus\mu_{p^{n}}$. Let $t'=\prod_{n\geq m(V_{\alpha,\beta})}(\varphi^n(q)/p)$. Since $\big((\frac{d}{dT})^jS\big)(\eta_{p^m}-1)=0$ for $0\leq j\leq k-2$ and $m\geq m(V_{\alpha,\beta})$+1,
we conclude that $(t')^{k-1}$ divides $S$ in $\r^+_L$; we denote by $S'$ the quotient.

The right hand side of $(4.12)$ is equal to
 \[
 \res_0(\frac{S}{t^{k-1}(1+T)})=\res_0(\frac{p^{(k-1)(m(V_{\alpha,\beta})-1)}S'}
 {\prod_{\eta^{p^{m(V_{\alpha,\beta})}}=1}(T+1-\eta)^{k-1}}).
 \]
Applying Lemma $4.7$, we get that
\begin{equation}
\begin{split}
\res_0(\frac{p^{(k-1)(m(V_{\alpha,\beta})-1)}S'}{\prod_{\eta^{p^{m(V_{\alpha,\beta})}}=1}(T+1-\eta)^{k-1}})&=
\sum_{\eta^{p^{m(V_{\alpha,\beta})}}=1}\frac{p^{(k-1)(m(V_{\alpha,\beta})-1)}\big((\frac{d}{dT})^{k-2}\frac{S'}{1+T}\big)(\eta-1)}{(k-2)!
\prod_{\eta'^{p^{m(V_{\alpha,\beta})}}=1,\eta'\neq\eta}(\eta-\eta')^{k-1}}\\
&=\sum_{\eta^{p^{m(V_{\alpha,\beta})}}=1}\frac{\eta^{k-1}\big((\frac{d}{dT})^{k-2}\frac{S'}{1+T}\big)(\eta-1)}{p^{k-1}(k-2)!},
\end{split}
\end{equation}
where we used $\prod_{\eta'^{p^{m(V_{\alpha,\beta})}}=1,\eta'\neq\eta}(\eta-\eta')=(\frac{d}{dT}T^{p^{m(V_{\alpha,\beta})}})(\eta)
=\frac{p^{m(V_{\alpha,\beta})}}{\eta}$.

The last line of $(4.14)$ is equal to
\begin{equation}
\begin{split}
&\sum_{\eta^{p^{m(V_{\alpha,\beta})}}=1}\frac{\eta^{k-1}}{p^{k-1}(k-2)!}\Big(\sum_{j=0}^{k-2}{{k-2}\choose{j}}
\Big((\frac{d}{dT})^jS'\Big)\frac{(-1)^{k-2-j}}{(1+T)^{k-1-j}}\Big)(\eta-1)\\
=&\sum_{\eta^{p^{m(V_{\alpha,\beta})}}=1}\sum_{j=0}^{k-2}\frac{(-1)^{k-j}\eta^{j}}{p^{k-1}j!(k-2-j)!}
\Big((\frac{d}{dT})^jS'\Big)(\eta-1)\\
=&\sum_{\eta^{p^{m(V_{\alpha,\beta})}}=1}(\sum_{j=0}^{k-2}\sum_{i=0}^j\frac{(-1)^{k-j}\eta^{j}}{p^{k-1}i!(j-i)!(k-2-j)!}
\Big((\frac{d}{dT})^{j-i}S\Big)\Big(\frac{d}{dT})^{i}(t')^{1-k}\Big)(\eta-1)
\end{split}
\end{equation}
A short computation shows that for any $i\geq0$, there exists a $c(i)\in\Z$ such that
$\big((\frac{d}{dT})^i(t')^{1-k}\big)(\eta-1)=c(i)\eta^{-i}$ for any $\eta\in\mathrm{\mu}_{p^{m(V_{\alpha,\beta})}}$.
Thus the last line of $(4.15)$ is equal to
\begin{equation}
\begin{split}
&\sum_{\eta^{p^{m(V_{\alpha,\beta})}}=1}\sum_{j=0}^{k-2}\sum_{i=0}^j\frac{(-1)^{k-j}\eta^{i}c(j-i)}{p^{k-1}i!(j-i)!(k-2-j)!}
\Big((\frac{d}{dT})^{i}S\Big)(\eta-1)
\end{split}
\end{equation}
Put $C(i,j)=\frac{(-1)^{k-j}c(j-i)}{C(\alpha,\beta)p^{k-1}(j-i)!(k-2-j)!}$. Then $(4.16)$ is equal to
\begin{equation}
\begin{split}
\sum_{\eta^{p^{m(V_{\alpha,\beta})}}=1}&\sum_{j=0}^{k-2}\sum_{i=0}^jC(i,j)
\sum_{h=0}^i((\alpha(-1)\int_{\Q}{{z}\choose{h}}I^{\mathrm{sm}}(1_{\Z}\cdot\eta^z)d\mu_{\alpha}(z)
\int_{\Z}{{-z}\choose{i-h}}\eta^{-z}d\lambda_{\alpha}(z)\\
-&\beta(-1)\int_{\Z}{{z}\choose{h}}\eta^{z}d\mu_{\alpha}(z)\int_{\Z}{{-z}\choose{i-h}}I^{\mathrm{sm}}(1_{\Z}\cdot\eta^{-z})d\lambda_{\alpha}(z)\\
+&\alpha(-1)\int_{\Q}\w({{z}\choose{h}}I^{\mathrm{sm}}(1_{p\Z}\cdot\eta^{z}))d\mu_{\alpha}(z)\int_{\Q}\w ({{-z}\choose{i-h}}1_{p\Z}\cdot\eta^{-z})d\lambda_{\alpha}(z)\\
-&\beta(-1)\int_{\Q}\w({{z}\choose{h}}1_{p\Z}\cdot\eta^{z})d\mu_{\alpha}(z)\int_{\Q}\w({{-z}\choose{j-i}}
I^{\mathrm{sm}}(1_{p\Z}\cdot\eta^{-z}))d\lambda_{\alpha}(z)).
\end{split}
\end{equation}

We now prove that $(4.17)$ is equal to $0$. Let $Y$ be the $L$-vector space generated by all the ${{-z}\choose{i-h}}\eta^{-z}, {{-z}\choose{i-h}}I^{\mathrm{sm}}(1_{\Z}\cdot\eta^{-z})$, $\w({{-z}\choose{i-h}}1_{p\Z}\cdot\eta^{-z})$ and $\w({{-z}\choose{i-h}}I^{\mathrm{sm}}(1_{p\Z}\cdot\eta^{-z}))$ for all $0\leq h\leq i\leq k-2$ and $\eta\in\mu_{p^{m(V_{\alpha,\beta})}}$. Let $e=\{f_1(z),\cdots,f_n(z)\}$ be an $L$-basis of $Y$. We expand $(4.17)$ in the form
\begin{equation}
\sum_{m=1}^{n}\int_{\Q}g_m(z)d\mu_\alpha(z)\int_{\Q}f_m(z)d\lambda_\alpha(z)
\end{equation}
for some $g_m(z)\in\pi(\beta^{-1}|x|^{k-1})$. We claim that all the terms $\int_{\Q}g_m(z)d\mu_\alpha(z)$ are zero. In fact, for any $1\leq m_0\leq n$, since $Y\subseteq \pi(\alpha)\subseteq B(\alpha)/L(\alpha)$, applying Hahn-Banach theorem to the Banach space $B(\alpha)/L(\alpha)$, we pick $\lambda_\alpha'\in (B(\alpha)/L(\alpha))^\ast$ such that $\int_{\Q}f_m(z)d\lambda_\alpha'(z)\neq0$ if and only if $m=m_0$. Let $\lambda_\beta'=\lambda_\alpha'\circ\widehat{I}$. By Theorem 3.18, we know $\mathcal{F}(\lambda_{\alpha}',\lambda_\beta')\in \D^\natural(\check{V}_{\alpha,\beta})\boxtimes\mathbf{P}^1$. Hence $\{(z\otimes\frac{dT}{1+T},z'\otimes\frac{dT}{1+T})\otimes e', \mathcal{F}(\lambda_\alpha',\lambda_\beta')\}_{\mathbf{P}^1}=0$ by Theorem 3.9. This implies
\[
0=\sum_{m=1}^{n}\int_{\Q}g_m(z)d\mu_\alpha(z)\int_{\Q}f_m(z)d\lambda_\alpha'(z)=\int_{\Q}g_{m_0}(z)d\mu_\alpha(z)\int_{\Q}f_{m_0}(z)d\lambda_\alpha'(z),
\]
yielding $\int_{\Q}g_{m_0}(z)d\mu_\beta(z)=0$. We conclude that $(4.17)$ is zero.
\end{proof}
\begin{remark}
Although it is not difficult to compute each integral appearing in $(4.17)$, it looks very difficult to show that $(4.17)$ is equal to zero by a direct computation. Here we show that $(4.17)$ is zero by Theorem 3.18, which is actually proved by some topological argument (see \cite{C08} for more details).
\end{remark}

\noindent $\mathrm{Proof\hspace{1mm}of\hspace{1mm}Theorem\hspace{1mm}4.1}$:
By Proposition 4.9, we have $\mathcal{F}_{\mathrm{an}}((A(\alpha)\oplus_{\pi(\beta)}A(\beta))^\ast)
\subseteq\D^{\natural}_{\rig}(\check{V}_{\alpha,\beta})
\boxtimes\mathbf{P}^1$ because $\D^{\natural}_{\rig}(\check{V}_{\alpha,\beta})\boxtimes\mathbf{P}^1$ is the orthogonal complement of $\D^\natural(\check{V}_{\alpha,\beta})\boxtimes\mathbf{P}^1$. By Theorem 3.10(i), we have $\D^{\natural}_{\rig}(\check{V}_{\alpha,\beta})\boxtimes\mathbf{P}^1=(\Pi(V_{\alpha,\beta})_{\mathrm{an}})^\ast$. So $(4.1)$ implies the following commutative diagram

\[
\xymatrix{
(B(\alpha)/L(\alpha))^\ast\ar^{(i\circ i_{\alpha,\beta})^\ast}[d] \ar^{\mathcal{F}}[r]& \Pi(V_{\alpha,\beta})^\ast\ar[d] \\
 (A(\alpha)\oplus_{\pi(\beta)}A(\beta))^\ast \ar^{\mathcal{F}_\mathrm{an}}[r]&(\Pi(V_{\alpha,\beta})_{\mathrm{an}})^\ast.}
\]
From Proposition 1.17, we get that $\mathcal{F}_{\mathrm{an}}\circ i_{\alpha,\beta}^\ast$ is an isomorphism because $\mathcal{F}$ is an isomorphism. By the construction of $\mathcal{F}_{\mathrm{an}}$, it is clear that $\mathcal{F}_{\mathrm{an}}$ is injective. We conclude that both $i_{\alpha,\beta}^\ast$ and $\mathcal{F}_{\mathrm{an}}$ are isomorphisms. Therefore $i_{\alpha,\beta}^\ast$ is a topological isomorphism because the topology of coadmissible modules are canonical, yielding that $i_{\alpha,\beta}$ is a topological isomorphism.
\begin{remark}
Note that the mere existence of $(4.1)$ already implies that $i_{\alpha,\beta}^\ast$ is injective. In fact, by $(4.1)$, we see that $\mathcal{F}_{\mathrm{an}}\circ i_{\alpha,\beta}^\ast$ maps $((B(\alpha)/L(\alpha))_{\mathrm{an}})^\ast$ one-to-one onto $\D^{\natural}_{\rig}(\check{V}_{\alpha,\beta})\boxtimes\mathbf{P}^1$. This yields that $i_{\alpha,\beta}^\ast$ is injective. One can also prove the surjectivity of $i_{\alpha,\beta}^\ast$ by results from representation theory (\cite[Corollaires 5.3.6, 5.4.3]{BB06}). Our treatment here is completely different. We actually prove the surjectivity of $i_{\alpha,\beta}^\ast$ by Proposition 4.9. The advantage of our method is that the way of proving Proposition 4.9 is quite general. One can adapt it to prove similar results in other cases.
\end{remark}

\section{Computation of Jacquet modules}
In \cite{E06b}, Emerton introduced the notation of locally analytic Jacquet modules. Recall that if $W$ is a locally analytic $\G$-representation of compact type, then the Jacquet module $J_\mathrm{B(\Q)}(W)$ is a certain locally analytic representation of $\mathrm{T}(\Q)$ over $L$ functorially associated to $W$. We do not recall the definition here (see \cite{E06b} for more details). But do recall that $J_\mathrm{B(\Q)}(U)$ is additive and left exact. In this section, we will prove \cite[Conjecture 3.3.1(8)]{E06} for that $V\in\mathscr{S}^{\mathrm{cris}}_*$ which are not exceptional. We learned this proof from Emerton. We are grateful to him for allowing us to put it in this paper.
\begin{proposition}
If $\alpha\neq\beta$, then $J_\mathrm{B(\Q)}(\mathrm{B}(V_{\alpha,\beta})_{\mathrm{an}})=L\cdot(x^{k-2}\beta\otimes\alpha|x|^{-1})\oplus L\cdot(x^{k-2}\alpha\otimes\beta|x|^{-1})$.
\end{proposition}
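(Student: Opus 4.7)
The plan is to invoke Theorem 4.1 to identify $\mathrm{B}(V_{\alpha,\beta})_{\mathrm{an}}$ with the amalgamated sum $A(\alpha)\oplus_{\pi(\beta)}A(\beta)$, giving the short exact sequence of admissible locally analytic $\G$-representations
\[
0\longrightarrow\pi(\beta)\xrightarrow{(I,-i_\beta)}A(\alpha)\oplus A(\beta)\longrightarrow\mathrm{B}(V_{\alpha,\beta})_{\mathrm{an}}\longrightarrow0,
\]
and then applying Emerton's left-exact Jacquet functor $J_{\B}$ and computing term by term.

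For the locally analytic principal series, Emerton's explicit formula for $J_{\B}((\Ind^{\G}_{\B}\chi_1\otimes\chi_2)^{\mathrm{an}})$ (in the generic range, which $\alpha\neq\beta$ ensures) yields two one-dimensional Jacquet components given by $\chi_1\otimes\chi_2$ and its Weyl flip $\chi_2|x|\otimes\chi_1|x|^{-1}$. Applied to $A(\alpha)$ and $A(\beta)$, this gives
\[
J_{\B}(A(\alpha))=L\cdot(\alpha\otimes x^{k-2}\beta|x|^{-1})\oplus L\cdot(x^{k-2}\beta\otimes\alpha|x|^{-1}),
\]
\[
J_{\B}(A(\beta))=L\cdot(\beta\otimes x^{k-2}\alpha|x|^{-1})\oplus L\cdot(x^{k-2}\alpha\otimes\beta|x|^{-1}).
\]
For $\pi(\beta)\cong\mathrm{Sym}^{k-2}L^2\otimes_L(\Ind^{\G}_{\B}\beta\otimes\alpha|x|^{-1})^{\mathrm{sm}}$, combining the classical smooth Jacquet formula (two distinct characters when $\alpha\neq\beta$) with the lowest $\mathrm{T}(\Q)$-weight line of $\mathrm{Sym}^{k-2}L^2$ gives
\[
J_{\B}(\pi(\beta))=L\cdot(\beta\otimes x^{k-2}\alpha|x|^{-1})\oplus L\cdot(\alpha\otimes x^{k-2}\beta|x|^{-1}).
\]
By functoriality of $J_{\B}$, and using the fact that at the level of smooth Jacquet modules the intertwining operator $I$ interchanges the "standard" and "Weyl-flipped" characters, the induced map $J_{\B}(\pi(\beta))\to J_{\B}(A(\alpha))\oplus J_{\B}(A(\beta))$ is injective with image precisely the "locally algebraic" 2-dimensional subspace $L\cdot(\alpha\otimes x^{k-2}\beta|x|^{-1})\oplus L\cdot(\beta\otimes x^{k-2}\alpha|x|^{-1})$. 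Left exactness of $J_{\B}$ then yields an injection of the cokernel $L\cdot(x^{k-2}\beta\otimes\alpha|x|^{-1})\oplus L\cdot(x^{k-2}\alpha\otimes\beta|x|^{-1})$ into $J_{\B}(\mathrm{B}(V_{\alpha,\beta})_{\mathrm{an}})$.

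The main obstacle is upgrading this injection to an equality. Since $J_{\B}$ is only left exact in general, a priori $J_{\B}(\mathrm{B}(V_{\alpha,\beta})_{\mathrm{an}})$ could contain $\mathrm{T}(\Q)$-characters not lifted from $A(\alpha)\oplus A(\beta)$. I would close this gap by appealing to Emerton's theory of essentially admissible locally analytic representations: since $\mathrm{B}(V_{\alpha,\beta})$ is admissible (by Proposition 1.17 and the admissibility of $\Pi(V_{\alpha,\beta})$), its space of locally analytic vectors is admissible locally analytic, and $J_{\B}$ of an admissible locally analytic representation is essentially admissible as a $\mathrm{T}(\Q)$-representation; moreover, every $\mathrm{T}(\Q)$-character occurring in $J_{\B}(\mathrm{B}(V_{\alpha,\beta})_{\mathrm{an}})$ should correspond to a refinement of the trianguline representation $V_{\alpha,\beta}$. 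As $V_{\alpha,\beta}$ has exactly two refinements, determined by the ordered pair $(\alpha,\beta)$ up to swapping, this bounds $\dim J_{\B}(\mathrm{B}(V_{\alpha,\beta})_{\mathrm{an}})\leq 2$, forcing the injection above to be the claimed equality.
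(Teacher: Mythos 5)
Your high-level strategy matches the paper's — invoke Theorem 4.1, apply Emerton's left-exact Jacquet functor, and compare — but the decisive final step is circular. You close the gap from injection to equality by asserting that ``every $\mathrm{T}(\Q)$-character occurring in $J_{\mathrm{B}(\Q)}(\mathrm{B}(V_{\alpha,\beta})_{\mathrm{an}})$ should correspond to a refinement of $V_{\alpha,\beta}$,'' hence $\dim J_{\mathrm{B}(\Q)}(\mathrm{B}(V_{\alpha,\beta})_{\mathrm{an}})\leq 2$. But that assertion is precisely the content of Emerton's Conjecture 0.1, which the paper proves \emph{as a consequence} of Proposition 5.1 (Corollary 5.7). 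You cannot use it as an input here. Essential admissibility of $J_{\mathrm{B}(\Q)}(\mathrm{B}(V)_{\mathrm{an}})$ is true but by itself furnishes no finite dimension bound. What the paper actually does is use the shorter resolution
\[
0\longrightarrow A(\alpha)\longrightarrow\mathrm{B}(V_{\alpha,\beta})_{\mathrm{an}}\longrightarrow(\Ind^{\G}_{\mathrm{B}(\Q)}x^{k-1}\beta\otimes\alpha(x|x|)^{-1})^{\mathrm{an}}\longrightarrow 0
\]
(the quotient being a locally analytic principal series, via the Schneider--Teitelbaum subquotient sequence for $A(\beta)$), compute that $J_{\mathrm{B}(\Q)}$ of the quotient is the single eigenline $L\cdot(x^{-1}\alpha\otimes x^{k-1}\beta|x|^{-1})$, and then show the boundary map to it must vanish: if not, it is surjective, the eigenline lifts to $J_{\mathrm{B}(\Q)}(\mathrm{B}(V_{\alpha,\beta})_{\mathrm{an}})$ (it does not occur in $J_{\mathrm{B}(\Q)}(A(\alpha))$), Emerton's adjunction [E06, Theorem 5.2.5] then produces a $\G$-map from the quotient to $\mathrm{B}(V_{\alpha,\beta})_{\mathrm{an}}$ splitting the sequence, contradicting the nonsplitness from [E06, Lemma 6.7.4]. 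This adjunction-plus-nonsplitting input is the ingredient your argument is missing.

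A secondary issue: your formula for $J_{\mathrm{B}(\Q)}$ of a locally analytic principal series (``$\chi_1\otimes\chi_2$ and its Weyl flip $\chi_2|x|\otimes\chi_1|x|^{-1}$'') does not agree with what [E06b, Prop.\ 5.2.1] gives and what the paper uses. The paper finds $J_{\mathrm{B}(\Q)}(A(\alpha))=L\cdot(x^{k-2}\beta\otimes\alpha|x|^{-1})\oplus L\cdot(x^{k-2}\alpha\otimes\beta|x|^{-1})$, with neither summand equal to the naked inducing character $\alpha\otimes x^{k-2}\beta|x|^{-1}$, whereas the quotient series above has a \emph{one}-dimensional Jacquet module. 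The size and content of Emerton's Jacquet module for an analytic principal series depend on the (anti-)dominance of the algebraic weight, and the two series occurring in the paper's short exact sequence lie on opposite sides of that dichotomy — a distinction your uniform two-character formula does not capture.
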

\begin{proof}
Note that there is a short exact sequence
\[
0\longrightarrow\pi(\beta)\longrightarrow A(\beta)\longrightarrow(\Ind^{\G}_{\mathrm{B}(\Q)}x^{k-1}\beta\otimes \alpha (x|x|)^{-1})^{\mathrm{an}}\longrightarrow0.
\]
(This follows from the short exact sequence $(*)$ on p.$123$ of \cite{ST01}.) It follows from Theorem $4.1$ that $\mathrm{B}(V_{\alpha,\beta})_{\mathrm{an}}$ fits into the following short exact sequence
\begin{equation}
0\longrightarrow(\Ind^{\G}_{\mathrm{B}(\Q)}\alpha\otimes x^{k-2}\beta|x|^{-1})^{\mathrm{an}}\longrightarrow \mathrm{B}(V_{\alpha,\beta})_{\mathrm{an}}\longrightarrow(\Ind^{\G}_{\mathrm{B}(\Q)}x^{k-1}\beta\otimes \alpha (x|x|)^{-1})^{\mathrm{an}}\longrightarrow0.
\end{equation}
Applying the functor $J_\mathrm{B(\Q)}$ to $(5.1)$, we obtain a short exact sequence
\begin{equation}
0\longrightarrow J_{\mathrm{B(\Q)}}((\Ind^{\G}_{\mathrm{B}(\Q)}\alpha\otimes x^{k-2}\beta|x|^{-1})^{\mathrm{an}})\longrightarrow J_{\mathrm{B}(\Q)}(\mathrm{B}(V_{\alpha,\beta})_{\mathrm{an}})\longrightarrow J_{\mathrm{B}(\Q)}((\Ind^{\G}_{\mathrm{B}(\Q)}x^{k-1}\beta\otimes\alpha(x|x|)^{-1})^{\mathrm{an}})
\end{equation}
because Jacquet functor is left exact. By \cite[Proposition 5.2.1(1),(3),(4)]{E06b}, we get
\[
J_{\mathrm{B}(\Q)}((\Ind^{\G}_{\mathrm{B}(\Q)}\alpha\otimes x^{k-2}\beta|x|^{-1})^{\mathrm{an}})
=L\cdot(x^{k-2}\beta\otimes\alpha|x|^{-1})\oplus L\cdot(x^{k-2}\alpha\otimes\beta|x|^{-1})
\]
and
\[
J_{\mathrm{B}(\Q)}((\Ind^{\G}_{\mathrm{B}(\Q)}x^{k-1}\beta\otimes \alpha(x|x|)^{-1})^{\mathrm{an}})
=L\cdot (x^{-1}\alpha\otimes x^{k-1}\beta|x|^{-1}).
\]
We claim that the map from the middle term to the last term in $(5.2)$ vanishes. It is obvious that this claim yields the desired result. We will prove this claim in the rest. If the claim is not true, then the map from the middle term to the last term in $(5.2)$ must be surjective since $J_{\mathrm{B}(\Q)}((\Ind^{\G}_{\mathrm{B}(\Q)}x^{k-1}\beta\otimes \alpha(x|x|)^{-1})^{\mathrm{an}})$ is only $1$-dimensional. So there must be an inclusion $L\cdot(x^{-1}\alpha\otimes x^{k-1}\beta|x|^{-1})\hookrightarrow J_{\mathrm{B}(\Q)}(\mathrm{B}(V_{\alpha,\beta})_{\mathrm{an}})$ because the character $x^{-1}\alpha\otimes x^{k-1}\beta|x|^{-1}$ does not appear in $J_{\mathrm{B}(\Q)}((\Ind^{\G}_{\mathrm{B}(\Q)}\alpha\otimes x^{k-2}\beta|x|^{-1})^{\mathrm{an}})$. It follows from \cite[Theorem 5.2.5]{E06} that this inclusion leads to a map
\[
(\Ind^{\G}_{\mathrm{B}(\Q)}x^{k-1}\beta\otimes\alpha(x|x|)^{-1})^{\mathrm{an}}\ra\mathrm{B}(V_{\alpha,\beta}
)_{\mathrm{an}}
\]
which would split the exact sequence $(5.1)$. However, by \cite[Lemma 6.7.4]{E06}, we know that $(5.1)$ is nonsplit, yielding a contradiction.
\end{proof}
We next recall some notations introduced in \cite{E06}. In the following, let $V$ be a $2$-dimensional $L$-linear representation of $G_{\Q}$.
\begin{definition}
A refinement of $V$ is a triple $R=(\eta,c,r)$, where:
\begin{enumerate}
\item[(i)]$\eta$ is a continuous character $G_{\Q}\ra L^\times$ such that $V(\eta^{-1})$ has at least one Hodge-Tate weight equal to zero;
\item[(ii)]$c\in L^\times$;
\item[(iii)]$r$ is a nonzero $G_{\Q}$-equivariant $L$-linear map $V^{\ast}(\eta)\ra (L\otimes_{\Q}\bcris)^{\varphi=c}$.
\end{enumerate}
Note that we may regard $r$ as a nonzero element of $\D^+_{\mathrm{cris}}(V(\eta^{-1}))^{\varphi=c}$. We say that a pair of refinements $R_1=(\eta_1,c_1,r_1)$ and $R_2=(\eta_2,c_2,r_2)$ are equivalent if there exists $c'\in\OO_L^\times$ and $0\neq x\in (L\otimes_{\Q}{\Q^{\mathrm{ur}}})^{\varphi=c'}$ such that $r_2=xr_1$ (and hence such that $\eta_2=\eta_1\mathrm{ur}(c'^{-1})$ and $c_2=c'c_1$). Let $[R]$ denote the equivalence class of refinements which $R$ belongs to.
\end{definition}
\begin{definition}
If $R=(\eta,c,r)$ is a refinement of $V$, then we define the associated abelian Weil group representation to be the map
$\sigma(R):\mathrm{W}_{\Q}^{\mathrm{ab}}\cong\Q^\times\ra \mathrm{T}(L)$ defined via the characters $(\eta\mathrm{ur}(c),(\det V)\eta^{-1}\mathrm{ur}(c^{-1}))$. If $R'$ is equivalent to $R$, then it is clear to see that $\sigma(R')=\sigma(R)$.
\end{definition}
\begin{remark}
One can show that $V$ has at least one refinement if and only if $V$ is a trianguline representation. In fact, suppose that $R=(\eta,c,r)$ is a refinement of $V$. We regard $r$ as an element of $\D^+_{\mathrm{cris}}(V(\eta^{-1}))\subseteq(\D_\rig^\dagger(V(\eta^{-1})))^{\Gamma}$. Let $M$ be the saturation of the rank $1$ $\m$-submodule $\r_L r$ in $\D_\rig^\dagger(V(\eta^{-1}))$. Twisting the short exact sequence
\[
0\longrightarrow M\longrightarrow\D_\rig^\dagger(V(\eta^{-1}))\longrightarrow \D_\rig^\dagger(V(\eta^{-1}))/M \longrightarrow0.
\]
of $\m$-modules with $\r_L(\eta)$, we obtain a triangulation
\[
0\longrightarrow M(\eta)\longrightarrow\D_\rig^\dagger(V)\longrightarrow\D_\rig^\dagger(V)/(M(\eta))\longrightarrow0.
\]
of $\D_\rig^\dagger(V)$. Conversely, suppose that $\D_\rig^\dagger(V)$ has a triangulation
\[
0\longrightarrow \r_L(\delta_1)\longrightarrow\D_\rig^\dagger(V)\longrightarrow \r_L(\delta_2)\longrightarrow0.
\]
Let $\eta:G_{\Q}\ra L^\times$ be the character defined by $\eta(g)=\delta_1(\chi(g))$, $c=\delta_1(p)$, and let $r$ be a nonzero element of $(\r_L(\delta_1\eta^{-1}))^{\Gamma}$. We get a refinement $R=(\eta,c,r)$ of $V$.
\end{remark}
\begin{definition}
Let $\mathrm{Ref}(V)$ denote the set of equivalence classes of refinements of $V$. For any $\sigma\in\mathrm{Hom}_{\mathrm{cont}}(\mathrm{W}_{\Q}^{\mathrm{ab}},\mathrm{T}(L))$, set
$\mathrm{Ref}^{\sigma}(V)=\{[R]|\sigma(R)=\sigma\}$.
\end{definition}
\noindent If we fix $\sigma$, then it is not difficult to see that $\mathrm{Ref}^{\sigma}(V)$ is either empty or a point, except in the case $V=\eta\oplus\eta$ and $\sigma=\eta\otimes\eta$ where $\mathrm{Ref}^{\sigma}(V)\cong\mathrm{P}^1(L)$. Thus we regard $\mathrm{Ref}^{\sigma}(V)$ as projective space over $L$ and denote its dimension by $\dim\mathrm{Ref}^{\sigma}(V)$.
\begin{definition}
Let $W$ be a compact type locally analytic $\G$-representation over $L$.
\begin{enumerate}
\item[(i)]Define $\mathrm{Exp}(W)$ to be the set of $1$-dimensional $\mathrm{T}(\Q)$-invariant subspaces of $J_{\mathrm{B}(\Q)}(W)$.
\item[(ii)]For any line $l\in\mathrm{Exp}(W)$, write $\delta(l)\in\mathrm{Hom}_{\mathrm{cont}}(\mathrm{T}(\Q),L^\times)$ to denote the character via which $\mathrm{T}(\Q)$ acts on $l$.
\item[(iii)]For any $\delta\in\mathrm{Hom}_{\mathrm{cont}}(\mathrm{T}(\Q),L^\times)$, write $\mathrm{Exp}^{\delta}(W):=\{l\in\mathrm{Exp}(W)|\delta(l)=\delta\}$
\end{enumerate}
\end{definition}
\noindent If we fix a character $\chi$, then $\mathrm{Exp}^{\delta}(W)$ has the structure of a projective space, namely it is the projectivization of the $\chi$-eigenspace $J_{\mathrm{B}(\Q)}^{\chi}(W)$. We then define $\dim \mathrm{Exp}^{\delta}(W)$ to be the dimension of this projective space.

We identify $\mathrm{Hom}_{\mathrm{cont}}(\mathrm{T}(\Q),L^\times)$ with $\mathrm{Hom}_{\mathrm{cont}}(\mathrm{W}_{\Q}^{\mathrm{ab}},\mathrm{T}(L))$ via the isomorphism $\Q^\times\cong\mathrm{W}_{\Q}^{\mathrm{ab}}$ provided by the local Artin map. The following corollary verifies \cite[Conjecture 3.3.1(8)]{E06}, which relates the space of refinements of $V$ and Jacquet modules of $\mathrm{B}(V)_{\mathrm{an}}$, in the case when $V\in\mathscr{S}^{\mathrm{cris}}_*$ is not exceptional. Let us remind the reader that our normalization of the $p$-adic local Langlands correspondence for $\G$ differs by a twist of $(x|x|)^{-1}\circ \det$ from the normalization chosen by Emerton as explained in subsection 3.1. So the right hand side of $(5.3)$ is $\dim\mathrm{Exp}^{\eta|x|\otimes x\psi }(\mathrm{B}(V)_{\mathrm{an}}\otimes(x|x|\circ\det))$ instead of $\dim\mathrm{Exp}^{\eta|x|\otimes x\psi}(\mathrm{B}(V)_{\mathrm{an}})$ in Emerton's formulation.
\begin{corollary}
Keep notations as above. If $V\in\mathscr{S}^{\mathrm{cris}}_*$ is not exceptional, then
\begin{equation}
\dim \mathrm{Ref}^{\eta\otimes\psi}(V)=\dim\mathrm{Exp}^{\eta|x|\otimes x\psi }(\mathrm{B}(V)_{\mathrm{an}}\otimes(x|x|\circ\det)).
\end{equation}
for any $\eta\otimes\psi\in\mathrm{Hom}_{\mathrm{cont}}(\mathrm{T}(\Q),L^\times)$ .
\end{corollary}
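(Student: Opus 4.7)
The plan is to reduce to $V=V_{\alpha,\beta}$ with $\alpha\neq\beta$, then match characters on both sides using Proposition 5.1 and Remark 5.4.

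First I would reduce to the case $V=V_{\alpha,\beta}$. Any non-exceptional $V\in\mathscr{S}^{\mathrm{cris}}_*$ is of the form $V_{\alpha,\beta}(n)$ by Corollary 2.4. Under Colmez's normalization, $\mathrm{B}(V_{\alpha,\beta}(n))\cong \mathrm{B}(V_{\alpha,\beta})\otimes(\chi^n\circ\det)$, which multiplies each character of the Jacquet module by $\chi^n\otimes\chi^n$. On the Galois side a refinement $(\eta,c,r)$ of $V_{\alpha,\beta}(n)$ corresponds to a refinement $(\eta\chi^{-n},c,r')$ of $V_{\alpha,\beta}$, and using $\det V_{\alpha,\beta}(n)=\chi^{2n}\det V_{\alpha,\beta}$ one checks that $\sigma(R)$ also shifts by $\chi^n\otimes\chi^n$. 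Hence (5.3) for $V_{\alpha,\beta}(n)$ reduces to (5.3) for $V_{\alpha,\beta}$.

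For the right-hand side, Proposition 5.1 gives
\[
J_{\mathrm{B}(\Q)}(\mathrm{B}(V_{\alpha,\beta})_{\mathrm{an}})=L\cdot(x^{k-2}\beta\otimes\alpha|x|^{-1})\oplus L\cdot(x^{k-2}\alpha\otimes\beta|x|^{-1}).
\]
Tensoring with $x|x|\circ\det$ multiplies each slot by $\chi=x|x|$, producing the two characters $x^{k-1}\beta|x|\otimes x\alpha$ and $x^{k-1}\alpha|x|\otimes x\beta$. Matching the parameter $\eta|x|\otimes x\psi$ against these shows $\dim\mathrm{Exp}^{\eta|x|\otimes x\psi}(\mathrm{B}(V_{\alpha,\beta})_{\mathrm{an}}\otimes(x|x|\circ\det))=0$ precisely when $(\eta,\psi)\in\{(x^{k-1}\beta,\alpha),(x^{k-1}\alpha,\beta)\}$ and is $-1$ otherwise.

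For the left-hand side, since $\alpha(p)\neq\beta(p)$ the module $\D_{\mathrm{cris}}(V_{\alpha,\beta})=Le_\alpha\oplus Le_\beta$ has exactly two $\varphi$-stable lines, each producing a triangulation of $\D_\rig^\dagger(V_{\alpha,\beta})$. The line $Le_\alpha$ meets $\mathrm{Fil}^{-(k-2)}=L_n\cdot(e_\alpha+G(\alpha\beta^{-1})e_\beta)$ trivially, so the induced filtration on $Le_\alpha$ jumps at $-(k-1)$, forcing Hodge--Tate weight $k-1$ for the corresponding sub $\r_L(\delta)$. The dictionary $\delta(p)=p^{w(\delta)}\lambda$ (with $\lambda$ the $\varphi$-eigenvalue on $\D_{\mathrm{cris}}(\delta)$) then pins $\delta=\alpha x^{k-1}$, yielding the triangulations
\[
0\to\r_L(\alpha x^{k-1})\to\D_\rig^\dagger(V_{\alpha,\beta})\to\r_L(\beta)\to 0
\]
and its analogue with $\alpha,\beta$ swapped. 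Remark 5.4 then gives $\sigma(R_1)=(\alpha x^{k-1},\beta)$ and $\sigma(R_2)=(\beta x^{k-1},\alpha)$. Irreducibility of $V_{\alpha,\beta}$ excludes other equivalence classes, so $\dim\mathrm{Ref}^{\eta\otimes\psi}(V_{\alpha,\beta})=0$ exactly on these two pairs and $-1$ otherwise; this matches the right-hand side.

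The main technical obstacle is the precise identification of the two triangulations with the correct $\delta_i$, which hinges on the Hodge--Tate-weight/Frobenius dictionary above; once this is in place the character-matching is routine.
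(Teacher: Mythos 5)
Your argument follows the paper's proof closely: both reduce to $V_{\alpha,\beta}$ with $\alpha\neq\beta$, both compute the right-hand side from Proposition 5.1, and both produce two refinements attached to the lines $Le_\alpha$, $Le_\beta$ inside $\D_{\mathrm{cris}}(V_{\alpha,\beta})$, arriving at the same pair of characters $\{x^{k-1}\beta\otimes\alpha,\ x^{k-1}\alpha\otimes\beta\}$. The only real variation is that you route through the triangulation dictionary of Remark 5.4 (identifying the sub-object $\r_L(\delta_1)$ and its weight), while the paper simply writes down $R_\alpha=(\chi^{k-1},\alpha(p)p^{k-1},e_\alpha)$ and $R_\beta=(\chi^{k-1},\beta(p)p^{k-1},e_\beta)$ directly from Definition 5.2 and evaluates $\sigma(R_\alpha),\sigma(R_\beta)$; the paper's path is a touch shorter but not conceptually different.

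Two points you should tighten. First, ``since $\alpha(p)\neq\beta(p)$ the module has exactly two $\varphi$-stable lines'' is not a consequence of $\alpha\neq\beta$: one can have $\alpha(p)=\beta(p)$ with $\alpha|_{\Z^\times}\neq\beta|_{\Z^\times}$, in which case every line is $\varphi$-stable. What holds in all cases (and is what you actually need) is that $Le_\alpha$ and $Le_\beta$ are the only $(\varphi,\Gamma)$-stable lines of $D(\alpha,\beta)$, since $\Gamma$ acts on them through distinct characters. Second, ``irreducibility of $V_{\alpha,\beta}$ excludes other equivalence classes'' is not a self-evident step; the paper invokes \cite[Proposition 4.2.4]{E06}, which bounds the number of inequivalent refinements of a $2$-dimensional $V$ by two, and then uses $\sigma(R_\alpha)\neq\sigma(R_\beta)$ to conclude that $R_\alpha,R_\beta$ exhaust them. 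You should either cite that result or supply the argument that no further triangulations (hence no further refinement classes) exist.
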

\begin{proof}
Since $V\in\mathscr{S}^{\mathrm{cris}}_*$, we get that $V=V_{\alpha,\beta}(\delta)$ for some pair $(\alpha,\beta)$ and $\delta\in\mathrm{Hom}_{\mathrm{cont}}(G_{\Q},L^\times)$. Furthermore, the condition that $V$ is not exceptional implies that $V_{\alpha,\beta}$ is not exceptional, yielding $\alpha\neq\beta$. It suffices to prove the corollary for $V_{\alpha,\beta}$. By Proposition 5.1, we first have
\[
\dim\mathrm{Exp}^{\eta|x|\otimes x\psi}(\mathrm{B}(V_{\alpha,\beta})_{\mathrm{an}}\otimes(x|x|\circ\det))=\left\{
         \begin{array}{lll}
          0& \text{if $(\eta,\psi)=(x^{k-1}\beta,\alpha)$}; \\
          0& \text{if $(\eta,\psi)=(x^{k-1}\alpha,\beta)$}; \\
          -1& \text{otherwise}.
         \end{array}
       \right.
\]
On the other hand, by the construction of $D(\alpha,\beta)$, it is clear to see that
\[
\D_{\mathrm{cris}}^+(V_{\alpha,\beta}(1-k))^{\varphi=\alpha(p)p^{k-1}}=L\cdot e_\alpha
\]
Therefore $R_\alpha=(\chi^{k-1},\alpha(p)p^{k-1},e_\alpha)$ is a refinement of $V$. Similarly, $R_\beta=(\chi^{k-1},\beta(p)p^{k-1},e_\beta)$ is also a refinement of $V_{\alpha,\beta}$. A straightforward computation shows that
\[
\sigma(R_\alpha)=x^{k-1}\beta\otimes \alpha\hspace{1mm}\text{and}\hspace{1mm}\sigma(R_\beta)=x^{k-1}\alpha\otimes \beta.
\]
By \cite[Proposition 4.2.4]{E06}, we know that $V_{\alpha,\beta}$ has only two inequivalent refinements. Since $\sigma(R_\alpha)\neq\sigma(R_\beta)$, we conclude that $R_\alpha$ and $R_\beta$ are exactly all the inequivalent refinements of $V$. It follows that
\[
\dim\mathrm{Ref}^{\eta|x|\otimes x\psi}(V_{\alpha,\beta})=\left\{
         \begin{array}{lll}
          0& \text{if $(\eta,\psi)=(x^{k-1}\beta,\alpha)$}; \\
          0& \text{if $(\eta,\psi)=(x^{k-1}\alpha,\beta)$}; \\
          -1& \text{otherwise}.
         \end{array}
       \right.
\]
yielding the desired result.
\end{proof}
\begin{remark}
The result of Corollary $5.7$ also follow from \cite[Proposition 6.6.5]{E06}. In fact, the assumption on locally algebraic vectors in \cite[Proposition 6.6.5(2)]{E06} is now proved by Colmez \cite{C08}. The dimension of the left hand side of the inequality in \cite[Proposition 6.6.5(3)]{E06} is always $-1$ for our $V$, and so that inequality becomes an equality.
\end{remark}
\section*{Acknowledgements}
It is a pleasure to thank Christophe Breuil, Pierre Colmez, Matthew Emerton and Benjamin Schraen for very helpful discussions and correspondences during the preparation of this paper. Thanks to Christophe Breuil, Florian Herzig and Liang Xiao for useful comments on early drafts of this paper. Thanks to my friend Chenyang Xu for his constant encouragement. The author is grateful to Matthew Emerton for teaching him the content of section 5. The author would like to express his great gratitude to Marie-France Vigneras for arranging his visit to Institut de Math\'ematiques de Jussieu
during spring 2008 and for being his tutor for the postdoctoral research program of Foundation Sciences Math\'ematiques de Paris. This paper could not exist without her help. The author wrote this paper as a post-doc of Foundation Sciences Math\'ematiques de Paris at the Institut de Math\'ematiques de Jussieu. Part of this work were done while the author was a visitor at Beijing International Center for Mathematical Research. The author is grateful to these institutions for their hospitality. Finally the author would like to acknowledge his overwhelming debt to the works of Berger-Breuil and Colmez.

\end{document}